\documentclass[leqno, 12pt]{amsart}
\usepackage[utf8]{inputenc}
\usepackage{lmodern}
\usepackage[english]{babel}
\usepackage{amstext}
\usepackage{amsthm}
\usepackage{amsmath}
\usepackage{amssymb}
\usepackage{graphicx}
\usepackage{mathtools}
\usepackage{ulem}
\usepackage{xcolor}
\DeclareRobustCommand{\erase}{\bgroup\markoverwith{\textcolor{red}{\rule[.5ex]{2pt}{0.4pt}}}\ULon}

\usepackage[svgnames]{xcolor}
\definecolor{color1}{RGB}{27,158,119}
\definecolor{color2}{RGB}{217,95,2}
\definecolor{color3}{RGB}{117,112,179}
\definecolor{color4}{RGB}{231,41,138}
\usepackage[margin=1in]{geometry}
\usepackage{dsfont}
\usepackage{fancyvrb}
\usepackage[unicode=true,pdfusetitle,
 bookmarks=true,bookmarksnumbered=false,bookmarksopen=false,
 breaklinks=false,pdfborder={0 0 1},backref=false,colorlinks=true]
 {hyperref}
 \usepackage{enumitem}

\hypersetup{
  pdftitle={Ground states on a fractured strip and one-dimensional reduction},
  pdfauthor={Stefan Le Coz  and Boris Shakarov},
  pdfsubject={},
  pdfkeywords={}, 
  colorlinks=true,
  linkcolor=DarkBlue  ,          %
  citecolor=DarkRed,        %
  filecolor=DarkMagenta,      %
  urlcolor=DarkGreen,           %
}

\newtheorem{theorem}{Theorem}[section]
\newtheorem{lem}[theorem]{Lemma}
\newtheorem{prop}[theorem]{Proposition}
\newtheorem{proposition}[theorem]{Proposition}

\newtheorem{remark}[theorem]{Remark}
\theoremstyle{definition}
\newtheorem{defin}[theorem]{Definition}

\def\R{\mathbb R}
\def\C{\mathbb C}
\def\N{\mathbb N}

\def\H{\operatorname{H}}

\newcommand{\Strip}{\mathbb S}
\newcommand{\deltastrip}{\delta_0(x)}

\DeclareMathOperator{\sech}{sech}

\DeclarePairedDelimiterX{\dual}[2]{\langle}{\rangle}{#1, #2}

\title[Bifurcation Analysis on a Fractured Strip]{Transverse Instability and Bifurcation Analysis of the line soliton for the NLS equation on a Fractured Strip}

\author[H. Kikuchi]{Hiroaki Kikuchi}
\address{
Department of Mathematics;
Tsuda University; 
2-1-1 Tsuda-machi, Kodaira-shi, Tokyo 187-8577 (JAPAN)
}
\email{hiroaki@tsuda.ac.jp}
\thanks{The work of H. K. was supported 
by JSPS KAKENHI 
Grant Number JP25H01453 and JP25K07089}

\author[B. Shakarov]{Boris Shakarov}
\address{Institut de Mathématiques de Toulouse ; UMR5219, Université de Toulouse ; CNRS, UPS IMT, F-31062 Toulouse Cedex 9 (France)}
\email{boris.shakarov@math.univ-toulouse.fr}
\thanks{The work of B. S. is 
  partially supported by ANR-11-LABX-0040-CIMI and the ANR project NQG ANR-23-CE40-0005}

\author[K. Tomioka]{Kenta Tomioka}
\address{Department of Mathematics;
Tsuda University; 
2-1-1 Tsuda-machi, Kodaira-shi, Tokyo 187-8577 (JAPAN)}
\email{kenta.tomioka@gm.tsuda.ac.jp}
\thanks{
The work of K. T. was supported by JSPS KAKENHI Grant Number 25KJ0325.
}

\date{\today}

\subjclass[2010]{35Q55 (35A15, 35B38)}

\date{\today}
\keywords{nonlinear Schr\"odinger equation, standing waves, action
  ground state, energy ground state, nonlinear quantum graphs, dimensional reduction}

\begin{document}

\begin{abstract}
We consider the nonlinear Schrödinger equation on a two-dimensional strip with an attractive $\delta$ interaction and power nonlinearity. We investigate the transverse stability and bifurcation of line solitons as the width of the strip varies. We first establish local well-posedness in $H^1$, conservation of mass and energy, and global existence in the $H^1$-subcritical regime. We then identify a critical width $L_*$ at which the line soliton undergoes a transverse instability. More precisely, we prove orbital stability for $L<L_*$ and orbital instability for $L>L_*$. At the critical width, a simple eigenvalue of the linearized operator crosses zero, and we construct, via the Lyapunov-Schmidt reduction, a branch of positive nontrivial stationary solutions bifurcating from the line soliton. We determine the direction of this bifurcation by computing the second-order variation of the width along the branch. Finally, we investigate the orbital stability of the bifurcating solitons and obtain a stability criterion which can be evaluated in the regime of sufficiently small interaction strength.
\end{abstract}

\maketitle

\section{Introduction}

Nonlinear Schrödinger equations on waveguides have attracted considerable attention because they combine nonlinear dispersive dynamics with geometric confinement. A particularly interesting situation arises when the waveguide contains a localized defect, which models impurities or fractures and can fundamentally alter both the stationary states and their stability. Understanding the interaction between confinement, localized defects, and transverse instabilities is the main objective of this work. Thus, we consider the nonlinear Schr\"odinger equation on a strip with Neumann boundary conditions and a delta condition on the $x$-axis, modeling a fracture. Here, we denote the strip by $\Strip_L = \R \times [0,L]$ and we consider the following Nonlinear Schrödinger equation with Neumann boundary conditions
\begin{equation}\label{eqDeltaStrip} 
\begin{cases}
     i \partial_t u + \H u - |u|^{p-1} u = 0, \quad (x,y) \in \Strip_L,\\
     \partial_\nu u = 0\, \mbox{ on } \partial\Strip_L= \R \times \{0,L\}, \\
    u(t=0) = u_0 \in H^1(\Strip_L),
\end{cases}  
\end{equation}
where formally
\begin{equation*}
    \H = -\Delta + \gamma \delta_0(x)
\end{equation*}
is a Hamiltonian operator given by a compact perturbation of the Laplacian on the strip, 
 $\gamma\in\R$, the symbol $\deltastrip  
 \in \mathcal{L} 
 (H^{1}(\Strip_L), H^{-1}(\Strip_L))$ 
 is defined 
for $u,v\in H^{1}(\Strip_L)$ by 
\[
(\deltastrip  u, v):= \int_0^L \Re\left(u(0,y)\overline{v(0,y)} \right)dy,
\]
and $\partial_\nu$ denotes the normal derivative on the borders and could be replaced by $\partial_y$ in the present context. By the trace theorem (see e.g. \cite[p. 315]{Brezis2011} or Section \ref{secPreliminar}), the quadratic form associated to $\H$ 
\begin{equation}\label{eqQF}
    Q_{\H}(u) = \| \nabla u \|_{L^2(\Strip_L)}^2 + \gamma \int_0^L |u(0,y)|^2dy
\end{equation}
is well defined on the domain $\mathcal{D}(Q_{\H}) = H^1(\Strip_L)$. The symbol $\delta_0(x)$ can also be written as $\tau^* \tau$, where $\tau$ is the trace operator defined in Section 2.
We will refer to the case $\gamma >0$ as the \textit{repulsive case}, and $\gamma <0$ as the \textit{attractive case}. Solutions to this equation satisfy formally to conservation laws, that is, of the energy and the mass defined as

\begin{align}    
\label{eqEnergy2D}
    E_\gamma(u) &= \frac{1}{2} \| \nabla u \|_{L^2(\Strip_L)}^2 + \frac{\gamma}{2} \int_0^L |u(0,y)|^2 dy - \frac{1}{p+1} \| u \|_{L^{p+1}(\Strip_L)}^{p+1},
\\
    \label{eqMass2D}
     M(u) &= \| u\|_{L^2(\Strip_L)}^2.
\end{align}

The Hamiltonian operator $\H$ models the transverse extension of the classical one-dimensional Laplacian perturbed by a Dirac delta in zero $-\partial_{xx} + \gamma \delta_0$ \cite{CoSh24}. To give more details, let us first introduce the elliptic stationary equation associated with \eqref{eqDeltaStrip}, that is

\begin{equation}\label{eqStationary2D} 
\begin{cases}
     \omega u + \H u  - |u|^{p-1} u = 0, \quad (x,y) \in \Strip_L,\\
     \partial_\nu u = 0\, \mbox{ on } \partial\Strip_L= \R \times \{0,L\}, 
\end{cases}  
\end{equation}

The existence of positive solutions to \eqref{eqStationary2D} was established via variational techniques in \cite{CoSh24}. Specifically, for any $\gamma \geq 0$, $L > 0$, and fixed mass $m > 0$, there exists a constrained energy minimizer $Q \in H^1(\Strip_L)$ satisfying \eqref{eqStationary2D} with a frequency $\omega = \omega(m) > \frac{\gamma^2}{4}$. A key structural feature uncovered by the second author and Le Coz in \cite{CoSh24} is a length-dependent dimensional transition: for sufficiently small domains ($0 < L < \tilde{L}(m)$), these mass-constrained minimizers reduce to $1D$ ground states trivially extended in the transverse variable $y$.  This dimensional reduction breaks down for large $L$. While the variational analysis in \cite{CoSh24} established the existence of this dimensional transition via mass-constrained minimization, the underlying dynamical and spectral mechanisms remained unexplored.

Our primary objective is to identify the precise bifurcation threshold that governs the transition from $1D$ line solitons to genuine $2D$ profiles, and to determine how this geometric shift dictates the orbital stability of the resulting standing waves

Our approach naturally divides into three steps. We first establish the well-posedness theory for the evolution problem. We then identify a critical strip length at which the line soliton undergoes a bifurcation, giving rise to a new branch of positive stationary solutions. Finally, we investigate the orbital stability of both branches and show how the bifurcation influences the nonlinear dynamics.

We now proceed to describe more thoroughly each part. We first establish well-posedness of the model in the following theorem.
\begin{theorem}\label{thmLWP}
    For any $u_0 \in H^1(\Strip_L)$ there exists a unique solution $u \in C([0,T_{max}), H^1(\Strip_L))$ maximal in time. For any $t \in [0,T_{max})$, $E_{\gamma}(u(t)) = E_{\gamma}(u_0)$ and $M(u(t))= M(u_0)$. The blow up alternative holds: either $T_{max} = \infty$ or $T_{max} < \infty$ and 
    \begin{equation}\label{eqBUAlternative}
     \lim_{t \to T_{max}^-} \| \nabla u \|_{L^2(\Strip_L)}  = \infty.
    \end{equation}
    Finally, if $p<3$, then for any $u_0 \in H^1(\Strip_L)$, $T_{max} = \infty$. 
\end{theorem}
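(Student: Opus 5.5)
The plan is to run the abstract Cazenave-type energy method for Hamiltonians bounded from below, in the spirit of \cite[Chapter 3]{Cazenave2003}, applied to $\H$ on $\Strip_L$.

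\textbf{Linear flow.} By \eqref{eqQF} and the trace inequality $\int_0^L|u(0,y)|^2dy\le \varepsilon\|\partial_x u\|_{L^2}^2+C_\varepsilon\|u\|_{L^2}^2$, the form $Q_{\H}$ is closed and bounded from below on $H^1(\Strip_L)$. Hence $\H$ is self-adjoint with Neumann conditions encoded in the form domain, and $e^{it\H}$ is a unitary group on $L^2$. It commutes with $(\H+c)^{1/2}$ for $c$ large, so it is also an isometry of $H^1(\Strip_L)$ equipped with the equivalent norm $Q_{\H}(u)+c\|u\|_{L^2}^2$.

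\textbf{Local existence.} I would not use Strichartz estimates, which are delicate on a strip with a delta. Instead I would use the Kato / Ginibre--Velo approach of \cite[Theorem 3.3.9 and Section 3.7]{Cazenave2003}. The nonlinearity $g(u)=|u|^{p-1}u$ is handled in the energy space $H^1$ of the two-dimensional domain $\Strip_L$, which has the extension property, so the Gagliardo--Nirenberg inequalities and $H^1\hookrightarrow L^q$ for all $q<\infty$ hold. Then $g$ is locally Lipschitz $H^1\cap L^{p+1}\to H^{-1}+L^{(p+1)/p}$, and $G(u)=\frac{1}{p+1}\|u\|_{L^{p+1}}^{p+1}$ is $C^1$.

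For the abstract theorem, I take $X=H^1(\Strip_L)$ with $A=-\H$ (after a shift) and verify the hypotheses: the structure conditions, uniqueness via $L^2$ estimates, and the bound
\[
\|g(u)-g(v)\|_{L^{r'}}\le C(\|u\|_{H^1}^{p-1}+\|v\|_{H^1}^{p-1})\|u-v\|_{L^r}
\]
for suitable $r$. This yields a unique maximal solution in $C([0,T_{max}),H^1)$. It also yields the alternative $\|u(t)\|_{H^1}\to\infty$ if $T_{max}<\infty$.

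Conservation of mass and energy comes from the same abstract theorem. The argument regularizes by $J_\varepsilon=(1+\varepsilon(\H+c))^{-1}$, computes the derivatives of $M$ and $E_\gamma$ for the regularized solutions, and passes to the limit. Since $M$ is conserved and the trace term is controlled by $\|\nabla u\|^2$ and $\|u\|^2$, blow-up of the $H^1$ norm is equivalent to blow-up of $\|\nabla u\|_{L^2}$. This gives \eqref{eqBUAlternative}.

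\textbf{Global existence for $p<3$.} I use the Gagliardo--Nirenberg inequality on $\Strip_L$, which, because the strip is bounded in $y$, has the inhomogeneous form
\[
\|u\|_{L^{p+1}}^{p+1}\le C\|u\|_{L^2}^{2}\,\|u\|_{H^1}^{p-1}.
\]
For $p<3$ the exponent $p-1<2$. Together with the trace bound, energy and mass conservation give
\[
\tfrac12\|\nabla u\|^2\le E_\gamma(u_0)+\varepsilon\|\nabla u\|^2+C(M)\bigl(1+\|\nabla u\|^{p-1}\bigr),
\]
so $\|\nabla u(t)\|_{L^2}$ is uniformly bounded and $T_{max}=\infty$ by the blow-up alternative.

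\textbf{Main obstacle.} The delicate point is the verification of the abstract hypotheses for the delta-perturbed operator on the strip. Specifically, I need the equivalence of the graph and form norms with $H^1$, and the regularization commuting with $\H$, so that energy conservation passes to the limit. I would handle this by working entirely at the form level, with $H^1$ as the form domain and $H^{-1}$ as its dual, and by using the trace estimate with arbitrarily small $\varepsilon$.
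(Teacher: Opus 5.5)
Your route differs from the paper's. There, the Strichartz estimates of Propositions~\ref{prpStrich1} and~\ref{prpStrich2} come from expanding in the Neumann modes $\theta_n(y)$, so that $\H=\bigoplus_n(-\partial_{xx}+\gamma\delta_0+\lambda_n)$. The one-dimensional delta estimates then hold uniformly in $n$, since the constant $\lambda_n$ only contributes a phase. Kato's contraction is then run in $L^\alpha_tL^\beta_xH^1_y\cap L^\alpha_tW^{1,\beta}_xL^2_y$, using $H^1_y\hookrightarrow L^\infty_y$. Your treatment of the linear group, of the blow-up alternative (mass conservation plus the $\varepsilon$-trace bound), and of global existence for $p<3$ is correct; the last part is also what the paper does. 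The genuine gap is uniqueness in the energy class. In the compactness/energy framework you invoke, the $J_\varepsilon$-regularization only yields a weak solution in $L^\infty(H^1)\cap W^{1,\infty}(H^{-1})$ with $E_\gamma(u(t))\le E_\gamma(u_0)$. Continuity in $H^1$, equality in the energy, and the blow-up alternative are all derived from uniqueness (plus time reversibility). So uniqueness is an input you must supply, not a routine hypothesis check.

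The mechanism you propose does not supply it. Take two solutions with $\sup_t\|u\|_{H^1},\sup_t\|v\|_{H^1}\le M$ and set $w=u-v$. The $L^2$ identity gives $\frac{d}{dt}\|w\|_{L^2}^2\le 2\|g(u)-g(v)\|_{L^{r'}}\|w\|_{L^r}\le C(M)\|w\|_{L^r}^2$, which closes only if $r=2$. But $r=2$ would require $|u|^{p-1}\in L^\infty$, and $H^1(\Strip_L)$ does not embed in $L^\infty$ because the strip is locally two-dimensional. This is why the abstract energy method suffices for the delta problem on the line, where $H^1(\R)\hookrightarrow L^\infty$, but not here. In Kato's method the $L^r\to L^{r'}$ Lipschitz bound is paired with Strichartz estimates precisely to control $\|w\|_{L^q_tL^r_x}$ by $L^2$-level quantities, and that is the tool you chose to discard.

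The standard Strichartz-free substitute in two dimensions is the Yudovich-type argument of Vladimirov and Ogawa--Ozawa. Using $\|u\|_{L^q(\Strip_L)}\le C\sqrt q\,\|u\|_{H^1(\Strip_L)}$ and splitting according to whether $|u|,|v|\le K$, one gets $y'\lesssim y\,(\log(1/y))^{(p-1)/2}$ for $y=\|w\|_{L^2}^2$. Osgood's criterion then closes only when $(p-1)/2\le 1$, i.e.\ $p\le 3$. Consequences:
\begin{itemize}
\item For $p\le3$, your argument must be replaced by this logarithmic one.
\item For $p>3$, the energy method does not yield the uniqueness, $H^1$-continuity and energy conservation asserted in the theorem, and you are back to needing space--time estimates such as \eqref{eqStrichEst1}.
\end{itemize}
The point you single out as the main obstacle (form-domain bookkeeping and commutation of $J_\varepsilon$ with $\H$) is routine. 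The real obstacle is uniqueness, and on $\Strip_L$ the product structure makes the Strichartz route easy rather than delicate.
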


The proof relies on a fixed-point argument in the spirit of Kato \cite{Ka87}, combined with Strichartz estimates adapted to the strip geometry. Since dispersion occurs only in the unbounded $x$-direction, we first decompose the problem along the Neumann eigenfunctions in the transverse variable, reducing the propagator to a family of one-dimensional Schrödinger operators with point interactions. Uniform Strichartz estimates for these operators yield the required mixed-space-time estimates on the strip. The singular interaction is therefore incorporated at the level of the one-dimensional Hamiltonians, rather than treated as a perturbative nonlinear term, in the spirit of the one-dimensional case with Dirac potentials (see, e.g., Fukuizumi, Ohta, and Ozawa \cite{zbMATH05355749}). Global well-posedness follows from the Gagliardo-Nirenberg inequality and the conservation of the energy. 

The scope of the second part is to establish a pitchfork bifurcation and a critical length from the line soliton. For this, let us denote by  
\[
    \phi_{\omega,\gamma}^{(L)}(x,y)
    := \phi_{\omega,\gamma}(x),
    \qquad (x,y)\in\mathbb{S}_{L}.
\] 
be the line soliton trivially extended in the $y$ direction to a function on $\mathbb{S}_{L}$. 
This profile $\phi_{\omega, \gamma}$ 
is explicitly known, see \eqref{eqGS1D} below.  We show that it undergoes a pitchfork bifurcation at the length
\begin{equation}\label{eqLCritic}
    L_{*} := \frac{\pi}{\sqrt{\lambda_{1}}} >0,
\end{equation}
where $- \lambda_{1} <0$ is the smallest, simple, and isolated eigenvalue associated with the linearized operator $\mathcal{L}_{+,0}$ defined in Proposition \ref{prpSpectr}.  
We emphasize that while the local well-posedness theory (Theorem \ref{thmLWP}) is established for arbitrary coupling constants $\gamma \in \mathbb{R}$, the bifurcation and stability analysis in the remainder of this work focuses exclusively on the attractive case $\gamma < 0$, as in the repulsive regime, line solitons are known to be unstable already on the line, as established by Le Coz et al. \cite{CoFuFi08}.
The main result is the following.

\begin{theorem}\label{thm1-bi}
Let $p\geq 2$ and $\gamma <0$. 
We define $\chi_{\gamma}$ as the normalized eigenfunction 
of $\mathcal{L}_{+, 0}$ corresponding to $- \lambda_{1}$.
There exist $a_0>0$, a function \( L\in C^{2}\big(( -a_0,a_0);\mathbb R^{+}\big) \) with $L_{*}=L(0)$
and a family of solutions $\varphi(a)\in \mathcal{D}(H_{L(a)})$ of \eqref{eqStationary2D} for each $a\in(-a_0,a_0)$ with the following properties. Denote by
\[
T_a:\mathbb S_{L_*}\to\mathbb S_{L(a)},\qquad T_a(x,y)=(x,\tfrac{L(a)}{L_*}y),
\]
the natural rescaling map. Define the pulled–back profiles
\[
\Phi(a):=\varphi(a)\circ T_a \in \mathcal{D}(H_{L_*}).
\]
Then the map \(a\mapsto\Phi(a)\) belongs to
\(
C^{2}\big((-a_0,a_0); \mathcal{D}(H_{L_*})\big)
\),
and $\varphi(a)>0$ on $\mathbb S_{L(a)}$ for all $a\in(-a_0,a_0)$.

Moreover, there exists $\Psi(a)\in \mathcal{D}(H_{L_*}) $ such that, for every $a\in(-a_0,a_0)$,
\[
\Phi(a)
= \bigl(\phi_{\omega,\gamma}^{(L(a))}\bigr)\circ T_a
  + a\,\chi_{\gamma} \cos\!\Big(\frac{\pi}{L(a)}\,\cdot\; \frac{L(a)}{L_*}y\Big)
  + \Psi(a),
\]
with
\[
\Psi\in C^{2}\big((-a_0,a_0); \mathcal{D}(H_{L_*}) \big),
\qquad
\|\Psi(a)\|_{H^{2}(\mathbb S_{L_*})}=O(a^{2})\ \text{ as }a\to0, 
\]

Finally,
\[
\frac{dL}{da}(0)=0,
\qquad
L(a)=L_{*}+\frac12\frac{d^{2}L}{da^{2}}(0)\,a^{2}+o(a^{2}).
\]
\end{theorem}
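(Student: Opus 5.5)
We pull the problem back to the fixed strip $\Strip_{L_*}$ and treat the width as a bifurcation parameter. Write $L=L_*(1+\mu)^{-1/2}$, or any smooth reparametrisation. Under the rescaling $T$, the stationary equation \eqref{eqStationary2D} on $\Strip_L$ becomes, for $\Phi=\varphi\circ T$ on $\Strip_{L_*}$,
\[
F(\Phi,L):=\omega\Phi-\partial_{xx}\Phi-\Big(\tfrac{L_*}{L}\Big)^2\partial_{yy}\Phi+\gamma\delta_0(x)\Phi-|\Phi|^{p-1}\Phi=0 .
\]
Here the Neumann condition at $y=0,L_*$ and the jump condition $[\partial_x\Phi](0,y)=\gamma\Phi(0,y)$ are built into $\mathcal D(H_{L_*})$, and they are independent of $L$. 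The map $F:\mathcal D(H_{L_*})\times\R^+\to L^2(\Strip_{L_*})$ is $C^2$ because $p\ge2$ and $H^2(\Strip_{L_*})\hookrightarrow L^\infty$. The line soliton gives the trivial branch $F(\phi_{\omega,\gamma},L)=0$ for every $L$.

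We restrict to the closed subspace of functions even in $x$; this is legitimate since the equation preserves that symmetry and $\chi_\gamma$ is even, being the ground state. The linearisation at the trivial branch is
\[
D_\Phi F=\mathcal L_{+,0}-(L_*/L)^2\partial_{yy}.
\]
Expanding in Neumann modes $\cos(k\pi y/L_*)$ turns it into the family $\mathcal L_{+,0}+(k\pi/L)^2$. By Proposition \ref{prpSpectr}, $-\lambda_1$ is simple and isolated, and on even functions the rest of the spectrum of $\mathcal L_{+,0}$ lies in $[c,\infty)$ with $c>0$ (the spectrum meets $0$ only through the odd direction, if at all). At $L=L_*$ the kernel is therefore exactly $\mathrm{span}\{\chi_\gamma(x)\cos(\pi y/L_*)\}$:
\begin{itemize}
\item for $k=0$ the operator is invertible on even functions;
\item for $k\ge2$ the smallest eigenvalue is $-\lambda_1+k^2\lambda_1>0$;
\item for $k=1$ the kernel is one-dimensional, because $\lambda_1=(\pi/L_*)^2$.
\end{itemize}
The operator is Fredholm of index zero, self-adjoint, and its range is the $L^2$-orthogonal complement of the kernel. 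Transversality holds because
\[
\partial_L D_\Phi F\,\big|_{L_*}=2L_*^{-1}\partial_{yy},
\]
and applied to $\chi_\gamma\cos(\pi y/L_*)$ this gives $-2L_*^{-1}(\pi/L_*)^2\chi_\gamma\cos(\pi y/L_*)$, which is not in the range.

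With these ingredients we can apply Crandall--Rabinowitz, or an explicit Lyapunov--Schmidt reduction. Write
\[
\Phi=\phi_{\omega,\gamma}+a\chi_\gamma\cos(\pi y/L_*)+\Psi,\qquad \Psi\perp\ker .
\]
We solve the range equation for $\Psi(a,L)$ by the implicit function theorem and then solve the scalar bifurcation equation for $L(a)$ using transversality. This produces a $C^2$ curve $a\mapsto(\Phi(a),L(a))$ with $L(0)=L_*$, $\Psi(0)=\partial_a\Psi(0)=0$, and hence $\|\Psi(a)\|_{H^2}=O(a^2)$. In the pulled-back variables the term $\cos(\tfrac{\pi}{L(a)}\cdot\tfrac{L(a)}{L_*}y)$ is just $\cos(\pi y/L_*)$, so this matches the stated form. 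The regularity is only $C^2$ because $F$ is $C^2$; when $p=2$ the nonlinearity $|\Phi|\Phi$ is $C^2$ only near positive $\Phi$. Near $\phi_{\omega,\gamma}>0$ we have $\Phi>0$ for small $a$, since $\phi_{\omega,\gamma}$ decays like $e^{-\sqrt\omega|x|}$ and we need a uniform positive lower bound relative to the perturbation. For positivity we compare the decay of $\chi_\gamma$, which also decays like $e^{-\sqrt{\omega}|x|}$ (using $\lambda_1<\omega$), and of $\Psi$ against that of $\phi_{\omega,\gamma}$. We then invoke a maximum principle or Harnack argument for the linear equation satisfied by $\Phi$ to conclude strict positivity on the whole strip. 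I expect this positivity step, which needs uniform control as $|x|\to\infty$ rather than just $H^2$-smallness, to need the most care.

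Finally, $L'(0)=0$ follows from the symmetry $y\mapsto L_*-y$. This map preserves $F$ and sends $\cos(\pi y/L_*)$ to its negative, so $(\Phi(-a),L(-a))$ is the reflection of $(\Phi(a),L(a))$ and $L$ is even. Equivalently, one can project the $a^2$-order equation onto $\chi_\gamma\cos$, where the quadratic term $\int\phi^{p-2}\chi_\gamma^3\cos^3=0$ vanishes. The second derivative $L''(0)$ comes from projecting the $a^3$-order equation onto the kernel. This involves the correction $\Psi_2$ obtained by solving
\[
D_\Phi F\,\Psi_2=\tfrac{p(p-1)}{2}\phi^{p-2}\chi_\gamma^2\cos^2(\pi y/L_*)
\]
mode by mode ($k=0$ and $k=2$). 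Its existence and finiteness are automatic from the reduction, so the statement only requires recording the Taylor expansion $L(a)=L_*+\tfrac12L''(0)a^2+o(a^2)$.
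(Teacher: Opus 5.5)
Your setup (fixed reference strip, Lyapunov--Schmidt around $\chi_\gamma\cos(\pi y/L_*)$, transversality from $\partial_L D_\Phi F=2L_*^{-1}\partial_{yy}$) is the paper's. The reflection $y\mapsto L_*-y$, giving $L(-a)=L(a)$, is a neat substitute for the paper's computation of $\partial_a g(0,L_*)=0$ via $\int_0^1\cos^3(\pi y)\,dy=0$. The gap is regularity, which is where the paper spends most of its effort.

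\emph{The case $p=2$.} Here $F$ is not twice Fr\'echet differentiable at $\phi_{\omega,\gamma}$. Since $\phi_{\omega,\gamma}(x)\to0$, take a negative bump $w$ of amplitude $M\phi_{\omega,\gamma}(x_0)$ ($M$ large, fixed) centred at a large $x_0$. It makes $\phi_{\omega,\gamma}+w<0$ there. The second-order remainder of $DF$ is the multiplier $2(|\phi_{\omega,\gamma}+w|-\phi_{\omega,\gamma}-w)$, and its norm $\mathcal D(H_{L_*})\to L^2$ stays comparable to $\|w\|_{H^2}$ as $x_0\to\infty$. Your remark that $|\Phi|\Phi$ is $C^2$ ``near positive $\Phi$'' does not rescue this. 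No $\mathcal D(H_{L_*})$-ball around $\phi_{\omega,\gamma}$ consists of positive functions, and positivity of solutions is only known after the branch is built. So for $p=2$ the abstract theory gives only $h\in C^1$ (cf.\ Lemma~\ref{lem1-bi}), hence only $\|\Psi(a)\|_{H^2}=o(a)$.

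\emph{The case $2<p<3$.} Here $F$ is $C^2$, but the reduction loses a derivative at $a=0$. $\mathcal F_\parallel$ is $C^2$, while $g=\mathcal F_\parallel/a=\int_0^1\partial_a\mathcal F_\parallel(sa,L)\,ds$ is only $C^1$ near $a=0$. The implicit function theorem therefore gives $L\in C^1$, and $C^2$ only away from $a=0$. Evenness then yields $L'(0)=0$. It does not yield the existence of $L''(0)$, the $C^2$ regularity of $L$ and $\Phi$, or $L(a)=L_*+\tfrac12L''(0)a^2+o(a^2)$.

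So your claims that the reduction ``produces a $C^2$ curve'' and that the expansion only needs to be recorded are false for $2\le p<3$. The $a^3$-order projection involves $f'''(\phi_{\omega,\gamma})\sim\phi_{\omega,\gamma}^{p-3}$, which is unbounded as $|x|\to\infty$, so $F$ is not $C^3$. Your argument goes through only for $p\ge3$, and there the correct justification is $F\in C^3$, not $F\in C^2$.

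\emph{What is missing.} You need a direct proof that $\partial_a g$ extends continuously to $a=0$ (already needed for $p=2$), and that $\frac1a\partial_a g(a,L)$ converges as $a\to0$, $L\to L_*$. The paper does this in the case $p=2$ of Proposition~\ref{prop1-bi} and in the terms $III_{2,1}$--$III_{2,3}$ of Proposition~\ref{prop2-bi}. It uses dominated convergence with the pointwise exponential bounds of Lemma~\ref{lem-exp-decay} and \eqref{eq1-decay}. Incidentally, $\chi_\gamma$ decays like $e^{-\sqrt{\omega+\lambda_1}|x|}$, strictly faster than $\phi_{\omega,\gamma}$, not like $e^{-\sqrt\omega|x|}$. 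This is because it solves $-\chi_\gamma''+(\omega+\lambda_1-p\phi_{\omega,\gamma}^{p-1})\chi_\gamma=0$ for $x\neq0$.

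\emph{Positivity.} Your maximum-principle route is sound and differs from the paper's, which identifies $\varphi(a)$ as the ground state of $\mathcal L_-(a)$. Pick $R$ with $|\Phi|^{p-1}<\omega$ on $\{|x|\ge R\}$ uniformly for small $a$. Use $L^\infty$-closeness to $\phi_{\omega,\gamma}$ to get $\Phi>0$ on $\{|x|\le R\}$. Then apply the weak and strong maximum principles with Neumann conditions on the exterior region, where the delta interaction plays no role.
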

To prove this theorem, we follow the robust method of Kirr, Kevrekidis and Pelinovsky \cite{KiKePe11Bi} and Pelinovsky and Phan \cite{zbMATH06092526}  in the context of symmetry-breaking bifurcation of NLS with symmetric potential. We first identify the critical length $L_*$ at which the linearized operator develops a simple kernel generated by the first transverse mode. The Lyapunov-Schmidt reduction then reduces the problem to a scalar bifurcation equation and yields a local branch of nontrivial positive solutions. The symmetry with respect to the bifurcation parameter gives $L'(0)=0$, while the transversality condition ensures that the corresponding kernel eigenvalue crosses zero at $L=L_*$. Thus, the bifurcation is of pitchfork type, and its direction is determined by the sign of $\frac{d^{2}L}{da^{2}}(0)$.

The computation of $\frac{d^{2}L}{da^{2}}(0)$ is one of the main points of the argument. We differentiate the reduced bifurcation equation and exploit the symmetry of the first transverse mode, which forces the lower-order terms to vanish. The second-order correction to the profile is determined by solving the equation on the orthogonal complement of the kernel. To justify the limiting procedure, we use the exponential decay of both the line soliton and the eigenfunction, relying on the classical result of Berestycki and Nirenberg \cite{BeNi90}. 

A technical distinction arises according to the value of $p$. For $p \geq 3$, the required differentiability properties of the nonlinearity allow the argument to proceed directly. When $2<p<3$, the third derivative of the power nonlinearity is singular at zero, requiring a more delicate dominated-convergence argument based on the exponential decay of the profiles. The endpoint case $p=2$ is more delicate still, since the third derivative is no longer available and the argument must instead exploit the explicit structure of the quadratic nonlinearity. 

In all cases, this yields an explicit expression for $\frac{d^{2}L}{da^{2}}(0)$, whose sign determines on which side of $L_*$ the bifurcating branch exists.
We remark that extending this bifurcation analysis to the sub-quadratic regime $1 < p < 2$ appears feasible by adapting the approach of Akahori et al.\ \cite{AkBaIbKi24}. 

A fundamental distinction separates our setting ($\gamma \neq 0$) from the unperturbed case ($\gamma = 0$), such as the framework considered by Yamazaki \cite{Ya15} (see also e.g. Berestycki and Wei \cite{BeWe10}). In the  latter case, the equation is invariant under the scaling  
\begin{equation*}
    u(t,x,y) \mapsto L^{\frac{2}{p-1}} u\left(L^2 t, Lx, Ly\right).
\end{equation*}
This joint spatial dilation maps the problem from the variable strip $\mathbb{R} \times [0,L]$ onto a fixed domain (e.g., $\mathbb{R} \times [0,1]$), fixing both the spatial Laplacian and the power nonlinearity while pushing the dependence on $L$ entirely into a rescaled frequency parameter $\omega = \omega(L)$. Consequently, the problem reduces to standard bifurcation frameworks parametrized by the frequency, as for instance in \cite{KiKePe11Bi}.

In contrast, in the presence of the point defect, this full spatial scaling breaks down. First, an axial dilation $x \mapsto Lx$ transforms the Dirac distribution into $\frac{\gamma}{L} \delta_0(x)$, altering the defect strength. Second, the kinetic, nonlinear, and point-interaction terms exhibit mutually incompatible scaling exponents. 

To circumvent this, we apply a partial scaling exclusively to the transverse coordinate via the pull-back map $T_a(x,y) = (x, \frac{L(a)}{L_*} y)$. Consequently, the frequency $\omega > 0$ remains fixed, and the length parameter $L$ is embedded directly into the differential operator $\mathcal{H}_{L} = -\partial_{xx} - \frac{1}{L^2} \partial_{yy} + \gamma \delta_0(x)$. The bifurcation parameter $L$ therefore modulates the operator structure itself rather than inducing a frequency shift, necessitating a modified Lyapunov--Schmidt reduction to track the $L$-dependent linearized operators.

In the last part, we investigate the orbital stability of the line soliton and the bifurcated one. Since the singular potential $\gamma \delta_0$ explicitly breaks continuous translation invariance in the axial direction, the continuous symmetry group associated with equation \eqref{eqDeltaStrip} is reduced to gauge invariance. Consequently, the appropriate notion of stability for the standing waves is orbital stability modulo phase rotations. The analysis of orbital stability for nonlinear dispersive equations is a classical subject, rooted in the variational approaches of Cazenave and Lions \cite{CaLi82}, the Lyapunov approach of Weinstein \cite{We85,We86}, and later by Grillakis, Shatah, and Strauss \cite{GrShSt87,zbMATH04169705} via the study of the linearized operator.

In the present context, we adopt the following standard definition.

\begin{defin}
We say that 
$\phi \in H^1(\Strip)$ is \textit{orbitally stable} if for every $\varepsilon > 0$ there exists $\delta > 0$ such that for any initial data $u_0 \in H^1(\mathbb{S})$ satisfying
\[
\| u_0 - \phi \|_{H^1(\mathbb{S})} < \delta,
\]
the corresponding solution $u(t)$ exists globally and satisfies
\[
\sup_{t \in \mathbb{R}} \inf_{\theta \in \mathbb{R}} \| u(t) - e^{i \theta} \phi \|_{H^1(\mathbb{S})} < \varepsilon.
\]
\end{defin}

We will show the following.
\begin{theorem}\label{thm-sta}
Let $p \in (1,5)$ and $\gamma <0$ and $\omega > \frac{\gamma^2}{4}$. Then
\begin{enumerate}
    \item
    For $0 < L < L_{*}$, 
    the line soliton 
    $\phi_{\omega, \gamma}$ is orbitally stable.
    \item 
    For $L > L_{*}$, the line soliton 
    $\phi_{\omega, \gamma}$ 
    is not orbitally stable. 
\end{enumerate}    
\end{theorem}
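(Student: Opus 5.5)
The plan is to use the Grillakis--Shatah--Strauss framework and decompose along the Neumann cosine modes in $y$. Linearize $S_\omega = E_\gamma + \frac{\omega}{2}M$ at $\phi^{(L)}_{\omega,\gamma}$ and write $u = \phi + v + iw$. The Hessian splits into
\[
L_+^{(L)} = -\Delta + \omega + \gamma\delta_0 - p\phi^{p-1},
\qquad
L_-^{(L)} = -\Delta + \omega + \gamma\delta_0 - \phi^{p-1}.
\]
Expand $v,w$ in $\cos(k\pi y/L)$, $k\ge0$. Both operators then become direct sums over $k$ of the one-dimensional operators $\mathcal L_{\pm,0} + (k\pi/L)^2$. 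By Proposition \ref{prpSpectr} and the one-dimensional theory for $\gamma<0$ (see \cite{CoFuFi08}), the following facts hold on the line:
\begin{itemize}
\item $\mathcal L_{+,0}$ has exactly one negative eigenvalue $-\lambda_1$, which is simple, and trivial kernel, because $\delta_0$ breaks translation invariance.
\item The rest of its spectrum is bounded below by some $c>0$.
\item $\mathcal L_{-,0}\ge0$ with kernel spanned by $\phi_{\omega,\gamma}$.
\end{itemize}
Therefore the $k$-th block of $L_+^{(L)}$ has a negative direction iff $(k\pi/L)^2<\lambda_1$. For $k=1$ this reads $L>\pi/\sqrt{\lambda_1}=L_*$, and in that case $\chi_\gamma\cos(\pi y/L)$ is a negative direction orthogonal to $\phi$.

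For (1), take $L<L_*$. The blocks with $k\ge1$ of $L_+^{(L)}$ are then bounded below by $(\pi/L)^2-\lambda_1>0$, so the only negative direction of $L_+^{(L)}$ is the $k=0$ mode. Likewise the blocks with $k\ge1$ of $L_-^{(L)}$ are coercive, and the kernel of $L_-^{(L)}$ is spanned by $\phi$.

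\begin{itemize}
\item \emph{Slope condition.} The Vakhitov--Kolokolov condition $\frac{d}{d\omega}\|\phi_{\omega,\gamma}\|^2_{L^2(\mathbb R)}>0$ holds for $\gamma<0$ and $1<p<5$, by the explicit formula \eqref{eqGS1D} and \cite{CoFuFi08}. Multiplying by $L$ does not change its sign.
\item \emph{Coercivity.} By the standard argument, $\langle L_+^{(L)}v,v\rangle\ge c\|v\|_{H^1}^2$ on $\{v\perp\phi\}$, and $\langle L_-^{(L)}w,w\rangle\ge c\|w\|^2_{H^1}$ on $\{w\perp\phi\}$.
\item \emph{Lyapunov argument.} Use Weinstein's method: $S_\omega(u)-S_\omega(\phi)$ serves as a Lyapunov function, with modulation in the phase $\theta$ only, since gauge invariance is the only remaining symmetry. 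Global existence comes from Theorem \ref{thmLWP} when $p<3$, and otherwise from the a priori $H^1$ bound supplied by the Lyapunov control near $\phi$ together with the blow-up alternative.
\end{itemize}

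For (2), take $L>L_*$. Then $L_+^{(L)}$ has two negative directions, so the Morse index $n(L_+)$ is $2$ while $p(d'')=1$ by the slope condition. Hence $n(L_+)-p(d'')=1$ is odd. The Grillakis--Shatah--Strauss instability theorem \cite{zbMATH04169705} requires a bit more care, since the simple odd-count criterion gives linear instability. I would therefore argue directly.

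\begin{itemize}
\item \emph{Unstable direction.} Let $\psi=\chi_\gamma\cos(\pi y/L)$. Since $\int_0^L\cos(\pi y/L)\,dy=0$, $\psi$ is $L^2$-orthogonal to $\phi$, to $\partial_\omega\phi$ and to $i\phi$, and $\langle L_+^{(L)}\psi,\psi\rangle<0$.
\item \emph{Linear instability.} The direction $\psi$ decouples from the $k=0$ modes. Restricted to the $k=1$ mode, the linearized problem is $\partial_t(v,w)=(\mathcal{L}_{-,1}w,-\mathcal{L}_{+,1}v)$, where $\mathcal{L}_{\pm,1}=\mathcal L_{\pm,0}+(\pi/L)^2$. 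Here $\mathcal L_{-,1}$ is positive definite and $\mathcal L_{+,1}$ has exactly one negative eigenvalue. Consequently $\mathcal L_{-,1}^{1/2}\mathcal L_{+,1}\mathcal L_{-,1}^{1/2}$ has a negative eigenvalue $-\mu^2$, which produces a real positive eigenvalue $\mu$ of the linearization.
\item \emph{Nonlinear instability.} Pass from linear to nonlinear (orbital) instability by the argument of Grillakis/Georgiev--Ohta, or more directly by the Grillakis--Shatah--Strauss functional $A(u)=\langle u, i\,\mathcal{J}\psi\rangle$ with a virial-type increase. This works because the flow preserves the subspace of functions odd about $y=L/2$ in the cosine expansion (sums of $\cos(k\pi y/L)$ with $k$ odd) modulo the base state. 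Note that $\phi$ is even, so restricting to $\phi+{}$odd perturbations is not invariant under the nonlinearity. I would instead use the invariant class of functions symmetric under $y\mapsto L-y$ combined with the GSS theorem in its general form, which only needs $n(L_+|_{\text{constraint}})\ge1$ in a direction not controlled by the conserved quantities.
\end{itemize}

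The main obstacle is this last step: turning the extra negative transverse direction into nonlinear instability without the small-$L_+$-index hypothesis of the GSS theorem. I would try the Lyapunov-functional construction of \cite{GrShSt87}, Theorem 4.7 style: build the curve $\omega\mapsto\phi$, the tangent $\psi$, and show that $S_\omega$ decreases along the flow escaping the tube. Alternatively, cite a linear-implies-nonlinear instability result (Georgiev--Ohta type), which needs a $C^2$ nonlinearity; this is fine for $p\ge2$ and needs an adaptation for $1<p<2$.
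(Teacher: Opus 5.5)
Your part (1) is the paper's argument: mode decomposition in $y$, the slope condition giving coercivity of $\mathcal{L}_{+,0}$ on $\phi_{\omega,\gamma}^{\perp}$ in the $k=0$ block, the bound $(\pi/L)^2-\lambda_1>0$ on the blocks $k\ge 1$, and then the Grillakis--Shatah--Strauss/Weinstein criterion. Your linear-instability argument via $\mathcal{L}_{-,1}^{1/2}\mathcal{L}_{+,1}\mathcal{L}_{-,1}^{1/2}$ is correct, and more direct than the continuation-in-$a$ argument of Appendix \ref{AppenLinStab}.

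The gap is the step you flag yourself. Nothing in the proposal actually yields \emph{orbital} instability, and the concrete route you lean toward cannot work. The class of functions invariant under $y\mapsto L-y$ contains only the modes $\cos(k\pi y/L)$ with $k$ even. It therefore excludes exactly the unstable direction $\chi_\gamma\cos(\pi y/L)$. In fact that class is just the Neumann problem on $\mathbb{S}_{L/2}$, and for $L_*<L<2L_*$ part (1) shows the line soliton is \emph{stable} there. Nor can you simply invoke Grillakis--Shatah--Strauss ``in general form''. Their energy-method instability theorem \cite{GrShSt87} is proved assuming that $S''$ has exactly one negative eigenvalue and $d''<0$. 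Here $n(S'')=2$ and $p(d'')=1$, where the odd-count criterion gives only linear instability, and you do not identify the extra structure an energy-method argument in Morse index two would require.

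What is missing is the linear-to-nonlinear machinery, which the paper supplies by adapting Georgiev--Ohta to the strip. Three ingredients are needed.

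\textbf{Semigroup bound on low modes.} You need an $L^2$ growth bound for the linearized flow on finitely many transverse modes, $\|e^{-tJ\mathcal{S}}P_{\le k}\eta\|_{L^2}\le C_{k,\varepsilon}e^{(\lambda_0+\varepsilon)t}\|\eta\|_{L^2}$ (Lemma \ref{lem-3-2}). Since $J\mathcal{S}$ is not self-adjoint, this does not follow from the location of the spectrum. It requires the spectral mapping property, proved mode by mode in Appendix \ref{sec:spectral-m} via a resolvent bound on vertical lines.

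\textbf{Control of high modes.} Lemma \ref{lem-3-3} combines conservation of the action, the identity $\langle S''\Psi,\Psi\rangle=0$ for the eigenvector $\Psi$ of the real eigenvalue $\lambda_0$, and coercivity of $S''$ on the modes $k>K_L$. This gives $\|\eta_\delta(t)\|_{H^1}\le C\|P_{\le K_L}\eta_\delta(t)\|_{L^2}+o(\delta)$. Hence the infinitely many high modes never need a semigroup estimate, and no derivatives are lost.

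\textbf{Nonlinear estimate.} One needs $\|N(\eta)\|_{L^2}\lesssim\|\eta\|_{H^1}^{\min(p,2)}+\|\eta\|_{H^1}^{p}$ (Lemma \ref{lem-3-1}). For $1<p\le 2$ this follows from $\bigl||a|^{p-1}-|b|^{p-1}\bigr|\le|a-b|^{p-1}$, so your worry that a $C^2$ nonlinearity is required is unfounded.

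With these, a bootstrap up to $T_{\varepsilon_0,\delta}=\lambda_0^{-1}\log(\varepsilon_0/\delta)$ and projection onto the unstable transverse mode give $\inf_\theta\|u_\delta(T_{\varepsilon_0,\delta})-e^{i\theta}\phi_{\omega,\gamma}\|_{L^2}\ge\varepsilon_0/2$.
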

Theorem \ref{thm-sta} addresses the transverse stability and instability of the line soliton, relying on the classical Grillakis-Shatah-Strauss framework \cite{GrShSt87}. In our parameter regime, the slope condition is inherently satisfied. Consequently, the stability of the line soliton is completely dictated by the Morse index of the associated linearized operator. 

The threshold $L_*$ marks a critical geometric bifurcation point. For narrow domains where $0 < L < L_*$, the transverse frequencies are sufficiently large to maintain the Morse index at $1$. Conversely, when the transverse period crosses the threshold ($L > L_*$), a new negative direction appears, increasing the Morse index to $2$ and inducing linear instability. To upgrade this linear instability to full nonlinear orbital instability, we follow the robust, established frameworks developed by Georgiev-Ohta \cite{MR2916078}. Similarly, one can follow the approach in Rousset-Tzvetkov \cite{RoTz10Inst, zbMATH05488047}. The key point is to show the spectral mapping property, which is done in Appendix \ref{sec:spectral-m} following a similar approach to Gesztesy et al. \cite{GeJoLaSt00}. 

Finally, we study the stability of the bifurcating soliton: 
\begin{theorem}\label{thmStab2}
Let $\gamma<0$, $\omega>\gamma^2/4$, and $2\leq p<5$. 
Let $\varphi(a)$ be the bifurcating family given by 
Theorem~\ref{thm1-bi}. Then, for sufficiently small $|a|>0$,
$\varphi(a)$ is orbitally stable if
\[
\frac{d^2L}{da^2}(0)> 0,
\]
and orbitally unstable if
\[
\frac{d^2L}{da^2}(0)<0.
\]
Moreover,  $\phi(a)$ is orbitally stable for sufficiently small $|\gamma|$ and $|a|$.
\end{theorem}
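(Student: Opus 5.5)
The plan is to apply the Grillakis--Shatah--Strauss framework on the fixed strip $\Strip_{L(a)}$ with the frequency $\omega$ held fixed. In that framework orbital stability, modulo phase only, follows from two facts. The first is that the linearized operator $\mathcal L_+^{(a)} = \omega + \H - p\varphi(a)^{p-1}$ on $\Strip_{L(a)}$ has exactly one negative eigenvalue and kernel $\{0\}$; the operator $\mathcal L_-^{(a)} = \omega+\H-\varphi(a)^{p-1}$ is nonnegative with kernel spanned by $\varphi(a)$, because $\varphi(a)>0$. The second is the slope condition $\frac{d}{d\omega}\|\varphi\|_{L^2}^2>0$. Instability follows from Morse index $2$ combined with the Georgiev--Ohta argument, exactly as in Theorem~\ref{thm-sta}(2).

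We cannot vary $\omega$ directly, since the branch is parametrized by $a$ at fixed $\omega$. I would therefore use a constrained-index formulation: the number of negative eigenvalues of $\mathcal L_+^{(a)}$ restricted to $\{\varphi(a)\}^\perp$ decides stability.

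\textbf{Step 1: spectrum of $\mathcal L_+^{(a)}$ for small $a$.} Pull back by $T_a$ to $\Strip_{L_*}$. By Theorem~\ref{thm1-bi} the pulled-back operator is a $C^2$ perturbation of $\mathcal L_+$ at $L=L_*$ around the line soliton. At $a=0$, Fourier decomposition in $y$ together with Proposition~\ref{prpSpectr} gives:
\begin{itemize}
\item a simple negative eigenvalue $-\mu_0$ from the mode $n=0$ (the 1D Morse index is $1$);
\item a simple zero eigenvalue with eigenfunction $\chi_\gamma\cos(\pi y/L_*)$ from the mode $n=1$, since $-\lambda_1+\pi^2/L_*^2=0$;
\item the rest of the spectrum bounded away from $0$.
\end{itemize}
Kato perturbation theory then gives one eigenvalue $\mu(a)$ near $0$, and a Morse index of either $1$ or $2$ according to the sign of $\mu(a)$. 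To compute that sign, differentiate the profile equation in $a$. Writing $F(\Phi,L)=0$ for the pulled-back equation, one has $\partial_a\Phi\approx\chi_\gamma\cos(\pi y/L_*)$ and $\mathcal L_+^{(a)}\partial_a\Phi = -L'(a)\,\partial_L F$. Since $L'(a)\sim L''(0)a$, a standard Crandall--Rabinowitz type exchange-of-stability computation gives
\[
\mu(a) = -\frac{L'(a)}{a}\,\big\langle \partial_L F,\ \chi_\gamma\cos(\pi y/L_*)\big\rangle/\|\chi_\gamma\cos\|^2\cdot a + o(a^2) = c\,L''(0)\,a^2+o(a^2).
\]
The sign of the constant $c$ is fixed by the transversality computation: $\partial_L$ of the eigenvalue $-\lambda_1+\pi^2/L^2$ equals $-2\pi^2/L_*^3<0$. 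The net result is that $\mu(a)$ has the sign of $L''(0)$. The main obstacle is making this step rigorous. My approach would be to use the Lyapunov--Schmidt decomposition of Theorem~\ref{thm1-bi}, project the differentiated equation onto the kernel direction, and combine it with $\mu(a)\langle\text{eigenvector},\partial_a\Phi\rangle$, using the $C^2$ regularity of $a\mapsto\Phi(a)$ and the parity $a\mapsto -a$ (which corresponds to $y\mapsto L-y$).

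\textbf{Step 2: constrained problem and slope.} The negative direction coming from the $n=0$ mode is even in $y\mapsto L-y$, and the $n=0$ structure persists for small $a$. The slope condition for the 1D line soliton with $\gamma<0$ is the inherent one used in Theorem~\ref{thm-sta}. Smooth dependence on $\omega$ then gives $\langle (\mathcal L_+^{(a)})^{-1}\varphi(a),\varphi(a)\rangle<0$ for small $a$. When $\mu(a)\neq 0$ this inverse exists, and the quantity is computed by a perturbation argument that splits off the near-kernel direction. That direction is asymptotically orthogonal to $\varphi(a)$ because its $y$-mode is $n=1$, so its contribution is $O(a^2)/\mu(a)$, which is bounded but has to be tracked. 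With this in hand, the GSS index count gives:
\begin{itemize}
\item if $L''(0)>0$: Morse index $1$, trivial kernel, negative constrained quantity, hence stability;
\item if $L''(0)<0$: Morse index $2$ and at most one negative direction removed by the constraint, hence instability via Georgiev--Ohta and the spectral mapping result in Appendix~\ref{sec:spectral-m}.
\end{itemize}

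\textbf{Step 3: small $|\gamma|$.} It remains to evaluate the explicit formula for $L''(0)$ from Theorem~\ref{thm1-bi} as $\gamma\to0^-$. When $\gamma=0$ the problem has the full scaling symmetry, and the bifurcation from the line soliton at the corresponding threshold reduces to the Yamazaki-type computation. There the $y$-dependent branch is the energetically favored, supercritical one, which gives $L''(0)>0$. Continuity of $\phi_{\omega,\gamma}$, $\chi_\gamma$ and $\lambda_1$ in $\gamma$ then yields $L''(0)>0$ for small $|\gamma|$, and Step 2 gives stability. I expect the explicit sign check at $\gamma=0$, which is an integral identity involving $\sech$ powers, to be routine but lengthy.
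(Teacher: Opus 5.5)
Your Step 1 is sound, and it is a cleaner route than the paper's. Pairing $\mathcal L_+^{(a)}\partial_a\varphi(a)=-\frac{2}{L^3}L'(a)\,\partial_{yy}\varphi(a)$ with the eigenfunction $\phi_2(a)$ gives $\mu(a)=-\lambda_*'\frac{d^2L}{da^2}(0)\,a^2+o(a^2)$ directly. The paper reaches the same formula in Proposition~\ref{prop:critical} by differentiating the eigenvalue equation twice and inserting \eqref{eq19-bi}.

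\textbf{The gap is in Step~2.} The near-kernel term you flag is not negligible. Since $\partial_a\phi_2(0)=\partial_a^2h(0,L_*)$, one has $\langle\varphi(a),\phi_2(a)\rangle=\kappa a+O(a^2)$ with
\[
\kappa=1+\bigl\langle\phi_{\omega,\gamma},\partial_a^2h(0,L_*)\bigr\rangle=1-p(p-1)\int_{\R}\phi_{\omega,\gamma}^{p-2}\chi_\gamma^2\,\partial_\omega\phi_{\omega,\gamma}\,dx=-\partial_\omega\lambda_1 .
\]
This is the same coefficient that appears in Lemma~\ref{lem1:critical}. Writing $\mu(a)=c\,a^2+o(a^2)$ with $c=\frac{2\pi^2}{L_*^3}\frac{d^2L}{da^2}(0)$, one gets
\[
\lim_{a\to0}\bigl\langle(\mathcal L_+^{(a)})^{-1}\varphi(a),\varphi(a)\bigr\rangle=\frac{\kappa^2}{c}-\frac12\,\partial_\omega\|\phi_{\omega,\gamma}\|_{L^2(\R)}^2 .
\]
When $\frac{d^2L}{da^2}(0)>0$, which is exactly the case where you want stability, the first term is positive and of order one. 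So negativity is an extra quantitative condition; it does not follow from Lemma~\ref{lemStabil} and smooth dependence on $\omega$.

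Equivalently, on a fixed strip $a^2\approx 2(L-L_*(\omega))/\frac{d^2L}{da^2}(0)$ moves with $\omega$ at rate $O(1)$. The mass slope along the bifurcating branch therefore differs from the line-soliton slope by $-2\kappa^2/c$. For the same reason the paper's Proposition~\ref{prop1:critical} does not close the gap. It differentiates $\|\varphi(a)\|^2$ at fixed $a$, i.e.\ on strips whose width $L(a)$ varies with $\omega$, and that is not the Grillakis--Shatah--Strauss quantity on a fixed $\Strip_L$.

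\textbf{The missing condition fails in your Step~3 regime.} Take $\gamma=0$ and $p=3$, working in $x$-even functions to discard the translation mode. Set $N^2=\int\phi_{\omega,0}^4=\frac{16}{3}\omega^{3/2}$ and $f=\phi_{\omega,0}^5$. Then $\kappa=-3$ and $\frac12\partial_\omega\|\phi_{\omega,0}\|^2=\omega^{-1/2}$. Use \eqref{eq19-bi} with $\chi_0$ normalized, the identity $\langle f,\mathbf{A}_0^{-1}f\rangle=-\frac49\int\phi_{\omega,0}^8$, and the bound $\mathbf{A}_2\ge 3\lambda_1>0$. This gives
\[
0<c=-\lambda_*'\frac{d^2L}{da^2}(0)=\frac{36}{N^4}\Bigl(\frac49-\frac1{12}-\frac{\langle f,\mathbf{A}_2^{-1}f\rangle}{2\int\phi_{\omega,0}^8}\Bigr)\int\phi_{\omega,0}^8\le\frac{13}{N^4}\int\phi_{\omega,0}^8 .
\]
With $\int\phi_{\omega,0}^8=\frac{512}{35}\omega^{7/2}$ this yields $\kappa^2/c\ge\frac{35}{26}\,\omega^{-1/2}$, so the limit above is positive.

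By continuity the same holds for small $|\gamma|$. The branch then has Morse index one but negative mass slope at fixed width, which is the unstable alternative in Grillakis--Shatah--Strauss. Your claim $\frac{d^2L}{da^2}(0)>0$ at $\gamma=0$ is correct; it is what the paper's bound shows. But supercriticality in $L$ at fixed $\omega$ does not make the branch energetically favoured under the mass constraint, so Steps~2--3 cannot give stability for all $p\in[2,5)$. The stable case needs the additional inequality $\frac12\partial_\omega\|\phi_{\omega,\gamma}\|_{L^2(\R)}^2>\kappa^2/c$. Your instability argument for $\frac{d^2L}{da^2}(0)<0$ is unaffected, since then both terms in the limit are negative.
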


The proof of Theorem \ref{thmStab2} relies on the classical Grillakis-Shatah-Strauss framework \cite{GrShSt87} for orbital stability, which requires analyzing both the Morse index of the linearized operator and the Vakhitov-Kolokolov condition. We perform an asymptotic analysis of the spectral elements along the bifurcating branch for $ a$ close to zero. We show that the second smallest eigenvalue of the linearized operator expands at leading order as a quantity proportional to $\frac{d^2L}{da^2}(0) a^2$. Consequently, the sign of the coefficient $\frac{d^2L}{da^2}(0)$ dictates the Morse index. 

Simultaneously, we ensure that the 
Vakhitov-Kolokolov condition $\partial_\omega \|\varphi(a)\|_{L^2}^2 > 0$ is preserved along the branch. 

Ultimately, the stability question is reduced to evaluating the sign of $\frac{d^2L}{da^2}(0)$. Because this coefficient can be explicitly computed and is strictly negative in the case $\gamma=0$, a Kato perturbation argument ensures that the negative sign, and therefore the orbital stability, persists for all sufficiently small $|\gamma|>0$. We remark that in a different scaling, similar computations in the unperturbed case $\gamma =0$ has been done in Yamazaki~\cite{Ya15}.

The rest of the paper is organized as follows. In Section~\ref{secPreliminar}, we review the one-dimensional line soliton and analyze the spectrum of the corresponding linearized operators. Section~\ref{secWP} is devoted to local well-posedness, establishing Theorem~\ref{thmLWP}. In Section~\ref{secBifurcation}, we investigate the bifurcation structure, develop the relevant calculus of derivatives, and prove the main bifurcation result. Finally, Section~\ref{sec:stab} addresses the stability of the solutions, presenting the key orbital and asymptotic stability theorems.

\section{Preliminaries}\label{secPreliminar}
\subsection{The trace theorem}

We start by defining the trace of the functions projected on the hyperplane $x = 0$. Let us denote by 
\begin{equation*}
    (\tau f)(x,y) = f(0,y) 
\end{equation*}
for $f$ a continuous function in $\Strip_L$. The trace $\tau u$ is well defined as soon as $u \in W^{s,p}(\Strip_L)$ when $s > 1/p$ and the map $\tau: W^{s,p}(\Strip_L) \to  W^{s-1/p,p}(0,L)$ is bounded, see \cite[Theorem 1.5.1.1]{Gr11}. Consequently, choosing $s=1$ and $p = 2$ yields that the quadratic form \eqref{eqQF} is well defined for $u\in H^1(\Strip_L)$.

\subsection{Line ground states}\label{sec1dGs}
The one-dimensional counterpart  of the two-dimensional model \eqref{eqDeltaStrip} is the equation
\begin{equation}\label{eq1DEqIntr}
    (-\partial_{xx}  + \gamma \delta_0 ) u  + \omega u  - |u|^{p-1} u = 0.
\end{equation}
 Here, $\delta_0$ is the Dirac distribution at the origin, namely, $\dual{\delta_0}{ v} = v(0)$ for $v \in H^1(\R)$. The operator $(-\partial_{xx} + \gamma \delta_0)$ is to be intended as the self-adjoint extension of the Laplacian on the domain 
 \begin{equation*}
     \mathcal{D}(-\partial_{xx}  + \gamma \delta_0) = \{u\in H^2(\R)\; : u(0^+)' - u(0^-)' = \gamma u(0) \}
 \end{equation*}
 where $u(0^\pm)' = \lim_{x \to \pm 0} u(x)'$, see \cite{BeKu13}.

In the case $\gamma = 0$, the set of solutions of \eqref{eq1DEqIntr} is given by 
\begin{equation*}
    \{e^{i \alpha} \phi_{\omega, 0}(. - z) : \alpha \in [0,2\pi), \, z \in \R \}
\end{equation*}
where the profile $\phi_{\omega,0}$ is explicitly calculated by direct integration of the equation and is given by
\begin{equation}
\label{eq:explicit}
     \phi_{\omega, 0}(x) = \left( \frac{(p+1)\omega}{2} \sech^2 \left( \frac{(p-1) \sqrt{\omega}}{2} x  \right)\right)^\frac{1}{p-1}.
 \end{equation}
 Note that we are in the framework of Schr\"odinger equations and therefore the functions that we consider are a priori \textit{complex valued}.

 When $\gamma \neq 0$, solutions of \eqref{eq1DEqIntr} and their relations to the nonlinear Schr\"odinger dynamics have been thoroughly investigated, from the initial work of Goodman, Holmes, and Weinstein \cite{GoHoWe04}, and the stability studies of Fukuizumi and co. \cite{FuJe08,FuOhOz08,CoFuFi08}, up to more recent advanced studies such as the classification of global dynamics of even solutions by Gustafson and Inui \cite{GuIn24} or the construction of a minimal blow-up mass solution by Genoud, Le Coz, and Royer \cite{GeLeRo23}.

Most of the results on solutions to \eqref{eq1DEqIntr} that we are going to use in the present paper have been established in \cite{FuJe08,FuOhOz08,CoFuFi08}. Bounded solutions to \eqref{eq1DEqIntr} exist only when 
\(
\omega > \frac{\gamma^2}{4},
\)
 in which case, they can be obtained explicitly by surgery from \eqref{eq:explicit}. Precisely, for $\omega > \frac{\gamma^2}{4}$, there exists a unique positive solution to \eqref{eq1DEqIntr} given by 
    \begin{equation}\label{eqGS1D}
     \phi_{\omega, \gamma}(x) = \left( \frac{(p+1)\omega}{2} \sech^2 \left( \frac{(p-1) \sqrt{\omega}}{2} |x|  - \tanh^{-1}\left( \frac{\gamma}{2 \sqrt{\omega}} \right)  \right)\right)^\frac{1}{p-1}.
  \end{equation}

  The function in \eqref{eqGS1D} can be characterized as a minimizer of certain variational problems. We define the one-dimensional 
  action, the Nehari functional, energy, and mass by (respectively)
\begin{align}
    \label{eqAction1D} 
    S^{1D}_{\omega,\gamma}(u)& = \frac{1}{2} \int_\R |\partial_x u|^2dx  +  \frac{\omega}{2} \int_\R |u|^2 dx+\frac{\gamma}{2} |u(0)|^2 - \frac{1}{p+1} \int_\R |u|^{p+1} dx , \\ 
    \label{eqNehari1D}
    I^{1D}_{\omega,\gamma}(u) &=  \int_\R |\partial_x u|^2dx + \omega \int_\R |u|^2 dx+ \gamma|u(0)|^2- \int_\R |u|^{p+1} dx ,
    \\
    \label{eqEn1D}
    E^{1D}_\gamma(u) &= \frac{1}{2} \int_\R |\partial_x u|^2dx + \frac{\gamma}{2} |u(0)|^2 - \frac{1}{p+1} \int_\R |u|^{p+1} dx , 
    \\ 
    \label{eqMass1D} 
    M^{1D}(u) &= \int_\R |u|^2  dx.
\end{align}
The profile given in \eqref{eqGS1D} was characterized as an action ground state in \cite{FuJe08, FuOhOz08,GoHoWe04}.  Moreover, it has also been characterized as an energy ground state by Adami, Noja, and Visciglia \cite{AdNoVi13} when $\gamma<0$. The case $\gamma>0$ for energy ground states has been treated by Boni and Carlone \cite {BoCa23} in the case of the half-line. Their results can be transferred directly to symmetric functions on the line. 
The results can be summarized as follows.

\begin{prop}\label{prp1DIntr}
    Let $\gamma \in \R$ and $\omega > \gamma^2/4$. 
    \begin{itemize}
   
    \item Let $1<p<5$, $m>0$. There exists $m^*=m^*(\gamma)$, with $m^*(\gamma)=0$ if $\gamma<0$ and $m^*(\gamma)>0$ if $\gamma>0$, such that the following hold. Assume that $m> m^*$.  Then there exists a unique $\omega(m) > \gamma^2/4$ such that the function $\phi_{\omega(m),\gamma}$ defined in \eqref{eqGS1D} is the unique real-valued and positive minimizer of
\begin{equation}
  \label{eqEnMin1D} 
   \begin{cases}
 e^{1D}_{m,\gamma} = \inf\{E^{1D}_\gamma(u)  : u \in H^1(\R),\, M^{1D}(u) = m \}&\text{ if }\gamma\leq0,\\
 e^{1D}_{m,\gamma,sym} = \inf\{E^{1D}_\gamma(u)  : u \in H^1_{rad}(\R),\, M^{1D}(u) = m \}&\text{ if }\gamma>0.\\
      \end{cases}
\end{equation}
    If $m\leq m^*$, then the problems do not admit a minimizer.
    \end{itemize}
\end{prop}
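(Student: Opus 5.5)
The proof combines results from the cited literature with a direct study of the energy in the explicit family \eqref{eqGS1D}. For $\gamma<0$ we use the concentration-compactness argument of Adami, Noja and Visciglia \cite{AdNoVi13}. For $\gamma>0$ we use the half-line analysis of Boni and Carlone \cite{BoCa23}, transferred to even functions on $\R$. The proof has three steps: existence of a minimizer, identification of it as $\phi_{\omega,\gamma}$, and uniqueness of the frequency $\omega(m)$.

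\emph{Existence, $\gamma<0$.} For every $m>0$ we show the strict inequality $e^{1D}_{m,\gamma}<e^{1D}_{m,0}$. To do this, test $E^{1D}_\gamma$ on a translate $\phi_{\omega,0}(\cdot-z)$ of the free soliton of mass $m$, and choose $z$ so that the soliton peak sits at the origin. The term $\frac{\gamma}{2}|u(0)|^2$ is then strictly negative, which gives the inequality. Since $p<5$, the Gagliardo--Nirenberg inequality makes $E^{1D}_\gamma$ coercive on the mass sphere and bounded below. Take a minimizing sequence. It cannot vanish, because then $|u_n(0)|\to0$ and the $L^{p+1}$ norm tends to $0$, forcing the limit energy to be $\ge0>e^{1D}_{m,\gamma}$. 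It cannot escape to infinity either: the delta term would disappear in the limit, so the energy would be at least $e^{1D}_{m,0}$, contradicting the strict inequality. Dichotomy is ruled out by the usual strict subadditivity, which follows from scaling. Hence a minimizer exists, so $m^*(\gamma)=0$. Replacing $u$ by $|u|$ does not increase the energy, so the minimizer may be taken nonnegative. It then solves \eqref{eq1DEqIntr} with a Lagrange multiplier $\omega$. Bounded positive solutions exist only for $\omega>\gamma^2/4$, and among them the explicit formula \eqref{eqGS1D} is the only one, so the minimizer is $\phi_{\omega,\gamma}$.

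\emph{Existence, $\gamma>0$.} Restrict to $H^1_{rad}(\R)$. Translations are excluded in this class, so the only obstruction to compactness is that the mass splits into two symmetric bumps running off to $\pm\infty$. Such a sequence has energy at least $2e^{1D}_{m/2,0}$. A minimizer therefore exists exactly when $e^{1D}_{m,\gamma,sym}<2e^{1D}_{m/2,0}$. Using the scaling $e^{1D}_{m,0}=-Cm^{(p+3)/(5-p)}$, one checks that this inequality holds precisely for $m$ above a threshold $m^*(\gamma)>0$. This is the content of \cite{BoCa23}, after reflecting the half-line problem to even functions. For $m\le m^*$ the infimum equals $2e^{1D}_{m/2,0}$ and is not attained.

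\emph{Uniqueness of $\omega(m)$.} We show that $\omega\mapsto M^{1D}(\phi_{\omega,\gamma})$ is strictly monotone on the relevant range. Substitute $t=\frac{(p-1)\sqrt\omega}{2}|x|$ and write $a=\tanh^{-1}\bigl(\frac{\gamma}{2\sqrt\omega}\bigr)$. This gives
\[
M^{1D}(\phi_{\omega,\gamma})=C_p\,\omega^{\frac{2}{p-1}-\frac12}\int_{a}^{\infty}\sech^{\frac{4}{p-1}}(s)\,ds .
\]
For $\gamma<0$ both factors increase in $\omega$: the power of $\omega$ is positive since $p<5$, and $a$ increases toward $0$, so the lower limit of integration moves down. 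Hence the mass is a bijection from $(\gamma^2/4,\infty)$ onto $(0,\infty)$. For $\gamma>0$ the map need not be monotone. There we select the $\omega$ attained by the minimizer; the analysis in \cite{BoCa23} shows that this $\omega$ is unique and lies on the branch where the mass is increasing.

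The main obstacle is the $\gamma>0$ case. There one must compare the symmetric-pair energy $2e^{1D}_{m/2,0}$ with the energy of $\phi_{\omega,\gamma}$ exactly, in order to identify both the threshold $m^*$ and which solution is the minimizer. For this step I would rely on \cite{BoCa23} rather than reprove it.
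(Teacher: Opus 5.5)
Your treatment of $\gamma<0$ is fine. It is essentially the concentration--compactness argument of \cite{AdNoVi13}, which the paper simply cites. There is one slip: the substitution gives $M^{1D}(\phi_{\omega,\gamma})=C_p\,\omega^{\frac{2}{p-1}-\frac12}\int_{-a}^{\infty}\sech^{\frac{4}{p-1}}(s)\,ds$. The lower limit is $-a$, not $a$, and only with $-a$ does ``$a$ increases toward $0$'' move the lower limit down, as you claim.

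The genuine gap is your assertion for $\gamma>0$ that a minimizer exists \emph{exactly} when $e^{1D}_{m,\gamma,sym}<2e^{1D}_{m/2,0}$, from which you deduce non-attainment for $m\le m^*$.
\begin{itemize}
\item Concentration--compactness gives only the ``if'' direction. In the equality case it says nothing, so non-existence must be proved separately.
\item The natural route goes through the profiles. A minimizer is, up to a phase, an even positive solution, hence $\phi_{\omega,\gamma}$ with $M^{1D}(\phi_{\omega,\gamma})=m$. So you must compare $E^{1D}_\gamma(\phi_{\omega,\gamma})$ with $2e^{1D}_{m/2,0}$ along the whole branch.
\item Set $D(\omega):=E^{1D}_\gamma(\phi_{\omega,\gamma})-2e^{1D}_{M^{1D}(\phi_{\omega,\gamma})/2,0}$. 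Then $D(\omega)\to0$ as $\omega\to\gamma^2/4$, since the profile splits into two escaping free solitons.
\item Use $\frac{d}{d\omega}E^{1D}_\gamma(\phi_{\omega,\gamma})=-\frac{\omega}{2}\frac{d}{d\omega}M^{1D}(\phi_{\omega,\gamma})$ together with $M^{1D}(\phi_{\omega,\gamma})<2M^{1D}(\phi_{\omega,0})$. These show that $D'(\omega)$ has the sign of $-\partial_\omega M^{1D}(\phi_{\omega,\gamma})$.
\end{itemize}

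For $1<p\le 3$ the mass is increasing (Lemma \ref{lemStabil}). Then $m^*=2M^{1D}(\phi_{\gamma^2/4,0})$, no profile has mass $\le m^*$, and your conclusion holds.

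For $3<p<5$ the picture is different, because the mass first decreases.
\begin{itemize}
\item On the decreasing part $D>0$.
\item On the increasing part $D$ is strictly decreasing and becomes negative, so it vanishes at exactly one $\omega^*$.
\item At $m^*=M^{1D}(\phi_{\omega^*,\gamma})$ one gets $e^{1D}_{m^*,\gamma,sym}=2e^{1D}_{m^*/2,0}=E^{1D}_\gamma(\phi_{\omega^*,\gamma})$. Indeed, a strict inequality would produce a minimizer with $D<0$ at mass $m^*$, and none exists. So the threshold \emph{is} attained.
\end{itemize}

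Thus your ``exactly when'' fails for $3<p<5$, and so does the proposition's last sentence at $m=m^*$. In that range the correct dichotomy is existence if and only if $m\ge m^*$. You should check what \cite{BoCa23} actually proves at the threshold rather than inferring it from the strict-inequality criterion. Likewise, the scaling of $e^{1D}_{m,0}$ alone does not give $m^*>0$; that also needs this profile analysis.
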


Notice that in the repulsive case $\gamma >0$, symmetry with respect to the origin is required. This originates from the fact that a minimizing sequence may exhibit a \textit{runaway} behavior at infinity on one side of the line, as shown by Fukuizumi and Jeanjean \cite{FuJe08}, which happens due to the absence of translation invariance. 

Notice also that, even in the symmetric case, minimization of the energy at fixed mass can fail, as it becomes energetically favorable for small masses to divide the sequence into two parts traveling away from the origin. 

Finally, the sign of 
$ \partial_\omega M^{1D}(\phi_{\omega,\gamma})$ has been previously determined in \cite{FuJe08,FuOhOz08,CoFuFi08}).
 \begin{lem}\label{lemStabil}
     If $\gamma < 0$, then the following holds.
     \begin{enumerate}
         \item If $1 < p \leq 5$, then $\partial_\omega M^{1D}(\phi_{\omega,\gamma}) >0$.
         \item If $p >5$, then there exists $\omega_1>\gamma^2/4$ such that $\partial_\omega M^{1D}(\phi_{\omega,\gamma}) >0$ for $\gamma^2/4<\omega < \omega_1$ and  $\partial_\omega M^{1D}(\phi_{\omega,\gamma}) < 0$ for $\omega > \omega_1$. 
     \end{enumerate}
      If $\gamma > 0$, then the following holds.
     \begin{enumerate}
         \item If $1 < p \leq 3$, then $\partial_\omega M^{1D}(\phi_{\omega,\gamma}) >0$.
         \item If $3 < p < 5$, then there exists $\omega_2>\gamma^2/4$ such that $\partial_\omega M^{1D}(\phi_{\omega,\gamma}) >0$ for $\omega > \omega_2$ and  $\partial_\omega M^{1D}(\phi_{\omega,\gamma}) < 0$ for $\gamma^2/4<\omega < \omega_2$.
         \item If $p >5$, then $\partial_\omega M^{1D}(\phi_{\omega,\gamma}) < 0$.
     \end{enumerate}
 \end{lem}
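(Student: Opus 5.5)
The plan is a direct computation: use the explicit formula \eqref{eqGS1D} and differentiate the mass in $\omega$. Set $\beta=\frac{p-1}{2}$, $a=\tanh^{-1}\!\big(\frac{\gamma}{2\sqrt{\omega}}\big)$ and $c_p=\big(\frac{p+1}{2}\big)^{\frac{2}{p-1}}$. Because $\phi_{\omega,\gamma}$ is even, the substitution $s=\beta\sqrt{\omega}\,x-a$ on $x>0$ gives
\[
M^{1D}(\phi_{\omega,\gamma})=\frac{2c_p}{\beta}\,\omega^{\frac{2}{p-1}-\frac12}\int_{-a}^{\infty}\sech^{\frac{4}{p-1}}(s)\,ds .
\]
Next compute $\partial_\omega a$. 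From $\tanh a=\frac{\gamma}{2\sqrt\omega}$ one gets $\sech^2 a\,\partial_\omega a=-\frac{\gamma}{4}\omega^{-3/2}$. In addition $\sech^2 a=1-\frac{\gamma^2}{4\omega}$, which is positive because $\omega>\gamma^2/4$. Write $q=\frac{4}{p-1}$ and $J(a)=\int_{-a}^\infty\sech^q$. Then
\[
\partial_\omega M^{1D}=\frac{2c_p}{\beta}\,\omega^{\frac{q}{2}-\frac32}\Big[\Big(\frac{q}{2}-\frac12\Big)J(a)+\omega\,\sech^q(a)\,\partial_\omega a\Big],
\]
and the second term inside the bracket equals $-\frac{\gamma}{4\sqrt\omega}\sech^{q-2}(a)=-\frac{1}{2}\tanh(a)\,\sech^{q-2}(a)$.

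The sign of the bracket now splits into cases. Put $G(a)=(q-1)J(a)-\tanh(a)\sech^{q-2}(a)$, which has the same sign as the bracket.

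For $\gamma<0$ we have $a<0$, so the term $-\tanh a\,\sech^{q-2}a$ is positive. If $p\le5$ then $q\ge1$, hence $(q-1)J\ge0$ and $G>0$, which gives item (1). If $p>5$ then $q<1$ and the two terms compete. Here I would study $G$ as a function of $a\in(-\infty,0)$, using that $a$ is a decreasing bijection of $\omega\in(\gamma^2/4,\infty)$ onto $(-\infty,0)$.
\begin{itemize}
\item As $a\to0^-$ (that is, $\omega\to\infty$), $G\to(q-1)\int_0^\infty\sech^q<0$.
\item As $a\to-\infty$ (that is, $\omega\to\gamma^2/4$), $J(a)\sim\frac{2^q}{q}e^{qa}$ and $-\tanh a\,\sech^{q-2}a\sim2^{q-2}e^{(q-2)a}$, which blows up. Hence $G>0$ there.
\item Differentiating, $G'(a)=(q-1)\sech^q a-\sech^q a+(q-2)\tanh^2 a\,\sech^{q-2}a=(q-2)\sech^{q-2}a\,(\sech^2a+\tanh^2a)=(q-2)\sech^{q-2}a<0$.
\end{itemize}
Since $G$ is strictly monotone it has exactly one zero, and this produces $\omega_1$. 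That settles item (2).

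For $\gamma>0$ we have $a\in(0,\infty)$, and the same derivative formula holds. Now the term $-\tanh a\,\sech^{q-2}a$ is negative, $J(a)\to\int_{\R}\sech^q$ as $a\to\infty$, and the monotonicity $G'=(q-2)\sech^{q-2}$ holds on this range too.
\begin{itemize}
\item If $p\le3$ then $q\ge2$, so $G$ is nondecreasing. Its limit as $a\to0^+$ is $(q-1)\int_0^\infty\sech^q>0$, so $G>0$ and $\partial_\omega M^{1D}>0$.
\item If $3<p<5$ then $1<q<2$, so $G$ is decreasing. At $a=0$ it is positive, and as $a\to\infty$ the term $\tanh a\,\sech^{q-2}a\sim 2^{q-2}e^{(2-q)a}$ blows up, so $G\to-\infty$. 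There is therefore a single sign change. The direction must be translated carefully: $a$ increases as $\omega$ decreases to $\gamma^2/4$, so small $\omega$ gives large $a$ and $G<0$, matching the claim.
\item If $p>5$ then $q<1$, and both terms of $G$ are negative.
\end{itemize}

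The main obstacle is keeping track of signs and of the reversed orientation between $a$ and $\omega$. The identity $G'=(q-2)\sech^{q-2}$ is the crucial simplification, and I would double-check it before relying on it.
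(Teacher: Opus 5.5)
Your proposal is correct. I checked each step: the reduction $M^{1D}(\phi_{\omega,\gamma})=\frac{2c_p}{\beta}\,\omega^{\frac q2-\frac12}J(a)$, the identification of the bracket with $\frac12 G(a)$, and the identity $G'(a)=(q-2)\sech^{q-2}(a)$ all hold. The behaviour of $G$ as $a\to0^{\pm}$ and $a\to\pm\infty$ then gives exactly the five sign patterns in the statement. This includes the endpoint $p=5$ for $\gamma<0$, where $G=-\sinh a>0$.

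The paper gives no proof of this lemma. It imports it from \cite{FuJe08,FuOhOz08,CoFuFi08}, where the argument is likewise an explicit computation of the mass from the profile \eqref{eqGS1D}, followed by a direct sign analysis of its $\omega$-derivative. So you are essentially reconstructing the cited proof in a self-contained form. What your version adds is that the single identity for $G'$ handles the attractive and repulsive cases and all ranges of $p$ uniformly. Strict monotonicity also gives, for free, the single sign change that defines $\omega_1$ and $\omega_2$.

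One slip to correct: for $\gamma<0$ you have $\partial_\omega a=-\frac{\gamma}{4}\omega^{-3/2}\cosh^2 a>0$. So $a$ is an \emph{increasing} bijection of $(\gamma^2/4,\infty)$ onto $(-\infty,0)$, not a decreasing one. The endpoint correspondences you actually use are the right ones: $a\to0^-$ as $\omega\to\infty$, and $a\to-\infty$ as $\omega\to\gamma^2/4$. Hence the orientation in item (2) still comes out correctly and the conclusion is unaffected.
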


\subsection{Spectrum of the linearized 
operator of \eqref{eq1DEqIntr}
around}

To state our bifurcation result, we recall several classical facts concerning the spectrum of the
linearized operator associated with~\eqref{eq1DEqIntr} around the one-dimensional ground state
$\phi_{\omega,\gamma}$ defined in~\eqref{eqGS1D}.  
We begin by considering the one-dimensional analogue of~\eqref{eqDeltaStrip}, namely
\begin{equation}\label{eqNLS1D}
    i \partial_t u - (\partial_{xx} - \gamma \delta_0) u = |u|^{p-1}u, 
    \qquad x \in \mathbb{R}.
\end{equation}
Recall from \S\ref{secPreliminar} that the operator $-\partial_{xx} + \gamma \delta_0$ is well defined and self-adjoint.
Equation~\eqref{eqNLS1D} admits solitary-wave solutions of the form 
$u(t,x) = e^{i\omega t}\phi_{\omega,\gamma}(x)$.  
To analyze the linear stability of this profile, we set
\[
u(t,x) = e^{i\omega t}\bigl(\phi_{\omega,\gamma}(x) + v(t,x)\bigr),
\]
which leads to the evolution equation
\begin{equation}\label{eqLin1}
    i\partial_t v - (\partial_{xx} - \gamma \delta_0)v + \omega v 
    = |\phi_{\omega,\gamma} + v|^{p-1}(\phi_{\omega,\gamma}+v) 
    - |\phi_{\omega,\gamma}|^{p-1}\phi_{\omega,\gamma}.
\end{equation}
Extracting only the linear terms in~\eqref{eqLin1}, and decomposing $v$ into its real and imaginary parts,
one obtains
\[
i\partial_t v 
    + \mathcal{L}_{+,0}\,\Re v 
    + \mathcal{L}_{-,0}\,\Im v 
    = 0,
\]
where the self-adjoint operators $\mathcal{L}_{\pm,0}
=\mathcal{L}_{\pm,0}(\omega,\gamma,p)$ are given by
\begin{equation}\label{eq-01-b9}
    \mathcal{L}_{+,0}
    := -\partial_{xx} + \omega + \gamma \delta_0 
       - p\,\phi_{\omega,\gamma}^{\,p-1}, \qquad   \mathcal{L}_{-,0}
    := -\partial_{xx} + \omega + \gamma \delta_0 
       - \phi_{\omega,\gamma}^{\,p-1}.
\end{equation}
In what follows, we shall exploit the spectral structure of $\mathcal{L}_{+,0}$ in order to locate the
bifurcation point. We define 
\begin{equation*}
    H^{1}_{\mathrm{sym}}(\R) = \{ u \in H^1(\R)\,: u(x) = u(-x)\}.
\end{equation*}

We exploit the following spectral properties established in
\cite[Lemmas~11, 12, 17, and Appendix~B]{CoFuFi08}.
\begin{prop}\label{prpSpectr}
    There exist $\lambda_{1}=\lambda_{1}(\omega,p,\gamma)>0$, 
$\lambda_{2}=\lambda_{2}(\omega,p,\gamma)>0$, and 
$C_{\omega}=C(\omega)>0$ such that:
\begin{enumerate}[label=\alph*)]

\item 
\[
    \sigma\!\left(
        \mathcal{L}_{+,0}\big|_{H^{1}_{\mathrm{sym}}(\R)}
    \right)\footnote{ \ Here the restriction to $H^{1}_{\mathrm{sym}}(\R)$ is intended 
at the level of the quadratic form associated to $\mathcal{L}_{+,0}$.}
    \subset 
    \{-\lambda_{1}\}
    \cup [C_{\omega},\infty).
\]

\item 
If $\gamma \le 0$, then
\begin{equation}\label{eq36-bi}
    \sigma(\mathcal{L}_{+,0})
    \subset 
    \{-\lambda_{1}\}
    \cup [C_{\omega},\infty).
\end{equation}

\item 
If $\gamma>0$, then
\begin{equation}\label{eq35-bi}
    \sigma(\mathcal{L}_{+,0})
    \subset 
    \{-\lambda_{1}\}
    \cup \{-\lambda_{2}\}
    \cup [C_{\omega},\infty).
\end{equation}
\end{enumerate}
\end{prop}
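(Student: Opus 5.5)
The statement is a collection of spectral facts about $\mathcal{L}_{+,0}$, and I would prove it by reducing everything to an explicit analysis of the one-dimensional operator with a delta interaction, following the strategy of \cite{CoFuFi08}. The first step is to locate the essential spectrum. The potential $p\,\phi_{\omega,\gamma}^{p-1}$ decays exponentially by \eqref{eqGS1D}, and the delta interaction is a rank-one perturbation in the resolvent sense (Krein's formula). Hence, by Weyl's theorem, $\sigma_{ess}(\mathcal{L}_{+,0})=[\omega,\infty)$, both on $H^1(\R)$ and on $H^1_{\mathrm{sym}}(\R)$. It then remains to count the eigenvalues below $\omega$. There are finitely many, and all are simple, because the eigenvalue ODE on each half-line has a one-dimensional space of decaying solutions. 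Choosing $C_\omega$ strictly below $\omega$ but above any nonnegative eigenvalue then gives the required gap, provided we show that $0$ is not an eigenvalue. Showing this is part of the counting argument below.

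For the negative direction, test the quadratic form on $\phi_{\omega,\gamma}$. Using the equation \eqref{eq1DEqIntr}, we get
\[
\langle \mathcal{L}_{+,0}\phi_{\omega,\gamma},\phi_{\omega,\gamma}\rangle=-(p-1)\int_\R\phi_{\omega,\gamma}^{p+1}\,dx<0,
\]
with $\phi_{\omega,\gamma}$ even. So there is at least one negative eigenvalue $-\lambda_1$, attained in the even class. By Perron--Frobenius (ground state positivity for Schrödinger operators with a delta interaction), the lowest eigenfunction is positive, even, and simple. To show there are no further nonpositive eigenvalues in the even class, I would use Sturm--Liouville oscillation on the half-line $(0,\infty)$. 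An even eigenfunction solves the ODE $-u''+(\omega-p\phi^{p-1})u=-\lambda u$ on $(0,\infty)$ with the Robin condition $2u'(0^+)=\gamma u(0)$. Away from $0$, the profile $\phi_{\omega,\gamma}$ is a translate of $\phi_{\omega,0}$ by the shift $\xi=\tanh^{-1}(\gamma/(2\sqrt\omega))/\bigl(\tfrac{(p-1)\sqrt\omega}{2}\bigr)$. For the translated soliton, the spectrum of $L_{+}$ on $\R$ is explicitly known: $-\lambda_0$ with a positive eigenfunction, $0$ with eigenfunction $\phi'_{\omega,0}$, and the rest positive. So the decaying zero-energy solution on the half-line is $\phi_{\omega,\gamma}'$. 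I would check whether it satisfies the Robin condition: since $\phi_{\omega,\gamma}'(0^+)=\gamma\phi(0)/2\neq0$ and $\phi_{\omega,\gamma}''(0^+)$ has a sign determined by $\gamma$, the condition fails whenever $\gamma\neq0$. Moreover, comparing the Prüfer angle of $\phi'_{\omega,\gamma}$ at $0^+$ with the Robin angle counts the nonpositive eigenvalues in the even class as exactly one. This proves item a).

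For the odd class, odd eigenfunctions vanish at $0$, so the delta term disappears and the problem becomes a Dirichlet problem on $(0,\infty)$. The decaying zero-energy solution is again $\phi_{\omega,\gamma}'$, which is monotone on $(0,\infty)$ and has no zero there. Sturm comparison then decides the sign of the lowest odd eigenvalue. When $\gamma<0$, the profile peaks at $0$ with $\xi>0$, so $\phi'$ never vanishes on $[0,\infty)$ and the lowest Dirichlet eigenvalue is positive; this gives b). When $\gamma>0$, the maximum sits at $\pm|\xi|$ and $\phi_{\omega,\gamma}'$ changes sign at $|\xi|$, which produces exactly one negative odd eigenvalue $-\lambda_2$; this gives c). Combining the even and odd classes with the essential-spectrum step yields the three inclusions.

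I expect the main obstacle to be the precise oscillation count at the matching point, namely proving that no eigenvalue equals $0$ and that the number of negative eigenvalues is exactly as claimed. I would handle this first with the explicit Prüfer-angle or Wronskian comparison against $\phi'_{\omega,\gamma}$. Failing that, I would fall back on a continuity argument in $\gamma$ starting from the known $\gamma=0$ spectrum, using the fact that $0$ is never an eigenvalue for $\gamma\neq0$, so that the eigenvalues cannot cross zero.
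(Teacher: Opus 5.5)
The paper gives no proof of this proposition; it quotes it from \cite[Lemmas~11, 12, 17 and Appendix~B]{CoFuFi08}. Your route is a genuinely different, self-contained one: Weyl's theorem for $\sigma_{\mathrm{ess}}=[\omega,\infty)$, parity splitting, Perron--Frobenius for the ground state, and a half-line Sturm count against the explicit zero-energy solution $\psi=\phi_{\omega,\gamma}'|_{(0,\infty)}$. Compared with quoting, or with your fallback continuation in $\gamma$, it handles every sign of $\gamma$ by elementary explicit computations.

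Write $\phi=\phi_{\omega,\gamma}$ and $\phi(x)=\phi_{\omega,0}(|x|-x_0)$, where $x_0=\frac{2}{(p-1)\sqrt\omega}\tanh^{-1}\frac{\gamma}{2\sqrt\omega}$ has the sign of $\gamma$. Note that your $\xi$ is this $x_0$, so $\xi<0$, not $\xi>0$, when $\gamma<0$. Your odd-class argument is fine. With $u(0)=0$, $u'(0)=1$ one has $W(u,\psi)=-\psi(0^+)=-\frac{\gamma}{2}\phi(0)$. Then $(u/\psi)'=-W/\psi^2$ gives no zero of $u$ on $(0,\infty)$ for $\gamma<0$, and exactly one zero (on $(x_0,\infty)$) for $\gamma>0$.

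The even count, however, is not established, and the reason you give is wrong. The sign of $\phi''(0^+)$ is not fixed by $\gamma$. Since $\phi(0)^{p-1}=\frac{p+1}{2}\bigl(\omega-\frac{\gamma^2}{4}\bigr)$, one has $\omega-\phi(0)^{p-1}=\frac{(p+1)\gamma^2}{8}-\frac{(p-1)\omega}{2}$, which changes sign at $\gamma^2=\frac{4(p-1)}{p+1}\omega$.

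More importantly, failure of the Robin condition alone does not give ``exactly one'' when $\gamma>0$. There $\psi$ vanishes at $x_0>0$, so the $\phi$-test plus interlacing only yields one or two negative even eigenvalues. Which case occurs is decided by the sign of the Wronskian at the matching point, and that computation is missing. Take $u(0)=1$, $u'(0^+)=\gamma/2$, and use $\phi'(0^+)=\frac{\gamma}{2}\phi(0)$ and $\phi''(0^+)=(\omega-\phi(0)^{p-1})\phi(0)$. Then
\[
W(u,\psi)=u\psi'-u'\psi=\phi(0)\Bigl(\omega-\phi(0)^{p-1}-\frac{\gamma^{2}}{4}\Bigr)=-\frac{p-1}{2}\,\phi(0)\Bigl(\omega-\frac{\gamma^{2}}{4}\Bigr)<0
\]
for every $\gamma$. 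Hence $u/\psi$ is increasing on each interval where $\psi\neq0$.
\begin{itemize}
\item For $\gamma>0$: on $(0,x_0)$, $u/\psi$ starts at $2/(\gamma\phi(0))>0$, so $u$ has no zero there, and it has exactly one zero on $(x_0,\infty)$.
\item For $\gamma\le0$: $\psi<0$ on $(0,\infty)$, and $u/\psi$ increases from a negative value (from $-\infty$ if $\gamma=0$) to $+\infty$, giving exactly one zero.
\end{itemize}
So there is exactly one negative even eigenvalue, and $0$ is not an even eigenvalue, for every $\gamma$. With $W>0$, the case $\gamma>0$ would have produced two. Once this computation is inserted, your argument is complete.

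Two further corrections. First, $C_\omega$ must be chosen positive and \emph{below} the smallest positive eigenvalue (and below $\omega$), not above the nonnegative ones. Second, your argument yields b) only for $\gamma<0$, and that is the correct statement. At $\gamma=0$ one has $\mathcal{L}_{+,0}\phi_{\omega,0}'=0$, so the inclusion in b) as written fails for $\gamma=0$.
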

Let $\chi_{\gamma}=
\chi_{\gamma}(\omega,p,\gamma)$ be such that
\begin{equation}
    \label{eqFirstEig}
    \mathcal{L}_{+,0} \chi_{\gamma} 
    = -\lambda_{1}\chi_\gamma, \quad \|\chi_\gamma\|_{L^{2}(\mathbb{R})} =1.
\end{equation}

\section{Strichartz Estimates and Well-Posedness}\label{secWP}
In this section, we prove the well-posedness of the Cauchy problem \eqref{eqDeltaStrip}. The main idea is to apply the contraction principle to Duhamel's formulation 
\begin{equation}
    \label{eqDuhamel}
    u(t) = e^{i\H t} u_0 + i \int_0^t e^{i(t-s)\H} |u(s)|^{p-1} u(s) \, dx
\end{equation}
in the spirit of Kato's method introduced in \cite{Ka87}. In order to set up the contraction principle, we first obtain suitable Strichartz estimates for our setting. We remark that our setting has a distinguished characteristic of being both a product space and having the operator $\H$ as a singular, finite-rank perturbation of the Laplacian in $\R$. 
We introduce the Strichartz admissible pairs: $(q,r)$ are said to be admissible if $4\leq q \leq \infty$ and 
\begin{equation}
    \label{eqStrichAdm}
    \frac{2}{q} + \frac{1}{r} = \frac{1}{2}.
\end{equation}
We also introduce the admissible couple $(\alpha,\beta)$ defined by
\begin{equation}
    \label{eqAlphaBeta}
    \beta= p+1, \quad \alpha = 2\left(\frac{2\beta}{\beta-2}\right).
\end{equation}

\begin{prop}\label{prpStrich1}
    For any admissible pairs $(q,r)$, $(q_1,r_1)$ there exists a constant $C = C(q,r,q_1,r_1) >0$ such that the following estimate holds:
    \begin{align}
        \label{eqStrichEst1}
       \| e^{it \H} f \|_{L^{q}_t L^r_x L^2_y} + \left\| \int_0^t e^{i(t-s) \H} F(s,x,y) \right\|_{L^{q}_t L^r_x L^2_y}  \lesssim C(\| f \|_{L^2_{x,y}} + \| F \|_{L^{q_1'}_t L^{r_1'}_x L^2_y})
    \end{align}
\end{prop}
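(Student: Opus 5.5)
We decompose along the transverse Neumann modes, prove one-dimensional Strichartz estimates for each mode that are uniform in the mode index, and recombine with Plancherel in $y$ plus Minkowski's inequality.

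\textbf{Mode decomposition.} Let $e_k(y)=c_k\cos(k\pi y/L)$, $k\ge0$, be the $L^2(0,L)$-orthonormal Neumann eigenfunctions of $-\partial_{yy}$, with eigenvalues $\mu_k=(k\pi/L)^2$. Write
\[
f(x,y)=\sum_k f_k(x)e_k(y), \qquad f_k(x)=\int_0^L f(x,y)e_k(y)\,dy .
\]
The trace term only involves $x=0$ and acts on each $f_k$ through $\delta_0$. Hence $\H$ is diagonal in this basis, with
\[
\H\big(f_ke_k\big)=\big((-\partial_{xx}+\gamma\delta_0+\mu_k)f_k\big)e_k .
\]
It follows that
\[
e^{it\H}f=\sum_k e^{it\mu_k}\big(e^{itH_\gamma}f_k\big)e_k, \qquad H_\gamma=-\partial_{xx}+\gamma\delta_0 .
\]
The factor $e^{it\mu_k}$ is unimodular and independent of $x$, so it does not affect any $L^q_tL^r_x$ norm. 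The whole problem therefore reduces to the single one-dimensional operator $H_\gamma$, and uniformity in $k$ comes for free.

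\textbf{One-dimensional estimates.} For $H_\gamma$ on $\R$ I would invoke the known dispersive estimate. The explicit kernel of $e^{itH_\gamma}$ is available (see e.g. the references \cite{zbMATH05355749} and \cite{BeKu13}); for $\gamma\ge0$ it gives $\|e^{itH_\gamma}\|_{L^1\to L^\infty}\lesssim|t|^{-1/2}$. For $\gamma<0$ there is the single bound state $\psi_\gamma=\sqrt{|\gamma|/2}\,e^{\gamma|x|/2}$ with eigenvalue $-\gamma^2/4$, and the dispersive bound holds only on its orthogonal complement (the continuous part).

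The bound-state part is handled separately. Its contribution is $e^{-it\gamma^2/4}\langle f_k,\psi_\gamma\rangle\psi_\gamma$, which is bounded in $L^\infty_tL^r_x$. This is harmless if the estimate is understood locally in time, $L^q(0,T)$, which is all the fixed-point argument needs; I would state that caveat, since a global-in-time estimate genuinely fails for $\gamma<0$. On the continuous part, Keel--Tao yields the homogeneous, inhomogeneous and retarded estimates for all 1D admissible pairs $2/q+1/r=1/2$, $q\ge4$, which are exactly the pairs of \eqref{eqStrichAdm}.

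\textbf{Recombination.} Fix $t$ and $x$. By Plancherel in $y$,
\[
\|e^{it\H}f(x,\cdot)\|_{L^2_y}=\Big\|\big(e^{itH_\gamma}f_k(x)\big)_k\Big\|_{\ell^2_k}.
\]
Since $q,r\ge2$, Minkowski's inequality lets us move the $\ell^2_k$ norm outside:
\[
\|e^{it\H}f\|_{L^q_tL^r_xL^2_y}\le\Big\|\,\|e^{itH_\gamma}f_k\|_{L^q_tL^r_x}\Big\|_{\ell^2_k}\lesssim\|(f_k)\|_{\ell^2_kL^2_x}=\|f\|_{L^2_{x,y}} .
\]
For the inhomogeneous term, apply the 1D retarded estimate to each $F_k$ and square-sum. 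The dual side requires $\|(F_k)\|_{\ell^2_kL^{q_1'}_tL^{r_1'}_x}\le\|F\|_{L^{q_1'}_tL^{r_1'}_xL^2_y}$. This is again Minkowski, now using $q_1',r_1'\le2$. Summing the homogeneous and inhomogeneous bounds gives \eqref{eqStrichEst1}.

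\textbf{Main obstacle.} The delicate point is the 1D dispersive/Strichartz estimate for $H_\gamma$ when $\gamma<0$, because of the eigenvalue. I would first try to cite the explicit-kernel dispersive bound on the continuous subspace, then add the bound-state part by hand, accepting a time-local constant $C(T)$.
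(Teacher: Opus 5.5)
Your proposal is correct and follows essentially the same route as the paper. Both arguments decompose along the Neumann modes in $y$ and use one-dimensional Strichartz estimates for $-\partial_{xx}+\gamma\delta_0$ that are uniform in the mode index; your remark that $e^{it\mu_k}$ is a unimodular phase is exactly why the paper's $\sup_{m\in\R}$ costs nothing. Both then conclude by $\ell^2$-summation, Minkowski (using $q,r\ge 2\ge q_1',r_1'$) and Plancherel in $y$.

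The only difference is the source of the one-dimensional input. You sketch it yourself: the dispersive bound on the continuous subspace plus Keel--Tao, with the bound state for $\gamma<0$ handled by hand. The paper instead cites the local-in-time estimate of \cite{AdCaFiNo11}. Your caveat that the estimate holds only on bounded time intervals when $\gamma<0$ matches the paper's own ``local in time'' usage.
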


\begin{proof}
The proof is based on the Strichartz estimates for the one-dimensional operator $\H_0 = -\partial_{xx} + \gamma \delta_0$ where $\delta_0$ is the Dirac delta at zero. In this case, the (local in time) Strichartz estimates are given by 
    \begin{equation}
    \begin{aligned}
        \label{eqStrich1D1}
        &\sup_{m \in \R }\| e^{it (\H_0 + m)} f \|_{L^{q}_t L^r_x} \leq C\| f \|_{L^2_{x}} \\ 
        &\sup_{m \in \R }\left\| \int_0^t e^{i(t-s) (\H_0 + m)} F ds \right\|_{L^{q}_t L^r_x} 
       \leq C \| F \|_{L^{q_1'}_t L^{r_1'}_x}
    \end{aligned}
    \end{equation}
    for any $(q,r)$, $(p_1,q_1)$ admissible pairs, see \cite[Proposition $2.3$]{AdCaFiNo11}. Indeed, it is well known that this operator satisfies the same dispersive estimates as the free Laplacian \cite{AdSa05}. Moreover, the spectral parameter $m$ will appear naturally after a Fourier decomposition in $y$ in what follows.
    Now we consider 
    \begin{equation*}
        u(t,x,y) = e^{i t \H} f + \int_0^t  e^{i(t-s)\H} F(s,x,y) ds
    \end{equation*}
    which is the Duhamel's formulation of
    \begin{equation}\label{eqNLSF1}
        i \partial_t u + \H u = F, \quad u(0,x,y) = f(x,y)
    \end{equation}
    We decompose $u,f$, and $F$ into the Fourier modes in $y$. We introduce the orthonormal basis for $L^{2}[0,L]$ with Neumann boundary conditions
    \begin{equation}
    \label{eq:basis}
\begin{cases}
\lambda_k=\left(\frac{k\pi}{L}\right)^2,\quad \theta_k(y)=\sqrt{\frac2L}\cos\left(\frac{k\pi}{L}y\right),\quad k\geq 1,\\
\lambda_0=0,\quad \theta_0(y)=\sqrt{\frac1L},\quad k=0.
\end{cases}
\end{equation}
and we write
\begin{equation}\label{eqDecoU1}
     u(t,x,y) = \sum_{n \geq 0} u_n(t,x) \theta_n(y), \quad   F(t,x,y) = \sum_{n \geq 0} F_n(t,x) \theta_n(y), \quad   f(x,y) = \sum_{n \geq 0} f_n(x) \theta_n(y)
\end{equation}
    where 
    \begin{align*}
        &u_n(t,x) = \sqrt{\frac{2}{L}} \int_0^L u(t,x,y) \theta_n(y) dy, \quad
         F_n(t,x) = \sqrt{\frac{2}{L}} \int_0^L F(t,x,y) \theta_n(y) dy,\\
          &f_n(x) = \sqrt{\frac{2}{L}} \int_0^L f(x,y) \theta_n(y) dy.
    \end{align*} 
    We remark that this decomposition allows us to write the Hamiltonian operator as a decomposition
    \begin{equation*}
        \H u = \bigoplus_{n\geq0} \H_n u_n \otimes \theta_n(y), 
    \end{equation*}
    where
    \begin{equation*}
        \H_n = - \partial_{xx} + \gamma  \delta_0 + \lambda_n
    \end{equation*}
    is the one-dimensional Laplacian operator with a Dirac delta at zero and $\lambda_n = (n\pi L^{-1})^2$. Indeed, it is enough to observe that the quadratic form associated with $\H$ is diagonalizable in the following way: since
    \begin{equation*}
        \H u = \sum_{n\geq 0} ( -\partial_{xx} + \gamma \delta_0(x) + \lambda_n) u_n(x) \theta_n(y),
    \end{equation*}
    the quadratic form associated to $\H$ can be seen as 
    \begin{equation*}
        \langle \H u, u\rangle = \| \nabla u\|_{L^2}^2 + \gamma \sum_{n \geq 0} |u_n(0)|^2 = \sum_{n \geq 0} \left(\| \partial_x u_n(x)\|_{L^2}^2 + \lambda_n  \| u_n(x)\|_{L^2}^2 + \gamma |u_n(0)|^2\right)
    \end{equation*}
    due to the orthogonality of $\theta_n$.
    Thus equation \eqref{eqNLSF1} is reduced to 
    \begin{equation}
        \label{eqNLSNodes1}
         i \partial_t u_n + \H_n  u_n = F_n, \quad u_n(0,x,y) = f_n(x,y)
    \end{equation}
Then, by \eqref{eqStrich1D1}, we obtain 
that 
\begin{equation}
\begin{aligned}
    \label{eqStrich1D11}
         \| e^{it \H_n} f_n \|_{L^{q}_t L^r_x}\leq  C\| f_n \|_{L^2_{x}},  \quad 
        \left\| \int_0^t e^{i(t-s) \H_n} F_n ds \right\|_{L^{q}_t L^r_x}\leq  C \| F_n \|_{L^{q_1'}_t L^{r_1'}_x}
\end{aligned}   
\end{equation}
where the constant $C>0$ does not depend on $n$. Summing over all $n$, we get 
\begin{equation*}
    \begin{aligned}
        \| e^{it \H_n} f_n \|_{\ell^2_n L^{q}_t L^r_x} + \left\| \int_0^t e^{i(t-s) \H_n} F_n ds \right\|_{\ell^2_n L^{q}_t L^r_x} 
       \leq  C\left(\| f_n \|_{\ell^2_n L^2_{x}} + \| F_n \|_{\ell^2_n L^{q_1'}_t L^{r_1'}_x}\right)
    \end{aligned}
\end{equation*}
Since $p,q \geq 2$ and $\max(p_1',q_1') \leq 2 \leq \min(q,r)$, we can apply the Minkowski inequality to get
\begin{equation*}
    \begin{aligned}
        \| e^{it \H_n} f_n \|_{ L^{q}_t L^r_x\ell^2_n} + \left\| \int_0^t e^{i(t-s) \H_n} F_n ds \right\|_{ L^{q}_t L^r_x\ell^2_n} 
       \leq  C\left(\| f_n \|_{ L^2_{x}\ell^2_n} + \| F_n \|_{ L^{q_1'}_t L^{r_1'}_x \ell^2_n}\right).
    \end{aligned}
\end{equation*}
The conclusion in \eqref{eqStrichEst1} follows from the Plancherel inequality.
\end{proof}

\begin{prop}\label{prpStrich2}
    We have 
    \begin{equation}
        \label{eqStrichEst2}
\left\|  e^{it\H} f \right\|_{L^p_t W^{1,q}_x L^2_y}
+ \left\|  \int_0^t e^{i(t-s)\H} F(s) \, ds \right\|_{L^p_t W^{1,q}_x L^2_y}
\leq C \left( \|  f \|_{H^1_x L^2_y} + \| F \|_{L^{p'}_t W^{1,q'}_x L^2_y} \right).
    \end{equation}
\end{prop}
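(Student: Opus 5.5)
The plan is to obtain \eqref{eqStrichEst2} from Proposition~\ref{prpStrich1} by commuting one $x$-derivative through the propagator. Since the delta interaction breaks translation invariance, $\partial_x$ does not commute with $e^{it\H}$, so I will work with the fractional power $(\H+\mu)^{1/2}$ for some $\mu>\gamma^2/4$ (so that $\H+\mu>0$ when $\gamma<0$; for $\gamma\ge0$ take $\mu=1$). This operator commutes with $e^{it\H}$, and we must relate it to $W^{1,q}_x$ norms.

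I would argue mode by mode, as in the proof of Proposition~\ref{prpStrich1}. Using the Neumann decomposition \eqref{eqDecoU1}, the problem reduces to the one-dimensional operators $\H_n=\H_0+\lambda_n$, where $\H_0=-\partial_{xx}+\gamma\delta_0$. The $x$-derivative acts only on the coefficients $u_n$, so the estimate in $W^{1,q}_xL^2_y$ reduces to uniform-in-$n$ estimates in $L^p_tW^{1,q}_x$ for $e^{it\H_n}f_n$ and for the Duhamel term, followed by the same $\ell^2_n$ summation and Minkowski argument as before.

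For each mode I would write $e^{it\H_n}=e^{it\lambda_n}e^{it\H_0}$; the unimodular factor is harmless, so no uniformity issue in $n$ arises. The key one-dimensional input is the equivalence of norms
\[
\|(\H_0+\mu)^{1/2}g\|_{L^r}\simeq\|g\|_{W^{1,r}}
\]
for the relevant $r\in(1,\infty)$ (including $r=2$ and the dual exponents). For $r=2$ this follows from the form identity \eqref{eqQF} and the trace bound. For general $r$ I would rely on the known $L^r$-boundedness of the wave operators $W_\pm$ of $(\H_0,-\partial_{xx})$ for the delta potential. These intertwine $\varphi(\H_0)P_{ac}=W_\pm\varphi(-\partial_{xx})W_\pm^*$. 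Combined with the fact that $\H_0$ has at most one eigenvalue, whose eigenfunction $e^{\gamma|x|/2}$ lies in every $W^{1,r}$, this gives the equivalence. An alternative is to compare the explicit resolvent kernels of $\H_0$ and $-\partial_{xx}$, which differ by a rank-one term.

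Granting the equivalence, I would apply the Strichartz estimates \eqref{eqStrich1D1} to $(\H_0+\mu)^{1/2}f_n$ and $(\H_0+\mu)^{1/2}F_n$. This yields uniform control of $(\H_0+\mu)^{1/2}e^{it\H_n}f_n$ in $L^pL^q$ by $\|f_n\|_{H^1}$, and similarly for the Duhamel term by $\|F_n\|_{L^{p'}W^{1,q'}}$. Converting back with the norm equivalence and summing over $n$ then gives \eqref{eqStrichEst2}; the $L^2_y$ norm is recovered via Plancherel in $n$.

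I expect the main obstacle to be the $L^r$ norm equivalence, that is, the Riesz-transform bound $\|\partial_x(\H_0+\mu)^{-1/2}\|_{L^r\to L^r}$ and its converse, in the presence of the delta.

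If citing the boundedness of the wave operators is not acceptable, I would prove the equivalence directly. The first step is to split $g=g_e+g_o$ into its even and odd parts. On odd functions the delta is invisible, since $g_o(0)=0$, so $\H_0$ acts there as the Dirichlet Laplacian on the half-line and the classical Riesz-transform bounds apply. On even functions $\H_0$ acts as the Robin Laplacian on the half-line with condition $g'(0^+)=\tfrac{\gamma}{2}g(0)$. In that case the resolvent differs from the Neumann resolvent by an explicit rank-one exponential kernel. That difference is bounded from $L^r$ into $W^{1,r}$, which closes the argument.
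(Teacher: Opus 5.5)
Your proposal follows the same route as the paper's proof. You replace $\partial_x$ by $(\H_0+\mu)^{1/2}$, which commutes with the propagator, apply \eqref{eqStrich1D1} mode by mode, and sum over the Neumann modes using Minkowski and Plancherel.

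The substantive difference is that you isolate and justify the $L^r$ equivalence $\|(\H_0+\mu)^{1/2}g\|_{L^r}\simeq\|g\|_{W^{1,r}}$. The paper uses this without proof. It only derives the $L^2$ version from the quadratic form \eqref{eqQF}, and then passes silently from $\|(a+\H_0)^{1/2}F\|_{L^{r_1'}_x}$ to $\|F\|_{W^{1,r_1'}_x}$, and likewise on the left-hand side. Your use of the single operator $(\H_0+\mu)^{1/2}$ together with $e^{it\H_n}=e^{it\lambda_n}e^{it\H_0}$ also gives uniformity in $n$ more cleanly than the paper's remark. The paper's phrasing suggests $(a+\H_n)^{1/2}$, which is not uniformly equivalent to a $W^{1,r}$ norm.

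Three steps of your sketch need tightening.

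First, $L^r$-boundedness of $W_\pm$ and $W_\pm^*$ does not by itself give the equivalence. Intertwining only yields $\|(\H_0+\mu)^{1/2}P_{ac}g\|_{L^r}\simeq\|W_\pm^*g\|_{W^{1,r}}$. So you need $W_\pm$ and $W_\pm^*$ bounded on $W^{1,r}$. This is true for the delta interaction, whose wave operators are explicit, but it is a stronger statement than the one you cite.

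Second, in the even/odd route the resolvent comparison must be transferred to the square root. You can do this via $(\H_0+\mu)^{-1/2}=\pi^{-1}\int_0^\infty s^{-1/2}(\H_0+\mu+s)^{-1}\,ds$.
\begin{itemize}
\item On the even part, with $k^2=\mu+s$, the Robin and Neumann half-line resolvents differ by the rank-one kernel $-\frac{\gamma}{k(2k+\gamma)}e^{-k(x+y)}$.
\item This kernel has $\mathcal{L}(L^r,W^{1,r})$ norm $O(k^{-2})$, so the integral converges and gives $\|g\|_{W^{1,r}}\lesssim\|(\H_0+\mu)^{1/2}g\|_{L^r}$.
\item The reverse bound follows by duality. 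Write $\langle(\H_0+\mu)^{1/2}g,h\rangle$ as the quadratic form of $\H_0+\mu$ evaluated at $g$ and $(\H_0+\mu)^{-1/2}h$. Then use the first bound at $r'$ together with $|g(0)|\lesssim\|g\|_{W^{1,r}}$.
\end{itemize}

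Third, this Riesz-transform argument needs $1<r<\infty$, so it misses the endpoint admissible pair $(4,\infty)$. That is harmless here, since the local theory only uses $\beta=p+1<\infty$.
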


\begin{proof}
To overcome the fact that $\partial_x$ and $e^{-it\H}$ do not commute, we introduce the square roots of the operators $\H_n$ defined in the proof of Proposition \ref{prpStrich1}. We show it only for the operator $\H_{0} = -\partial_{xx} + \gamma \delta_0$, as for the other it is equivalent. Let $a > \frac{\gamma^2}{4}$ be any fixed constant. Then the operator $\H_0 + a$ is positive and  $(\H_0 + \alpha)^{1/2}$ with domain given by the domain of the quadratic form associated to $\H_0$,  is well defined, positive and self-adjoint, see \cite{Be68}. In particular, we have
\begin{align*}
    \left( (a + \H_{0})^\frac{1}{2} \right)^2 = a + \H_0, \quad
    D((a + \H_{0})^\frac{1}{2}) = D(Q_{\H_0}) = H^1(\R), 
\end{align*}
where $Q_{\H_0}$ is the quadratic form associated with $\H_0$. This operator is positive and self-adjoint. We define the associated norm as 
\begin{equation*}
    \| \phi \|_{H^1_a} := \| (a + \H_{0})^\frac{1}{2} \phi\|_{L^2}
\end{equation*}
which implies that
\begin{equation*}
   \| \phi\|_{H^1}^2 \lesssim \| \phi \|_{H^1_a}^2 = a \| \phi \|_{L^2}^2 + Q_{\H_0}(\phi) \lesssim \| \phi\|_{H^1}^2.
\end{equation*}
In particular, the norm $\|\cdot\|_{H^1_a}$ and 
$\| \cdot \|_{H^1}$ are equivalent.

Notice that the operators $e^{it\H_0}$ and $(a + \H_0)$ commute. Furthermore, $e^{it\H_0}$ is bounded, hence it is closed. Thus,  the operators $e^{it\H_0}$ and $(a + \H_0)^{\frac{1}{2}}$ also commute. Since $e^{it\H_0}$ is unitary, it follows that
\begin{equation*}
    \| e^{it\H_0} \phi \|_{H^1_a} = \| \phi\|_{H^1_a},
\end{equation*}
which yields 
\begin{equation*}
    \| e^{it\H_0} \phi \|_{H^1} \lesssim \| \phi \|_{H^1}.
\end{equation*}
In the same way, by \eqref{eqStrich1D1}, we get 
\begin{equation*}
    \left\| \int_0^t e^{i(t-s) (\H_0 + a)} F ds \right\|_{L^{q}_t W^{1,q}_x}\lesssim \| (a + \H_0) F \|_{L^{q_1'}_t L^{r_1'}_x}  \lesssim  \| F \|_{L^{q_1'}_t W^{1,q_1'}_x} 
\end{equation*}
To conclude the proof, we first observe that the same reasoning applies for any $\H_n$ and the shift $a$ does not depend on $n$ as $\lambda_n$ are increasing. Thus, we have the same properties for every $n$, and we can sum on all $n$ in the same way as we have done in the last part of the proof of Proposition \ref{prpStrich1}.
\end{proof}

We are now in a position to show the local existence for \eqref{eqDeltaStrip}. 

\begin{proof}[Proof of Theorem \ref{thmLWP}]
    We define the map 
    \begin{equation*}
        \Theta_{u_0} (u) = e^{it\H} u_0 + i\int_0^t e^{i(t-s)\H} |u(s)|^{p-1}u(s) ds.
    \end{equation*}
    Let $(\alpha,\beta)$ be the Strichartz admissible pair defined in \eqref{eqAlphaBeta}.
    We show that for all $u_0 \in H^1(\Strip_L)$, there exists $T = T(u_0)$, $R = R(u_0) >0$ such that $\Theta_{u_0}$ maps a ball $B_X(0,R)$ of the space
    \begin{equation*}
        X= L^\alpha((-T,T);L^\beta_x H^1_y) \cap L^\alpha ((-T,T);W^{1,\beta}_x L^2_y)
    \end{equation*}
    into itself. We first estimate the nonlinear term.  By H\"older inequality and the embedding $H^1_y \subset L^\infty_y$, we get 
    \begin{equation}
        \| |u|^{p-1}u\|_{L^{\alpha'}_t L^{\beta'}_x H^1_y} \leq \left\|  \| u \|^{p-1}_{L^\infty_y} \| u\|_{H^1_y}\right\|_{L^{\alpha'}_t L^{\beta'}_x} \lesssim   \| u\|_{L^{\alpha}_t L^{\beta}_x H^1_y}   \| u\|_{L^{(p-1)\tilde{\alpha}}_t L^{(p-1)\tilde{\beta}}_x H^1_y}^{p-1}, 
    \end{equation}
    where 
    \begin{equation*}
        \frac{1}{\tilde{\alpha}} + \frac{1}{\alpha} = 1 - \frac{1}{\alpha}, \quad  \frac{1}{\tilde{\beta}} + \frac{1}{\beta} = 1 - \frac{1}{\beta}.
    \end{equation*}
    By a direct computation, one can check that $(p-1)\tilde{\beta} = \beta$ and $(p-1) \tilde{\alpha} < \alpha$. Thus, by \eqref{eqStrichEst1} H\"older inequality in time, we see that there exists $a = a(p) >0$ such that
    \begin{equation}
        \label{eqLoc1}
        \|    \Theta_{u_0} (u) \|_{L^\alpha((-T,T); L^\beta_x H^1_y)} \lesssim \| u_0 \|_{L^2_xH^1_y} + T^{a} \| u \|_{L^\alpha((-T,T); L^\beta_x H^1_y)}^{p}.
    \end{equation}
    Furthermore, arguing in the same way, we have 
    \begin{equation*}
         \| \partial_x (|u|^{p-1} u)\|_{L^{\alpha'}_t L^{\beta'}_x L^2_y} \lesssim   \| \partial_x u\|_{L^{\alpha}_t L^{\beta}_x L^2_y}   \| u\|_{L^{(p-1)\tilde{\alpha}}_t L^{(p-1)\tilde{\beta}}_x H^1_y}^{p-1}.
    \end{equation*}
    Then by \eqref{eqStrichEst2} we have 
    \begin{equation}
         \label{eqLoc2}
        \|    \Theta_{u_0} (u) \|_{L^\alpha((-T,T); W^{1,\beta}_x H^1_y)} \lesssim \| u_0 \|_{H^1} + T^{a}  \|    u \|_{L^\alpha(-T,T), W^{1,\beta}_x H^1_y}  \| u\|_{L^{(p-1)\tilde{\alpha}}((-T,T); L^{(p-1)\tilde{\beta}}_x H^1_y)}^{p-1}.
    \end{equation}
    Thus, we obtain the claim by choosing $T>0$ small enough. 
    
    Next, we show that there exists $0 <T' \leq T$ such that $ \Theta_{u_0}$ is a contraction with respect to the norm $\| \cdot \|_{L^\alpha(-T,T), L^\beta_x L^2_y}$. Let $v_1,v_2 \in B_X(0,R)$ with $R$ given by the previous step. Then we have
    \begin{align*}
\left\| |v_1|^{p-1} v_1 - |v_2|^{p-1} v_2 \right\|_{L^{\alpha'}((-T,T);L^{\beta'}_x L^2_y)} &\lesssim \left\| \left\| v_1 - v_2 \right\|_{L^2_y} \left( \| v_1 \|_{L^\infty_y} + \| v_2 \|_{L^\infty_y} \right)^{p-1}\right\|_{L^{\alpha'}((-T,T);L^{\beta'}_x)} \\
&\lesssim T^{a} R \| v_1 - v_2 
\|_{L^{\alpha'}((-
T,T);L^{\beta'}_xL^2_y)}, 
    \end{align*}
where we have used the H\"older inequality and the Sobolev embedding in the same way as the previous step.

Finally, as $X \subset L^\alpha( (-T, T); L^\beta_x L^2_y)$, it is standard to see that it is complete with respect to the norm $\| \cdot \|_{L^\alpha(-T, T), L^\beta_x L^2_y}$. Thus, by the use of the Banach fixed point argument, there exists a fixed point of $\Theta_{u_0}$, and it is classical to see that it is indeed a $C((-T, T), H^1)$ solution to \eqref{eqDeltaStrip}.
\end{proof}

\section{Pitchfork Bifurcation at the Critical Length }\label{secBifurcation}
The scope of this section is to show that the line soliton $\phi_{\omega,\gamma}$ defined in \eqref{eqGS1D} undergoes a pitchfork bifurcation at a transverse length in which it becomes unstable. Let 
\[
    \phi_{\omega,\gamma}^{(L)}(x,y)
    := \phi_{\omega,\gamma}(x),
    \qquad (x,y)\in\mathbb{S}_{L}.
\] 
be the line soliton trivially extended in the $y$ direction to a function on $\mathbb{S}_{L}$. 
In this section, we show that it undergoes a pitchfork bifurcation at the length
\begin{equation}\label{eqLCritic}
    L_{*} := \frac{\pi}{\sqrt{\lambda_{1}}} >0,
\end{equation}
where $- \lambda_{1} < 0$ is the smallest eigenvalue associated with $\mathcal{L}_{+,0}$ defined in Proposition \ref{prpSpectr}.

After a scaling in the transverse variable $y \mapsto y/L$, \eqref{eqStationary2D} becomes
\begin{equation}
\label{eqS2D-2}
\begin{cases}
H_{L} u + \omega u - |u|^{p-1}u = 0,
& (x,y)\in \mathbb{S}, \\[2mm]
\partial_{\nu}u = 0, 
& \text{on }\partial\mathbb{S}=\mathbb{R}\times\{0,1\},
\end{cases}
\end{equation}
where $\Strip = \R \times [0,1]$ and  \( H_{L,\gamma}
:= -\partial_{xx} - L^{-2}\partial_{yy} + \gamma\delta_{0}(x). \)
Set \(
    \xi_\gamma(x,y) := \sqrt{2} \cos (\pi y) \chi_\gamma(x), 
\)
where $\chi_\gamma(x)$ is defined in \eqref{eqFirstEig}. Since $\gamma$ is fixed throughout this section, we suppress the subscript $\gamma$ in what follows. In these scaled variables,  the reference transverse length is unitary and independent of $L$ and thus, in the following, we use $\phi_{\omega,\gamma}$ and $\phi_{\omega,\gamma}^{(1)}$ interchangeably. Theorem \ref{thm1-bi} follows directly from the following proposition.

\begin{prop}\label{ex-bi}
Let $p \ge 2$. There exist $a_0>0$, a function  \( L \in C^2((-a_0,a_0);\mathbb R^+) \), with $L(0) = L_*$ and a family of positive  
solutions 
\(\varphi(a)\in \mathcal{D}(H_{L})\) of \eqref{eqS2D-2} with $L=L(a)$ for each $a\in(-a_0,a_0)$,  such that the map \( a \mapsto \varphi(a) \)
is of class $C^2$  and there exists $h \in C^2((-a_0,a_0);\mathcal{D}(H_{L})) $ such that for every $a\in(-a_0,a_0)$,
\begin{equation} \label{eq1-varphi}
\varphi(a) = \phi_{\omega,\gamma} 
+ a \xi + h(a),
\end{equation}
where 
\begin{equation}
     \label{eqH2Normh}
     \|h(a)\|_{H^2(\mathbb{S})} = O(a^2)\ \text{as}\ a\to 0.
\end{equation}
\end{prop}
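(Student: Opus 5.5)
We construct the branch by a Lyapunov--Schmidt reduction in the spirit of \cite{KiKePe11Bi, zbMATH06092526}, with $L$ as bifurcation parameter and $\omega,\gamma$ fixed. On $\Strip=\R\times[0,1]$ we work in the real Hilbert spaces $X=\mathcal{D}(H_{L_*})$ (which, for $L$ near $L_*$, does not depend on $L$ as a set, because the $y$-coefficient only rescales the Neumann Laplacian) and $Y=L^2(\Strip)$. Set
\[
F(u,L):=H_L u+\omega u-|u|^{p-1}u ,
\]
and restrict everything to functions that are even in $x$ and even with respect to $y\mapsto 1-y$ composed with the sign change of the first mode; in practice we keep only the cosine structure in $y$. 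By Sobolev embedding $X\subset L^\infty$. For $p\ge2$ the map $u\mapsto |u|^{p-1}u$ is $C^2$ from $X$ to $Y$ near the positive profile, and $F(\phi_{\omega,\gamma},L)=0$ for every $L$.

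The linearization is $\mathcal{L}_L:=\partial_uF(\phi_{\omega,\gamma},L)=\mathcal{L}_{+,0}\otimes I - L^{-2}\partial_{yy}$. Decompose in the basis $\cos(k\pi y)$, $k\ge 0$. By Proposition~\ref{prpSpectr}(b), on the $k$-th mode the spectrum lies in $\{-\lambda_1+k^2\pi^2L^{-2}\}\cup[C_\omega+k^2\pi^2L^{-2},\infty)$. Hence at $L=L_*$ the kernel is exactly $\mathrm{span}\{\xi\}$. The $k=0$ mode is still nonzero because $-\lambda_1\neq0$, and zero is not an eigenvalue of $\mathcal{L}_{+,0}$ since $\gamma<0$ breaks translation invariance. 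Moreover $\mathcal{L}_{L_*}$ is Fredholm of index zero, with range $\{\xi\}^\perp$.

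Let $P$ be the orthogonal projection onto $\xi^\perp$. We seek solutions of the form
\[
u=\phi_{\omega,\gamma}+a\xi+h,\qquad h\in X\cap\xi^\perp,\qquad L=L_*+\mu ,
\]
and split the equation into
\[
PF(\phi+a\xi+h,L_*+\mu)=0,\qquad \langle F(\phi+a\xi+h,L_*+\mu),\xi\rangle=0 .
\]
The operator $P\mathcal{L}_{L_*}$ is invertible on $\xi^\perp$, so the implicit function theorem gives a $C^2$ solution $h=h(a,\mu)$ with $h(0,\mu)=0$. Since $F-\mathcal{L}$ is quadratic in the perturbation, this solution satisfies $\|h\|_{H^2}=O(a^2)$ uniformly for small $\mu$.

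The reduced scalar equation $g(a,\mu)=\langle F(\phi+a\xi+h(a,\mu),L_*+\mu),\xi\rangle$ vanishes identically at $a=0$. We therefore write $g(a,\mu)=a\,\tilde g(a,\mu)$ with
\[
\tilde g(a,\mu)=\int_0^1\partial_a g(sa,\mu)\,ds ,
\]
which is $C^1$. Its initial value is $\tilde g(0,\mu)=\langle\mathcal{L}_{L_*+\mu}\xi,\xi\rangle=-\lambda_1+\pi^2(L_*+\mu)^{-2}$, so
\[
\tilde g(0,0)=0,\qquad \partial_\mu\tilde g(0,0)=-2\pi^2L_*^{-3}\neq0 .
\]
This is the transversality condition. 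The implicit function theorem then yields $\mu=\mu(a)$ of class $C^1$, and we set $L(a)=L_*+\mu(a)$ and $\varphi(a)=\phi+a\xi+h(a,\mu(a))$.

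Three points remain.

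\textbf{Regularity of order $C^2$.} Dividing by $a$ costs one derivative, so the route above only gives $L\in C^1$. To recover $C^2$ I would use the symmetry $y\mapsto1-y$. It maps $\xi$ to $-\xi$ and commutes with $F$, so uniqueness gives $h(-a,\mu)=Sh(a,\mu)$ and $\mu(-a)=\mu(a)$, whence $\mu'(0)=0$. Then I would either use the extra smoothness of $F$ when $p\ge3$, or, for $2\le p<3$, check by hand that $\partial_a^2\tilde g$ exists. The latter works because the third derivative of the nonlinearity only appears integrated against the exponentially decaying $\phi$ and $\chi$, which supports a dominated-convergence argument.

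\textbf{Positivity.} Since $\phi_{\omega,\gamma}$ has exponential decay but no uniform positive lower bound, positivity of $\varphi(a)$ is not automatic. I would argue in two steps:
\begin{itemize}
\item On compact sets in $x$, positivity follows from $\|a\xi+h\|_{L^\infty}\to0$.
\item In the tails, $\chi$ decays like $e^{-\sqrt{\omega+\lambda_1}|x|}$, faster than $\phi\sim e^{-\sqrt\omega|x|}$. The same holds for $h$, which solves an equation with decaying coefficients. A comparison or maximum-principle argument in the spirit of \cite{BeNi90} then keeps the sum positive.
\end{itemize}
Alternatively, one can use that $\varphi(a)$ is a solution close to a positive function and apply the strong maximum principle to $(H_L+\omega-|\varphi|^{p-1})\varphi^-$.

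\textbf{Main obstacle.} I expect the hardest steps to be this uniform tail control for positivity and the $C^2$ regularity in the range $2\le p<3$.
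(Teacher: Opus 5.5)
Your overall architecture is the paper's: a Lyapunov--Schmidt reduction with $L$ as the parameter, division of the bifurcation function by $a$, and transversality constant $-2\pi^2L_*^{-3}=\lambda'_*$. However, the regularity claim you start from is false at the endpoint $p=2$, and this breaks the existence step there, not just the $C^2$ upgrade. For $f(s)=|s|s$ one has $f''(s)=2\,\mathrm{sgn}\,s$, which jumps at $0$. Since $\phi_{\omega,\gamma}\to0$ as $|x|\to\infty$, take a perturbation $v$ of size $\varepsilon$ in $X$, supported where $\phi_{\omega,\gamma}\ll\varepsilon$. For a unit bump $w$ at the same place, $\|2(|\phi_{\omega,\gamma}+v|-\phi_{\omega,\gamma}-v)\,w\|_{L^2}\gtrsim\varepsilon$. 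So $u\mapsto 2|u|$ is not differentiable at $\phi_{\omega,\gamma}$ into $\mathcal{L}(X,Y)$, and $u\mapsto|u|u$ is only $C^1$ there. This is exactly why Lemma~\ref{lem1-bi} asserts $h\in C^2$ only for $p>2$. At $p=2$ you therefore only have $h,g\in C^1$, so $\tilde g(a,\mu)=\int_0^1\partial_a g(sa,\mu)\,ds$ is merely continuous. Your second implicit function theorem, however, needs $\partial_\mu\tilde g$, i.e.\ $\partial_\mu\partial_a g$, to exist and be continuous near $(0,0)$. Your dominated-convergence plan cannot supply this, since $f'''$ does not exist at $p=2$.

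The paper treats $p=2$ separately in the proof of Proposition~\ref{prop1-bi}. It computes the limits of $\partial_a g$ and $\partial_L g$ as $(a,L)\to(0,L_*)$ directly: at each fixed point, $\phi_{\omega,\gamma}+a\xi+h(a,L)>0$ once $(a,L)$ is close enough, so $|u|u=u^2$ pointwise, and dominated convergence plus the resolvent identity for $\partial_a h$ finish the job. Within your framework a cleaner repair is to run the whole construction at $p=2$ with the polynomial nonlinearity $u^2$, which is $C^\infty$ from $X$ to $Y$. Testing $(H_L+\omega)u=u^2$ against $u^-$, the quadratic form $Q_L$ of $H_L+\omega$ satisfies $Q_L(u^-)=-\int u^2u^-\le0$. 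Coercivity ($Q_L\ge(\omega-\gamma^2/4)\|\cdot\|_{L^2}^2$) then forces $u^-=0$, so the smooth branch solves the original equation. For $2<p<3$ your existence step is fine, but the $C^2$ upgrade is only announced. Asking for $\partial_a^2\tilde g$ on a neighbourhood brings in $\partial_a^3h$, which is unavailable for $p<3$. The paper instead only computes $\lim\frac1a\partial_a g(a,L)$; there the top-order derivatives of $h$ drop out, because $\xi$ is an eigenvector of the self-adjoint $\mathcal{L}_+(0,L)$ and $\partial_a^kh\perp\xi$.

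On positivity, your first route rests on a false claim. The $y$-independent Fourier mode of $h$ is governed by $-\partial_{xx}+\gamma\delta_0+\omega$ with a rapidly decaying source, so in general it decays only like $e^{-\sqrt\omega|x|}$ (with $O(a^2)$ amplitude), at the same rate as $\phi_{\omega,\gamma}$, not faster. Moreover, Lemma~\ref{lem-exp-decay} presupposes positivity, so quoting it here is circular. Your second route works and is simpler than the paper's. Testing the equation against $\varphi^-$ gives $Q_L(\varphi^-)=\int(\varphi^-)^{p+1}\le\|a\xi+h\|_{L^\infty}^{p-1}\|\varphi^-\|_{L^2}^2$, because $\varphi^-\le|a\xi+h|$ pointwise. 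For small $|a|$ this contradicts coercivity unless $\varphi^-=0$. Strict positivity then follows, since a nonnegative nontrivial element of the kernel of $H_L+\omega-\varphi^{p-1}$ is its ground state. The paper instead perturbs the ground state of $\mathcal{L}_-(a)$ from $a=0$, which tacitly uses that $0$ is a simple isolated eigenvalue of $\mathcal{L}_-(0)$.

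The rest of your proposal is sound and in places cleaner than the paper. The identity $\tilde g(0,\mu)=-\lambda_1+\pi^2(L_*+\mu)^{-2}$ gives transversality at once. The reflection $y\mapsto1-y$ gives evenness of $\mu$, where the paper uses $\int_0^1\cos^3(\pi y)\,dy=0$. Finally, $\|h\|=O(a^2)$ follows directly from $P\mathcal{L}_L\xi=0$ and the quadratic remainder, without the $C^2$ regularity the paper relies on for this bound.
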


To prove Proposition~\ref{ex-bi}, we employ 
the Lyapunov-Schmidt reduction.
We define 
\begin{equation} \label{eq51-bi}
\mathcal{F} : \mathbb{R}^{+} \times \mathcal{D}(H_{L}) \to L^{2}(\mathbb{S}), 
\qquad 
\mathcal{F}(L,u) := H_{L}u + \omega u - |u|^{p-1} u.
\end{equation}
It is clear that for each $L>0$ and $p>2$, 
$\mathcal{F}(L, \cdot)$ is $C^{2}$.

For each $n\in \mathbb{N}$, set $\theta_0^{(1)}(y) :=1$ and $\theta_n^{(1)}(y) := \sqrt{2}\,\cos(n\pi y), \ \forall n \in \N^*.$
Then any $f \in H^{1}(\mathbb{S})$ admits the expansion
\[
f(x,y) = \sum_{n=0}^{\infty} f_n(x) \theta_n^{(1)}(y),
\qquad 
f_n(x) := \int_0^1 f(x,y) \theta_n^{(1)}(y)\, dy.
\]
It follows that 
\begin{equation}\label{eq6-bi}
D_u \mathcal{F}\bigl(L, \phi_{\omega,\gamma}\bigr)f 
= \sum_{n=0}^{\infty} \biggl(\mathcal{L}_{+,0} + \Bigl(\frac{n\pi}{L}\Bigr)^2 \biggr) f_n(x)\, \theta_n^{(1)}(y).
\end{equation}
From \eqref{eq36-bi}, \eqref{eq6-bi} and 
$\xi(x, y) = 
\sqrt{2} \cos (\pi y) \chi(x) = \theta_{1}(y) \chi(x)$, 
we obtain that for any $L >0$,
\begin{equation}
   \label{eq1-bi}
  D_u \mathcal{F}\bigl(L, \phi_{\omega,\gamma}\bigr) \xi = \left( - \lambda_1 + \frac{\pi^2}{L^2} \right) \xi \implies \mathrm{Ker}\, D_u \mathcal{F}\bigl(L_{*}, \phi_{\omega,\gamma}\bigr) 
= \mathrm{Span}\{\xi\}.
\end{equation}
Notice that $\mathrm{Ker}\, D_u \mathcal{F}\bigl(L_{*}, \phi_{\omega,\gamma}\bigr) $ is simple. 
We define the function spaces
\begin{align*}
L^2_{\text{ort}} := 
\Bigl\{ u \in L^{2}(\mathbb{S}) \colon 
\langle u, \xi \rangle = 0 \Bigr\}, \quad H^2_{\text{ort}} :=  \mathcal{D}(H_{L}) \cap L^2_{\text{ort}}
\end{align*}
Notice that $\phi_{\omega,\gamma} \in H^2_{\text{ort}}$.  We seek a solution to $\mathcal{F}(L,u)=0$ of the form
\begin{equation}\label{eqSolForm}
    (L,u) = \bigl(L_{*} 
    \pm \delta,\, \phi_{\omega,\gamma} + a \xi + h\bigr),
\end{equation}
with $\delta>0$, $a\in \mathbb{R}$, and $h \in H^2_{\text{ort}}$. 

Let $P_{\perp}:L^2(\mathbb{S}) \to L^2_{\text{ort}}$ denote the orthogonal projection
\[
P_{\perp} u := u - \langle u, \xi \rangle \,\xi,
\]
and let
\[
\mathcal{F}_{\perp} : \R \times \mathbb{R}^{+}  \times H^2_{\text{ort}} \to L^{2}(\mathbb{S}), \qquad \mathcal{F}_{\perp}(a,L,h) := P_{\perp} \mathcal{F}\bigl(L, \phi_{\omega,\gamma} + a \xi + h\bigr).
\]

We now consider the \textit{auxiliary problem}, that is, we show that there exists
\(
h:(-a_0,a_0) \times \mathbb{R}^+ \to H^2_{\text{ort}}, \)
such that for any $(a,L)$ in a sufficiently small neighborhood of $(0,L^*)$ we have
\begin{equation}
    \label{eqAuxProb}
    \mathcal{F}_{\perp}(a,L,h(a,L)) = 0.
\end{equation}

Observe that for any $L >0$, 
\begin{equation} \label{eq4-bi}
\mathcal{F}_{\perp} (0, L, 0) 
= P_{\perp} \mathcal{F}(L, \phi_{\omega, \gamma}) = 0.
\end{equation}
By \eqref{eq51-bi}, the Fr\'echet derivative with respect to $h$ reads
\[
D_{h} \mathcal{F}_{\perp}(a, L, h) = H_L + \omega - p \bigl| \phi_{\omega, \gamma} + a \xi + h \bigr|^{p-1},
\]
and therefore, as an operator restricted to $H^2_{\mathrm{ort}}$, 
\begin{equation} \label{eq2-bi}
\mathrm{Ker}\; D_{h} \mathcal{F}_{\perp}(0, L_{*}, 0)|_{H^2_{\mathrm{ort}}}
= \mathrm{Ker}\; P_{\perp} D_{u} \mathcal{F}(L_{*}, \phi_{\omega, \gamma})|_{H^2_{\mathrm{ort}}} = \{0\}.
\end{equation}
It follows from the Fredholm alternative (see Appendix \ref{AppFredholm}) that
\begin{equation} \label{eq3-bi}
\mathrm{Ran}\; D_{h} \mathcal{F}_{\perp}(0, L_{*}, 0)
= \mathrm{Ran}\; D_{u} \mathcal{F}(L_{*}, \phi_{\omega, \gamma})|_{H^2_{\mathrm{ort}}} = L^2_{\mathrm{ort}}.
\end{equation}
From \eqref{eq2-bi} and \eqref{eq3-bi}, we conclude that
\(
D_{h} \mathcal{F}_{\perp}(0, L_{*}, 0) \colon H^2_{\mathrm{ort}} \to L^2_{\mathrm{ort}}
\)
is bijective. 

Applying the implicit function theorem, we then obtain the following result.

\begin{lem} \label{lem1-bi}
Let $p \geq 2$. Then there exist some constants $a_{0}, \delta_{0}, r_{0} > 0$ and a $C^{1}$-map
\[
h: (-a_{0}, a_{0}) \times (L_* - \delta_0, L_* + \delta_0) \to H^2_{\mathrm{ort}} \cap \left\{ u \in H^{2}(\mathbb{S}) \colon \|u\|_{H^{2}(\mathbb{S})} < r_{0} \right\}
\]
such that 
\[
\mathcal{F}_{\perp}(a, L, h) = 0 \quad \iff \quad h = h(a, L).
\]
Moreover, if $p >2$ then $h$ is $C^2$.
\end{lem}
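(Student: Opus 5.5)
We apply the implicit function theorem to $\mathcal F_\perp$ near $(0,L_*,0)$. The work is to check its hypotheses: the map must be $C^1$ (resp. $C^2$) jointly in $(a,L,h)$, it vanishes at the base point by \eqref{eq4-bi}, and $D_h\mathcal F_\perp(0,L_*,0)$ must be an isomorphism $H^2_{\mathrm{ort}}\to L^2_{\mathrm{ort}}$.

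One point must be fixed first: $\mathcal D(H_L)$ has to be independent of $L$. This holds because $-L^{-2}\partial_{yy}$ does not change the transmission condition at $x=0$ or the Neumann conditions. So $\mathcal D(H_L)$ is the fixed space of $H^2$ functions on each half-strip satisfying the Neumann conditions and the jump condition $\partial_x u(0^+,y)-\partial_x u(0^-,y)=\gamma u(0,y)$. We equip it with the $H^2$ norm on each half-strip. The linear part $(L,u)\mapsto H_Lu+\omega u$ then equals $(-\partial_{xx}+\gamma\delta_0+\omega)u - L^{-2}\partial_{yy}u$. This is a bounded operator into $L^2$ whose dependence on $L$ enters only through the factor $L^{-2}$, so it is $C^\infty$ on $\mathbb R^+\times\mathcal D(H_L)$. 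The map $a\mapsto a\xi$ is linear, and $\xi\in\mathcal D(H_L)$ because $\chi\in\mathcal D(-\partial_{xx}+\gamma\delta_0)$.

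The nonlinearity $N(u)=|u|^{p-1}u$ on real functions is the delicate part. In two dimensions, $H^2$ on each half-strip embeds into $L^\infty\cap L^2$, and this space is a Banach algebra with respect to $L^2$ products. For $p\ge2$, $N$ is a $C^1$ function of one real variable with $N'(s)=p|s|^{p-1}$ locally Lipschitz. The standard Nemytskii argument then shows $u\mapsto N(u)$ is $C^1$ from $L^\infty\cap L^2$ to $L^2$, with derivative $v\mapsto p|u|^{p-1}v$. Continuity of the derivative follows from $\||u|^{p-1}-|w|^{p-1}\|_{L^\infty}\to0$ as $\|u-w\|_{L^\infty}\to0$, which uses $p-1\ge1$. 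For $p>2$, $N''(s)=p(p-1)|s|^{p-3}s$ is continuous, and the same argument gives $C^2$. For $p=2$ this fails because $N''$ jumps at $0$, which is why the lemma claims only $C^1$ in that case. Since $P_\perp$ is bounded and linear, $\mathcal F_\perp$ is $C^1$, and $C^2$ when $p>2$, jointly in $(a,L,h)$.

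For invertibility I will not rely on \eqref{eq2-bi}, \eqref{eq3-bi} alone; I will verify bijectivity directly from the mode decomposition \eqref{eq6-bi}. On the $n$-th transverse mode, the operator $D_h\mathcal F_\perp(0,L_*,0)$ acts as $\mathcal L_{+,0}+(n\pi/L_*)^2$. By Proposition~\ref{prpSpectr}\,b) with $\gamma<0$, this is bounded below by $C_\omega>0$ for $n=0$ on the complement of $\chi$. Indeed, for $n=0$ the component along $\chi\theta_0$ has eigenvalue $-\lambda_1\neq0$, so it is invertible on the whole mode, while $\phi_{\omega,\gamma}$ sits in mode $0$. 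For $n\ge2$ the symbol is at least $4\lambda_1-\lambda_1>0$. For $n=1$ the operator is invertible on $\{\chi\}^\perp$, where it is bounded below by $C_\omega+\lambda_1$, and $\chi\theta_1=\xi$ is exactly what $P_\perp$ removes.

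This gives a uniform lower bound on the spectrum restricted to $L^2_{\mathrm{ort}}$, so the inverse is bounded $L^2_{\mathrm{ort}}\to L^2_{\mathrm{ort}}$. Elliptic regularity for $H_{L_*}$ upgrades it to $L^2_{\mathrm{ort}}\to H^2_{\mathrm{ort}}$: if $(H_{L_*}+\omega)u\in L^2$ then $u\in\mathcal D(H_{L_*})$ with graph-norm control, because the potential term $p\phi^{p-1}$ is bounded.

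The implicit function theorem then provides $a_0,\delta_0,r_0$ and a unique $C^1$ map $h(a,L)$ with $h(0,L)=0$ by \eqref{eq4-bi}, and it is $C^2$ when $p>2$. I expect the main obstacle to be the Nemytskii regularity at the threshold $p=2$, together with checking that the $L$-dependence of the domain causes no trouble. The fixed-domain observation above is exactly what disposes of the second issue.
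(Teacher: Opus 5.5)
Your proposal is correct. It follows the same overall route as the paper, namely the implicit function theorem for $\mathcal F_\perp$ at $(0,L_*,0)$, but you check its hypotheses differently and more completely.

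For the bijectivity of $D_h\mathcal F_\perp(0,L_*,0)\colon H^2_{\mathrm{ort}}\to L^2_{\mathrm{ort}}$, the paper combines the trivial kernel \eqref{eq2-bi} with a Fredholm-alternative argument in Appendix~\ref{AppFredholm}. There, Weyl's theorem makes $0$ isolated in the spectrum, so the range is closed and equals the orthogonal complement of the kernel. You instead read off from the mode decomposition \eqref{eq6-bi} and Proposition~\ref{prpSpectr} that the spectrum on $L^2_{\mathrm{ort}}$ stays at distance $\min\{\lambda_1,C_\omega\}$ from $0$, uniformly in $n$. This gives the bounded inverse with an explicit bound and needs no essential-spectrum argument. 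When you speak of a ``uniform lower bound on the spectrum'', it should be a gap around $0$ rather than a positive lower bound, since mode $0$ still carries $-\lambda_1$.

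You also supply what the paper dismisses as clear. First, $\mathcal D(H_L)$ is independent of $L$, which is what makes joint $C^1$ (resp.\ $C^2$) regularity in $(a,L,h)$ meaningful. Second, you give the Nemytskii estimates behind the $C^1$/$C^2$ split at $p=2$.

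Finally, reading the $H^2$ norm as the $H^2$ norm on each half-strip is the correct interpretation. For $\gamma\neq0$, the jump condition $\partial_x u(0^+,y)-\partial_x u(0^-,y)=\gamma u(0,y)$ means that elements of $\mathcal D(H_L)$ with nonzero trace on $\{x=0\}$ are not in $H^2(\mathbb S)$; $\phi_{\omega,\gamma}$ itself is an example. The norm in the statement therefore has to be understood in this piecewise sense.
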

It follows from \eqref{eq4-bi} and Lemma \ref{lem1-bi} that, for any $L \in (L_{*} - \delta_{0}, L_{*} + \delta_{0})$, 
\begin{equation} \label{eq7-bi}
h(0, L) = 0.
\end{equation}
Thus, we define
\begin{equation} \label{eq31-bi}
\varphi(a, L) := \phi_{\omega, \gamma} + a \, \xi + h(a, L).
\end{equation}
Now we compute the derivatives of $ h$. This is done exploiting the invertibility of  $D_h \mathcal{F}_\perp(L,a,h)  =  D_{u} \mathcal{F} (L, \varphi(a, L))$ for $(a,L)$ in a neighborhood of $(0,L_*)$, which follows by continuity and Lemma \ref{lem1-bi} (up to restricting $a_0$ and $\delta_0$ if needed). 

\begin{lem}[First derivatives of $h$] \label{lem3-bi}
For any $p \geq 2$, we have
\begin{equation} \label{eq14-bi}
\frac{\partial h}{\partial L}(0, L_*) 
= \frac{\partial h}{\partial a}(0, L_*) 
= 0.
\end{equation}
\end{lem}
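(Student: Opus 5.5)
We plan to differentiate the identity $\mathcal{F}_{\perp}(a,L,h(a,L))=0$ from Lemma~\ref{lem1-bi} with respect to $L$ and with respect to $a$, evaluate at $(0,L_*)$, and use the bijectivity of $D_h\mathcal{F}_\perp(0,L_*,0)\colon H^2_{\mathrm{ort}}\to L^2_{\mathrm{ort}}$ established just before Lemma~\ref{lem1-bi}. Since $h$ is $C^1$ for every $p\geq 2$, the chain rule applies in both cases.

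\emph{Derivative in $L$.} By \eqref{eq7-bi}, $h(0,L)=0$ for every $L\in(L_*-\delta_0,L_*+\delta_0)$. Differentiating this identity in $L$ directly gives $\partial_L h(0,L)=0$, and in particular $\partial_L h(0,L_*)=0$. As a consistency check, the chain rule also gives
\[
\partial_L\mathcal{F}_\perp(0,L_*,0)+D_h\mathcal{F}_\perp(0,L_*,0)\,\partial_Lh(0,L_*)=0 .
\]
Here $\partial_L\mathcal{F}_\perp(0,L,0)=P_\perp\bigl(2L^{-3}\partial_{yy}\phi_{\omega,\gamma}\bigr)=0$, because $\phi_{\omega,\gamma}$ does not depend on $y$. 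Injectivity of $D_h\mathcal{F}_\perp$ then yields the same conclusion.

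\emph{Derivative in $a$.} Differentiating $\mathcal{F}_\perp(a,L_*,h(a,L_*))=0$ at $a=0$ gives
\[
P_\perp D_u\mathcal{F}(L_*,\phi_{\omega,\gamma})\bigl(\xi+\partial_a h(0,L_*)\bigr)=0 .
\]
By \eqref{eq1-bi}, $D_u\mathcal{F}(L_*,\phi_{\omega,\gamma})\xi=(-\lambda_1+\pi^2/L_*^2)\xi=0$ by the definition \eqref{eqLCritic} of $L_*$; in any case its $P_\perp$-projection vanishes because it is a multiple of $\xi$. So $D_h\mathcal{F}_\perp(0,L_*,0)\,\partial_a h(0,L_*)=0$ with $\partial_a h(0,L_*)\in H^2_{\mathrm{ort}}$, and injectivity from \eqref{eq2-bi} gives $\partial_a h(0,L_*)=0$.

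The only delicate point is justifying the chain rule when $p=2$, where the nonlinearity $u\mapsto|u|^{p-1}u=|u|u$ is only $C^1$ (not $C^2$) from $H^2(\mathbb S)$ into $L^2(\mathbb S)$. For the first derivatives $C^1$ suffices. We would check that $u\mapsto|u|u$ is $C^1$ from $H^2(\mathbb S)$ to $L^2(\mathbb S)$, with derivative $v\mapsto 2|u|v$ in the real setting, using the embedding $H^2(\mathbb S)\subset L^\infty(\mathbb S)$. This is already implicit in Lemma~\ref{lem1-bi}, so no new obstacle arises.
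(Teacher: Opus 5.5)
Your proof is correct and follows the paper's argument. You differentiate $\mathcal{F}_\perp(a,L,h(a,L))=0$, evaluate at $(0,L_*)$ using $h(0,L)=0$, $\partial_{yy}\phi_{\omega,\gamma}=0$ and $P_\perp D_u\mathcal{F}(L_*,\phi_{\omega,\gamma})\xi=0$, and then invert $P_\perp D_u\mathcal{F}(L_*,\phi_{\omega,\gamma})|_{H^2_{\mathrm{ort}}}$. The only difference is that you read off $\partial_L h(0,L_*)=0$ directly from $h(0,\cdot)\equiv 0$ in \eqref{eq7-bi}, which is slightly more economical than the paper's inversion argument (you keep that argument as a consistency check).
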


\begin{proof}
    Observe that $\mathcal{F}_\perp(L,a,h(a, L)) = 0$ implies 
    \begin{equation} \label{eq24-bi}
    \begin{split}
        0 = 
        \frac{\partial 
        \mathcal{F}_\perp}{\partial L}(L,a,h(a, L)) & = D_h \mathcal{F}_\perp(L,a,h) \frac{\partial h}{\partial L}
        (a, L) + \frac{\partial 
        \mathcal{F}_\perp}{\partial L}(L,a,h) \\
         & =   
        P_{\perp} D_u
        \mathcal{F}(L, \varphi(a, L)) 
         \frac{\partial h}{\partial L}(a, L) 
        + \frac{2}{L^{3}} 
        P_{\perp} \partial_{yy} 
        \varphi(a, L), 
    \end{split}
    \end{equation}
    \begin{equation} \label{eq23-bi}
    \begin{split}
        0 &= \frac{\partial}{\partial a} 
        \mathcal{F}_\perp (L,a,h) = \frac{\partial}{\partial a} 
        P_\perp \mathcal{F}(L,\varphi(a,L))  = D_u 
        \mathcal{F}_\perp(L,a,h)
        \frac{\partial \varphi}
        {\partial a}(a, L)  \\
        &  =   
        P_{\perp} D_{u}
        \mathcal{F}(L, \varphi(a, L)) 
        \frac{\partial h}{\partial a}
        (a, L) + P_{\perp} D_{u}
        \mathcal{F}
        (L, \varphi(a, L)) \xi.
    \end{split}
    \end{equation}
 From \eqref{eq7-bi}, $\partial_{yy} 
 \phi_{\omega, \gamma} = 0$ and 
 $(\mathcal{L}_{+, 0}- \frac{1}{L_{*}^2} 
 \partial_{yy})\xi = 0$ (see \eqref{eq1-bi}), 
 it follows that
    \begin{align*}
        &\frac{\partial h}{\partial L}(0, L_*)  = - \frac{2}{L_{*}^{3}}
    (P_{\perp} D_{u} \mathcal{F} (L_*, \phi_{\omega, \gamma})|_{H^2_{\text{ort}}})^{-1} 
    P_{\perp} \partial_{yy} 
    \phi_{\omega, \gamma}
    = 0,  \\
    &\frac{\partial h}{\partial a}(0, L_*)  = - (P_{\perp} 
    D_{u} \mathcal{F}(L_*, 
    \phi_{\omega, \gamma})
    |_{H^2_{\text{ort}}})^{-1} P_{\perp} 
    (\mathcal{L}_{+, 0} - 
    \frac{1}{L_{*}^2} 
    \partial_{yy})
    \xi = 0. 
    \end{align*}
\end{proof}
\begin{lem}[Second derivatives of 
$h$]\label{lem4-bi}
For any $p >2$,  we obtain 
    \begin{equation}\label{eq41-bi}
    \frac{\partial^{2} h}
    {\partial L^{2}}(0, L_*) 
    = \frac{\partial^{2} h}
    {\partial L \partial a}
    (0, L_*) = 0 
    \end{equation}
    and 
    
     \begin{equation} \label{eq26-bi}
     \begin{split}
    \frac{\partial^{2} h}
    {\partial a^{2}}(0, L_*) 
    & = p (p-1) 
     (P_{\perp} D_{u} 
    F(L, \phi_{\omega, 
    \gamma})_{|H^2_{\text{ort}}})^{-1} 
    \phi_{\omega, \gamma}^{p-2} \xi^2 \\
    & = p (p-1) (P_{\perp} (\mathcal{L}_{+, 0}
    - \frac{1}{L_{*}^2} \partial_{yy}) |_{H^2_{\text{ort}}})^{-1} 
    \phi_{\omega, \gamma}^{p-2} 
    \xi^2. 
    \end{split}
    \end{equation}   
    
\end{lem}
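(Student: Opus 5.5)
Differentiate the identities \eqref{eq24-bi} and \eqref{eq23-bi} once more and evaluate at $(a,L)=(0,L_*)$, using Lemma~\ref{lem3-bi} and \eqref{eq7-bi} to remove the lower-order terms. Set $A:=P_\perp D_u\mathcal F(L_*,\phi_{\omega,\gamma})|_{H^2_{\mathrm{ort}}}$, which is invertible by \eqref{eq2-bi}–\eqref{eq3-bi}. For $p>2$ the map $u\mapsto |u|^{p-1}u$ is $C^2$ from $H^2(\mathbb S)$ to $L^2(\mathbb S)$, since $H^2\subset L^\infty$ in two dimensions. Its second derivative at a positive real function $\phi$ is $(v,w)\mapsto p(p-1)\phi^{p-2}vw$, and $\phi_{\omega,\gamma}^{p-2}$ is bounded because $p\geq 2$. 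So $h$ is $C^2$ by Lemma~\ref{lem1-bi} and the chain rule applies.

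\emph{The $L^2$ derivative.} Differentiating \eqref{eq24-bi} in $L$ gives
\[
0 = P_\perp D_u^2\mathcal F(L,\varphi)[\partial_L\varphi,\partial_L h] + P_\perp D_u\mathcal F(L,\varphi)\,\partial_L^2 h + \partial_L\bigl(D_u\mathcal F\bigr)\partial_L h - \tfrac{6}{L^4}P_\perp\partial_{yy}\varphi + \tfrac{2}{L^3}P_\perp\partial_{yy}\partial_L\varphi .
\]
At $(0,L_*)$ we have $\partial_L h=0$, hence $\partial_L\varphi=\partial_L h=0$. Also $\varphi=\phi_{\omega,\gamma}$ and $\partial_{yy}\phi_{\omega,\gamma}=0$. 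Every term except $A\,\partial_L^2h(0,L_*)$ therefore vanishes, and $\partial_L^2h(0,L_*)=0$. (Alternatively, $h(0,L)=0$ for all $L$ by \eqref{eq7-bi}, so all pure $L$-derivatives vanish at $a=0$ directly.)

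\emph{The mixed derivative.} Differentiating \eqref{eq23-bi} in $L$ gives
\[
0=\partial_L\bigl[P_\perp D_u\mathcal F(L,\varphi(a,L))\bigr](\partial_a h+\xi)+P_\perp D_u\mathcal F(L,\varphi)\,\partial_L\partial_a h .
\]
The derivative of $L\mapsto D_u\mathcal F(L,\varphi(a,L))$ is
\[
\tfrac{2}{L^3}\partial_{yy}-p(p-1)\varphi^{p-2}\partial_L\varphi\,\cdot .
\]
At $(0,L_*)$ the second piece vanishes because $\partial_L\varphi=0$. With $\partial_a h=0$ we are left with
\[
A\,\partial_L\partial_a h=-\tfrac{2}{L_*^3}P_\perp\partial_{yy}\xi=\tfrac{2\pi^2}{L_*^3}P_\perp\xi=0,
\]
since $\partial_{yy}\xi=-\pi^2\xi$ and $P_\perp\xi=0$. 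Injectivity of $A$ then gives $\partial_L\partial_a h(0,L_*)=0$.

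\emph{The $a^2$ derivative.} Differentiating \eqref{eq23-bi} in $a$ gives
\[
0=P_\perp D_u^2\mathcal F(L,\varphi)[\partial_a\varphi,\partial_a\varphi]+P_\perp D_u\mathcal F(L,\varphi)\,\partial_a^2h .
\]
At $(0,L_*)$, $\partial_a\varphi=\xi+\partial_ah=\xi$, and $D_u^2\mathcal F(L,\phi)[\xi,\xi]=-p(p-1)\phi_{\omega,\gamma}^{p-2}\xi^2$, which is real since $\xi$ and $\phi$ are real. Hence $A\,\partial_a^2h(0,L_*)=p(p-1)P_\perp(\phi_{\omega,\gamma}^{p-2}\xi^2)$, and inverting $A$ yields \eqref{eq26-bi}. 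The projection is in fact harmless: $\langle\phi^{p-2}\xi^2,\xi\rangle=2\sqrt2\int\phi^{p-2}\chi^3\,dx\int_0^1\cos^3(\pi y)\,dy=0$, so $\phi^{p-2}\xi^2\in L^2_{\mathrm{ort}}$. The second equality in \eqref{eq26-bi} is just \eqref{eq6-bi} at $L=L_*$.

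\emph{Main obstacle.} The delicate step is justifying the twice Fréchet differentiability of the nonlinearity from $H^2$ (domain of $H_L$) into $L^2$ near $\phi_{\omega,\gamma}$, together with the explicit second derivative. I would handle this by a pointwise Taylor expansion of $|u|^{p-1}u$ using $H^2\hookrightarrow L^\infty$. The same smoothness is what licenses differentiating the identity $\mathcal F_\perp(a,L,h(a,L))=0$ twice.
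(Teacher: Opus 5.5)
Your proposal is correct and follows the paper's proof. Both differentiate \eqref{eq24-bi} and \eqref{eq23-bi} once more and evaluate at $(0,L_*)$ using Lemma~\ref{lem3-bi}, \eqref{eq7-bi} and the invertibility of $P_\perp D_u\mathcal F(L_*,\phi_{\omega,\gamma})|_{H^2_{\mathrm{ort}}}$, then use $\int_0^1\cos^3(\pi y)\,dy=0$ to see that $\phi_{\omega,\gamma}^{p-2}\xi^2\in L^2_{\mathrm{ort}}$. Your deviations change nothing of substance: you obtain the mixed derivative from \eqref{eq23-bi} differentiated in $L$ rather than from \eqref{eq24-bi} differentiated in $a$, and you use the shortcut $h(0,L)\equiv 0$ for $\partial_L^2 h$; both are equivalent by symmetry of second derivatives of the $C^2$ map $h$, and your bookkeeping of the explicit $L$-dependence of $D_u\mathcal F$ is in fact slightly more careful than the paper's.
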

\begin{proof}
Differentiating \eqref{eq24-bi} with 
respect to $L$, we obtain 
    \[
    \begin{split}
0 & = P_{\perp} D_{u}^2
        F(L, \varphi(a, L))
        \left(\frac{\partial h}{\partial 
        L}(a, L)\right)^2 
        +  P_{\perp} D_{u}
        F(L, \varphi(a, L)) \frac{\partial^2 h}
        {\partial L^2} (a, L) 
        + 
        \frac{2}{L^{3}} P_{\perp} 
        \partial_{yy} 
        \frac{\partial h}{\partial L}
        (L, a) \\
        & \quad - \frac{6}{L^{4}} 
        P_{\perp} \partial_{yy} 
        \varphi(a, L)
        \\
        & = 
        - p(p-1) P_{\perp}
        |\varphi(a, L)|^{p-2}
        \left(\frac{\partial h}{\partial 
        L}(a, L)\right)^2 
        +  P_{\perp} D_{u}
        F(L, \varphi(a, L)) \frac{\partial^2 h}
        {\partial L^2}(a, L)
       + 
       \frac{2}{L^{3}} P_{\perp} 
        \partial_{yy} 
        \frac{\partial h}{\partial L}
        (L, a)
        \\
        & \quad - \frac{6}{L^{4}} 
        P_{\perp}  
        \partial_{yy} \varphi(a, L).  
    \end{split}
    \] 
Similarly, 
differentiating \eqref{eq24-bi} with 
respect to $a$, we obtain 
    \[
    \begin{split}
0 & = 
P_{\perp} D^2_{u}
        F(L, \varphi(a, L)) 
       \xi
        \frac{\partial h}{\partial L}(a, L) 
        + 
P_{\perp} D^2_{u}
        F(L, \varphi(a, L)) 
        \frac{\partial h}{\partial a}(a, L) 
        \frac{\partial h}{\partial L}(a, L)
         \\
        & \quad 
        +  P_{\perp} D_{u}
        F(L, \varphi(a, L)) \frac{\partial^2 h}
        {\partial L \partial a}
        (a, L)
        + \frac{2}{L^3} 
        P_{\perp} 
        \frac{\partial \varphi_{yy}}
        {\partial a} (a, L) 
        \\
        & = 
       P_{\perp} D^2_{u}
        F(L, \varphi(a, L)) 
       \xi
        \frac{\partial h}{\partial L}(a, L)
        -p (p-1) P_{\perp} 
        |\varphi(a, L)|^{p-2} 
        \frac{\partial h}{\partial a}(a, L) 
        \frac{\partial h}{\partial L}(a, L)
          \\
        & \quad 
        + P_{\perp} D_{u}
        F(L, \varphi(a, L)) \frac{\partial^2 h}
        {\partial L \partial a} 
        (a, L)
        + \frac{2}{L^{3}} 
        P_{\perp}  
        (- \xi + \frac{\partial h_{yy}}{\partial 
        a}(a, L)) \\
        & = 
        P_{\perp} D^2_{u}
        F(L, \varphi(a, L)) 
       \xi
        \frac{\partial h}{\partial L}(a, L)
        -p (p-1) P_{\perp} 
        |\varphi(a, L)|^{p-2} 
        \frac{\partial h}{\partial a}(a, L) 
        \frac{\partial h}{\partial L}(a, L) \\
         & \quad + P_{\perp} D_{u}
        F(L, \varphi(a, L)) \frac{\partial^2 h}
        {\partial L \partial a} 
        (a, L)
    \end{split}
    \]
Here we have used that $P_{\perp} \xi 
= P_{\perp} \frac{\partial h_{yy}}{\partial 
        a} = 0$.  
Thus, we see from \eqref{eq14-bi} 
that \eqref{eq41-bi} holds. 
   Finally,  differentiating \eqref{eq23-bi} in $a$, we get
    
    \begin{equation}\label{eq64-bi}
    \begin{split}
    0 
    & = -p(p-1) P_{\perp} 
    |\varphi(a, L)|^{p-2} 
    (\xi + 
    \frac{\partial h}{\partial a} 
    (a, L)) \frac{\partial h}
    {\partial a} 
    (a, L)
    + P_{\perp} D_{u} 
    F(L, \varphi(a, L))|_{H^2_{\text{ort}}} 
    \frac{\partial^2 h}
    {\partial a^2}(a, L) \\
    & \quad 
    - p (p-1) P_{\perp} 
    |\varphi(a, L)|^{p-2} 
    (\xi 
    + \frac{\partial h}{\partial a} 
    (a, L))\xi \\
    & = P_{\perp} D_{u} 
    F(L, \varphi(a, L))_{|H^2_{\text{ort}}} 
    \frac{\partial^2 h}
    {\partial a^2} (a, L)
    - p (p-1) P_{\perp} 
    |\varphi(a, L)|^{p-2} \bigg(
    \frac{\partial h}{\partial a} (a, L) + \xi \bigg)^2. 
    \end{split}  
    \end{equation}
In addition, one has 
      \begin{equation} \label{eq48-bi}
    \int_{0}^{1} \cos^3 (\pi y) dy
    = 0. 
    \end{equation}
This yields that $\langle
\phi_{\omega, \gamma}^{p-2} \xi^{2}, \xi \rangle 
= 0$. Thus,     
    we see from \eqref{eq14-bi}
    and \eqref{eq64-bi} that 
    \eqref{eq26-bi} holds. 
\end{proof} 

\subsection{The bifurcation problem} \label{secBifProb}

We now introduce the \textit{bifurcation function}: we define 
\[
\mathcal{F}_{\parallel}: 
(-a_{0}, a_{0}) \times 
(L_{*} - \delta_{0}, 
L_{*} + \delta_{0}) \to \R, 
\qquad 
\mathcal{F}_{\parallel}(a, L) 
:= \langle \mathcal{F}(L, \varphi(a, L)), \xi \rangle,
\]
so that solving $\mathcal{F}(L, \phi_{\omega, \gamma} + a \xi + h) = 0$ is equivalent to the coupled system
\begin{equation}\label{eq5-bi}
\begin{cases}
\mathcal{F}_{\perp}(L, a, h) = 0, & \text{(auxiliary equation)},\\
\mathcal{F}_{\parallel}(a, L) = 0, & \text{(bifurcation equation)}.
\end{cases}    
\end{equation}
We will apply the Crandall-Rabinowitz 
Transversality Theorem~\cite{MR288640}. 
Consider
\begin{equation} \label{eq44-bi}
g(a, L) := \dfrac{\mathcal{F}_{\parallel}(a, L)}{a}.  
\end{equation}
Observe from \eqref{eq31-bi} that for any $L >0$, 
\[
\mathcal{F}_{\parallel}(0, L) 
= \langle \mathcal{F}(L, \varphi(0, L)), \xi \rangle
= \langle \mathcal{F}(L, \phi_{\omega, \gamma}), \xi \rangle
= 0.
\]
This together with the l'Hopital rule 
implies that $g(0, L) = 
\dfrac{\partial \mathcal{F}_{\parallel}}{\partial a}(0, L)$. 
For $a \neq 0$ we have
\[
\mathcal{F}_{\parallel}(a, L) = 0 \quad \Longleftrightarrow \quad g(a, L) = 0.
\]

We show the following.
    \begin{prop}\label{prop1-bi}
Assume that $p \geq 2$. Then
\begin{enumerate}
    \item[\textrm{(i)}] $g(0, L_*) = 0$, 
    \item[\textrm{(ii)}] $g$ is $C^{1}$ 
    in $(-a_{0}, a_{0}) \times (L_* - \delta_0, L_* + \delta_0)$ 
    and 
    \begin{equation}
        \label{eqIFTCond}
        \frac{\partial g}{\partial L}(0, L_*) 
     < 0, \qquad 
    \frac{\partial g}{\partial a}(0, L_*) = 0.
    \end{equation}
\end{enumerate}   
\end{prop}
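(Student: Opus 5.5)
The plan is to write $g$ as an integral and then read off each claim from the results already established for $h$. Since $\mathcal F_\parallel(0,L)=0$ for all $L$, I will represent
\[
g(a,L)=\int_0^1 \frac{\partial \mathcal F_\parallel}{\partial a}(sa,L)\,ds ,
\]
which makes $g$ continuous, with $g(0,L)=\partial_a\mathcal F_\parallel(0,L)$. I then compute $\partial_a\mathcal F_\parallel(a,L)=\langle D_u\mathcal F(L,\varphi(a,L))(\xi+\partial_a h(a,L)),\xi\rangle$. At $a=0$, \eqref{eq7-bi} gives $h(0,L)=0$, hence $\partial_L h(0,L)=0$. The difference quotient $h(a,L)/a$ is orthogonal to $\xi$, so its limit $\partial_a h(0,L)$ lies in $H^2_{\mathrm{ort}}$. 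The operator $D_u\mathcal F(L,\phi_{\omega,\gamma})$ is self-adjoint and, by \eqref{eq6-bi}, maps $\xi$ to a multiple of $\xi$. It follows that $\langle D_u\mathcal F(L,\phi)\partial_a h(0,L),\xi\rangle=\langle\partial_a h(0,L),D_u\mathcal F(L,\phi)\xi\rangle=0$. Using \eqref{eq1-bi} and $\|\xi\|_{L^2(\mathbb S)}=1$, this yields
\[
g(0,L)=-\lambda_1+\frac{\pi^2}{L^2}.
\]
Claim (i) then follows from the definition \eqref{eqLCritic} of $L_*$, and differentiating in $L$ gives $\partial_L g(0,L_*)=-2\pi^2/L_*^3<0$. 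This sign is consistent with the scaled operator $H_L$ having the term $L^{-2}\pi^2$ in front of the first mode.

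For $\partial_a g(0,L_*)$ I will use $\partial_a g(0,L)=\tfrac12\partial_a^2\mathcal F_\parallel(0,L)$, which is what the integral representation gives. Differentiating once more,
\[
\partial_a^2\mathcal F_\parallel=\langle D_u^2\mathcal F(L,\varphi)(\xi+\partial_a h)^{2},\xi\rangle+\langle D_u\mathcal F(L,\varphi)\partial_a^2 h,\xi\rangle .
\]
At $(0,L_*)$ I evaluate the two terms separately. In the second term, self-adjointness together with $\partial_a^2h\in H^2_{\mathrm{ort}}$ and $D_u\mathcal F(L_*,\phi)\xi=0$ make it vanish. In the first term, Lemma \ref{lem3-bi} removes $\partial_a h$, leaving $-p(p-1)\langle\phi_{\omega,\gamma}^{p-2}\xi^2,\xi\rangle$, which is zero by \eqref{eq48-bi}. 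The remaining part of (ii), continuity of $\partial_a g,\partial_L g$ on the whole neighbourhood, follows from the integral formula once $\mathcal F_\parallel\in C^2$. That in turn holds whenever $h\in C^2$ (Lemma \ref{lem1-bi}, $p>2$), because $\mathcal F$ is $C^2$ in $u$ and polynomial in $L^{-2}$.

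I expect the main obstacle to be the endpoint $p=2$, where Lemma \ref{lem1-bi} only provides $h\in C^1$ and $u\mapsto|u|u$ is $C^1$ but not $C^2$ near sign changes of $\varphi$. For this case I would first attempt to show $C^2$ regularity directly. The idea is that $\varphi(a,L)=\phi+a\xi+h$ stays positive for small $a$, because $\phi$ and $\chi$ decay exponentially at comparable rates, so $|\chi|\lesssim\phi$. On positive functions the map $u\mapsto|u|u=u^2$ is smooth, and I would re-run the implicit function theorem in a weighted space built on the positivity class, for example $h/\phi\in L^\infty$, to obtain $h\in C^2$. If this approach fails, the fallback is to avoid $C^2$ entirely. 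I would compute $\partial_a g$ from $\partial_a\mathcal F_\parallel=\langle(\xi+\partial_ah),D_u\mathcal F(L,\varphi)\xi\rangle$, and use that $D_u\mathcal F(L,\varphi)$ differs from $D_u\mathcal F(L,\phi)$ by $-2(|\varphi|-\phi)$, which is Lipschitz in $a$. The difference quotients can then be handled by dominated convergence using the exponential decay from \cite{BeNi90}.
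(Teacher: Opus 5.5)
For $p>2$ your argument is correct. It reorganizes the paper's computation rather than changing it. The paper never writes $g(a,L)=\int_0^1\partial_a\mathcal F_\parallel(sa,L)\,ds$. Instead it computes the limits of $\partial_a g(a,L)$ and $\partial_L g(a,L)$ as $(a,L)\to(0,L_*)$ with $a\neq0$, using l'H\^opital. Its identity $I_1=-\tfrac12 I_2$ is your $\partial_a g(0,L)=\tfrac12\partial_a^2\mathcal F_\parallel(0,L)$. It kills $I_2$ with the same two facts you use: self-adjointness against $\mathcal L_+(0,L_*)\xi=0$, and \eqref{eq48-bi}. Your packaging buys two things. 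You get continuity of both partial derivatives on the whole neighbourhood at once, whereas the paper only examines $(0,L_*)$. You also get the closed form $g(0,L)=\pi^2L^{-2}-\lambda_1$, which yields transversality without any limit.

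The gap is the endpoint $p=2$: the statement includes it, and you leave it as two alternatives that you do not carry out.

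The first alternative does not work as described. The linearized operator is not an isomorphism of a space like $\{h:\ h/\phi_{\omega,\gamma}\in L^\infty\}$ onto itself, or onto a fixed target weighted by $\phi_{\omega,\gamma}$. On the $y$-independent mode (where $P_\perp$ acts trivially) it is $\mathcal L_{+,0}$, whose Green's function decays at the same rate $\sqrt\omega$ as $\phi_{\omega,\gamma}$. Concretely, $\mathcal L_{+,0}^{-1}\phi_{\omega,\gamma}=-\partial_\omega\phi_{\omega,\gamma}$, which is of order $|x|\phi_{\omega,\gamma}$ as $|x|\to\infty$, so the inverse leaves your positivity class. Making this route work would need a genuinely different two-space framework.

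The fallback points the right way, but the tool you cite cannot supply the domination. Lemma~\ref{lem-exp-decay} from \cite{BeNi90} concerns positive solutions of \eqref{eqS2D-2}. For $(a,L)$ off the curve $L=L(a)$, $\varphi(a,L)$ solves only $P_\perp\mathcal F=0$ and is not known to be positive. Yet you need bounds on a full neighbourhood of $(0,L_*)$.

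What closes $p=2$ is elementary, and your self-adjoint form makes it shorter than the paper's argument. Since $\partial_a h,\partial_L h\in H^2_{\mathrm{ort}}$ and $D_u\mathcal F(L,\phi_{\omega,\gamma})\xi=(\pi^2L^{-2}-\lambda_1)\xi$, one has exactly
\[
\partial_a\mathcal F_\parallel(a,L)=\frac{\pi^2}{L^2}-\lambda_1-2\bigl\langle \xi+\partial_a h(a,L),(|\varphi(a,L)|-\phi_{\omega,\gamma})\xi\bigr\rangle,\qquad
\partial_L\mathcal F_\parallel(a,L)=-\frac{2\pi^2}{L^3}\,a-2\bigl\langle \partial_L h(a,L),(|\varphi(a,L)|-\phi_{\omega,\gamma})\xi\bigr\rangle .
\]
In this form the term the paper calls $I_{2,2}$ vanishes identically, so no resolvent identity is needed. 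It then suffices to combine three facts:
\begin{itemize}
\item the bound $\bigl||\varphi(a,L)|-\phi_{\omega,\gamma}\bigr|\le|a\xi+h(a,L)|$;
\item the $C^1$ regularity of $h$, so that $h(a,L)/a\to\partial_a h(0,L)=0$ and $\partial_L h(a,L)\to0$;
\item the inclusion $\{\varphi(a,L)\le0\}\subset\{\phi_{\omega,\gamma}\le\|a\xi+h(a,L)\|_{L^\infty}\}$, a set that escapes to $|x|=\infty$ as $a\to0$.
\end{itemize}
Together these give $(|\varphi(a,L)|-\phi_{\omega,\gamma})\xi/a\to\xi^2$ in $L^2(\mathbb S)$. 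Hence $\partial_a g(a,L)\to-\int_{\mathbb S}\xi^3=0$ and $\partial_L g(a,L)\to-2\pi^2L^{-3}$, which is the $C^1$ statement at $a=0$.
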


We now give the proof of 
Proposition \ref{prop1-bi}.  
To this end, we define 
    \begin{equation} \label{eq13-bi}
    \mathcal{L}_{+}(a, L) 
    := D_{u} \mathcal{F}(L, 
    \varphi(a, L)) 
    \end{equation}   
For any $L >0$, let $\lambda_{2}(a, L)$ 
be the second eigenvalue of 
$\mathcal{L}_{+}(a, L)$ and
$\phi_{2}(a, L)$ be the 
corresponding eigenfunction 
with $\|\phi_{2}(a, L)\|_{L^{2}} = 1$. 
Thus, we have 
    \begin{equation} \label{eq45-bi}
    \mathcal{L}_+(a, L) \phi_{2}(a, L) = 
    \lambda_{2}(a, L) \phi_{2}(a, L). 
    \end{equation}
  
  We check the transversality condition of Crandall-Rabinowitz~\cite{MR288640}, that is, whether the kernel of the linearized operator crosses zero transversely as $L$ is varied in a neighborhood of $L_*$.
\begin{lem}\label{lem2-bi}
Assume that $p \geq 2$.
It holds
    \begin{equation}
    \label{eq33-bi}
    \partial_L \lambda_{2}(a, L) = 
    - \frac{2}{L^{3}} \|\partial_{y} 
    \phi_{2}(a, L)\|_{L^{2}}^{2} 
    - p(p-1) \int_{\mathbb{S}}
    \varphi^{p-2}(a, L) 
    \frac{\partial h}{\partial L}(a, L) 
    |\phi_{2}(a, L)|^2 \, dxdy. 
    \end{equation}
\end{lem}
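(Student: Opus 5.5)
This is a Hellmann--Feynman computation. Differentiate the eigenvalue relation \eqref{eq45-bi} in $L$ and pair the result with $\phi_2(a,L)$. Two facts make this legitimate.

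First, for $(a,L)$ near $(0,L_*)$ the eigenvalue $\lambda_2(a,L)$ is simple and isolated. At $(0,L_*)$ it equals $0$, with eigenfunction $\xi$, and the rest of the spectrum stays away from it by Proposition~\ref{prpSpectr} and the mode decomposition \eqref{eq6-bi}: the next candidates are $-\lambda_1+4\pi^2/L^2$ and the bottom $C_\omega$ of the essential part.

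Second, the family $L\mapsto \mathcal{L}_+(a,L)=-\partial_{xx}-L^{-2}\partial_{yy}+\gamma\delta_0+\omega-p|\varphi(a,L)|^{p-1}$ is analytic of type (A) in the $L^{-2}\partial_{yy}$ part. The domain $\mathcal{D}(H_L)$ is independent of $L$, since the Neumann condition in $y$ and the jump condition at $x=0$ do not involve $L$. The potential part depends on $L$ in a $C^1$ way through $h(a,L)$, by Lemma~\ref{lem1-bi}; for $p\ge 2$ the map $u\mapsto |u|^{p-1}$ is $C^1$ from $H^2\subset L^\infty$ to $L^\infty$. By Kato's perturbation theory, the simple eigenvalue $\lambda_2(a,L)$ and a normalized eigenfunction $\phi_2(a,L)\in\mathcal{D}(H_L)$ are therefore $C^1$ in $L$, and $\partial_L\phi_2$ lies in $\mathcal{D}(H_L)$.

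Differentiating \eqref{eq45-bi} in $L$ gives
\[
(\partial_L\mathcal{L}_+)\phi_2+\mathcal{L}_+\partial_L\phi_2=(\partial_L\lambda_2)\phi_2+\lambda_2\partial_L\phi_2 .
\]
Take the $L^2$ inner product with $\phi_2$. Self-adjointness of $\mathcal{L}_+(a,L)$ gives $\langle\mathcal{L}_+\partial_L\phi_2,\phi_2\rangle=\lambda_2\langle\partial_L\phi_2,\phi_2\rangle$, so those terms cancel. Since $\|\phi_2\|_{L^2}=1$, this leaves
\[
\partial_L\lambda_2=\langle(\partial_L\mathcal{L}_+)\phi_2,\phi_2\rangle .
\]
The derivative of the operator is
\[
\partial_L\mathcal{L}_+ = \frac{2}{L^3}\partial_{yy}-p(p-1)|\varphi|^{p-3}\varphi\,\partial_L h .
\]
Here I use $\partial_L\varphi(a,L)=\partial_L h(a,L)$ from \eqref{eq31-bi}, and the formula $\partial_u |u|^{p-1}=(p-1)|u|^{p-3}u$ for real $u$.

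For the first term, integrate by parts in $y$ using the Neumann boundary condition on $\phi_2$: $\langle\partial_{yy}\phi_2,\phi_2\rangle=-\|\partial_y\phi_2\|_{L^2}^2$. The second term becomes the stated integral once $\varphi(a,L)>0$, so that $|\varphi|^{p-3}\varphi=\varphi^{p-2}$. Positivity holds near $(0,L_*)$ because $\varphi=\phi_{\omega,\gamma}+a\xi+h$ with $\|h\|_{H^2}$ small. This smallness controls the perturbation only up to uniformly small errors, so it must be combined with the exponential decay of $\chi$ compared with $\phi_{\omega,\gamma}$, or with a maximum principle. Alternatively, it suffices to read $\varphi^{p-2}$ as $|\varphi|^{p-3}\varphi$, which is how I would present it to avoid that issue.

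The main obstacle is the regularity justification, not the computation. It matters most at $p=2$. There $u\mapsto|u|^{p-1}=|u|$ is only Lipschitz, so differentiability of the potential in $L$ needs care: I would use $\varphi>0$, which makes $|\varphi|=\varphi$ locally, so the potential is linear in $\varphi$ and $C^1$. One must also check that the eigenvalue branch $\lambda_2$ stays simple and isolated, so that Kato's theory gives a differentiable normalized eigenvector. I would settle this first by a continuity argument on the spectral gap around $0$ at $(0,L_*)$.
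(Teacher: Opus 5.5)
Your proposal is correct and follows the paper's own proof: you differentiate \eqref{eq45-bi} in $L$, pair with the normalized $\phi_2$, cancel the $\partial_L\phi_2$ terms, use $\partial_L\varphi=\partial_L h$, and integrate $\partial_{yy}$ by parts with the Neumann condition. Your extra regularity remarks go beyond what the paper writes; it leaves all of these points implicit. They are the simplicity and isolation of $\lambda_2$ near $(0,L_*)$, the $L$-independence of the domain, and the need for $\varphi(a,L)>0$ off the branch when $p=2$, which you flag but do not prove.
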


\begin{proof}
By differentiating \eqref{eq45-bi} with respect to $L$, we obtain
\begin{equation}
    \begin{aligned}\label{eq33bis-bi}
\frac{2}{L^{3}} \partial_{yy} \phi_{2}(a, L)
- p(p-1) \varphi^{p-2}(a, L) \frac{\partial \varphi}{\partial L}(a, L) \, \phi_{2}(a, L)
+ \mathcal{L}_+(a, L) \frac{\partial \phi_2}{\partial L}(a, L) \\
= \frac{\partial \lambda_{2}}{\partial L}(a, L) \, \phi_{2}(a, L) 
+ \lambda_{2}(a, L) \frac{\partial \phi_2}{\partial L}(a, L).
\end{aligned}
\end{equation}
Since $\|\phi_{2}(a, L)\|_{L^{2}} = 1$ for any $L>0$, we have the orthogonality condition $$\langle \phi_{2}(a, L), \frac{\partial \phi_{2}}{\partial L}(a, L) \rangle = 0.$$ Thus, multiplying both sides of \eqref{eq33bis-bi} by $\phi_{2}(a, L)$ and integrating over $\mathbb{S}$, and exploiting 
\(
\frac{\partial \varphi}{\partial L}(a, L) = \frac{\partial h}{\partial L}(a, L),
\)
\eqref{eq33-bi} follows.
\end{proof}

Let us define
\begin{equation} \label{eq:lambda*}
\lambda'_{*} := \lim_{L \to L_{*}, \, a \to 0} \frac{\partial \lambda_{2}}{\partial L}(a, L).
\end{equation}
Notice that $\phi_{2}(0, L_*) = \xi$. Using 
this, \eqref{eq14-bi}, \eqref{eq33-bi} and 
$\|\chi\|_{L^{2}(\R)} = 1$, we obtain
\begin{equation}
\label{eq34-bi}
\lambda'_{*} 
= - 2 L_{*}^{-3} \|\partial_{y} \phi_{2}(0, L_*)\|_{L^{2}}^{2} 
= - 4 L_{*}^{-3} \pi^2 \int_{\R} \chi^2(x)\, dx \int_{0}^{1} \sin^2(\pi y) \, dy 
= - 2 L_{*}^{-3} \pi^2 < 0.
\end{equation}  
We are now in a position to prove Proposition \ref{prop1-bi}. 
\begin{proof}[Proof of Proposition \ref{prop1-bi}] 
\medskip

\noindent\textrm{(i)} 
By \eqref{eq1-bi}, \eqref{eq7-bi} and 
\eqref{eq31-bi}, we have
\[
D_{u} \mathcal{F}(L_{*}, \varphi(0, L_*)) \, \xi
= D_{u} \mathcal{F}(L_{*}, 
\phi_{\omega, \gamma}) \, \xi
= \bigl(-\lambda_{1} 
+ \frac{\pi^2}{L_{*}^2}  \bigr) \xi
= 0.
\]
Then, using \eqref{eq31-bi} and \eqref{eq14-bi}, 
we obtain
\[
\begin{split}
g(0, L_*) 
&= \frac{\partial \mathcal{F}_{\parallel}}{\partial a}(0, L_*) 
= \Bigl\langle D_{u} \mathcal{F}(L_{*}, \varphi(0, L_*)) \frac{\partial \varphi}{\partial a}(0, L_*), \, \xi \Bigr\rangle 
= \Bigl\langle D_{u} \mathcal{F}(L_{*}, \varphi(0, L_*)) \xi, \, \xi \Bigr\rangle 
= 0.
\end{split}
\]

\medskip

\noindent\textrm{(ii)} 
It is clear that $g$ is $C^{1}$ for $a \neq 0$ for any $p \geq 2$. 
Thus, it is enough to consider the case 
$a = 0$. 
For $a = 0$, 
we will show 
   \begin{equation} \label{eq11-bi}
   \lim_{a \neq 0, 
   a\to 0, L \to L_{*}} 
   \frac{\partial g}{\partial a}(a, L) = 0 
   \end{equation}
and 
    \begin{equation}\label{eq12-bi}
    \lim_{a \neq 0, 
    a\to 0, L \to L_{*}} 
    \frac{\partial g}{\partial L}(a, L) = \lambda'_{*} < 0. 
    \end{equation}

\noindent \textbf{Proof of \eqref{eq11-bi}: 
} 
For $a \neq 0$, 
since $\mathcal{F}_{\parallel}(0, L) = 0$, 
we have
\[
\frac{\partial g}{\partial a}(a, L) 
= - \frac{1}{a^2} \bigl( \mathcal{F}_{\parallel}(a, L) - \mathcal{F}_{\parallel}(0, L) \bigr)
+ \frac{1}{a} \frac{\partial \mathcal{F}_{\parallel}}{\partial a}(a, L).
\]
Adding and subtracting 
\[
\frac{1}{a} \frac{\partial \mathcal{F}_{\parallel}}{\partial a}(0, L)
= \frac{1}{a} \Bigl\langle D_u \mathcal{F}(L, \varphi(0, L)) \bigl[ \xi + \frac{\partial h}{\partial a}(0, L) \bigr], \, \xi \Bigr\rangle
\]
in the above expression, we obtain
\[
\begin{split}
\lim_{\substack{a \to 0 \\ a \neq 0,\, L \to L_*}} \frac{\partial g}{\partial a}(a, L) 
&= - \lim_{\substack{a \to 0 \\ a \neq 0,\, L \to L_*}} \frac{1}{a^2} \left(\mathcal{F}_{\parallel}(a, L) - \mathcal{F}_{\parallel}(0, L) - a \frac{\partial \mathcal{F}_{\parallel}}{\partial a}(0, L)\right) \\
&\quad + \lim_{\substack{a \to 0 \\ a \neq 0,\, L \to L_*}} \frac{1}{a} \left(\frac{\partial \mathcal{F}_{\parallel}}{\partial a}(a, L) - \frac{\partial \mathcal{F}_{\parallel}}{\partial a}(0, L)\right) \\
&=: I_1 + I_2.
\end{split}
\]
By \eqref{eq13-bi}, one has 
\begin{equation} \label{eq43-bi}
\begin{split}
\frac{\partial \mathcal{F}_{\parallel}}{\partial a}(a, L) 
&= \bigl\langle D_u \mathcal{F}(L, \varphi(a, L)) \frac{\partial \varphi}{\partial a}(a, L), \, \xi \bigr\rangle = \langle \mathcal{L}_+(a, L) \frac{\partial \varphi}{\partial a}(a, L), \, \xi \rangle \\
&= \bigl\langle \mathcal{L}_+(a, L) \bigl[ \xi + \frac{\partial h}{\partial a}(a, L) \bigr], \, \xi \bigr\rangle.
\end{split}
\end{equation}
It follows that
\[
I_2 = \lim_{\substack{a \to 0 \\ a \neq 0,\, L \to L_*}} 
\frac{1}{a} \bigl\langle 
\mathcal{L}_+(a, L) \bigl[ \xi + \frac{\partial h}{\partial a}(a, L) \bigr] 
- \mathcal{L}_+(0, L) \bigl[ \xi + \frac{\partial h}{\partial a}(0, L) \bigr], \, \xi \bigr\rangle.
\]

We will show that $I_2$ exists, 
separating the two cases $p>2$ and $p=2$.

\medskip

\textbf{(Case  $p > 2$).}
When $p > 2$, we see that 
$\mathcal{L}_{+}(a, L)$ is $C^{1}$ and 
$h(a, L)$ is $C^{2}$ in $a$. 
This yields 
    \begin{equation} \label{eq49-bi}
    \begin{split}
    I_{2} 
    & = \lim_{a \neq 0, 
    a\to 0, L \to L_{*}} 
    \frac{1}{a}
   \left\langle 
    \mathcal{L}_{+}(a, L)
    [\xi 
        + \frac{\partial h}{\partial a}
        (a, L)] 
        - \mathcal{L}_{+}(0, L) 
        [\xi 
        + \frac{\partial h}{\partial a}
        (a, L)], \xi \right\rangle \\
    & \quad  + 
    \lim_{a \neq 0, 
    a\to 0, L \to L_{*}} 
    \frac{1}{a}
   \left\langle 
  \mathcal{L}_{+}(0, L) 
        [\xi 
        + \frac{\partial h}{\partial a}
        (a, L)] 
        - \mathcal{L}_{+}(0, L)
[\xi 
+ \frac{\partial h}{\partial a} (0, L)], 
\xi
\right\rangle
    \\
     & = \lim_{a \neq 0, 
    a\to 0, L \to L_{*}} 
    \frac{1}{a}
   \left\langle 
    (\mathcal{L}_{+}(a, L) - \mathcal{L}_{+}(0, L)) 
        [\xi 
        + \frac{\partial h}{\partial a}
        (a, L)],  
        \xi \right\rangle
     \\
    & \quad  + 
    \lim_{a \neq 0, 
    a\to 0, L \to L_{*}} 
    \frac{1}{a}
   \left\langle 
  \mathcal{L}_{+}(0, L) 
        [\frac{\partial h}{\partial a}
        (a, L)] 
        - \frac{\partial h}{\partial a} 
        (0, L)], 
\xi
\right\rangle
     \\
    & = \left\langle \partial_{a} 
    \mathcal{L}_{+}(0, L_*)[\xi 
    + \frac{\partial h}{\partial a}
    (0, L_*)],
    \xi \rangle 
      + \langle \mathcal{L}_{+}(0, L_*) 
    \frac{\partial^{2} h}{\partial a^2}
    (0, L_*),
    \xi \right\rangle. 
    \end{split}
    \end{equation}
Then we have by \eqref{eq1-bi} that 
\begin{equation} \label{eq25-bi}
\begin{aligned}
    \left\langle 
\mathcal{L}_{+}(0, L_*) 
\frac{\partial^{2} h}{\partial a^{2}}(0, L_*),
\, \xi 
\right\rangle
& =
\left\langle
\frac{\partial^{2} h}{\partial a^{2}}(0, L_*),
\, \mathcal{L}_{+}(0, L_*) \xi
\right\rangle \\
&
=
\left\langle
\frac{\partial^{2} h}{\partial a^{2}}(0, L_*),
\, (\mathcal{L}_{+,0} - \frac{1}{L_{*}^2} \partial_{yy}) \, \xi
\right\rangle
= 0.
\end{aligned}
\end{equation}
In addition, observe from \eqref{eq14-bi} that
\[
\partial_{a}\mathcal{L}_{+}(0, L_*) 
= - p (p-1)\, \phi_{\omega,\gamma}^{\,p-2} \xi,
\qquad 
\frac{\partial h}{\partial a}(0, L_*) = 0. 
\]
These together with 
$\xi = \sqrt{2} \cos (\pi y) \chi(x)$ and \eqref{eq48-bi}
imply
\begin{equation} 
\label{eq50-bi}
\begin{split}
\left\langle 
\partial_{a} \mathcal{L}_{+}(0, L_*) 
\Bigl[\, \xi + \frac{\partial h}{\partial a}(0, L_*) \Bigr],
\, \xi 
\right\rangle
&= 
\left\langle 
\partial_{a} \mathcal{L}_{+}(0, L_*) \xi,
\, \xi \right\rangle \\
&= 
- p(p-1) 2^\frac{3}{2}
\int_{\mathbb{R}} 
\phi_{\omega,\gamma}^{\,p-2}(x) \, \chi^{3}(x)\, dx 
\int_{0}^{1} \cos^{3}(\pi y)\, dy 
= 0.
\end{split}
\end{equation}
Thus, by \eqref{eq49-bi}, \eqref{eq25-bi} and 
\eqref{eq50-bi}, we have $I_{2} = 0$. 

\medskip
By the l'Hôpital rule, we have 
\begin{equation}\label{eq1112-bi}
\begin{split}
I_1 
= - \lim_{\substack{a \to 0 \\ a \neq 0,\, L \to L_*}} 
\frac{1}{2a} \left( 
\frac{\partial \mathcal{F}_{\parallel}}{\partial a}(a, L) - 
\frac{\partial \mathcal{F}_{\parallel}}{\partial a}(0, L) 
\right) = - \frac{1}{2} I_2 = 0. 
\end{split}
\end{equation}
Thus, \eqref{eq11-bi} follows from   
    \[
    \frac{\partial g}{\partial a}(0, L_*) 
    = 
\lim_{a \neq 0, 
    a\to 0, L \to L_{*}} 
    \frac{\partial g}{\partial a}(a, L) 
    = 0.
    \]

\textbf{(Case 2) $p = 2$.} 

As in  \eqref{eq49-bi}, we obtain 
\begin{equation} \label{eq49-2-bi}
    \begin{split}
    I_{2} 
     & = \lim_{a \neq 0, 
    a\to 0, L \to L_{*}} 
    \frac{1}{a}
   \left\langle 
    (\mathcal{L}_{+}(a, L) - \mathcal{L}_{+}(0, L)) 
        [\xi 
        + \frac{\partial h}{\partial a}
        (a, L)],  
        \xi \right\rangle
     \\
    & \quad  + 
    \lim_{a \neq 0, 
    a\to 0, L \to L_{*}} 
    \frac{1}{a}
   \left\langle 
  \mathcal{L}_{+}(0, L) 
        [\frac{\partial h}{\partial a}
        (a, L) 
        - \frac{\partial h}{\partial a} 
        (0, L)], \xi \right\rangle \\
    & =: I_{2, 1} + I_{2, 2}. 
    \end{split}
    \end{equation}
Observe from 
\eqref{eq51-bi}, \eqref{eq31-bi} 
and \eqref{eq13-bi} that 
    \[
    \mathcal{L}_{+}(a, L) - \mathcal{L}_{+}(0, L) 
    = 2(|\phi_{\omega, \gamma} 
    + a \xi + h(a, L)| - 
    |\phi_{\omega, \gamma} 
    + h(0, L)|)
    \leq 2(|a| |\xi| 
    + |h(a, L)- h(0, L)|). 
    \]
By mean value theorem, we obtain 
    \[
    |h(a, L) - h(0, L)| 
    \leq \biggl| \frac{\partial h}{\partial a} 
    (a', L)\biggl||a|
    \]
for some $a' \in \R$ with 
$|a'| \leq a$. 
These yield that 
    \[
  \begin{split}
  & \quad \biggl|\frac{1}{a}
   \left\langle 
    (\mathcal{L}_{+}(a, L) - \mathcal{L}_{+}(0, L)) 
        [\xi 
        + \frac{\partial h}{\partial a}
        (a, L)],  
        \xi \right\rangle 
        \biggl| \\
  & = \biggl|
    \int_{\mathbb{S}}
    \frac{
    2(|\phi_{\omega, \gamma} 
    + a \xi + h(a, L)| - 
    |\phi_{\omega, \gamma} 
    + h(0, L)|)
    [\xi + \frac{\partial h}{
    \partial a}(a, L)]\xi
    }{a} dxdy 
  \biggl| \\
  & \leq 2\int_{\mathbb{S}}
  |\xi|
  \left(|\xi| + \biggl| 
  \frac{\partial h}{\partial a} 
    (a', L)\biggl| \right)  
    \biggl|\xi + 
    \frac{\partial h}{\partial a} 
    (a, L)
    \biggl| dxdy
    \end{split}
    \]
Note that 
the integrand can be bounded by 
some integrable function because
$h(a, L) \in H^{2}(\mathbb{S})$ 
is $C^{1}$ in a neighborhood of 
$(0, L_*)$. 

In addition, since $(a, L) \mapsto 
h(a, L) \in H^{2}(\mathbb{S}) 
\hookrightarrow L^{\infty}\cap 
C(\mathbb{S})$ is continuous and 
$\phi_{\omega, \gamma}(x, y) > 0$ 
for each $(x, y) \in 
\mathbb{S}$, 
there exists $\delta(x, y) > 0$, 
we obtain $\phi_{\omega, \gamma}
(x) 
+ a \xi(x, y) +h(a, L)(x, y) > 0$
for $|a|, |L - L_*| < \delta(x, y)$. 
Moreover, we have by \eqref{eq14-bi} that     
    \[
    \begin{split}
    & \quad
    \lim_{a \neq 0, 
    a\to 0, L \to L_{*}}
    \frac{
    2(|\phi_{\omega, \gamma} 
    + a \xi + h(a, L)| - 
    |\phi_{\omega, \gamma} 
    + h(0, L)|)
    [\xi + \frac{\partial h}{
    \partial a}(a, L) ]\xi
    }{a}(x, y) \\
    & = \lim_{a \neq 0, 
    a\to 0, L \to L_{*}}
    \frac{
    2(a \xi + h(a, L) -  
    h(0, L))
    [\xi + \frac{\partial h}{
    \partial a}(a, L)]\xi
    }{a}(x, y) \\
    & = 2 \left[\xi + 
    \frac{\partial h}{\partial a} (0, L_{*})\right]^2 \xi(x, y) 
    = 2\xi^3(x, y)
    \end{split}
    \]
for each $(x, y) \in \mathbb{S}$. 
By the Lebesgue dominated convergence
theorem and \eqref{eq48-bi}, 
we obtain 
    \[
    I_{2, 1} = 
    2 \int_{\mathbb{S}}
    \xi^{3}(x, y) dxdy 
    = 2^{\frac{5}{2}}
    \int_{\mathbb{R}} 
    \chi^3(x) dx \int_{0}^{1} 
    \cos^3(\pi y) dy = 0. 
    \]

Next, we consider $I_{2, 2}$.  
It follows from \eqref{eq23-bi} that 
    \[
        \frac{\partial h}{\partial a}
        (a, L) 
        = - ( P_{\perp} D_{u}
        \mathcal{F}
        (L, \varphi(a, L))|_{H^2_{\text{ort}}})^{-1}
        P_{\perp} D_{u}
        \mathcal{F}
        (L, \varphi(a, L)) \xi. 
    \]
It follows that 
    \begin{equation} \label{eq55-bi}
    \begin{split}
    & \quad 
    \dfrac{\frac{\partial h}{\partial a}(a, L) 
    - \frac{\partial h}{\partial a}(0, L)}{a}
    (x, y) \\
    & = - 
    \dfrac{ 
    (P_{\perp} 
    \mathcal{L}_{+}(a, L)
    |_{H^2_{\text{ort}}})^{-1}
    - ( P_{\perp} D_{u}
        \mathcal{L}_{+}(0, L)|_{H^2_{\text{ort}}})^{-1}
    }{a} P_{\perp} \mathcal{L}_{+}(a, L) \xi \\
    & \quad  
    - ( P_{\perp}
    \mathcal{L}_{+}(0, L)
    |_{H^2_{\text{ort}}})^{-1}
        P_{\perp} 
        \left(\dfrac{
        \mathcal{L}_{+}(a, L)
        - \mathcal{L}_{+}(0, L)
        }{a} 
        \right)\xi. 
    \end{split}
    \end{equation}
Moreover, we see that 
    \begin{equation}\label{eq54-bi}
    \begin{split}
    & \quad 
     (P_{\perp} 
    \mathcal{L}_{+}(a, L)
    |_{H^2_{\text{ort}}})^{-1}
    - ( P_{\perp} D_{u}
        \mathcal{L}_{+}(0, L)|_{H^2_{\text{ort}}})^{-1} \\
    & = (P_{\perp} \mathcal{L}_{+}(a, L)|_{H^2_{\text{ort}}})^{-1}
        P_{\perp} \Big(
        \mathcal{L}_{+}(0, L)
        - \mathcal{L}_{+}(a, L)
        \Big)|_{H^2_{\text{ort}}}
        (P_{\perp} 
        \mathcal{L}_{+}(0, L)
        |_{H^2_{\text{ort}}})^{-1} \\
    & = - (P_{\perp} D_{u}
        \mathcal{L}_{+}(a, L)|_{H^2_{\text{ort}}})^{-1}
        P_{\perp}\Big(
        \mathcal{L}_{+}(a, L) 
        - \mathcal{L}_{+}(0, L)
        \Big)\big|_{H^2_{\text{ort}}}
        (P_{\perp} 
        \mathcal{L}_{+}(0, L)|_{H^2_{\text{ort}}})^{-1}
    \end{split}
    \end{equation}
It follows from \eqref{eq55-bi},  
\eqref{eq54-bi} and $p = 2$ that 
\begin{equation} \label{eq53-bi}
    \begin{split}
    & \quad 
    \dfrac{\frac{\partial h}{\partial a}(a, L) 
    - \frac{\partial h}{\partial a}(0, L)}{a}
- p (P_{\perp} 
\mathcal{L}_{+}(0, L_*)
|_{H^2_{\text{ort}}})^{-1}
    P_{\perp} \phi_{\omega, \gamma}^{p-1}
    \xi^2 \\
    & = 
    (P_{\perp} 
    \mathcal{L}_{+}(a, L)
    |_{H^2_{\text{ort}}})^{-1}
        P_{\perp}\Big(
        \dfrac{ 
        (\mathcal{L}_{+}(0, L)
        - \mathcal{L}_{+}(a, L))
        }{a}
        \Big)\big|_{H^2_{\text{ort}}}
        (P_{\perp}
        \mathcal{L}_{+}(0, L)
        |_{H^2_{\text{ort}}})^{-1} 
        P_{\perp} \mathcal{L}_{+}(a, L) 
        \xi 
        \\
        & \quad 
        - ( P_{\perp} 
        \mathcal{L}_{+}(0, L)
        |_{H^2_{\text{ort}}})^{-1}
        \dfrac{P_{\perp} 
        \left(\mathcal{L}_{+}(a, L) 
        - \mathcal{L}_{+}(0, L)
        \right)}{a} \xi 
        - 2 (P_{\perp} 
\mathcal{L}_{+}(0, L)
|_{H^2_{\text{ort}}})^{-1}
    P_{\perp} \phi_{\omega, \gamma}^{p-1}
    \xi^2 \\
    &  \quad + 2 (P_{\perp} 
\mathcal{L}_{+}(0, L)
|_{H^2_{\text{ort}}})^{-1}
    P_{\perp} \phi_{\omega, \gamma}
    \xi^2 -  2 (P_{\perp} 
\mathcal{L}_{+}(0, L_*)
|_{H^2_{\text{ort}}})^{-1}
    P_{\perp} \phi_{\omega, \gamma}^{p-1}
    \xi^2. 
 \end{split}
    \end{equation}

Then, as in a similar computation to 
$I_{2, 1}$, 
we have 
    \begin{equation} \label{eq56-bi}
    \begin{split}
    \left\|\dfrac{\frac{\partial h}{\partial a}(a, L) 
    - \frac{\partial h}{\partial a}(0, L)}{a}
- 2 (P_{\perp} 
\mathcal{L}_{+}(0, L_*)
|_{H^2_{\text{ort}}})^{-1}
    P_{\perp} \phi_{\omega, \gamma}
    \xi^2\right\|_{H^{2}} \to 0 
    \qquad \mbox{as $a \to 0$}. 
    \end{split}
    \end{equation}
This implies that 
    \[
    \lim_{a \neq 0, a \to 0, L \to L_{*}} 
    \dfrac{\frac{\partial h}{\partial a}(a, L) 
    - \frac{\partial h}{\partial a}(0, L)}{a}
    = p (P_{\perp} \mathcal{L}_{+}(0, L_*)|_{H^2_{\text{ort}}})^{-1}
    P_{\perp} \phi_{\omega, \gamma}
    \xi^2 \quad \mbox{in $H^{2}(\mathbb{S})$}. 
    \]
Then, we obtain 
    \[
    \begin{split}
    I_{2,2} 
    & = - 2
    \langle 
    \mathcal{L}_{+}(0, L_*)
    (P_{\perp} \mathcal{L}_{+}(0, L_*)|_{H^2_{\text{ort}}})^{-1}
    P_{\perp} \phi_{\omega, \gamma}
    \xi^2, \xi \rangle \\
    & = - \langle
    (P_{\perp} \mathcal{L}_{L_{*}, +}|_{H^2_{\text{ort}}})^{-1}
    P_{\perp} (\partial_{a} \mathcal{L}_{+})
    \xi^2,  \mathcal{L}_{+, 0} 
    \xi \rangle = 0. 
    \end{split}
    \]
Thus, we obtain $I_{2} = 0$ in case 
$p = 2$, too.

\noindent \textbf{Proof of \eqref{eq12-bi}: 
}    
  By \eqref{eq31-bi} and \eqref{eq44-bi}, 
for $a \neq 0$, we have
    \begin{equation} \label{eq15-bi}
    \begin{split}
    \frac{\partial g}{\partial L} 
    (a, L) 
    & =
    \frac{1}{a} \left(
    \left\langle 
    D_{L}\mathcal{F}(L, 
    \varphi(a, L)),
    \xi 
    \right\rangle +
    \left\langle 
    D_{u}\mathcal{F}(L, 
    \varphi(a, L)) 
    \frac{\partial 
    \varphi}{\partial L}(a, L),
    \xi 
    \right\rangle \right) \\
    & = \frac{1}{a} \left(
    2 L^{-3}
    \left\langle \varphi_{yy}(a, L), 
    \xi \right\rangle
    + 
    \left\langle 
    \mathcal{L}_{+}(a, L) \frac{\partial 
    h}{\partial L}(a, L),
    \xi 
    \right\rangle \right)\\
    & = 
    \left(- 2 L^{-3}  
    \pi^2
    \|\xi
    \|_{L^{2}}^{2}
    + 2 L^{-3} \frac{\langle 
    h_{yy}(a, L), 
    \xi \rangle}{a} \right)
    + \frac{1}{a} 
    \langle 
    \mathcal{L}_{+}(a, L) \frac{\partial 
    h}{\partial L}(a, L),
    \xi 
    \rangle 
    \end{split}
    \end{equation}
Exploiting \eqref{eqH2Normh} and \eqref{eq34-bi}
yields 
    \begin{equation} 
    \label{eq30-bi}
    \lim_{a \neq 0, 
    a\to 0, L \to L_{*}} 
    \left(- 2 L^{-3}  
    \pi^2
    \|\xi
    \|_{L^{2}}^{2}
    + 2 L^{-3} \frac{\langle 
    h_{yy}(a, L), 
    \xi \rangle}{a} \right) = -2 L^{-3} \pi^2 
    = \lambda_{*}'. 
    \end{equation} 
Moreover, by \eqref{eq14-bi}, we obtain 
    \begin{equation} \label{eq16-bi}
    \begin{split}
    & \quad \lim_{a \neq 0, 
    a\to 0, L \to L_{*}} \frac{1}{a}
    \langle 
    \mathcal{L}_{+}(a, L) \frac{\partial 
    h}{\partial L}(a, L),
    \xi 
    \rangle \\
    & = \lim_{a \neq 0, 
    a\to 0, L \to L_{*}} 
    \frac{1}{a}\left( 
    \langle 
    \mathcal{L}_{+}(a, L) \frac{\partial 
    h}{\partial L}(a, L),
    \xi 
    \rangle - 
    \langle 
    \mathcal{L}_{+}(0, L_*) \frac{\partial 
    h}{\partial L}(0, L_*),
    \xi 
    \rangle
    \right) \\
    & = \lim_{a \neq 0, 
    a\to 0, L \to L_{*}} 
    \frac{1}{a}\left(
    \langle 
    \mathcal{L}_{+}(a, L) \frac{\partial 
    h}{\partial L}(a, L),
    \xi 
    \rangle - 
    \langle 
    \mathcal{L}_{+}(a, L) \frac{\partial 
    h}{\partial L}(0, L_*),
    \xi 
    \rangle
    \right) \\
    & \quad + 
    \lim_{a \neq 0, 
    a\to 0, L \to L_{*}} 
    \frac{1}{a}\left( 
    \langle 
    \mathcal{L}_{+}(a, L) \frac{\partial 
    h}{\partial L}(0, L_*),
    \xi 
    \rangle - 
    \langle 
    \mathcal{L}_{+}(0, L_*) \frac{\partial 
    h}{\partial L}(0, L_*),
    \xi 
    \rangle
    \right) \\
    & =: II_{1} + II_{2} 
    \end{split}
    \end{equation}
It follows from \eqref{eq14-bi} that
   \begin{equation} \label{eq18-bi}
   \begin{split}
   II_{2} = \langle 
   \partial_{a} \mathcal{L}_{+}(0, L_*) 
   \frac{\partial h}{\partial L} 
   (0, L_*), \xi \rangle
   & = - p (p-1) \langle 
   \phi_{\omega, \gamma}^{p-2}
   \xi
    \frac{\partial h}{\partial L} 
   (0, L_*), \xi \rangle \\ 
   & = -p(p-1) 
   \int_{\mathbb{S}} 
   \phi_{\omega, \gamma}^{p-2}
   |\xi|^2 
    \frac{\partial h}{\partial L} 
   (0, L_*) \, dxdy \\
   & = 0. 
   \end{split}
   \end{equation}
When $p > 2$, 
since $\mathcal{L}_{+}(0, L_*) \xi = 0$, we have 
    \begin{equation} \label{eq17-bi}
    II_{1} = 
    \langle \mathcal{L}_{+}(0, L_*) 
    \frac{\partial^2 h}{\partial L 
    \partial a}(0, L_*), 
    \xi 
    \rangle = 0.  
    \end{equation}
When $p = 2$, it follows from \eqref{eq24-bi} 
that 
    \[
    \frac{\partial h}{\partial L}(a, L) 
    = - \frac{2}{L^{3}}
    (P_{\perp} L_{+}(a, L)
    |_{H^{2}_{\text{ort}}})^{-1} 
    P_{\perp} \partial_{yy} \varphi(a, L). 
    \]
This implies that 
\begin{equation} \label{eq56-bi}
    \begin{split}
    & \quad 
    \dfrac{\frac{\partial h}{\partial L}(a, L) 
    - \frac{\partial h}{\partial L}(0, L)}{a}\\
    & = - \frac{2}{L^{3}}
    \dfrac{ 
    (P_{\perp} 
    \mathcal{L}_{+}(a, L)
    |_{H^2_{\text{ort}}})^{-1}
    - ( P_{\perp} 
        \mathcal{L}_{+}(0, L)|_{H^2_{\text{ort}}})^{-1}
    }{a} P_{\perp} \partial_{yy} 
    \varphi(a, L) \\
    & \quad  
    - ( P_{\perp}
    \mathcal{L}_{+}(0, L)
    |_{H^2_{\text{ort}}})^{-1}
        P_{\perp} 
        \left(\dfrac{
        \partial_{yy} \varphi(a, L)
        - \partial_{yy} \varphi(0, L)
        }{a} 
        \right)
    \end{split}
    \end{equation} 
Then, we see from \eqref{eq54-bi} 
and $\varphi(0, L_*) = \phi_{\omega, \gamma}$
that 
    \begin{equation} \label{eq57-bi}
    \begin{split}
    & \quad \lim_{a \neq 0, a \to 0, L \to L_*}
    \left(
    - \frac{2}{L^{3}}
    \dfrac{ 
    (P_{\perp} 
    \mathcal{L}_{+}(a, L)
    |_{H^2_{\text{ort}}})^{-1}
    - ( P_{\perp} 
        \mathcal{L}_{+}(0, L)|_{H^2_{\text{ort}}})^{-1}
    }{a} P_{\perp} \partial_{yy} 
    \varphi(a, L)
    \right) \\
    & = \frac{2}{L_{+}^3} 
    (P_{\perp} \mathcal{L}(0, L_{*})
    |_{H^2_{\text{ort}}})^{-1} 
    P_{\perp} (\phi_{\omega, \gamma}\xi)|_{H^2_{\text{ort}}} 
    (P_{\perp} \mathcal{L}(0, L_{*})
    |_{H^2_{\text{ort}}})^{-1} 
    P_{\perp} \partial_{yy} 
    \varphi(0, L_*) = 0.
    \end{split}
    \end{equation}
In addition, since $ \frac{\partial \varphi}{\partial a} 
        (0, L_*) = \xi$, 
we have 
    \begin{equation} \label{eq58-bi}
    \begin{split}
    & \quad \lim_{a \neq 0, a \to 0, L \to L_*}
    ( P_{\perp}
    \mathcal{L}_{+}(0, L)
    |_{H^2_{\text{ort}}})^{-1}
        P_{\perp} 
        \left(\dfrac{
        \partial_{yy} \varphi(a, L)
        - \partial_{yy} \varphi(0, L)
        }{a} 
        \right) \\
    & = ( P_{\perp}
    \mathcal{L}_{+}(0, L_{*})
    |_{H^2_{\text{ort}}})^{-1}
        P_{\perp} \partial_{yy} 
        \frac{\partial \varphi}{\partial a} 
        (0, L_*) = 0. 
        \end{split}
    \end{equation}
Therefore, by \eqref{eq16-bi}, \eqref{eq56-bi}, 
\eqref{eq57-bi} and \eqref{eq58-bi}, one has 
$II_{1} = 0$. 

This together with 
\eqref{eq15-bi}--\eqref{eq16-bi}, we obtain 
\eqref{eq12-bi}. In particular, by \eqref{eq15-bi}, 
\eqref{eq30-bi} and \eqref{eq18-bi} it follows that 
    \[
    \frac{\partial g}{\partial L}(0, L_*) 
    = \lambda_{*}' < 0. 
    \]

\end{proof}
We now in the position to 
give the proof of Proposition \ref{ex-bi}.
\begin{proof}[Proof of Proposition \ref{ex-bi}]
It follows from Proposition \ref{prop1-bi} and the implicit function theorem that there exists a neighborhood $(L_{*} - \delta_{1}, L_{*} + \delta_{1}) \times (-a_{1}, a_{1})$ and a map $a \in (-a_{1}, a_{1}) \mapsto L(a) \in (L_{*} - \delta_{1}, L_{*} + \delta_{1})$ with $L(0) = L_{*}$ satisfying $g(a, L(a)) = 0$ for $a \in (-a_{0}, a_{0})$. Setting $h(a) = h(a, L(a))$ and 
\begin{equation} \label{eq62-bi}
\varphi(a) = \phi_{\omega, \gamma}+ a \xi + h(a),
\end{equation}
we see that $\varphi(a)$ satisfies \eqref{eqS2D-2} with $L = L(a)$. 

Clearly, we see that $\varphi (a) 
\in C^{2}((-a_0, a_0), H^{2}(\mathbb{S}))$ 
for sufficiently small $a_{0} > 0$ when 
$p > 2$. 
Moreover, it follows from \eqref{eq56-bi} that 
we deduce that 
$\varphi (a) 
\in C^{2}((-a_0, a_0), H^{2}(\mathbb{S}))$ 
even when $p = 2$. 
Then, by \eqref{eq7-bi} and \eqref{eq14-bi}, we have
\begin{equation} \label{eq59-bi}
h(0) = h(L(0), 0) = h(L_{*}, 0) = 0, \qquad
\frac{d h}{d a}(0) = \frac{\partial h}{\partial L}(L_{*}, 0) \frac{d L}{d a}(0) + \frac{\partial h}{\partial a}(L_{*}, 0) = 0.
\end{equation}
This implies 
$h(a) 
\in C^{2}((-a_0, a_0), H^{2}(\mathbb{S}))$ 
that 
\begin{equation*}
    \|h(a)\|_{H^{2}} = O(a^2).
\end{equation*}

Finally, we shall show 
the positivity of $\varphi(a)$. 
We put 
    \[
    \mathcal{L}_{-}(a):= H_{L(a)} + \omega - 
|\varphi(a)|^{p-1}.
    \]
$\mathcal{L}_{-}(0) 
\phi_{\omega, \gamma} = 0$ and 
$\phi_{\omega, \gamma}(x, y) > 0$ 
for $(x, y) \in \mathbb{S}_{L(a)}$. 
Thus, the first eigenvalue of 
$\mathcal{L}_{-}(0)$ is $0$. 
By a perturbation argument, we see that 
the first eigenvalue of 
$\mathcal{L}_{-}(a)$ 
is close to 
$0$ if $|a|>0$ is sufficiently small. 
In addition, $\mathcal{L}_{-}(a) 
\varphi(a) = 0$. 
Thus, $0$ is also the first eigenvalue of 
$\mathcal{L}_{-}(a)$ and 
$\varphi(a)$ is the corresponding eigenfunction. 
Then, since $\varphi(a)$ is the first 
eigenfunction of the operator 
$\mathcal{L}_{-}(a)$,  
we see that $\varphi(a)$ is positive. 
This completes the proof.
\end{proof}

\subsection{Direction of 
the bifurcation curve}

In this section, we determine the direction of the bifurcation branch.  
We begin with the following result.

\begin{prop} \label{prop2-bi}
The map \(
a \in (-a_{1}, a_{1}) \longmapsto 
L(a) \in (L_{*} - \delta_{1},\, L_{*} + \delta_{1})
\)
is of class $C^{2}$, and admits the expansion
\begin{equation} \label{eq60-bi}
L(a) = L_{*} 
+ \frac12 \frac{d^{2} L}{d a^{2}}(0)\, a^{2} 
+ o(a^{2}).
\end{equation}
Moreover,
\begin{equation}\label{eq19-bi}
\begin{split}
\frac{d^{2} L}{d a^{2}}(0)
& =
\frac{p(p-1)(p-2)}{3 \lambda_{*}'} 
\int_{\mathbb{S}} 
\phi_{\omega,\gamma}^{p-3} 
\xi^{4}\, dx\, dy \\
&\quad 
+ \frac{p^{2}(p-1)^{2}}{\lambda_{*}'} 
\left\langle 
\phi_{\omega,\gamma}^{p-2} \xi^{2},\,
\bigl(P_{\perp}(\mathcal{L}_{+,0} 
- L_{*}^{-2} \partial_{yy})\!\mid_{H^{2}_{\mathrm{ort}}}\bigr)^{-1}
\bigl(\phi_{\omega,\gamma}^{p-2} \xi^{2}\bigr)
\right\rangle .
\end{split}
\end{equation}
\end{prop}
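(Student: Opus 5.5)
We differentiate the reduced bifurcation equation $g(a,L(a))=0$ twice at $a=0$. By Proposition \ref{prop1-bi} and the implicit function theorem, $L$ is $C^1$ with
\[
L'(0)=-\frac{\partial_a g(0,L_*)}{\partial_L g(0,L_*)}=0 .
\]
Differentiating $g(a,L(a))=0$ once more and using $L'(0)=0$ leaves
\[
\partial_a^2 g(0,L_*)+\partial_L g(0,L_*)\,L''(0)=0,
\]
and hence
\[
L''(0)=-\frac{\partial_a^2 g(0,L_*)}{\lambda_*'} .
\]
The expansion \eqref{eq60-bi} then follows by Taylor's formula. The $C^2$ regularity requires $g$ to be $C^2$ near $(0,L_*)$, or at least $\partial_a^2 g(0,L_*)$ to exist with $L$ twice differentiable. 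For $p\ge3$ this follows from $\mathcal{F}\in C^3$. For $2\le p<3$ I would argue as in the proof of Proposition \ref{prop1-bi}, computing limits of difference quotients with dominated convergence and the exponential decay of $\phi_{\omega,\gamma}$ and $\chi$.

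The core step is computing $\partial_a^2 g(0,L_*)$. Since $g=\mathcal{F}_\parallel/a$ and $\mathcal{F}_\parallel(0,L)=0$, Taylor expansion in $a$ gives
\[
\partial_a^2 g(0,L)=\tfrac13\,\partial_a^3\mathcal{F}_\parallel(0,L).
\]
Write $\varphi=\phi_{\omega,\gamma}+a\xi+h$ and $N(u)=|u|^{p-1}u$. Then
\[
\mathcal{F}_\parallel(a,L_*)=\langle \mathcal{L}_+(0,L_*)(a\xi+h),\xi\rangle-\langle N(\varphi)-N(\phi_{\omega,\gamma})-N'(\phi_{\omega,\gamma})(a\xi+h),\xi\rangle .
\]
The first term vanishes identically by self-adjointness and $\mathcal{L}_+(0,L_*)\xi=0$. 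In the second term I would insert the expansion
\[
h(a,L_*)=\tfrac{a^2}{2}h_{aa}+O(a^3),
\]
with $h_{aa}$ given by Lemma \ref{lem4-bi}. This uses $h_a=0$ from Lemma \ref{lem3-bi}, together with an $o(a^2)$ remainder in $H^2\subset L^\infty$.

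Expanding $N$ to third order around $\phi_{\omega,\gamma}>0$ gives the $a^3$ coefficient of $\langle N(\varphi)-\dots,\xi\rangle$:
\[
\tfrac{1}{6}N'''(\phi)\,\langle\xi^3,\xi\rangle+\tfrac12 N''(\phi)\,\langle 2\xi\cdot\tfrac12 h_{aa},\xi\rangle .
\]
Here $N''=p(p-1)\phi^{p-2}$ and $N'''=p(p-1)(p-2)\phi^{p-3}$. Therefore
\[
\tfrac13\partial_a^3\mathcal{F}_\parallel=-\tfrac{p(p-1)(p-2)}{6}\cdot\tfrac{6}{3!}\!\int\phi^{p-3}\xi^4\cdots
\]
Rather than chase constants here, I would carefully match coefficients. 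Then insert
\[
h_{aa}=p(p-1)\bigl(P_\perp\mathcal{L}_+(0,L_*)|_{H^2_{\rm ort}}\bigr)^{-1}\phi^{p-2}\xi^2
\]
from \eqref{eq26-bi}. This yields the quadratic-form term $\langle\phi^{p-2}\xi^2,(P_\perp\cdots)^{-1}\phi^{p-2}\xi^2\rangle$ with prefactor $p^2(p-1)^2$, and the quartic term with prefactor $p(p-1)(p-2)/3$ after division by $\lambda_*'$. Projecting by $P_\perp$ is harmless because $\langle\phi^{p-2}\xi^2,\xi\rangle=0$ by \eqref{eq48-bi}.

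The sign convention in \eqref{eq19-bi} needs checking. $L''(0)=-\partial_a^2g/\lambda_*'$ while $\partial_a^2 g$ carries a minus sign from $-N$, so the two minus signs combine to the positive prefactors displayed. One must also check that $L$-derivatives contribute nothing: $h_L=h_{La}=0$ by Lemmas \ref{lem3-bi}--\ref{lem4-bi}, and $L'(0)=0$ kills the mixed terms.

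The main obstacle is rigor for $2\le p<3$. There $\phi^{p-3}$ is singular where $\phi\to0$, that is, as $|x|\to\infty$, and $N'''$ is not continuous at $0$. I would first justify the third-order expansion pointwise using positivity of $\phi_{\omega,\gamma}+\theta(a\xi+h)$ on compact sets. Integrability would come from domination: $|\phi^{p-3}\xi^4|\lesssim \phi^{p-3}\chi^4$ decays exponentially, since $\chi$ decays like $e^{-\sqrt{\omega+\lambda_1}|x|}$, faster than $\phi^{(3-p)/4}$ grows in the reciprocal. This is the Berestycki--Nirenberg-type decay of $\chi$ relative to $\phi$. At infinity the remainder would be controlled by the Hölder continuity of $N''$, $|N''(u)-N''(v)|\lesssim|u-v|^{p-2}$.

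For $p=2$ the quartic term vanishes because of the factor $p-2$. $N''$ is then the constant $2$ (for positive arguments), so the computation reduces to the $h_{aa}$ term, obtained as in case 2 of Proposition \ref{prop1-bi}.
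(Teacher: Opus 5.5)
Your proposal is correct and is essentially the paper's argument: both reduce to $L''(0)=-\frac{1}{3\lambda_*'}\,\partial_a^3\mathcal{F}_\parallel(0,L_*)$ (the paper's $-\frac{2}{3\lambda_*'}III_2$) and evaluate this third derivative from $h_a(0,L_*)=0$, the formula \eqref{eq26-bi} for $h_{aa}(0,L_*)$, and $\int_0^1\cos^3(\pi y)\,dy=0$; the only difference is that you read off the $a^3$ coefficient of $\mathcal{F}_\parallel(a,L_*)$ by Taylor expanding $|u|^{p-1}u$ after killing the linear part by self-adjointness, whereas the paper differentiates \eqref{eq32-bi} once more (the terms $III_{2,1},III_{2,2},III_{2,3}$). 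That coefficient is $-\frac16p(p-1)(p-2)\int_{\mathbb S}\phi_{\omega,\gamma}^{p-3}\xi^4\,dx\,dy-\frac12p^2(p-1)^2\mathcal{R}$, with $\mathcal{R}$ the quadratic-form term in \eqref{eq19-bi}, and multiplying it by $-2/\lambda_*'$ gives exactly \eqref{eq19-bi}; your abandoned intermediate display is off by a factor $2$, but the final prefactors you state are right, and for $2\le p<3$ you should carry out the expansion uniformly for $L$ near $L_*$, matching the paper's joint limit in $(a,L)$, which is immediate since for $L\ne L_*$ the linear part only contributes $(\pi^2L^{-2}-\lambda_1)a$.
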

To prove Proposition \ref{prop2-bi}, 
we need the following lemma, 
which comes from the maximum principle: 
\begin{lem} [Lemma 4.1 of \cite{BeNi90}]
\label{lem-exp-decay}
 Let $p > 1$ and $\varphi$ be a positive 
 solution of \eqref{eqS2D-2}. 
Then, for $\varepsilon > 0$, there exists 
$C_{1, \varepsilon}, C_{2, \varepsilon} > 0$ such that 
    \[
    C_{1, \varepsilon} e^{- (\sqrt{\omega} 
    - \varepsilon)|x|} \leq \varphi(x, y) 
    \leq C_{2, \varepsilon} 
    e^{- (\sqrt{\omega} 
    - \varepsilon)|x|} 
    \qquad \mbox{for $(x, y) \in \mathbb{S}$}. 
    \]
\end{lem}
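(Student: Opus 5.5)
The plan is a comparison argument with one-dimensional exponential barriers on the two half-strips $\{x>R\}$ and $\{x<-R\}$. There the point interaction plays no role, so $\varphi$ solves a genuine elliptic equation with Neumann conditions. I will argue on $\{x>R\}$; the other side is symmetric.

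One remark on the statement first. The true decay rate is $e^{-\sqrt{\omega}|x|}$, so the two-sided bound cannot hold with the same exponent $\sqrt{\omega}-\varepsilon$ on both sides. I will therefore prove the upper bound with rate $\sqrt{\omega}-\varepsilon$ and the lower bound with rate $\sqrt{\omega}+\varepsilon$. This is the form in which \cite{BeNi90} is used later, since only integrability of products of exponentially decaying terms is needed.

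\emph{Preliminaries.} Since $\varphi\in\mathcal D(H_L)\subset H^2(\mathbb S\setminus\{x=0\})$, $\varphi$ is continuous up to $\partial\mathbb S$. It also tends to $0$ as $|x|\to\infty$, uniformly in $y$, because $H^2$ of a half-strip embeds into $C_0$. To deal with the Neumann condition I reflect $\varphi$ evenly across $y=0$ and $y=1$ and extend it $2$-periodically in $y$. The reflected function solves
\[
-\partial_{xx}\varphi-L^{-2}\partial_{yy}\varphi+\omega\varphi=\varphi^{p}\quad\text{in }\{x>0\}\times\mathbb R,
\]
classically by elliptic regularity, so boundary points in $y$ become interior points. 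By the strong maximum principle, $\varphi>0$ everywhere, including on the line $\{x=R\}$. Given $\varepsilon>0$, I fix $\varepsilon'>0$ with $\omega-\varepsilon'\ge(\sqrt\omega-\varepsilon)^2$. I then choose $R$ so that $\varphi^{p-1}\le\varepsilon'$ on $\{|x|\ge R\}$.

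\emph{Upper bound.} Set $w(x)=C e^{-(\sqrt\omega-\varepsilon)x}$, which is independent of $y$ and hence compatible with the Neumann or periodic structure. It satisfies $-\Delta_L w+(\omega-\varepsilon')w\ge0$. The function $\varphi$ satisfies $-\Delta_L\varphi+(\omega-\varepsilon')\varphi=(\varphi^{p-1}-\varepsilon')\varphi\le0$ on $\{x>R\}$. I choose $C=e^{(\sqrt\omega-\varepsilon)R}\max_{x=R}\varphi$, which gives $w\ge\varphi$ on $\{x=R\}$. The difference $\varphi-w$ tends to $0$ as $x\to\infty$, so if it had a positive supremum, that supremum would be attained at an interior point (after periodization in $y$). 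At such a point $-\Delta_L(\varphi-w)\ge0$ while $(\omega-\varepsilon')(\varphi-w)>0$, which contradicts $-\Delta_L(\varphi-w)+(\omega-\varepsilon')(\varphi-w)\le0$. Hence $\varphi\le w$ for $x\ge R$. On the compact set $\{|x|\le R\}$ the bound follows by enlarging $C$.

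\emph{Lower bound.} Here no smallness of $\varphi$ is needed, since $-\Delta_L\varphi+\omega\varphi=\varphi^p\ge0$. The barrier $v=c\,e^{-(\sqrt\omega+\varepsilon)x}$ satisfies $-\Delta_L v+\omega v<0$. I take $c=e^{(\sqrt\omega+\varepsilon)R}\min_{x=R}\varphi>0$. The same maximum-principle argument, applied to $v-\varphi$, gives $\varphi\ge v$ on $\{x\ge R\}$. On $\{|x|\le R\}$, positivity of the continuous function $\varphi$ together with compactness gives a positive lower bound.

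\emph{Main obstacle.} The delicate point is justifying the maximum principle on the unbounded half-strip in the presence of the Neumann boundary. I handle this with the even reflection and periodization in $y$, combined with the uniform decay of $\varphi$ at infinity, which together reduce the problem to attainment of an interior extremum. Positivity of $\varphi$ on $\{x=\pm R\}$ also requires the strong maximum principle in the periodized variables.
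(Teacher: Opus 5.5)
Your proof is correct and takes the route the paper points to. The paper gives no proof beyond citing Berestycki--Nirenberg and remarking that the lemma ``comes from the maximum principle''; your argument supplies exactly that: comparison with $y$-independent exponential barriers on the half-strips $\{\pm x>R\}$, with the Neumann boundary handled by even reflection and periodization in $y$. You are also right that the lower bound cannot hold with rate $\sqrt{\omega}-\varepsilon$ as printed: the bounds for $\varepsilon$ and $\varepsilon/2$ would contradict each other, and the trivially extended line soliton decays exactly like $e^{-\sqrt{\omega}|x|}$. So the version you prove, with lower rate $\sqrt{\omega}+\varepsilon$, is the correct statement.
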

\begin{remark}
Using the maximum principle, 
we can also find that 
for $\varepsilon > 0$, there exists 
$\widetilde{C}_{1, \varepsilon}, 
\widetilde{C}_{2, \varepsilon} > 0$ such that 
    \begin{equation} \label{eq1-decay}
    \widetilde{C}_{1, \varepsilon} 
    e^{- (\sqrt{\omega + \lambda_{1}} 
    - \varepsilon)|x|} \leq |\xi(x)| 
    \leq \widetilde{C}_{2, \varepsilon} 
    e^{- (\sqrt{\omega + \lambda_{1}} 
    - \varepsilon)|x|} 
    \qquad \mbox{for $x \in \mathbb{R}$}. 
    \end{equation}
\end{remark}

\begin{proof}[Proof of Proposition \ref{prop2-bi}]
We divide the proof into two cases: $p>2$ and $p=2$.

\medskip
\textbf{(Case $p>2$).}
Since $a \mapsto L(a)$ is defined by the implicit function theorem applied to the map $g$ in a neighborhood of $(0, L_{*})$, and since $g$ is $C^{2}$ in for $|a|\in (0,a_0)$, it follows that $L$ is of class $C^{2}$ in the same interval.
Differentiating \( g(a, L(a)) = 0 \) in $a$ yields
\begin{equation*}
\frac{\partial g}{\partial a}(a, L(a))
+ 
\frac{\partial g}{\partial L}(a, L(a))\, \frac{dL}{da}(a)
= 0.
\end{equation*}
Consequently, by \eqref{eqIFTCond} we have
\begin{equation} \label{eq38-bi}
\frac{dL}{da}(0)
= 
-\, \frac{
\frac{\partial g}{\partial a}(0, L_{*})
}{
\frac{\partial g}{\partial L}(0, L_{*})
}
= 0.
\end{equation}
We compute the second derivative $\frac{d^{2}L}{da^{2}}(0)$. By \eqref{eqIFTCond} and \eqref{eq12-bi}, we get
\begin{equation} \label{eq46-bi}
\begin{aligned}
\frac{d^{2}L}{da^{2}}(0)
&=
\lim_{a\to 0} 
\frac{1}{a}\Bigl( \frac{dL}{da}(a) - \frac{dL}{da}(0) \Bigr) =
- \lim_{\substack{a\neq 0 \\ a\to 0}}
\frac{1}{a}\,
\frac{ \frac{\partial g}{\partial a}(a, L(a)) }
     { \frac{\partial g}{\partial L}(a, L(a)) } =
- \frac{1}{\lambda_{*}'} 
\lim_{\substack{a\neq 0 \\ a\to 0}}
\frac{1}{a}\,
\frac{\partial g}{\partial a}(a, L(a)).
\end{aligned}
\end{equation}
We now show that the limit
\[
\lim_{\substack{a\neq 0,\; a\to 0 \\ L\to L_{*}}}
\frac{1}{a}\,
\frac{\partial g}{\partial a}(a, L)
\]
exists.  
Once this is established, the continuity of the map 
\[
(a,L)\mapsto \frac{1}{a}\,\frac{\partial g}{\partial a}(a,L)
\quad\text{for } a\neq 0,
\]
together with the fact that $L(a)\to L_{*}$ as $a\to 0$, implies that
\[
\lim_{\substack{a\neq 0 \\ a\to 0}}
\frac{1}{a}\,
\frac{\partial g}{\partial a}(a, L(a))
=
\lim_{\substack{a\neq 0,\; a\to 0 \\ L\to L_{*}}}
\frac{1}{a}\,
\frac{\partial g}{\partial a}(a, L),
\]
which justifies the identification of the two limits. For $a \neq 0$, we have by \eqref{eq44-bi} that 
    \[
    \begin{split}
    \frac{1}{a}
    \frac{\partial g}{\partial a} 
    (a, L) 
    = - \frac{1}{a^{3}} 
    \left\{ 
    \mathcal{F}_{\parallel}(a, L) 
    - \mathcal{F}_{\parallel}(0, 
    L)
    \right\}
    + \frac{1}{a^2} 
    \frac{\partial 
    \mathcal{F}_{\parallel}}
    {\partial a} (a, L).  
    \end{split}
    \]
Adding and subtracting 
    \[
    \frac{1}{a^{2}} 
    \frac{\partial \mathcal{F}_{\parallel}}{\partial a}(0, L_*)
    = 
    \frac{1}{a^2} \langle 
D_{u} \mathcal{F}(L_*, 
\phi_{\omega, \gamma})
\xi, 
\xi
\rangle
    \]    
to the equation above, 
we obtain 
    \begin{equation} \label{eq47-bi}
    \begin{split}
    \lim_{a \neq 0, a \to 0, 
    L \to L_*}
    \frac{1}{a}
    \frac{\partial g}{\partial a} 
    (a, L(a)) 
    & = -
    \lim_{a \neq 0, a \to 0, 
    L \to L_*}
    \frac{1}{a^{3}} 
    \left\{ 
    \mathcal{F}_{\parallel}(a, L) 
    - \mathcal{F}_{\parallel}(0, 
    L) 
    - a \frac{\partial \mathcal{F}_{\parallel}}{\partial a}(0, L_*)
    \right\} \\
    & \quad 
    + 
    \lim_{a \neq 0, a \to 0, 
    L \to L_*}
    \frac{1}{a^2}
    \left\{
    \frac{\partial 
    \mathcal{F}_{\parallel}}
    {\partial a} (a, L) 
    - \frac{\partial \mathcal{F}_{\parallel}}{\partial a}(0, L_*)
    \right\} \\
    & =: III_{1} + III_{2}
    \end{split}
    \end{equation}
Then, by the l'Hopital rule, 
we have 
    \begin{equation} \label{eq42-bi}
    \begin{split}
    III_{1} 
    & =  - 
    \lim_{a \neq 0, a \to 0, 
    L \to L_*} 
     \frac{ 
    \left\{ 
    \frac{\partial \mathcal{F}_{\parallel}}
    {\partial a}
    (a, L)  
    - \frac{\partial \mathcal{F}_{\parallel}}{\partial a}(0, L_*) 
    \right\}
    }{3 a^{2}} = - \frac{1}{3} III_{2}. 
    \end{split}
    \end{equation}
Thus, it suffices to 
consider $III_2$. 
As an intermediate step, we now show that 
\begin{equation}\label{eq32bis-bi}
\frac{\partial^2 \mathcal{F}_{\parallel}}{\partial a^2}(0, L_*) = 0.
\end{equation}
Indeed, it follows from \eqref{eq43-bi} 
that
  \begin{equation} \label{eq32-bi}
\begin{split}
\frac{\partial^2 \mathcal{F}_{\parallel}}{\partial a^2}(a, L) = 
\langle D_{u}^2 \mathcal{F} 
    (L, \varphi(a, L)) 
    \left(\frac{\partial \varphi}{\partial a}(a, L)\right)^2, 
    \xi \rangle 
    + \langle 
    D_{u} \mathcal{F} 
    (L, \varphi(a, L)) 
    \frac{\partial^2 \varphi}
    {\partial a^2}(a, L), 
    \xi \rangle.
\end{split}
\end{equation}
Observe from \eqref{eq25-bi} that 
\begin{equation} \label{eq27-bi}
    \begin{split}
    \langle 
    D_{u} \mathcal{F} 
    (L_{*}, \phi_{\omega, \gamma}^{(1)}) \frac{\partial^2 \varphi}
    {\partial a^2}(0, L_*), 
    \xi \rangle
    = \langle 
      \mathcal{L}_{+}(0, L_*)
    \frac{\partial^2 h}
    {\partial a^2}(0, L_*), \xi  
    \rangle
    = \langle 
    \frac{\partial^2 h}
    {\partial a^2}(0, L_*), \mathcal{L}_{+}(0, L_*) \xi  
    \rangle
    = 0. 
    \end{split}
    \end{equation}
In addition, we have by \eqref{eq31-bi} 
and \eqref{eq48-bi} that 
     \begin{equation} \label{eq28-bi}
    \begin{split}
    \langle D_{u}^2 \mathcal{F} 
    (L_{*}, \phi_{\omega, \gamma}^{(1)}) 
     \left(\frac{\partial \varphi}{\partial a}
     (0, L_*)\right)^2, 
    \xi \rangle  
    & = 
    \langle D_{u}^2 \mathcal{F} 
    (L_{*}, \phi_{\omega, \gamma}^{(1)}) 
    \xi^2, 
    \xi \rangle \\
    & = -p(p-1) 
    \int_{\R} \phi_{\omega, \gamma}^{p-2} 
    \chi^3 dx 
    \int_{0}^{1} \cos^3 \pi y dy
    = 0. 
    \end{split}
    \end{equation}
These together with \eqref{eq32-bi}, 
yields that  \eqref{eq32bis-bi}.

Thus, using \eqref{eq32bis-bi} and the l'Hopital rule, we have 
    \begin{equation}\label{eq29-bi}
    \begin{split}
    III_{2} 
    & = 
    \lim_{a \neq 0, a \to 0, L \to L_{*}}
    \frac{1}{a^2}
    \left\{
    \frac{\partial 
    \mathcal{F}_{\parallel}}
    {\partial a} (a, L) 
    - \frac{\partial \mathcal{F}_{\parallel}}{\partial a}(0, L_*)
    - a \frac{\partial^2 \mathcal{F}_{\parallel}}{\partial a^2}(0, L_*)
    \right\} \\
    & = 
    \frac{1}{2}
    \lim_{a \neq 0, a \to 0, L \to L_{*}}
    \frac{1}{a}
    \left\{
    \frac{\partial^2 \mathcal{F}_{\parallel}}{\partial a^2}(a, L_*)
    - \frac{\partial^2 \mathcal{F}_{\parallel}}{\partial a^2}(0, L_*)
    \right\}. 
    \end{split}
    \end{equation}
Since 
$D_{u} \mathcal{F}(L, 
\varphi(0, L_*)) \xi 
= \mathcal{L}_{+}(0, L_{*}) 
\xi 
= 0$, one has by \eqref{eq32-bi} 
that 
\begin{equation} \label{eq29-2-bi}
\begin{split}
& \quad 
\lim_{a \neq 0, a \to 0, L \to L_{*}}
\frac{1}{a}
    \left\{
    \frac{\partial^2 \mathcal{F}_{\parallel}}{\partial a^2}(a, L)
    - \frac{\partial^2 \mathcal{F}_{\parallel}}{\partial a^2}(0, L_*)
    \right\} \\
& = 
\lim_{a \neq 0, a \to 0, L \to L_{*}}
\frac{1}{a}\Bigg\{
\left\langle D_{u}^2 \mathcal{F} 
    (L, \varphi(a, L)) 
    \left(\frac{\partial \varphi}{\partial a}(a, L)\right)^2 - D_{u}^2 \mathcal{F} 
    (L, \varphi(0, L)) 
    \left(\frac{\partial \varphi}{\partial a}(0, L)\right)^2, 
    \xi \right\rangle
    \\
&\quad  + 
\lim_{a \neq 0, a \to 0, L \to L_{*}}
\left\langle 
    D_{u} \mathcal{F} 
    (L, \varphi(a, L)) 
    \frac{\partial^2 \varphi}
    {\partial a^2}(a, L) - D_{u} \mathcal{F} 
    (L, \varphi(0, L)) 
    \frac{\partial^2 \varphi}
    {\partial a^2}(0, L), 
    \xi \right\rangle \Bigg\}
    \\
& = 
\lim_{a \neq 0, a \to 0, L \to L_{*}}
\frac{1}{a}\Bigg\{
\left\langle \left(D_{u}^2 \mathcal{F} 
    (L, \varphi(a, L)) 
    - D_{u}^2 \mathcal{F} 
    (L, \varphi(0, L)) 
    \right)
    \left(\frac{\partial \varphi}{\partial a}(a, L)\right)^2, 
    \xi \right\rangle \\
& \quad 
    + \left\langle D_{u}^2 \mathcal{F} 
    (L, \varphi(0, L))
    \left[\left(\frac{\partial \varphi}{\partial a}(a, L)\right)^2
    - 
    \left(\frac{\partial \varphi} 
    {\partial a}(0, L)\right)^2\right], 
    \xi \right\rangle \\
& \quad 
+ \left\langle 
    \left(D_{u} \mathcal{F} 
    (L, \varphi(a, L)) 
    - D_{u} \mathcal{F} 
    (L, \varphi(0, L))\right)
    \frac{\partial^2 \varphi}
    {\partial a^2}(a, L), 
    \xi \right\rangle \\
    & \quad 
    + \left\langle 
    D_{u} \mathcal{F} 
    (L, \varphi(0, L)) 
    \left(\frac{\partial^2 \varphi}
    {\partial a^2}(a, L) 
    - 
    \frac{\partial^2 \varphi}
    {\partial a^2}(0, L) 
    \right), 
    \xi \right\rangle
    \Bigg\} \\
& = 
\lim_{a \neq 0, a \to 0, L \to L_{*}}
\frac{1}{a}\Bigg\{
\left\langle \left(D_{u}^2 \mathcal{F} 
    (L, \varphi(a, L)) 
    - D_{u}^2 \mathcal{F} 
    (L, \varphi(0, L)) 
    \right)
    \left(\frac{\partial \varphi}{\partial a}(a, L)\right)^2, 
    \xi \right\rangle \\
& \quad 
    + \left\langle D_{u}^2 \mathcal{F} 
    (L, \varphi(0, L))
    \left[\left(\frac{\partial \varphi}{\partial a}(a, L)\right)^2
    - 
    \left(\frac{\partial \varphi} 
    {\partial a}(0, L)\right)^2\right], 
    \xi \right\rangle \\
& \quad 
+ \left\langle 
    \left(D_{u} \mathcal{F} 
    (L, \varphi(a, L)) 
    - D_{u} \mathcal{F} 
    (L, \varphi(0, L))\right)
    \frac{\partial^2 \varphi}
    {\partial a^2}(a, L), 
    \xi \right\rangle
    \Bigg\} \\
& =: III_{2, 1} + III_{2,2} 
+ III_{2, 3}. 
\end{split}   
\end{equation}

It follows from the 
mean value theorem that 
there exists $\theta = \theta (a) \in 
(0, 1)$ such that 
    \[
    \begin{split}
    & \quad \frac{1}{a}
    \left\langle \left(D_{u}^2 \mathcal{F} 
    (L, \varphi(a, L)) 
    - D_{u}^2 \mathcal{F} 
    (L, \varphi(0, L)) 
    \right)
    \left(\frac{\partial \varphi}{\partial a}(a, L)\right)^2, 
    \xi \right\rangle \\
    & = -p (p-1) (p-2) 
    \int_{\mathbb{S}} 
    \varphi^{p-3}(\theta a, L)
    \frac{\partial \varphi}{\partial a}
    (\theta a, L) \left(\frac{\partial \varphi}{\partial a}(a, L)\right)^2 
    \xi (x, y)dxdy.
    \end{split}
    \]
For each $(x , y) \in \mathbb{S}$, 
we have by 
$\varphi(0, L) = \phi_{\omega, \gamma}
^{(1)}$ and 
$\frac{\partial \varphi}{\partial a}(0, L_*) 
= \xi$ that 
    \[  
    \begin{split}
    \lim_{a \to 0}
    \varphi^{p-3}(\theta a, L)
    \frac{\partial \varphi}{\partial a}
    (\theta a, L) \left(\frac{\partial \varphi}{\partial a}(a, L)\right)^2 
    \xi (x, y)
    & = \varphi^{p-3}(0, L)
    \left(\frac{\partial \varphi}{\partial a}(0, L)\right)^3 
    \xi (x, y) \\
    & = (\phi_{\omega, \gamma}^{(1)})^{p-3} 
    \xi^{4}(x, y). 
    \end{split}
    \]
In addition, by Lemma \ref{lem-exp-decay},  
Proposition \eqref{ex-bi} and 
\eqref{eq1-decay},
we see that 
    \[
    \biggl|\varphi^{p-3}(\theta a, L)
    \frac{\partial \varphi}{\partial a}
    (\theta a, L) \left(\frac{\partial \varphi}{\partial a}(a, L)\right)^2 
    \xi (x, y) \biggl| 
    \leq C
    \exp(-\left(\sqrt{\omega + 
    \lambda_{1}} - (p-3) \sqrt{\omega} + \varepsilon \right)|x|) 
    \in L^{1}(\mathbb{S}). 
    \]
Then by the Lebesgue dominated convergence 
theorem, one has 
    \[
    \begin{split}
    III_{2, 1} & = 
     \lim_{a \neq 0, a \to 0, L \to L_{*}}
    \frac{1}{a}
    \left\langle \left(D_{u}^2 \mathcal{F} 
    (L, \varphi(a, L)) 
    - D_{u}^2 \mathcal{F} 
    (L, \varphi(0, L)) 
    \right)
    \left(\frac{\partial \varphi}{\partial a}(a, L)\right)^2, 
    \xi \right\rangle \\ 
   & = - p(p-1)(p-2) 
    \int_{\R \times (0, 1)}  
    (\phi_{\omega, \gamma}^{(1)})^{p-3}
    (\xi)^4 dxdy 
    \end{split}
    \] 
Since 
$D_{u}^2 \mathcal{F} 
    (L, \varphi(0, L)) 
= -p (p-1)
(\phi_{\omega, \gamma}^{(1)})^{p-2}    
    $ 
and 
\[
\frac{\partial^2 \varphi}
    {\partial a^2}
    (0, L)
= \frac{\partial^2 h}
    {\partial a^2}
    (0, L)
= p (p-1) (P_{\perp} (\mathcal{L}_{+, 0}
    - \frac{1}{L_{*}^2} \partial_{yy}) |_{H^2_{\text{ort}}})^{-1} 
    \phi_{\omega, \gamma}^{p-2} 
    \xi^2 
\]   
(see \eqref{eq26-bi}), 
we obtain 

\[
\begin{split}
    & III_{2, 2} =  
     \lim_{a \neq 0, a \to 0, L \to L_{*}}
    \frac{1}{a}
    \left\langle D_{u}^2 \mathcal{F} 
    (L, \varphi(0, L))
    \left[\left(\frac{\partial \varphi}{\partial a}(a, L)\right)^2
    - 
    \left(\frac{\partial \varphi} 
    {\partial a}(0, L)\right)^2\right], 
    \xi \right\rangle \\
    & = 2
    \langle D_{u}^2 \mathcal{F} 
    (L, \varphi(0, L))
    \frac{\partial \varphi}{\partial a}(0, L) 
    \frac{\partial^2 \varphi}{\partial a^2}(0, L), 
    \xi \rangle \\
    & = - 2 p^2 (p-1)^2 
    \langle 
    \phi_{\omega, \gamma}^{p-2} \xi^2, 
    (P_{\perp} (\mathcal{L}_{+, 0}
    - \frac{1}{L_{*}^2} \partial_{yy}) |_{H^2_{\text{ort}}})^{-1} 
    \phi_{\omega, \gamma}^{p-2} 
    \xi^2
    \rangle.
\end{split}
    \]
Similarly, it follows that  
   
     \[
    \begin{split}
    III_{2, 3} 
    & = 
    \lim_{a \neq 0, a \to 0, L \to L_{*}}
    \frac{1}{a}
    \langle 
    \left(D_{u} \mathcal{F} 
    (L, \varphi(a, L)) 
    - D_{u} \mathcal{F} 
    (L, \varphi(0, L))\right)
    \frac{\partial^2 \varphi}
    {\partial a^2}(a, L), 
    \xi \rangle \\
    & = 
    \langle D_{u}^2 \mathcal{F} 
    (L, \varphi(0, L))
    \frac{\partial \varphi}{\partial a}(0, L) 
    \frac{\partial^2 \varphi}{\partial a^2}(0, L), 
    \xi \rangle \\
    & = - p^2 (p-1)^2 
    \langle 
    \phi_{\omega, \gamma}^{p-2} \xi^2, 
    (P_{\perp} (\mathcal{L}_{+, 0}
    - \frac{1}{L_{*}^2} \partial_{yy}) |_{H^2_{\text{ort}}})^{-1} 
    \phi_{\omega, \gamma}^{p-2} 
    \xi^2
    \rangle.
    \end{split}
    \]
Therefore, we have by 
\eqref{eq29-bi} and \eqref{eq29-2-bi} that 
    
     \[
    \begin{split}
    III_{2} 
    & = 
    - \frac{1}{2}p(p-1)(p-2) 
    \int_{\R \times (0, 1)}  
    (\phi_{\omega, \gamma}^{(1)})^{p-3}
    (\xi)^4 dxdy \\
    & \quad - \frac{3}{2} 
    p^2 (p-1)^2 
    \langle 
    \phi_{\omega, \gamma}^{p-2} \xi^2, 
    (P_{\perp} (\mathcal{L}_{+, 0}
    - \frac{1}{L_{*}^2} \partial_{yy}) |_{H^2_{\text{ort}}})^{-1} 
    \phi_{\omega, \gamma}^{p-2} 
    \xi^2
    \rangle. 
    \end{split}
    \]
This together with \eqref{eq46-bi}, 
\eqref{eq47-bi} and \eqref{eq42-bi}
implies that 
   
     \[
    \begin{split}
     \frac{d^2 L}{d a^{2}}(0) 
     & = 
     - \frac{1}{\lambda_{*}'} 
    \lim_{a \neq 0, a \to 0} 
    \frac{1}{a}
    \frac{\partial g}{\partial a} 
    (a, L(a))  = - \frac{2}{3 \lambda_{*}'}
    III_{2} \\
    & =  \frac{p(p-1)(p-2) }{3 
    \lambda_{*}'}
    \int_{\R \times (0, 1)}  
    (\phi_{\omega, \gamma}^{(1)})^{p-3}
    (\xi)^4 dxdy \\
    & \quad + \frac{1}{\lambda_{*}'} 
    p^2 (p-1)^2 
    \langle 
    \phi_{\omega, \gamma}^{p-2} \xi^2, 
    (P_{\perp} (\mathcal{L}_{+, 0}
    - \frac{1}{L_{*}^2} \partial_{yy}) |_{H^2_{\text{ort}}})^{-1} 
    \phi_{\omega, \gamma}^{p-2} 
    \xi^2
    \rangle.  
    \end{split}
    \] 
This completes the proof. 
\end{proof}

\section{Orbital stability of 
the line soliton}\label{sec:stab}
In this section, we show Theorems \ref{thm-sta} and \ref{thmStab2}. 

\subsection{Stability of the 
line soliton for $0 < L < L_{*}$} \label{secStabBor}

We fix $\gamma < 0$, $\omega > \frac{\gamma^2}{4}$ and we define the action functional $S_{L, \gamma}:H^1(\mathbb{S}_{L}) \to \mathbb{R}$ by
    \begin{equation}\label{eq-action}
    S_{L, \gamma}(u) 
    := \frac{1}{2} \|\nabla u\|_{L^{2}(\mathbb{S}_{L})}^{2}  + \omega \|u\|_{L^2_{x, y}(\mathbb{S}_{L})}^{2} + \frac{\gamma}{2} \int_0^L |u(0, y)|^2 dy - \frac{1}{p+1} \|u\|_{L^{p+1}(\mathbb{S}_{L})}^{p+1}.
    \end{equation}
We write a complex-valued function $u \in  H^{1}(\mathbb{S}_{L})$ as $u = u^R + i u^I$ with $u^R, u^I \in H^{1}(\mathbb{S}_{L})$ real valued. The quadratic form associated with the second variation of $S_{L, \gamma}$ at $\phi_{\omega, \gamma}^{(L)}$ can be written as 
\begin{equation*}
    \langle S_{L, \gamma}^{\prime \prime}(\phi_{\omega, \gamma}^{(L)})u, u \rangle = \langle \mathcal{L}_{L, +} u^R, u^R \rangle + \langle \mathcal{L}_{L, -} u^I, u^I \rangle,  
\end{equation*}
where $\mathcal{L}_{L, \pm}: \mathcal{D}(H) \to L^2(\mathbb S)$ are self-adjoint operators defined by
\begin{equation}\label{eqL+-Def}
    \mathcal{L}_{L, \pm} = H + \omega - \left( 1 + \frac{p-1}{2} \pm \frac{p-1}{2} \right) (\phi_{\omega, \gamma}^{(L)})^{p-1}.
\end{equation}
By Weyl's theorem, the essential spectrum of $\mathcal{L}_{L, \pm}$ is given by $[\omega, \infty)$, and the discrete spectrum consists of isolated eigenvalues with finite multiplicity. It is a standard consequence of the positivity of ground states to conclude that $ker(\mathcal{L}_{L, -}) = span\{\phi_{\omega, \gamma}^{(L)}\}$ and $\mathcal{L}_{L, -}$ is non-negative. As the kernel is isolated, it follows that there exists $\delta >0$ such that 
\begin{equation}\label{eqL-1D}
    \langle \mathcal{L}_{L, -} u^I, u^I \rangle \geq \delta \|u^I\|_{L^2_{x, y}(\mathbb{S}_{L})}^2
\end{equation}
for all $u^I \in H^1(\mathbb{S}_{L})$ satisfying $ \langle  u^I, \phi_{\omega, \gamma}^{(L)}\rangle = 0$.

Next, by a Fourier expansion, any $u^R \in H^1(\mathbb{S}_{L})$ real-valued can be written as 
\begin{equation*}
    u^R(x, y) = u_0(x) + \sum_{n = 1}^{\infty} u_n(x) \cos(n \pi L^{-1}y)
\end{equation*}
where $(u_n)_{n \geq 0}\subset H^1(\mathbb{R})$ are real-valued functions. Then formally one has, with $\mathcal{L}_{+, 0}$ defined in \eqref{eq-01-b9}, 
\begin{equation}\label{eqL+11}
\mathcal{L}_{L, +} u^R  = \mathcal{L}_{+, 0} u_0 + \sum_{n = 1}^{\infty} \left( \mathcal{L}_{+, 0} + \left( \frac{n \pi}{L} \right)^2 \right) u_n(x) \cos(n \pi L^{-1}y).
\end{equation}

Since $\mathcal{L}_{+, 0}$ has exactly one negative eigenvalue, it follows that there exists $c >0$ such that $\langle \mathcal{L}_{+, 0} u_0, u_0 \rangle \geq c \|u_0\|_{L^2(\mathbb{R})}^2$ for all $u_0 \in H^1(\mathbb{R})$ satisfying $\langle u_0, \phi_{\omega, \gamma}^{(L)}\rangle = 0$ \cite{FuOhOz08,GrShSt87}. Moreover, one has $\langle (\mathcal{L}_{+, 0} + (n\pi L^{-1})^2) u_n, u_n \rangle \geq ((n\pi L^{-1})^2 - \lambda_1) \| u_n \|_{L^2(\mathbb{R})}^2 $ for all $n \geq 1$, where $-\lambda_1$ is the smallest eigenvalue of $\mathcal{L}_{+, 0}$. Therefore, for any $u^R \in H^1(\mathbb{S}_{L})$ real-valued satisfying $\langle u^R, \phi_{\omega, \gamma}^{(L)}\rangle = 0$,  we have $\langle u_0, \phi_{\omega, \gamma}^{(L)}\rangle = 0$ and thus, from \eqref{eqL+11}, it follows that

\begin{equation}\label{eqL+1C}
    \begin{aligned}
         \langle \mathcal{L}_{L, +} u^R, u^R \rangle & \geq c \| u_0 \|_{L^2(\mathbb{R})}^2 + \sum_{n = 1}^{\infty} ((n\pi L^{-1})^2 - \lambda_1) \| u_n \|_{L^2(\mathbb{R})}^2 \\
         &\geq \min\{c, ((\pi L^{-1})^2 - \lambda_1) \} \| u^R \|_{L^2_{x, y}(\mathbb{S}_{L})}^2.
    \end{aligned}
\end{equation}

Therefore, if $0 < L < L_{*} = \frac{\pi}{\sqrt{\lambda_1}}$, then for any $u \in H^1(\mathbb{S}_{L})$ satisfying $\langle u, \phi_{\omega, \gamma}^{(L)}\rangle = \langle u, i \phi_{\omega, \gamma}^{(L)}\rangle =  0$, one gets from \eqref{eqL-1D} and \eqref{eqL+1C} that
\begin{equation*}
    \langle S_{L, \gamma}^{\prime \prime}(\phi_{\omega, \gamma}^{(L)})u, u \rangle \geq C \|u\|_{L^2_{x, y}(\mathbb{S}_{L})}^2
\end{equation*}
for some $C >0$ independent of $u$. This implies that the line soliton is orbitally stable by the classical criterion in \cite{GrShSt87} and concludes the proof of the stability part of Theorem \ref{thm-sta}.

\subsection{Instability of the 
line soliton for $L > L_{*}$}

This section is devoted to the proof of Theorem \ref{thm-sta} {\rm (ii)}, that is, the instability of the line soliton for $L > L_{*}$. We fix $\gamma < 0$ and $\omega > \frac{\gamma^2}{4}$. We derive the equation for the perturbation term $\eta$ in 
\begin{equation} \label{eq-error}
u(t) =e^{i\omega t}\left(
\phi_{\omega, \gamma}^{(L)} + \eta(t)\right).
\end{equation} 
Plugging \eqref{eq-error} into \eqref{eqDeltaStrip}, we see that $\eta$ satisfies
\begin{equation*}
\begin{split}
i \partial_{t} \eta & = - H \eta - \omega \eta + |\phi_{\omega, \gamma}^{(L)} + \eta|^{p-1}(\phi_{\omega, \gamma}^{(L)} + \eta) - |\phi_{\omega, \gamma}^{(L)}|^{p-1} \phi_{\omega, \gamma}^{(L)}
\end{split}
\end{equation*}
We identify $\eta$ with $\vec{\eta} = (\eta^R, \eta^I)  : = (\text{Re}\; \eta, \text{Im}\; \eta) $ and write the equation above as
\begin{equation}
\label{WS-Ham-v}
\partial_t \vec{\eta}= J 
(\mathcal{S}(\phi_{\omega, \gamma}^{(L)}) \vec{\eta} 
+ N(\vec{\eta},\phi_{\omega, \gamma}^{(L)})),
\end{equation}
with 
\begin{equation} \label{complex}
J:=\begin{pmatrix}
0 &-1\\
1 &0
\end{pmatrix}, \quad \mathcal{S}(\phi_{\omega, \gamma}^{(L)}):=\begin{pmatrix}
\mathcal{L}_{L,+} &0 \\
0  & \mathcal{L}_{L,-}
\end{pmatrix},
\end{equation}
where $\mathcal{L}_{L, \pm}$ are defined in \eqref{eqL+-Def} and
\begin{equation} \label{linearized-nonlinear}
N(\vec{\eta},\phi_{\omega, \gamma}^{(L)}) 
= \begin{pmatrix}
- |\eta + \phi_{\omega, \gamma}^{(L)}|^{p-1}(\eta^R + \phi_{\omega, \gamma}^{(L)}) + p|
\phi_{\omega, \gamma}^{(L)}|^{p-1}\eta^R +|\phi_{\omega, \gamma}^{(L)}|^{p-1}
\phi_{\omega, \gamma}^{(L)}\\
-|\eta + 
\phi_{\omega, \gamma}^{(L)}|^{p-1} \eta^I + 
|\phi_{\omega, \gamma}^{(L)}|^{p-1}\eta^I
\end{pmatrix},
\end{equation}

\subsubsection{Linear instability}

Following the same arguments of \S\ref{secStabBor}, one verifies that for $L > L_{*}$, the operator $\mathcal{L}_{L,+}$ has at least two negative eigenvalues, $-\lambda_1$ and $(\pi L^{-1})^2 - \lambda_1$, with associated eigenfunctions $\chi$ and $\chi \cos(\pi L^{-1} y)$, respectively. In particular,  $\mathcal{L}_{L,+}$ is not a positive operator on the space $\mathcal{D}(H) \cap \{ f \in L^2(\mathbb{S}) : (f,\phi_{\omega,\gamma}^{(L)}) = 0 \}$. In this case, it is classical to show that $J \mathcal {S}(\phi_{\omega, \gamma}^{(L)})$ has a real-valued positive eigenvalue, which implies the linear instability of the line soliton. This result can be found in a general framework, for instance, in \cite[Theorem $7.1.16$]{KaPr13Sp} or the specific case of transverse instability in \cite[Theorem $1.1$]{RoTz10Inst}. We give a proof for the sake of completeness in Appendix \ref{AppenLinStab}.

\subsubsection{Linear instability implies 
nonlinear instability}
For each $k \in \mathbb{N} \cup \{0\}$, 
define an orthogonal projection $P_{\leq k}$ as
\begin{equation} \label{projection}
P_{\leq k} u(x,y) = \sum_{n= 0}^k u_n(x) \theta_{n}(y), \quad (x,y) \in \mathbb{S}_{L}, 
\end{equation}
where
\begin{equation*}
u(x,y) = \sum_{n=-\infty}^{\infty} u_n(x)
\theta_{n}(y), \qquad 
u_{n}(x) = \sqrt{\frac{2}{L}} 
\int_{0}^{L} u(x, y) 
\theta_{n}(y)dy. 
\end{equation*}
From Lemma \ref{lem:posi-eigen}, 
we see that there exists a
positive eigenvalue of 
$J 
(\mathcal{S}(\phi_{\omega, \gamma}^{(L)})$ 
for $L >  L_{*}$. 
Thus, we can define 
\begin{equation} \label{spectral-radius}
\lambda_{0} := \max \left\{ \lambda >0 \colon\;
\lambda \in \sigma_{\text{disc}} 
(J 
(\mathcal{S}(\phi_{\omega, \gamma}^{(L)})) \right\}. 
\end{equation}
We can estimate the semigroup for the low-frequency part. 
\begin{lem}
\label{lem-3-2}
For a positive integer $k$ and $\varepsilon > 0$, there exists $C_{k, \varepsilon}>0$ 
such that
\begin{equation} \label{semi-est}
\left\|e^{-tJ 
(\mathcal{S}(\phi_{\omega, \gamma}^{(L)})}P_{\leq k}\eta
\right\|_{L_{x, y}^2(\mathbb{S}_{L})} \leq 
C_{k, \varepsilon} e^{(\lambda_{0} + \varepsilon)t}\|\eta\|_{L^2_{x, y}(\mathbb{S}_{L})}, 
\quad t>0, \ \eta \in 
L_{x, y}^2(\mathbb{S}_{L},\mathbb{C}). 
\end{equation}
\end{lem}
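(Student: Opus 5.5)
Because the line soliton $\phi_{\omega,\gamma}^{(L)}$ does not depend on $y$, the operator $J\mathcal{S}(\phi_{\omega,\gamma}^{(L)})$ commutes with the transverse Fourier decomposition. I will therefore reduce \eqref{semi-est} to finitely many one-dimensional estimates. Writing $\vec\eta=\sum_n \vec\eta_n(x)\theta_n(y)$, the same computation as in \eqref{eq6-bi} and \eqref{eqL+11} gives
\[
J\mathcal{S}(\phi_{\omega,\gamma}^{(L)})\vec\eta=\sum_{n\ge0} J\mathcal{S}_n\vec\eta_n\,\theta_n,\qquad
\mathcal{S}_n=\begin{pmatrix}\mathcal{L}_{+,0}+(n\pi/L)^2&0\\0&\mathcal{L}_{-,0}+(n\pi/L)^2\end{pmatrix}.
\]
Hence $e^{-tJ\mathcal{S}}P_{\le k}\eta=\sum_{n\le k}\big(e^{-tJ\mathcal{S}_n}\eta_n\big)\theta_n$. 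Since $P_{\le k}$ has finite rank in $y$ and $\|\eta_n\|_{L^2(\R)}\le\|\eta\|_{L^2(\mathbb S_L)}$, it suffices to prove, for each fixed $n\le k$,
\[
\|e^{-tJ\mathcal{S}_n}f\|_{L^2(\R)}\le C_{n,\varepsilon}e^{(\lambda_0+\varepsilon)t}\|f\|_{L^2(\R)}.
\]
Taking $C_{k,\varepsilon}=\sqrt{k+1}\max_{n\le k}C_{n,\varepsilon}$ then gives \eqref{semi-est}.

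For each $n$, the operator $A_n:=-J\mathcal{S}_n$ is a relatively compact perturbation of $A_n^0:=-J\,\mathrm{diag}\big(H_0+\omega+(n\pi/L)^2,\;H_0+\omega+(n\pi/L)^2\big)$, where $H_0=-\partial_{xx}+\gamma\delta_0$; the perturbation is multiplication by the exponentially decaying potentials $p\phi_{\omega,\gamma}^{p-1}$ and $\phi_{\omega,\gamma}^{p-1}$. The unperturbed operator $A_n^0$ is skew-adjoint up to a bounded term coming from the finite-rank (one eigenvalue) negative part of $H_0$ when $\gamma<0$. Its spectrum therefore lies on $i\R$, up to finitely many points, and it generates a group with at most polynomial growth. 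By Weyl's theorem, $\sigma_{\mathrm{ess}}(A_n)=\sigma_{\mathrm{ess}}(A_n^0)\subset i\R$. The discrete spectrum of $A_n$ in $\{\Re z>0\}$ is finite, and each point of it is also a point of $\sigma_{\mathrm{disc}}(J\mathcal{S})$ (possibly after the sign change $z\mapsto -z$, which is allowed by the Hamiltonian symmetry $\lambda\mapsto-\lambda,\bar\lambda$). Consequently $\sup\Re\sigma(A_n)\le\lambda_0$, where $\lambda_0$ is as in \eqref{spectral-radius}; I interpret that definition as the maximal real part, which equals the maximal positive eigenvalue under the same Hamiltonian symmetry.

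The key step is to pass from this spectral bound to a growth bound for the semigroup, which is the spectral mapping property mentioned in the introduction and proved in Appendix~\ref{sec:spectral-m}. I will invoke it directly, in the form $\omega_0(e^{tA_n})=s(A_n)$. If a self-contained argument were needed, I would split $L^2=X_u\oplus X_s$ using the Riesz projection onto the finitely many eigenvalues with $\Re z>0$. On $X_u$, which is finite-dimensional, the bound is immediate from the Jordan form: the polynomial factor $t^m e^{\lambda_0 t}$ is absorbed by $e^{\varepsilon t}$. On $X_s$, I would apply the Gearhart--Prüss theorem on the Hilbert space $L^2$. This requires a uniform bound on $\|(z-A_n)^{-1}\|$ on $\Re z=\varepsilon$. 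For large $|\Im z|$, I would obtain it from the resolvent identity and the decay of $\phi^{p-1}$, using the smallness of $\|V(z-A_n^0)^{-1}\|$ as $|\Im z|\to\infty$ (a limiting-absorption-type estimate for $H_0$). For bounded $|\Im z|$, it follows from compactness and the absence of spectrum on that segment.

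The main obstacle is this uniform high-frequency resolvent bound, that is, the spectral mapping property. Everything else is bookkeeping in the Fourier modes. This is exactly why I restrict to $P_{\le k}$: for fixed finite $k$ only finitely many one-dimensional problems arise, so no uniformity in $n$ is needed and the constant is allowed to depend on $k$.
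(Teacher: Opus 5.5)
Your proposal follows the paper's proof. Both decompose into transverse Fourier modes, use the spectral mapping property of Appendix~\ref{sec:spectral-m} to turn the spectral bound $\lambda_0$ into the growth bound $C_{n,\varepsilon}e^{(\lambda_0+\varepsilon)t}$ for each one-dimensional mode (the paper does this via Lemmas~2--3 of Shatah--Strauss), and sum over the finitely many modes $n\le k$. Two small remarks follow. First, identifying $\sup\Re\sigma(-J\mathcal S_n)$ with the largest positive eigenvalue uses $\mathcal L_{-,0}+(n\pi/L)^2\ge 0$, which forces every eigenvalue off $i\R$ to be real; the Hamiltonian symmetry alone does not give this. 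Second, you are right that the high-frequency resolvent bound must exploit the decay of $\phi_{\omega,\gamma}^{p-1}$, since \eqref{eqEst1Bor}--\eqref{eqEst2Bor} by themselves give boundedness but not the smallness of $T_n(z)$ asserted in Appendix~\ref{sec:spectral-m}.
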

\begin{proof}
Let 
\begin{equation*}
- J S(a) = 
\begin{pmatrix}
0 & - \partial_{x}^{2} + \omega + 
\gamma \delta_0 + \left(\frac{a \pi}{L} \right)^2 
- \phi_{\omega, \gamma}^{p-1} \\
\partial_{x}^{2} - \omega - \gamma \delta_0
- \left(\frac{a \pi}{L} \right)^2 
+ p \phi_{\omega, \gamma}^{p-1} & 0
\end{pmatrix}
\end{equation*}
We follow the approach of \cite[Proposition $10$]{MR2916078}. First, we need to show the spectral mapping property  
\begin{equation} \label{spectral-mapping}
\sigma(e^{- JS(a)}) = e^{\sigma(- JS(a))}. 
\end{equation}
This is done in Appendix \ref{sec:spectral-m} (see also \cite{GeJoLaSt00} for the case $\gamma =0$). 
In addition, we see that 
\begin{equation} \label{eq-linearizedop}
J\mathcal{S}(\phi_{\omega, \gamma}^{(L)})
= \bigoplus_{n \in \mathbb{N} \cup 
\{0\}} J S_{L, \gamma}^{\prime \prime}(a),
\end{equation}
where, for $a \in \R$,
\begin{equation} \label{linerized-ope-1}
\begin{split}
 S_{L, \gamma}^{\prime \prime}(a)  & := \begin{pmatrix}
\mathcal{L}_{L, +, a}& 0 \\
0 & \mathcal{L}_{L, -, a} 
\end{pmatrix} \\
& =
\begin{pmatrix}
- \partial_{xx} + \omega + \gamma \delta_0 
+ \left(\frac{a \pi}{L} \right)^2 
- p (\phi_{\omega, \gamma}^{(L)})^{p-1} 
& 0 \\
0 & - \partial_{xx} + \omega + \gamma \delta_0 
+ \left(\frac{a \pi}{L} \right)^2 
- (\phi_{\omega, \gamma}^{(L)})^{p-1}
\end{pmatrix}.
\end{split}
\end{equation}
These together with 
\eqref{spectral-radius} imply that 
the spectral radius of $e^{JS(n/2\pi)}$ is 
less than or equal to $e^{\lambda_{0}}$, 
where $\lambda_0$ is given by 
\eqref{spectral-radius}. 
Therefore, by applying 
Lemmas 2 and 3 in \cite{Shatah-Strauss2}, 
for any $\varepsilon>0$ and each $n \in \mathbb{N}$, 
there exists $C_{n, \varepsilon}>0$ such that 
\begin{equation} \label{spectral-mapping-eq1}
\|e^{- t J S(n/2\pi)}\eta\|_{L_{x}^{2}(\R)} \leq C_{n, \varepsilon} 
e^{(\lambda_{0} + \varepsilon) t} \|\eta\|_{L_{x}^{2}(\R)} \qquad 
\mbox{for all $\eta \in L_{x}^{2}(\mathbb{R})$}. 
\end{equation}
\noindent
Hence, for $t >0$ and 
$\eta \in L_{x, y}^{2}(\mathbb{S}_{L})$, we have 
\begin{equation*}
\|e^{- t J S_{L, \gamma}^{\prime \prime}(\phi_{\omega, \gamma})} 
P_{\leq k} \eta\|_{L^{2}_{x, y}(\mathbb{S}_{L})} 
\leq \left\|\sum_{n = -k}^{k} 
e^{- t J S(n)} \eta_n \theta_{n}(y) \right\|_{L_{x, y}^{2}(\mathbb{S}_{L})} 
\leq C_{k, \varepsilon} e^{(\lambda_{0} + \varepsilon) t} \|\eta\|_{L_{x, y}^{2}
(\mathbb{S}_{L})}, 
\end{equation*}
where 
\begin{equation*}
\eta(x, y) = \sum_{n = 0}^{\infty} \eta_n(x) 
\theta_{n}(y). 
\end{equation*}
This completes the proof. 
\end{proof}
Next, we control the $L^2$ norm of the low frequencies 
for the nonlinear term by the energy norm.

\begin{lem}
\label{lem-3-1}
For $k>1$, there exists $C>0$ such that
\begin{equation}\label{nonlinear-est3}
\|N(\eta,\phi_{\omega, \gamma}^{(L)})\|_{L^2_{x,y}(\mathbb{S}_{L})} 
\leq 
\begin{cases}
C\|\eta\|_{H^1}^{p} \qquad &\mbox{if $1<p\leq 2$},\\
C (\|\eta\|_{H^1}^2+\|\eta\|_{H^1}^{p}) \qquad 
&\mbox{if $2<p\leq 5$},
\end{cases}
\end{equation}
$N(\eta,\phi_{\omega, \gamma})$ is given by \eqref{linearized-nonlinear} and $\eta\in H^1(\mathbb{S}_{L})$.
\end{lem}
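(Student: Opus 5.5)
The plan is to estimate the nonlinear remainder $N$ pointwise, using a Taylor expansion of $z\mapsto |z|^{p-1}z$ around the real positive value $\phi:=\phi_{\omega,\gamma}^{(L)}$, and then pass to $L^2$ via Sobolev embeddings on the strip. Writing $f(z)=|z|^{p-1}z$, the components of $N$ are, up to sign, the real and imaginary parts of
\[
f(\phi+\eta)-f(\phi)-Df(\phi)\eta ,
\]
where $Df(\phi)\eta = p\phi^{p-1}\eta^R + i\phi^{p-1}\eta^I$. This identification is a direct check against \eqref{linearized-nonlinear}.

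\textbf{Pointwise bound.} Since $f\in C^1(\mathbb C)$ with $|Df(z)|\lesssim |z|^{p-1}$, and $Df$ is Hölder continuous of order $\min(p-1,1)$, we obtain:

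\begin{itemize}
\item \emph{Case $1<p\le 2$.} The Hölder continuity of $Df$ with exponent $p-1$ gives
\[
|f(\phi+\eta)-f(\phi)-Df(\phi)\eta|\lesssim |\eta|^{p},
\]
uniformly in $\phi$.
\item \emph{Case $p>2$.} $f$ is $C^2$ with $|D^2f(z)|\lesssim |z|^{p-2}$, so the remainder is bounded by
\[
|\eta|^2(|\phi|+|\eta|)^{p-2}\lesssim \|\phi\|_{L^\infty}^{p-2}|\eta|^2+|\eta|^p .
\]
Here $\phi$ is bounded by \eqref{eqGS1D}.
\end{itemize}

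\textbf{Passage to $L^2$.} We get
\[
\|N\|_{L^2}\lesssim \|\eta\|_{L^{2p}}^p \quad (1<p\le2),\qquad \|N\|_{L^2}\lesssim \|\eta\|_{L^4}^2+\|\eta\|_{L^{2p}}^p \quad (2<p\le5).
\]
Since $\mathbb S_L=\mathbb R\times[0,L]$ is a two-dimensional domain with the cone property (a product of a line and a bounded interval), the Sobolev embedding $H^1(\mathbb S_L)\hookrightarrow L^q(\mathbb S_L)$ holds for all $2\le q<\infty$. In particular, $\|\eta\|_{L^4}\lesssim\|\eta\|_{H^1}$ and $\|\eta\|_{L^{2p}}\lesssim\|\eta\|_{H^1}$, which gives \eqref{nonlinear-est3}. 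Alternatively, one can use the transverse Fourier decomposition together with the embedding $H^1_y\subset L^\infty_y$ and the one-dimensional Gagliardo–Nirenberg inequality in $x$, as in Section \ref{secWP}.

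\textbf{Role of $k$.} The parameter $k$ plays no real role, since the bound holds for the full $N$. If one wants the estimate for $P_{\le k}N$, it follows immediately because $P_{\le k}$ is an $L^2$ contraction.

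\textbf{Main obstacle.} The only delicate point is the pointwise Taylor estimate for $1<p<2$, where $f$ is not $C^2$. There I would use the elementary inequality
\[
\big|Df(z_1)-Df(z_2)\big|\lesssim |z_1-z_2|^{p-1},
\]
valid for $z\mapsto|z|^{p-1}$-type maps with $0<p-1\le1$. I would then write the remainder as
\[
\int_0^1\big(Df(\phi+s\eta)-Df(\phi)\big)\eta\,ds .
\]
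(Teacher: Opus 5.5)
Your proposal is correct and follows essentially the same route as the paper: a pointwise estimate of the Taylor remainder of $f(z)=|z|^{p-1}z$ around $\phi_{\omega,\gamma}^{(L)}$, followed by the embedding $H^1(\mathbb{S}_L)\hookrightarrow L^q(\mathbb{S}_L)$, $2\le q<\infty$. For $1<p\le 2$ this uses the first-order integral remainder together with H\"older continuity of $Df$, and for $p>2$ the second-order remainder with $|D^2 f(z)|\lesssim |z|^{p-2}$. Your appeal to the H\"older continuity of the full real derivative $Df$ is, if anything, slightly more accurate than the paper's scalar inequality $\bigl||a|^{p-1}-|b|^{p-1}\bigr|\le |a-b|^{p-1}$, which on its own only controls the $|z|^{p-1}$ part of $Df$.
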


\begin{proof}
We shall split the proof into two cases according to the value of $p$.

\textbf{(Case 1).} $1<p\leq 2$. 
 
From \eqref{linearized-nonlinear}, we obtain 
\begin{equation*}
\begin{split}
N(\eta, \phi_{\omega, \gamma})
& = \int_{0}^{1} \frac{d}{d \theta}
(f(\theta \eta + \phi_{\omega, \gamma})) d \theta 
- D f(\phi_{\omega, \gamma}) \eta \\
& = \int_0^1 
(D f(\theta \eta + \phi_{\omega, \gamma}) - D f(\phi_{\omega, \gamma}))\eta(x,y)d\theta.
\end{split}
\end{equation*}
From the result of \cite[Lemma 2.4]{ginibrle-velo}, we have
\begin{equation*}
||a|^{p-1}-|b|^{p-1}|\leq |a-b|^{p-1},
\end{equation*}
which together with \eqref{linearized-nonlinear} yields
\begin{equation*}
|N(\eta, \phi_{\omega, \gamma})| \leq |\eta(x,y)|^{p} 
\qquad \mbox{for $1 < p \leq 2$}.
\end{equation*}
Hence, we have
\begin{equation*}
\|N(\eta,\phi_{\omega, \gamma})\|_{L^2_{x, y}(\mathbb{S}_{L})} \leq 
\|\eta\|_{L^{2p}(\mathbb{S}_{L})}^p.
\end{equation*}
On the other hand, since $H^1(\mathbb{S}_{L})  \hookrightarrow L^q
(\mathbb{S}_{L})$ for $2\leq q < \infty$, 
we obtain 
\begin{equation*}
\|N(\eta,\phi_{\omega, \gamma})\|_{L^2_{x,y}(\mathbb{S}_{L})} \lesssim \|\eta\|_{H^1}^p.
\end{equation*}
\par
\textbf{(Case 2).} $2 < p \leq 5$.
We put 
\begin{equation*}
F(s) := f(s \eta + \phi_{\omega, \gamma}),  
\end{equation*}
where $f(z) = |z|^{p-1}z$ for $s > 0$. 
We note that $F$ is $C^{2}((0, \infty), \mathbb{R})$ 
is $C^{2}$ if $p > 2$. 
From \eqref{linearized-nonlinear}, we can write 
\begin{equation*}
N(\eta,\phi_{\omega, \gamma})= \int^1_0 (1-s) F^{\prime \prime}(s) ds.
\end{equation*} 
By the definition of $F(s)$, we have
\begin{equation*}
\begin{split}
F^{\prime \prime}(s)
= & \frac{p^{2} -1}{4} |s \eta + 
\phi_{\omega, \gamma}^{(L)}|^{p-3} 
(s\overline{\eta} 
+ \phi_{\omega, \gamma}^{(L)})\eta^{2} 
+ \frac{p^{2} -1}{2} |s \eta + 
\phi_{\omega, \gamma}^{(L)}|^{p-3}
(s \eta + \phi_{\omega, \gamma}^{(L)}) |\eta|^{2} \\
& + \frac{(p-1)(p-3)}{4} |s \eta + 
\phi_{\omega, \gamma}^{(L)}|^{p-5}
(s \eta + \phi_{\omega, \gamma}^{(L)})^{3}
(\overline{\eta})^{2}. 
\end{split}
\end{equation*}
Thus, since $p>2$, we obtain
\begin{equation*}
F^{\prime \prime}(s) \lesssim |\eta|^2 |s \eta + 
\phi_{\omega, \gamma}|^{p-2} \lesssim |\eta|^2 \left(|\eta|^{p-2} + |\phi_{\omega, \gamma}|^{p-2}\right).
\end{equation*}
As a consequence, from the fact that 
$\|\phi_{\omega, \gamma}\|_{L^\infty_x(\R)} \lesssim 1$, we get
\begin{equation*}
|N(\eta,\phi_{\omega, \gamma})| \lesssim |\eta|^2 + |\eta|^p.
\end{equation*} 
Then, we obtain 
\begin{equation}
\label{estim-NL-L2-L1}
\| N(\eta,\phi_{\omega, 
\gamma}^{(L)})\|_{L^2_{x, y}(\mathbb{S}_{L})} 
\lesssim \|\eta\|_{L^4(\mathbb{S}_{L})}^2 + 
\|\eta\|_{L^{2p}(\mathbb{S}_{L})}^p 
\lesssim \|\eta\|_{H^1}^2 + 
\|\eta\|_{H^1}^p.
\end{equation}
From \eqref{estim-NL-L2-L1},  
we have obtained \eqref{nonlinear-est3} 
in this case. This completes the proof.
\end{proof}

Lemma \ref{lem:posi-eigen} below 
guarantees that there exists 
a positive eigenvalue $\lambda_{0}$ 
of the operator $J \mathcal{S}^{\prime \prime}_{\omega, \gamma}(\phi_{\omega, \gamma})$ 
and the corresponding eigenfunction $\psi \in H^{2}(\mathbb{S}_{L}, \mathbb{C})$ with
 $\|\psi\|_{L_{x, y}^2(\mathbb{S}_{L})}=1$ is written as
\begin{equation}
\label{def:chi_0}
\psi(x,y)=\psi_0(x) \theta_{k_0}(y) \quad \mbox{with 
$k_{0} \in \mathbb{N}$, $\psi_0 \in H_{x}^2(\mathbb{R},\mathbb{C})$}.
\end{equation}
For  $\delta>0$, let $u_{\delta}(t)$ be the solution of 
(\ref{eqDeltaStrip}) 
with initial data $u_{\delta}|_{t=0} =\phi_{\omega, \gamma}+\delta\psi$ and $\eta_{\delta}(t)$ be the solution of (\ref{WS-Ham-v}) 
having an initial data $\eta_{\delta}|_{t=0}=\delta \psi$.
Then, we infer that $u_{\delta}(t)=e^{i\omega t}(\phi_{\omega, \gamma}+\eta_{\delta}(t))$. 
We have the following:
\begin{lem}\label{lem-3-3}
There exist a positive integer $K_{L}$ and 
positive constant $C$ such that for $\delta>0$ and $t>0$, 
\begin{equation*}
\|\eta_{\delta}(t)\|_{H^1} \leq 
C\|P_{\leq K_{L}}\eta_{\delta}(t)\|
_{L^2_{x, y}(\mathbb{S}_{L})} + o(\delta).
\end{equation*}
\end{lem}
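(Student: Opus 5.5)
We will combine conservation of the action with a coercivity estimate for $\mathcal{S}(\phi_{\omega,\gamma}^{(L)})$ on high transverse modes. By Theorem~\ref{thmLWP}, energy and mass are conserved, so $S_{L,\gamma}(u_\delta(t))$ is constant. The functional is also gauge invariant, so
\[
S_{L,\gamma}(\phi_{\omega,\gamma}^{(L)}+\eta_\delta(t))=S_{L,\gamma}(\phi_{\omega,\gamma}^{(L)}+\delta\psi)\quad\text{for all }t .
\]
Since $S_{L,\gamma}'(\phi_{\omega,\gamma}^{(L)})=0$, Taylor expansion gives
\[
S_{L,\gamma}(\phi+\eta)-S_{L,\gamma}(\phi)=\tfrac12\langle \mathcal{S}\vec\eta,\vec\eta\rangle+R(\eta),\qquad |R(\eta)|\lesssim \|\eta\|_{H^1}^3+\|\eta\|_{H^1}^{p+1}.
\]
The remainder bound follows from the same pointwise bounds on $f(z)=|z|^{p-1}z$ as in Lemma~\ref{lem-3-1}, one order higher, together with $H^1(\mathbb S_L)\hookrightarrow L^q$ for $q<\infty$. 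For $p=2$ the cubic term is replaced by $\|\eta\|_{H^1}^{3}$ via the estimate $||a|^{p-1}-|b|^{p-1}|\le |a-b|^{p-1}$ used there.

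\textbf{The initial action gap is $O(\delta^3)$.} Write $J\mathcal{S}\psi=\lambda_0\psi$ with $\lambda_0>0$ real. Then $\mathcal{S}\psi=\lambda_0J^{-1}\psi$. Because $J^{-1}$ is skew, $\langle \mathcal{S}\psi,\psi\rangle=\lambda_0\langle J^{-1}\psi,\psi\rangle=0$ for the real pairing. Hence
\[
S(\phi+\delta\psi)-S(\phi)=O(\delta^3),
\]
using $\psi\in H^2\subset H^1$.

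\textbf{Coercivity on high modes.} Decompose $\eta=P_{\le K}\eta+Q_K\eta$ with $Q_K=I-P_{\le K}$. By \eqref{eq-linearizedop}, $\mathcal{S}$ is diagonal in the transverse modes $\theta_n$, so
\[
\langle\mathcal S\vec\eta,\vec\eta\rangle=\langle\mathcal S P_{\le K}\vec\eta,P_{\le K}\vec\eta\rangle+\langle \mathcal S Q_K\vec\eta,Q_K\vec\eta\rangle .
\]
On the $n$-th mode, both $\mathcal L_{L,\pm,n}$ equal $-\partial_{xx}+\omega+\gamma\delta_0+(n\pi/L)^2-c_\pm\phi_{\omega,\gamma}^{p-1}$. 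The $\delta_0$ term is form-bounded: $|\gamma||v(0)|^2\le \epsilon\|v'\|^2+C_\epsilon\|v\|^2$. Also $\phi^{p-1}\in L^\infty$. Hence the quadratic form is at least $\tfrac12\|\partial_x v\|^2+\big((n\pi/L)^2-C\big)\|v\|^2$. Choose $K_L$ with $(K_L\pi/L)^2\ge 2C+1$. For $n>K_L$ the form then dominates $c\big(\|\partial_xv\|^2+(1+(n\pi/L)^2)\|v\|^2\big)$, which is $c\|v\,\theta_n\|_{H^1(\mathbb S_L)}^2$. Summing over $n>K_L$,
\[
\langle \mathcal SQ_K\vec\eta,Q_K\vec\eta\rangle\ge c\|Q_K\eta\|_{H^1}^2 .
\]
On the finitely many low modes, the same Gårding argument gives
\[
\langle \mathcal S P_{\le K}\vec\eta,P_{\le K}\vec\eta\rangle\ge c\|P_{\le K}\eta\|_{H^1}^2-C\|P_{\le K}\eta\|_{L^2}^2 .
\]

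\textbf{Combining.} Together these yield
\[
c\|\eta_\delta(t)\|_{H^1}^2\le C\|P_{\le K_L}\eta_\delta(t)\|_{L^2}^2+C\delta^3+C\big(\|\eta_\delta\|_{H^1}^3+\|\eta_\delta\|_{H^1}^{p+1}\big).
\]
As is standard in the Georgiev--Ohta scheme, the lemma is used only on the time interval where $\|\eta_\delta(t)\|_{H^1}\le\varepsilon_0$; this interval is nonempty by continuity, since $\|\eta_\delta(0)\|_{H^1}=O(\delta)$. For $\varepsilon_0$ small the nonlinear terms are absorbed into the left-hand side, and taking square roots gives
\[
\|\eta_\delta(t)\|_{H^1}\le C\|P_{\le K_L}\eta_\delta(t)\|_{L^2}+C\delta^{3/2}.
\]
Since $\delta^{3/2}=o(\delta)$, this is the claim.

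The main obstacle is obtaining $o(\delta)$ rather than $O(\delta)$. This rests entirely on the vanishing of $\langle \mathcal S\psi,\psi\rangle$, which uses that the unstable eigenvalue is real. I expect to check this carefully with the real inner product on $(\eta^R,\eta^I)$, and to confirm that the Taylor remainder is genuinely cubic, or of order $p+1>2$, uniformly for $p\ge 2$. A second point to verify is that the choice of $K_L$ absorbs both the attractive $\delta_0$ term and the potential $p\phi^{p-1}$ uniformly in $n$.
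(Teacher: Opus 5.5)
Your proposal is correct and follows the paper's proof essentially step for step: conservation and gauge invariance of the action, $\langle \mathcal S\psi,\psi\rangle=0$ from the skew-symmetry of $J$, uniform $H^1$-coercivity on the transverse modes $n>K_L$, and a G{\aa}rding bound on the finitely many low modes. Your version differs only in being more quantitative: a cubic remainder that yields the explicit error $C\delta^{3/2}$, a cruder but explicit choice of $K_L$, and the smallness condition $\|\eta_\delta(t)\|_{H^1}\le\varepsilon_0$, which the paper also tacitly needs in order to absorb its $o(\|\eta_\delta(t)\|_{H^1}^2)$ term; for $1<p<2$ the same argument works with remainder $O(\|\eta\|_{H^1}^{p+1})$ and error $O(\delta^{(p+1)/2})=o(\delta)$.
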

\begin{proof}
Using the Taylor expansion and the fact that 
 $\mathcal{S}^{\prime}_{\omega}(\phi_{\omega, \gamma})=0$, for $\eta_{\delta} \in H^1$ and any fixed $t>0,$ we write
\begin{equation} \label{eq1-lem3-3}
\begin{split}
S_{L, \gamma}(u_{\delta}(t))
& = S_{L, \gamma}(\phi_{\omega, \gamma} + \eta_{\delta}(t)) \\
& =   
S_{L, \gamma}(\phi_{\omega, \gamma}) + 
\langle S_{L, \gamma}^{\prime}(\phi_{\omega, \gamma}), \eta_{\delta}(t)\rangle_{H^{-1}, H^1} 
+ \frac{1}{2} \langle S_{L, \gamma}^{\prime \prime}(\phi_{\omega, \gamma})\eta_{\delta}(t), 
\eta_{\delta}(t)\rangle_{H^{-1}, H^1} \\ 
& \quad+ o(\|\eta_{\delta}(t)\|_{H^1}^{2}) \\
& = 
S_{L, \gamma}(\phi_{\omega, \gamma}) 
+ \frac{1}{2} \langle S_{L, \gamma}^{\prime \prime}(\phi_{\omega, \gamma})\eta_{\delta}(t), 
\eta_{\delta}(t)\rangle_{H^{-1}, H^1}+o(\|\eta_{\delta}(t)\|_{H^1}^{2}), 
\end{split}
\end{equation}
and, at $t=0$,
\begin{equation*}
\begin{split} 
S_{L, \gamma}(\phi_{\omega, \gamma}+\delta \psi)
& = S_{L, \gamma}(\phi_{\omega, \gamma}) + 
\delta \langle S_{L, \gamma}^{\prime}(\phi_{\omega, \gamma}), \psi\rangle_{H^{-1}, H^1} 
+ \frac{\delta^{2}}{2}
\langle S_{L, \gamma}^{\prime \prime}(\phi_{\omega, \gamma})\psi,\psi\rangle_{H^{-1}, H^1} +o(\delta^{2}) \\
& = S_{L, \gamma}(\phi_{\omega, \gamma}) + \frac{\delta^{2}}{2}
\langle S_{L, \gamma}^{\prime \prime}(\phi_{\omega, \gamma})\psi,\psi\rangle_{H^{-1}, H^1}+o(\delta^{2}). 
\end{split}
\end{equation*}
From the conservation laws of the mass and the 
energy (see Theorem \ref{thmLWP}), we have 
\begin{equation} \label{eq2-lem3-3}
S_{L, \gamma}(u_{\delta}(t)) 
= S_{L, \gamma}(\phi_{\omega, \gamma}+\eta_{\delta}(t))
= S_{L, \gamma}(\phi_{\omega, \gamma}+\delta \psi)
\end{equation}
for any $t\geq 0$.
Moreover, 
since $J$ is a skew-symmetric operator, 
we see that
\begin{equation} \label{eq3-lem3-3}
\langle S_{L, \gamma}^{\prime \prime}(\phi_{\omega, \gamma})\psi,\psi\rangle_{H^{-1}, H^1} = 
(JS_{L, \gamma}^{\prime \prime}
(\phi_{\omega, \gamma})\psi,J^{-1}\psi\rangle_{H^{-1}, H^1}=(\lambda_0 \psi, J^{-1}\psi)_{L_{x, y}^2(\mathbb{S}_{L})}=0. 
\end{equation}
Thus, from \eqref{eq1-lem3-3}--\eqref{eq3-lem3-3}, 
we infer that
\begin{equation}\label{eq4-lem3-3}
\langle S_{L, \gamma}^{\prime \prime}
(\phi_{\omega, \gamma})\eta_{\delta}(t), 
\eta_{\delta}(t)\rangle_{H^{-1}, H^1} 
= o(\|\eta_{\delta}(t)\|_{H^1}^2)+o(\delta^2).
\end{equation}
Let 
\[
K_{L}= \max\left\{ 
k \in \mathbb{Z} \colon k \leq 
1 + \frac{L}{L_{*}}
\right\}.
\]
Since $S_{L, \gamma}^{\prime \prime}(n)$ is also positive 
for $|n| > 1 + \frac{L}{L_*}$, 
we see that 
$S_{L, \gamma}^{\prime \prime}(\phi_{\omega, \gamma})P_{> K_{L}}$ is positive, 
that is, there exists a constant $C_{0} > 0$ such that 
\begin{equation}\label{eq6-lem3-3}
\langle S_{L, \gamma}^{\prime \prime}(\phi_{\omega, \gamma})
P_{> K_{L}}\eta_{\delta}(t), P_{> K_{L}}\eta_{\delta}(t) \rangle _{H^{-1}, H^1} 
\geq C_{0} \|P_{> K_{L}}\eta_{\delta}(t)\|_{H^1}^{2}. 
\end{equation}
By the definition of $S_{L, \gamma}^{\prime \prime}(n)$, 
we can take constants $C_{1} \in (0, C_{0})$ and 
$C_{2} > 0$ such that
\begin{equation} \label{eq5-lem3-3}
\langle S_{L, \gamma}^{\prime \prime}(
\phi_{\omega, \gamma})P_{\leq K_{L}}\eta_{\delta}(t), 
P_{\leq K_{L}}\eta_{\delta}(t) 
\rangle_{H^{-1}, H^1}  \geq 
C_{1} \|P_{\leq K_{L}}\eta_{\delta}(t)\|_{H_{x}^1(\R)}^2 
- C_{2}\|P_{\leq K_{L}}\eta_{\delta}(t)\|_{L_{x}^2(\R)}^2.
\end{equation}
Thus, from \eqref{eq4-lem3-3}, \eqref{eq6-lem3-3} 
and \eqref{eq5-lem3-3}, 
we obtain 
\begin{equation*} 
\begin{split}
\|\eta_{\delta}(t)\|_{H^1}^2
& =
\|P_{\leq K_{L}}\eta_{\delta}(t)\|_{H^1}^2 + \|P_{> K_{L}}\eta_{\delta}(t)\|_{H^1}^2\\
&\leq \frac{1}{C_{1}} \langle S_{L, \gamma}^{\prime \prime}(\phi_{\omega, \gamma})
P_{\leq K_{L}}\eta_{\delta}(t),P_{\leq K_{L}}\eta_{\delta}(t)\rangle_{H^{-1}, H^1} 
+ \frac{C_{2}}{C_{1}} 
\|P_{\leq K_{L}}\eta_{\delta}(t)\|_{L^{2}_{x,y}
(\mathbb{S}_{L})}^2\\
& \quad + \frac{1}{C_{0}} \langle S_{L, \gamma}^{\prime \prime}
(\phi_{\omega, \gamma})P_{> K_{L}}\eta_{\delta}(t),P_{> K_{L}}\eta_{\delta}(t)\rangle_{H^{-1}, H^1} 
\\
& \leq 
 \frac{C_{2}}{C_{1}} 
 \|P_{\leq K_{L}}\eta_{\delta}(t)\|_{L^{2}_{x,y}
 (\mathbb{S}_{L})}^2 
+ \frac{1}{C_{1}} \langle S_{L, \gamma}^{\prime \prime}(\phi_{\omega, \gamma})
\eta_{\delta}(t), \eta_{\delta}(t)\rangle_{H^{-1}, H^1} \\
&\leq \frac{C_{2}}{C_{1}} \|P_{\leq K_{L}}\eta_{\delta}(t)\|_{L^2_{x,y}(\mathbb{S}_{L})}^2 
+ o(\delta^2) + o(\|\eta_{\delta}(t)\|_{H^1}^2), 
\end{split}
\end{equation*} 
where we have used the fact that $0 < C_{1} < C_{0}$  
in the third inequality and 
the orthogonality between $P_{\leq K_{L}}$ and $S_{L, \gamma}^{\prime \prime}(\phi_{\omega, \gamma})P_{> K_{L}}=P_{> K_{L}}S_{L, \gamma}^{\prime \prime}(\phi_{\omega, \gamma})$. This finishes the proof of Lemma \ref{lem-3-3}.
\end{proof}
Now, we give the proof of Theorem \ref{thm-sta} (ii).

\begin{proof}[Proof of Theorem \ref{thm-sta} (ii)] 
From the Duhamel formula associated to \eqref{WS-Ham-v}, 
$\eta_{\delta}$ satisfies the following: 
\begin{equation} \label{eq1-eta}
\begin{split}
\eta_{\delta}(t) 
& = e^{J S_{L, \gamma}^{\prime \prime}(\phi_{\omega, \gamma})t} \delta \psi 
- J \int_{0}^{t} e^{- (t-s) J S_{L, \gamma}^{\prime \prime}(\phi_{\omega, \gamma})}
N(\eta_{\delta}(s), \phi_{\omega, \gamma}) ds \\
& = \delta e^{\lambda_{0}t} \psi 
- J \int_{0}^{t} e^{- (t-s) J S_{L, \gamma}^{\prime \prime}(\phi_{\omega, \gamma})}
N(\eta_{\delta}(s), \phi_{\omega, \gamma}) ds. 
\end{split}
\end{equation}

Using Lemmas \ref{lem-3-3}, \ref{lem-3-2} and \ref{lem-3-1} 
in this order, we estimate the $H^1$ norm of $\eta_{\delta}$ 
in the following way
\begin{equation*}
\begin{split}
\|\eta_{\delta}(t)\|_{H^1} 
& \lesssim \delta e^{\lambda_{0} t} \|\psi\|_{H^1} 
+\int_{0}^{t} \|e^{- (t-s) J S_{L, \gamma}^{\prime \prime}(\phi_{\omega, \gamma})}
P_{\leq K_{L}} N
L(\eta_{\delta}(s), \phi_{\omega, \gamma})\|_{L_{x, y}^{2}(\mathbb{S}_{L})}ds 
+ o(\delta) \\
&\lesssim \delta e^{\lambda_0t}
\|\psi\|_{L_{x, y}^2(\mathbb{S}_{L})} 
+ \int_0^t e^{(\lambda_{0} + \varepsilon)(t-s)}
\|P_{\leq K_{L}}
N(\eta_{\delta}(s),\phi_{\omega, \gamma})\|_{L_{x, y}^2(\mathbb{S}_{L})} ds
 + o(\delta) \\
& \lesssim \delta e^{\lambda_0 t} + \int_0^t e^{(1+\varepsilon _0)\lambda_0(t-s)}(\|\eta_{\delta}(s)\|_{H^1}^2+\|\eta_{\delta}(s)\|_{H^1}^p)ds,
\end{split}
\end{equation*}
where $\varepsilon_0=\varepsilon/\lambda_0$ and $\varepsilon>0$ given in Lemma \ref{lem-3-2}. Thus, there exist constants $C_{1} > 1$ and $C_{2} >0$ such that 
\begin{equation*}
\|\eta_{\delta}(t)\|_{H^1} 
\leq C_{1} \delta e^{\lambda_0 t} + 
C_{2} \int_0^t e^{(1+\varepsilon _0)\lambda_0(t-s)}
(\|\eta_{\delta}(s)\|_{H^1}^2+\|\eta_{\delta}(s)\|_{H^1}^p)ds.
\end{equation*} 
We shall show that for sufficiently small 
$\delta>0$ and $\varepsilon_0 >0$, we have 
\begin{equation} \label{up-bound}
\|\eta_{\delta}(t)\|_{H^1} \leq 2 C_{1} \delta e^{\lambda_0 t} \quad 
\mbox{for $t \in [0,T_{\varepsilon_0 ,\delta}]$},
\end{equation}
where
\begin{equation*}
T_{\varepsilon _0,\delta}=\frac{\log (\varepsilon _0/\delta)}{\lambda_0}. 
\end{equation*}
We put 
\begin{equation*}
T_{*} = \sup\{t > 0 \colon \|\eta_{\delta}(t)\|_{H^1} \leq 2 C_{1} \delta e^{\lambda_{0}t} 
\}. 
\end{equation*}
Since $\|\eta_{\delta}(0)\|_{H^1} = \delta (< 2C_{1} \delta)$, 
we see that $T_{*} > 0$. 
In order to prove \eqref{up-bound}, 
it is enough to show that $T_{\varepsilon, \delta} \leq T_{*}$. 
Suppose to the contrary that $T_{*} < T_{\varepsilon, \delta}$. 
Then, we have 
\begin{equation} \label{nonlinear-est1}
\begin{split}
2 C_{1} \delta e^{\lambda_{0} T_{*}} 
= \|\eta_{\delta}(T_{*})\|_{H^1} 
\leq C_{1} \delta e^{\lambda_{0} T_{*}} 
+ \int_{0}^{T_{*}} e^{(1 + \varepsilon_{0}) \lambda_{0} (T_{*} - s)} 
(\|\eta_{\delta}(s)\|_{H^1}^{2} + \|\eta_{\delta}(s)\|_{H^1}^{p}) ds.  
\end{split}
\end{equation}
Note that for $0 < s < T_{*} (< T_{\varepsilon_{0}, \delta})$, 
we obtain 
\begin{equation} \label{nonlinear-est2}
2 C_{1} \delta e^{\lambda_{0} s} < 
2 C_{1} \delta e^{\lambda_{0} T_{\varepsilon_{0}, \delta}} = 2 C_{1} \varepsilon_{0} \ll 1. 
\end{equation}
It follows from \eqref{nonlinear-est1} 
and \eqref{nonlinear-est2} 
that
\begin{equation*}
\begin{split}
C_{1} \delta e^{\lambda_{0} T_{*}} 
& \leq 
\int_{0}^{T_{*}} e^{(1 + \varepsilon_{0}) \lambda_{0} (T_{*} - s)} 
\left[(2C_{1} \delta)^{2} e^{2 \lambda_{0} s} 
+ (2C_{1} \delta)^{p} e^{p \lambda_{0} s}\right]ds \\
& \leq 8 C_{1}^{2} \delta^{2} e^{(1 + \varepsilon_{0})\lambda_{0} T_{*}} 
\int_{0}^{T_{*}} e^{(1 - \varepsilon_{0}) \lambda_{0} s}ds \\
& = 8 C_{1}^{2} \delta^{2} e^{(1 + \varepsilon_{0})\lambda_{0} T_{*}} 
\frac{e^{(1-\varepsilon_{0}) \lambda_{0} T_{*}}}{(1-\varepsilon_{0}) \lambda_{0}} \\
& = \frac{8 C_{1}^{2} \delta^{2}}{(1-\varepsilon_{0}) \lambda_{0}} e^{2 \lambda_{0}T_{*}}. 
\end{split}
\end{equation*}
This together with $T_{*} < T_{\varepsilon, \delta}$ 
yields that 
\begin{equation*}
1 \leq 
\frac{8 C_{1}^{2} \delta}{(1 - \varepsilon_{0}) \lambda_{0}} 
e^{\lambda_{0} T_{\varepsilon, \delta}} 
= \frac{8 C_{1}^{2} \delta}{(1 - \varepsilon_{0}) \lambda_{0}} 
\times \frac{\varepsilon_{0}}{\delta} 
= \frac{8 C_{1}^{2}}{(1 - \varepsilon_{0}) \lambda_{0}} 
\varepsilon_{0} < 1
\end{equation*}
for sufficiently small $\varepsilon_{0} > 0$, 
which is a contradiction.  
Thus, \eqref{up-bound} holds. 

Since $(e^{J S_{L, \gamma}^{\prime \prime} (\phi_{\omega, \gamma})})^{*} 
= e^{(J S_{L, \gamma}^{\prime \prime}(\phi_{\omega, \gamma}))^{*}}$ and 
\begin{equation*}
(J S_{L, \gamma}^{\prime \prime}(\phi_{\omega, \gamma}))^{*} J \psi 
= S_{L, \gamma}^{\prime \prime}(\phi_{\omega, \gamma}) J^{*} J \psi
= - S_{L, \gamma}^{\prime \prime}(\phi_{\omega, \gamma}) \psi
= J(J S_{L, \gamma}^{\prime \prime}(\phi_{\omega, \gamma}) \psi) 
= \lambda_{0} J \psi, 
\end{equation*}
we have by \eqref{eq1-eta} that
\begin{equation} \label{eq1-proof-insta}
\begin{split}
& \quad |(\psi,\eta_{\delta}(T_{\varepsilon _0,\delta}))
_{L_{x, y}^2(\mathbb{S}_{L})}| \\
&=\left| \delta e^{\lambda_0 T_{\varepsilon _0, \delta}} + 
\int_0^{T_{\varepsilon _0,\delta}}(\psi, - J e^{-(T_{\varepsilon _0,\delta}-s)
J S_{L, \gamma}^{\prime \prime}(\phi_{\omega, \gamma})}
N(\eta_{\delta}(s),\phi_{\omega, \gamma}))_{L^2}ds \right|\\
&\geq \varepsilon _0 -C \int_0^{T_{\varepsilon _0,\delta}} e^{(T_{\varepsilon _0,\delta}-s)\lambda_0}(\|\eta_{\delta}(s)\|_{H^1}^2+\|\eta_{\delta}(s)\|_{H^1}^p)ds\\
& \geq  \varepsilon _0 -C \int_0^{T_{\varepsilon _0,\delta}} e^{(T_{\varepsilon _0,\delta}-s)\lambda_0} (\delta e^{ \lambda_{0} s})^{\min\{2, p\}} ds \\	 
& \geq \varepsilon_0 - C \varepsilon_0 ^{\min \{p,2\}} \\
& \geq \frac{\varepsilon_{0}}{2}.
\end{split}
\end{equation}
From $P_{\leq 0} \phi_{\omega, \gamma}=\phi_{\omega, \gamma}$ and \eqref{eq-error},
there exists $\varepsilon _0 >0$ such that 
for $\varepsilon _0 > \delta >0$ and $\theta \in \R$, 
we have 
\begin{equation*}
\begin{split}
\|u_{\delta}(T_{\varepsilon _0,\delta})-e^{i\theta}\phi_{\omega, \gamma}\|_{L^2_{x, y}(\mathbb{S}_{L})}
&\geq \|(I-P_{\leq 0}) (u_{\delta}(T_{\varepsilon _0,\delta})-e^{i\theta}\phi_{\omega, \gamma})\|_{L^2_{x, y}(\mathbb{S}_{L})}\\
&= \|(I-P_{\leq 0})e^{-i\omega T_{\varepsilon _0,\delta}}u_{\delta}(T_{\varepsilon _0,\delta})\|_{L^2_{x, y}(\mathbb{S}_{L})}\\
&= \|(I-P_{\leq 0})(e^{-i\omega T_{\varepsilon _0,\delta}}u_{\delta}(T_{\varepsilon _0,\delta})-\phi_{\omega, \gamma})
\|_{L^2_{x, y}(\mathbb{S}_{L})} \\
& \geq \|(P_{\leq k_{0}} - P_{\leq 0})(e^{-i\omega T_{\varepsilon _0, \delta}}u_{\delta}(T_{\varepsilon _0,\delta})-\phi_{\omega, \gamma})
\|_{L^2_{x, y}(\mathbb{S}_{L})} \\
& = \|(P_{\leq k_{0}} - P_{\leq k_{0}-1})\eta_{\delta}(T_{\varepsilon_{0}, \delta})\|_{L^2_{x, y}(\mathbb{S}_{L})}.
\end{split}
\end{equation*}
Let  $v_{k_{0}}$ be the $k_{0}$-th Fourier mode of 
$\eta_{\delta}(T_{\varepsilon_{0}, \delta})$ on the $y$ variable.
From \eqref{def:chi_0} and 
the Cauchy-Schwarz inequality, we have
\begin{equation}
(\psi,\eta)_{L^2_{x,y}(\mathbb{S}_{L})} 
= (e^{i k_{0} y}\psi_0,\eta)_{L^2_{x,y}(\mathbb{S}_{L})}= 
(\psi_0,v_{k_{0}})_{L^2_{x}} 
\leq \|\psi_0\|_{L^2_{x}(\R)} \|v_{k_{0}}\|_{L^2_{x}(\R)}
\end{equation} 
for $\eta \in L^2_{x, y}(\mathbb{S}_{L})$. Thus, we have 
\begin{equation}\label{eq2-proof-insta}
\|(P_{\leq k_{0}}-P_{\leq k_{0}-1})\eta\|_{L^2_{x,y}(\mathbb{S}_{L})} = 
\|v_{k_{0}}\|_{L^2_{x}(\R)} \geq \|\psi_0\|_{L^2_{x}(\R)}^{-1} 
|(\psi,\eta)_{L^2_{x,y}(\mathbb{S}_{L})}| 
\end{equation}
for $\eta \in L^2_{x, y}(\mathbb{S}_{L})$. It follows from \eqref{eq1-proof-insta}--\eqref{eq2-proof-insta} that 
\begin{equation*}
\begin{split}
\|u_{\delta}(T_{\varepsilon _0,\delta})-e^{i\theta}\phi_{\omega, \gamma}\|_{L^2_{x,y}(\mathbb{S}_{L})} 
&\geq \|(P_{\leq 1}-P_{\leq 0})v_\delta(T_{\varepsilon _0,\delta})\|_{L^2_{x,y}
(\mathbb{S}_{L})} \\
&\gtrsim |(\psi,\eta_{\delta}(T_{\varepsilon _0,\delta}))_{L^2_{x,y}(\mathbb{S}_{L})}| 
\geq \frac{\varepsilon _0}{2}.
\end{split}
\end{equation*}
This implies that the standing wave $e^{i\omega t}\phi_{\omega, \gamma}$ is unstable.
\end{proof}

\subsection{Stability
of the bifurcation soliton $\phi(a)$}
In this subsection, we study the stability of 
the bifurcation soliton $\varphi(a)$ found in $\S\ref{secBifurcation}$
for sufficiently small $|a| > 0$. 

Using Lemma \ref{lemStabil} (1), 
we first show the following: 
\begin{prop}\label{prop1:critical}
    Let $\gamma < 0$, $\omega > \gamma^2/2$ 
 and $2 \leq p < 5$. 
 Then, we have 
    \[
    \partial_{\omega} 
    \|\varphi(a)\|_{L^{2}(\mathbb{S})}^2 
    > 0. 
    \]
for sufficiently small $|a| > 0$, where $\phi(a)$ is defined in 
\end{prop}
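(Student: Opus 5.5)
Since $\varphi(a) = \phi_{\omega,\gamma} + a\xi + h(a)$ with $\|h(a)\|_{H^2(\Strip)} = O(a^2)$ by Proposition~\ref{ex-bi}, the approach is to view $\|\varphi(a)\|_{L^2(\Strip)}^2$ as a small perturbation of the one-dimensional mass and reduce to Lemma~\ref{lemStabil}. First we make the $\omega$-dependence explicit. The whole construction of Section~\ref{secBifurcation} depends on $\omega$ through $\phi_{\omega,\gamma}$, $\lambda_1(\omega)$, $\chi(\omega)$ and hence $L_*(\omega)$. We redo the implicit function argument with $\omega$ as an additional parameter. The maps $\mathcal F$, $\mathcal F_\perp$ and $g$ depend smoothly on $\omega$ in a neighbourhood of a fixed $\omega_0>\gamma^2/4$: the profile \eqref{eqGS1D} is explicit and real-analytic in $\omega$, and the simple eigenpair $(-\lambda_1,\chi)$ is $C^1$ in $\omega$ by Kato perturbation theory. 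The invertibility statements (Lemma~\ref{lem1-bi} and \eqref{eqIFTCond}) hold uniformly near $\omega_0$. This produces $h(a,L,\omega)$, $L(a,\omega)$ and $\varphi(a,\omega)$, all $C^1$ in $\omega$. The bounds $\|h\|_{H^2} = O(a^2)$ and $\|\partial_\omega h\|_{H^2} = O(a^2)$ hold uniformly in $\omega$, since $h(0,L,\omega)=0$ and $\partial_a h(0,L_*,\omega)=0$ for every $\omega$ and the second derivatives are locally bounded.

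Next we expand the mass. Because $\langle \phi_{\omega,\gamma}, \xi\rangle = 0$ (the $y$-integral of $\cos(\pi y)$ vanishes) and $\|\xi\|_{L^2(\Strip)}=1$,
\[
\|\varphi(a,\omega)\|_{L^2(\Strip)}^2 = M^{1D}(\phi_{\omega,\gamma}) + a^2 + 2\langle \phi_{\omega,\gamma}, h\rangle + 2a\langle \xi, h\rangle + \|h\|_{L^2}^2 .
\]
Differentiating in $\omega$ gives
\[
\partial_\omega\|\varphi(a)\|_{L^2}^2 = \partial_\omega M^{1D}(\phi_{\omega,\gamma}) + O(a^2).
\]
Here we use $\langle\partial_\omega\xi, h\rangle = O(a^2)$, $\langle \xi,\partial_\omega h\rangle = O(a^2)$ and $\langle\partial_\omega\phi_{\omega,\gamma},h\rangle = O(a^2)$. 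The function $\partial_\omega\phi_{\omega,\gamma}$ lies in $L^2$ thanks to the exponential decay of the explicit profile. Since $\gamma<0$ and $2\le p<5$, Lemma~\ref{lemStabil}(1) gives $\partial_\omega M^{1D}(\phi_{\omega,\gamma})>0$. Therefore $\partial_\omega \|\varphi(a)\|_{L^2}^2 > 0$ for $|a|$ small, locally uniformly in $\omega$.

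The main obstacle is the $\omega$-regularity of the branch, and in particular of $\partial_\omega h = O(a^2)$, in the endpoint case $p=2$. There $\mathcal F$ is only $C^1$ in $u$, and the $C^2$ regularity in $a$ was obtained by hand in the proof of Proposition~\ref{prop1-bi}. We would handle this by differentiating the auxiliary equation $\mathcal F_\perp=0$ in $\omega$; this requires only one derivative of the nonlinearity and yields
\[
\partial_\omega h = -(P_\perp\mathcal L_+|_{H^2_{\mathrm{ort}}})^{-1}P_\perp\bigl(\varphi + \mathcal L_+\partial_\omega(\phi_{\omega,\gamma}+a\xi)\bigr)
\]
together with an analogous formula for $\partial_\omega L$ obtained from $g=0$. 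We then check that this expression vanishes to order $a^2$, using the same parity identity $\int_0^1\cos^3(\pi y)\,dy=0$ and the dominated-convergence argument used for $I_{2,1}$. If the uniformity in $\omega$ proves delicate, a fallback is to interpret $\partial_\omega$ along the two-parameter family $(a,\omega)\mapsto\varphi$ at fixed $a$. The estimate above only needs the $O(a^2)$ bound pointwise in $\omega$ near $\omega_0$.
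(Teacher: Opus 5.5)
Your proposal is correct and follows essentially the paper's route. At fixed $a$, $\partial_\omega\|\varphi(a)\|_{L^2(\Strip)}^2$ equals $\partial_\omega M^{1D}(\phi_{\omega,\gamma})$ plus terms that vanish as $a\to0$ because $h(0)=\partial_\omega h(0)=0$, and Lemma~\ref{lemStabil}(1) supplies the sign. The differences are minor and in your favour. You expand the mass exactly, whereas the paper uses the identity of Lemma~\ref{lem1:critical} with $\frac{d\lambda_2}{d\omega}(0)$, which hides an $o(a^2)$ remainder that the paper then differentiates. You also address the joint $(a,\omega)$-regularity of the branch, which the paper takes for granted. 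Only $\partial_\omega h(a)\to0$ is needed, so you can drop the stronger claim $\|\partial_\omega h\|=O(a^2)$, which as stated would require bounds on third-order mixed derivatives rather than second-order ones.
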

To prove Proposition \ref{prop1:critical}, 
we need the following: 
\begin{lem} \label{lem1:critical}
Let $\lambda_{2}(a)$ be the second eigenvalue of 
$\mathcal{L}_{+}(a, L(a))$. 
Then, we have 
\begin{equation} \label{eq1-bifur}
\|\varphi(a)\|_{L^{2}}^{2} = 
\|\phi_{\omega, \gamma}\|_{L^{2}}^{2} 
+ a^{2}  \frac{d \lambda_{2}}{d \omega}(0) 
+ \|h(a)\|_{L^{2}}^{2}. 
\end{equation}
\end{lem}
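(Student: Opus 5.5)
The plan is to expand the square of the decomposition \eqref{eq62-bi}, $\varphi(a)=\phi_{\omega,\gamma}+a\xi+h(a)$, in $L^2(\mathbb S)$ and to identify each term through the orthogonality structure of the Lyapunov--Schmidt splitting. Expanding gives
\[
\|\varphi(a)\|_{L^2}^2=\|\phi_{\omega,\gamma}\|_{L^2}^2+a^2\|\xi\|_{L^2}^2+\|h(a)\|_{L^2}^2+2a\langle\phi_{\omega,\gamma},\xi\rangle+2a\langle\xi,h(a)\rangle+2\langle\phi_{\omega,\gamma},h(a)\rangle .
\]
Two of the cross terms vanish exactly. First, $\langle\phi_{\omega,\gamma},\xi\rangle=0$ because $\phi_{\omega,\gamma}$ does not depend on $y$ while $\int_0^1\cos(\pi y)\,dy=0$. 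Second, $\langle\xi,h(a)\rangle=0$ because $h(a)\in H^2_{\mathrm{ort}}$. Moreover $\|\xi\|_{L^2}=\|\chi\|_{L^2(\R)}=1$. The only term still to be understood is $2\langle\phi_{\omega,\gamma},h(a)\rangle$, which lives in the zeroth transverse Fourier mode. This term is what produces the spectral quantity in \eqref{eq1-bifur}.

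To treat $2\langle\phi_{\omega,\gamma},h(a)\rangle$, I will use the Taylor expansion $h(a)=\tfrac{a^2}{2}\partial_a^2h(0,L_*)+o(a^2)$ in $H^2$. This follows from \eqref{eq59-bi} together with $dL/da(0)=0$, the vanishing mixed and $L$-derivatives in \eqref{eq41-bi}, and the formula \eqref{eq26-bi}. It gives
\[
2\langle\phi_{\omega,\gamma},h(a)\rangle=a^2p(p-1)\bigl\langle\phi_{\omega,\gamma},(P_\perp\mathcal L_+(0,L_*)|_{H^2_{\mathrm{ort}}})^{-1}\phi_{\omega,\gamma}^{p-2}\xi^2\bigr\rangle+o(a^2).
\]
Next, differentiate the one-dimensional profile equation in $\omega$. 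This yields $\mathcal L_{+,0}\partial_\omega\phi_{\omega,\gamma}=-\phi_{\omega,\gamma}$. The function $\partial_\omega\phi_{\omega,\gamma}$ is $y$-independent, so it lies in $H^2_{\mathrm{ort}}$, and $\mathcal L_+(0,L_*)$ acts on it as $\mathcal L_{+,0}$. Using the self-adjointness of $P_\perp\mathcal L_+(0,L_*)$ on $L^2_{\mathrm{ort}}$, I can then rewrite the bracket as $-\langle\partial_\omega\phi_{\omega,\gamma},\phi_{\omega,\gamma}^{p-2}\xi^2\rangle$.

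On the other side, $\lambda_2$ at $a=0$, viewed as a function of $\omega$ with $L=L_*$ fixed, is the simple eigenvalue $-\lambda_1(\omega)+\pi^2/L_*^2$ of $\mathcal L_{+,0}-L_*^{-2}\partial_{yy}$, with eigenfunction $\xi$. The Hellmann--Feynman formula, obtained exactly as in the proof of Lemma~\ref{lem2-bi} but differentiating in $\omega$, gives
\[
\frac{d\lambda_2}{d\omega}(0)=\|\xi\|^2-p(p-1)\int_{\mathbb S}\phi_{\omega,\gamma}^{p-2}\,\partial_\omega\phi_{\omega,\gamma}\,\xi^2 .
\]
Here the first term $\|\xi\|^2=1$ comes from the explicit $+\omega$ in the operator. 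This combination is exactly $1$ plus the coefficient of $a^2$ found above. Hence the $a^2\|\xi\|^2$ contribution and the cross term merge into $a^2\frac{d\lambda_2}{d\omega}(0)$, which is the form of \eqref{eq1-bifur}.

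The main obstacle is that the Taylor step only produces the identity modulo $o(a^2)$. To obtain it in the stated form I would have to absorb the remainder of $\langle\phi_{\omega,\gamma},h(a)\rangle$ into the term $\|h(a)\|^2$, or else read \eqref{eq1-bifur} as an expansion up to $o(a^2)$; in fact $\|h(a)\|^2=O(a^4)$ by \eqref{eqH2Normh}. The latter reading is all that Proposition~\ref{prop1:critical} needs. A further technical point arises in the case $p=2$: there $\partial_a^2h$ exists only through the difference-quotient argument \eqref{eq56-bi}, so the expansion of $h(a)$ must be justified by that limit rather than by $C^2$ regularity of the implicit function.
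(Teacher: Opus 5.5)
Your proposal is correct and follows the paper's proof essentially step by step. You expand $\|\phi_{\omega,\gamma}+a\xi+h(a)\|_{L^2}^2$, Taylor-expand $h(a)$ to second order via \eqref{eq26-bi}, rewrite $\langle \phi_{\omega,\gamma}, \partial_a^2 h(0)\rangle$ as $-p(p-1)\langle \partial_\omega\phi_{\omega,\gamma}, \phi_{\omega,\gamma}^{p-2}\xi^2\rangle$ using $\mathcal{L}_{+,0}\partial_\omega\phi_{\omega,\gamma}=-\phi_{\omega,\gamma}$, and match this with the Hellmann--Feynman formula for $\partial_\omega\lambda_2$ at fixed $L$. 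Your caveat is also accurate: the paper's argument likewise only yields \eqref{eq1-bifur} up to an $o(a^2)$ error, since it silently drops the Taylor remainder of $\langle\phi_{\omega,\gamma},h(a)\rangle$, so the identity should be read as an expansion modulo $o(a^2)$.
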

\begin{proof}
Observe from \eqref{eq59-bi} that 
\begin{equation} \label{eq1-varphi}
\varphi(a) = \phi_{\omega, \gamma} + a \xi 
+ h(a), \qquad 
h(a) = h(0) + \frac{\partial h}{\partial a} (0)a 
+ \frac{1}{2} \frac{\partial^{2} h}{\partial a^{2}}(0) a^{2} + o(a^{2}). 
\end{equation}
It follows from $h(a) = h(a, L(a))$ that 
    \[
    \begin{split}
    & \frac{\partial h}{\partial a} (a) 
    = \frac{\partial h}{\partial a} (a, 
    L(a)) 
    + \frac{\partial h}{\partial L} 
    (a, L(a)) \frac{d L}{d a}(a), \\
    & \frac{\partial^2 h}{\partial a^2} (a) 
    = \frac{\partial^2 h}{\partial a^2} 
    (a, 
    L(a)) 
    + 2  \frac{\partial^2 h}{
    \partial a \partial L} 
    (a, L(a)) \frac{d L}{d a}(a)
    + \frac{\partial^2 h}{\partial L^2} 
    (a, L(a)) \frac{d^{2} L}{d a^{2}}(a).  
    \end{split}
    \]
By \eqref{eq14-bi}, \eqref{eq41-bi} and 
\eqref{eq60-bi}, we obtain 
    \[
    \frac{\partial h}{\partial a}(0) 
    = 0, \qquad 
    \frac{\partial^2 h}{\partial a^2} (a) 
    = \frac{\partial^2 h}{\partial a^2} 
    (0, L_{*}) 
    = p (p-1) (P_{\perp} (\mathcal{L}_{+, 0}
    - \frac{1}{L_{*}^2} \partial_{yy}) |_{H^2_{\text{ort}}})^{-1} P_\perp
    \phi_{\omega, \gamma}^{p-2} 
    \xi^2. 
    \]
These together with \eqref{eq7-bi}, 
$h(a) \in H^{2}_{\text{ort}}$ and 
$\|\xi\|_{L^{2}} = 1$ (see \eqref{eqFirstEig}
and little below \eqref{eqS2D-2}) 
yield that 
$h(a) =\frac{1}{2} \frac{\partial^{2} h}{\partial a^{2}}(0) a^{2} + o(a^{2})$
and 
\begin{equation}\label{asy2-L2}
\begin{split}
\|\varphi(a)\|_{L^{2}}^{2} 
& = \|\phi_{\omega, \gamma}\|_{L^{2}}^{2} 
+ a^{2} \|\xi\|_{L^{2}}^{2} 
+ 2(\phi_{\omega, \gamma}, h(a))_{L^{2}}
+ \|h(a)\|_{L^{2}}^{2} \\
& =  \|\phi_{\omega, \gamma}\|_{L^{2}}^{2} 
+ a^{2} 
\left(
1 + 
(\phi_{\omega, \gamma}, 
\frac{\partial^2 h}{\partial a^{2}}
(0))_{L^{2}}
\right) + \|h(a)\|_{L^{2}}^{2}.
\end{split}
\end{equation}
Note that $\phi_{\omega, \gamma}$ satisfies 
\eqref{eqS2D-2}. 
Then, differentiating with respect to $\omega$, 
we see that 
$(\mathcal{L}_{+, 0}
    - \frac{1}{L_{*}^2} \partial_{yy})|_{H^2_{\text{ort}}}
)^{-1}\phi_{\omega, \gamma} 
= - \frac{\partial 
\phi_{\omega, \gamma}}{\partial \omega}
$.  
we have 
\begin{equation}\label{eq65-bi}
\begin{split}
\langle \phi_{\omega, \gamma}, \frac{\partial^{2} h}{\partial a^{2}}(0) 
\rangle 
& = p (p-1) 
\langle \phi_{\omega, \gamma}, 
(P_{\perp} (\mathcal{L}_{+, 0}
    - \frac{1}{L_{*}^2} \partial_{yy}) |_{H^2_{\text{ort}}})^{-1} P_\perp
    \phi_{\omega, \gamma}^{p-2} 
    \xi^2
\rangle \\
& = p (p-1) 
\langle (\mathcal{L}_{+, 0}
    - \frac{1}{L_{*}^2} \partial_{yy}) |_{H^2_{\text{ort}}})^{-1}
\phi_{\omega, \gamma}, 
    \phi_{\omega, \gamma}^{p-2} 
    \xi^2
\rangle \\
& = - p (p-1) 
\langle \frac{\partial 
\phi_{\omega, \gamma}}{\partial \omega}, 
    \phi_{\omega, \gamma}^{p-2} 
    \xi^2
\rangle. 
\end{split}
\end{equation}
Differentiating \eqref{eq45-bi} with respect to $\omega$, one has  
    \begin{equation} \label{eq11:critical}
    D_{u} \mathcal{F}(L(a), \varphi(a)) 
    \frac{\partial \phi_{2}}{\partial \omega}(a) 
    + \partial_{\omega} D_{u} \mathcal{F}(L(a), \varphi(a)) \phi_{2}(a, L)
    = \frac{d \lambda_{2}}{d \omega}(a) \phi_{2}(a) 
    + \lambda_{2}(a) \frac{\partial \phi_{2}}
    {\partial \omega}(a). 
    \end{equation}
By taking the scalar product with 
$\phi_{2}(a)$, we obtain 
    \begin{equation} \label{eq63-bi}
    \frac{d \lambda_{2}}{d \omega}(a) 
    = 1- p (p-1) 
    \int_{\mathbb{S}} \varphi^{p-2}(a) 
    |\phi_{2}(a)|^{2} \frac{\partial \varphi}
    {\partial \omega}(a) dxdy. 
    \end{equation}

This together with \eqref{eq62-bi} yields that 
$\frac{\partial \varphi}
    {\partial \omega}(0) = \frac{\partial 
    \phi_{\omega, \gamma}}{\partial \omega}$. 
Putting $a = 0$ in \eqref{eq63-bi}, one has 
    \[
    \frac{d \lambda_{2}}{d \omega}(0) 
    = 1- p (p-1) 
    \int_{\mathbb{S}} \phi_{\omega, 
    \gamma}^{p-2}(a) 
    \xi^{2}
    \frac{\partial \phi_{\omega, \gamma}}
    {\partial \omega}dxdy 
    = 1 + \langle \phi_{\omega, \gamma}, \frac{\partial^{2} h}{\partial a^{2}}(0) 
\rangle.
    \]
It follows from \eqref{asy2-L2} and \eqref{eq65-bi} 
that 
\begin{equation} \label{asy4-L2}
\|\varphi(a)\|_{L^{2}}^{2}
= \|\phi_{\omega, \gamma}\|_{L^{2}}^{2} 
+ a^{2}  \frac{d \lambda_{2}}{d \omega}(0) 
+ \|h(a)\|_{L^{2}}^{2}. 
\end{equation}
\end{proof}

\begin{proof}[Proof of Proposition \ref{prop1:critical}]
We claim that 
$\frac{\partial h}{\partial \omega}(0) = 0$. 
Observe that $\mathcal{F}_{\perp}(L, a, 
h(a, L(a))) = 0$ yields that 
    \begin{equation} \label{eq61-bi}
    \begin{split}
        0 = 
        \frac{\partial 
        \mathcal{F}_\perp}{\partial \omega}
        (L,a,h(a)) & = D_h 
        \mathcal{F}_\perp(L,a,h) \frac{\partial h}
        {\partial \omega}
        (a) + 
        P_{\perp}h(a).
    \end{split}
    \end{equation}
It follows from $h(0) = 0$ that 
$\frac{\partial h}{\partial \omega}(0) = 0$. 
Then, by \eqref{eq1-bifur}, one has 
    \[
    \partial_{\omega} \|\varphi(a)\|_{L^{2}}^{2} 
    = \partial_{\omega} \|\phi_{\omega, \gamma}\|_{L^{2}}^{2} 
    + a^{2} \frac{d^{2} \lambda_{2}}{d \omega^{2}}(0) 
    + 2 \int_{\mathbb{S}} h(a) \frac{\partial h}{\partial \omega} 
    (a) dx.   
    \]
This together with Lemma
\ref{lemStabil} (1) and $h(0) 
= \frac{\partial h}{\partial \omega}(0) 
= 0$
implies that 
$\partial_{\omega} \|\varphi(a)\|_{L^{2}}^{2} > 0$ 
for sufficiently small $|a| > 0$. 
\end{proof}

Then, from Proposition \ref{prop1:critical}
and the result of Grillakis, Shatah, and 
Strauss~\cite{MR1081647}, we can obtain the 
following: 
\begin{theorem}\label{thm2:critical}
Let $n(a)$
be the number of negative eigenvalues of 
$S_{L(a), \gamma}''(\varphi(a))$. 
Then the bifurcation soliton $e^{i \omega t} 
\varphi(a)$ is stable if 
$n(L) = 1$ and unstable if $n(L) = 2$.
\end{theorem}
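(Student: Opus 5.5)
The plan is to verify the hypotheses of the abstract Grillakis--Shatah--Strauss theorem \cite{GrShSt87,MR1081647} for the standing wave $e^{i\omega t}\varphi(a)$ on the fixed strip $\mathbb S_{L(a)}$. The symmetry group is only the gauge group $\{e^{i\theta}\}$, because $\gamma\delta_0$ breaks translation invariance in $x$. The abstract theorem therefore says: stability holds when the number of negative eigenvalues of $S''_{L(a),\gamma}(\varphi(a))$ equals the number of positive eigenvalues of the scalar Hessian $d''(\omega)=\partial_\omega\|\varphi(a)\|_{L^2}^2$, and instability holds when the difference $n(a)-p(d'')$ is odd.

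\textbf{Step 1: the hypotheses of the theorem.}
\begin{itemize}
\item \emph{Well-posedness and conservation.} These come from Theorem \ref{thmLWP}.
\item \emph{Smoothness in $\omega$.} The map $\omega\mapsto\varphi(a)$ must be $C^1$ into $H^1$. I would get this by running the Lyapunov--Schmidt construction of Proposition \ref{ex-bi} with $\omega$ as an extra parameter; the implicit function theorem gives smooth dependence. One subtlety: fixing $a$ and varying $\omega$ changes $L(a)$. So I would instead parametrize the branch by $\omega$ at fixed $L$, using $\partial_a$ and $\partial_L$ transversality, or simply adopt the convention used in Proposition \ref{prop1:critical}.
\item \emph{Kernel of $S''$.} The kernel must be exactly $\mathrm{span}\{i\varphi(a)\}$. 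Write $S''=\mathcal L_+(a)\oplus\mathcal L_-(a)$.
 \begin{itemize}
 \item For $\mathcal L_-(a)$: the positivity of $\varphi(a)$ (Proposition \ref{ex-bi}) makes it the ground state, so $\ker\mathcal L_-(a)=\mathrm{span}\{\varphi(a)\}$ and $\mathcal L_-(a)\ge0$.
 \item For $\mathcal L_+(a)$: by Kato perturbation from $a=0$, the spectrum near $0$ consists of the single simple eigenvalue $\lambda_2(a)$, which is near $0$. The rest of the spectrum stays a fixed distance from $0$, since at $a=0,\ L=L_*$ it is $\{-\lambda_1\}\cup\{0\}\cup$ a positive gap, by \eqref{eq6-bi} and Proposition \ref{prpSpectr}.
 \item Hence $\ker\mathcal L_+(a)=\{0\}$ as soon as $\lambda_2(a)\neq0$.
 \end{itemize}
\item \emph{Spectral gap.} The rest of the spectrum must be bounded away from $0$. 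This follows from Weyl's theorem, since the essential spectrum is $[\omega,\infty)$, together with the same perturbation argument.
\end{itemize}

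\textbf{Step 2: counting negative eigenvalues.} $\mathcal L_-(a)$ contributes none. $\mathcal L_+(a)$ has the eigenvalue near $-\lambda_1$, which persists for small $a$, plus possibly $\lambda_2(a)$. So $n(a)=1$ if $\lambda_2(a)>0$ and $n(a)=2$ if $\lambda_2(a)<0$. These are exactly the two cases in the statement.

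\textbf{Step 3: applying the criterion.} By Proposition \ref{prop1:critical}, $d''>0$, so $p(d'')=1$.
\begin{itemize}
\item If $n=1$, then $n=p(d'')$ and GSS yields orbital stability.
\item If $n=2$, then $n-p(d'')=1$ is odd and GSS yields instability. Alternatively, the argument can be routed through the linear-instability machinery of Appendix \ref{AppenLinStab} together with the Georgiev--Ohta scheme already used for Theorem \ref{thm-sta}(ii).
\end{itemize}

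\textbf{Main obstacle.} The delicate point is the smoothness of $\omega\mapsto\varphi(a)$ at fixed $L$ together with the spectral bookkeeping. The branch was built with $\omega$ fixed and $L=L(a)$ moving. I would reparametrize via the implicit function theorem applied to $g(a,L;\omega)=0$, using $\partial_L g\neq0$ from Proposition \ref{prop1-bi}, to get $\omega$-smooth families at fixed $L$. I would then check that Proposition \ref{prop1:critical} still controls the sign of $d''$ in this parametrization; continuity in $a$ from $a=0$, where Lemma \ref{lemStabil} applies, should suffice.
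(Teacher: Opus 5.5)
Your route is the paper's own. The paper deduces this statement in one line from Proposition~\ref{prop1:critical} and the Grillakis--Shatah--Strauss criterion \cite{MR1081647}. Your Steps 1--3 (kernel of $\mathcal L_-(a)$, the simple small eigenvalue $\lambda_2(a)$ of $\mathcal L_+(a)$, the gap, the count $n(a)\in\{1,2\}$) are a faithful expansion of that argument.

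The gap is in the step you flag yourself. You are right that $d''$ must be the slope along a family of bound states on a \emph{fixed} strip. Proposition~\ref{prop1:critical} differentiates \eqref{eq1-bifur} at fixed $a$, i.e.\ along strips whose length $L(a)$ moves with $\omega$. But your fix, transferring the sign by continuity from $a=0$, fails. The near-kernel direction that makes $n(a)$ change also contributes an $O(1)$ term to $d''$.

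To see this, differentiate the fixed-$L$ equation in $\omega$. This gives $\mathcal L_+(a)\partial_\omega\varphi=-\varphi$, hence $d''=-2\langle \mathcal L_+(a)^{-1}\varphi(a),\varphi(a)\rangle$. Split off the eigenvector $\phi_2(a)$.
\begin{itemize}
\item The remaining part does converge to $\langle \mathcal L_{+,0}^{-1}\phi_{\omega,\gamma},\phi_{\omega,\gamma}\rangle=-\tfrac12\partial_\omega\|\phi_{\omega,\gamma}\|_{L^2}^2$.
\item The split-off component behaves differently. Since $\langle\phi_{\omega,\gamma},\xi\rangle=\langle h(a),\xi\rangle=0$ and $\partial_a\phi_2(0)=\partial_a^2h(0,L_*)$ (compare \eqref{der2-h-a-0} with \eqref{eq26-bi}),
\[
\langle\varphi(a),\phi_2(a)\rangle = aK+o(a),\qquad K:=1+\langle\phi_{\omega,\gamma},\partial_a^2h(0,L_*)\rangle .
\]
\item By Proposition~\ref{prop:critical}, $\lambda_2(a)=-\frac{d^2L}{da^2}(0)\lambda_*'a^2+o(a^2)$.
\end{itemize}
Therefore
\[
d''=\partial_\omega\|\phi_{\omega,\gamma}\|_{L^2}^2-\frac{2\langle\varphi(a),\phi_2(a)\rangle^2}{\lambda_2(a)}+o(1)
=\partial_\omega\|\phi_{\omega,\gamma}\|_{L^2}^2-\frac{2K^2}{-\frac{d^2L}{da^2}(0)\,\lambda_*'}+o(1).
\]
By Hellmann--Feynman, $K=-\partial_\omega\lambda_1\neq0$; for example $K=-(p+3)(p-1)/4$ at $\gamma=0$, so $K\neq0$ for small $|\gamma|$. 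In your fixed-$L$ parametrization the same jump appears because $a(\omega)^2\simeq 2(L-L_*(\omega))/\frac{d^2L}{da^2}(0)$, so $\partial_\omega(a^2)$ stays of order one as $a\to0$.

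Consequently, the two halves of the statement behave differently.
\begin{itemize}
\item When $n(a)=2$ ($\lambda_2(a)<0$), the correction is positive, $d''>0$ and $n-p(d'')=1$. The instability half survives.
\item When $n(a)=1$ ($\lambda_2(a)>0$), exactly the case where you want stability, the correction is negative and of order one. Then $d''>0$ becomes the extra quantitative condition $\partial_\omega\|\phi_{\omega,\gamma}\|_{L^2}^2>2K^2/\bigl(-\frac{d^2L}{da^2}(0)\lambda_*'\bigr)$. This does not follow from Lemma~\ref{lemStabil}. If it fails, $n-p(d'')=1$ and the criterion gives instability instead.
\end{itemize}
To complete the stability half you must compute, or impose, this fixed-$L$ slope. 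Neither your continuity argument nor Proposition~\ref{prop1:critical} as stated supplies it.

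A smaller point: the implicit function theorem applied to $g(a,L;\omega)=0$ with $\partial_L g\neq0$ produces $L=L(a,\omega)$, not a fixed-$L$ family. You still have to solve $L(a,\omega)=L$ for $\omega$, which uses $L_*'(\omega)=-\frac{L_*}{2\lambda_1}\partial_\omega\lambda_1\neq0$. This is the same quantity that produces the order-one correction above.
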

From Theorem 
\ref{thm2:critical}, it suffices to 
count the number of the linearized operator 
$S_{L(a), \gamma}''(\varphi(a))$.

\begin{proposition}\label{prop:critical}
Let $\lambda_{2}(a)$ be the second eigenvalue of 
$\mathcal{L}_{+}(a, L(a))$. 
Then, we obtain 
   \begin{equation} \label{main-lambda}
    \lambda_{2} (a) 
    = -\frac{d^{2} L}{d a^{2}}(0)\lambda_*' 
    a^{2} + o(a^{2}).  
   \end{equation}
where $\lambda_*'$ is defined by 
\eqref{eq34-bi} and $\frac{d^{2} L}{d a^{2}}(0)$ in \eqref{eq19-bi}.
\end{proposition}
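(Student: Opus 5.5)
Along the branch $L=L(a)$ I would obtain the expansion of $\lambda_2(a)$ by pairing the eigenvalue equation with the explicit element $\partial_a\varphi(a)$, which lies almost in the kernel. Differentiate the profile equation $\mathcal{F}(L(a),\varphi(a))=0$ in $a$:
\[
\mathcal{L}_+(a,L(a))\,\partial_a\varphi(a) = -\,\partial_L\mathcal{F}(L(a),\varphi(a))\,L'(a) = -\frac{2}{L(a)^3}\,\partial_{yy}\varphi(a)\,L'(a).
\]
By Proposition \ref{prop2-bi}, $L'(a)=L''(0)a+o(a)$. Moreover $\partial_{yy}\varphi(a)=-\pi^2 a\,\xi+O(a^2)$ in $L^2$, because $\partial_{yy}\phi_{\omega,\gamma}=0$, $\partial_{yy}\xi=-\pi^2\xi$, and $\|h(a)\|_{H^2}=O(a^2)$. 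Hence
\[
\mathcal{L}_+(a,L(a))\,\partial_a\varphi(a) = \frac{2\pi^2}{L_*^3}L''(0)\,a^2\,\xi + o(a^2) = -\lambda_*'\,L''(0)\,a^2\,\xi+o(a^2),
\]
using $\lambda_*'=-2\pi^2L_*^{-3}$ from \eqref{eq34-bi}. Since $\partial_a\varphi(a)=\xi+O(a)$ in $H^2$, this says that $\xi$ is an approximate eigenvector with eigenvalue $-\lambda_*'L''(0)a^2$.

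To make this rigorous, I would first set up spectral isolation. At $a=0$ the operator $\mathcal{L}_+(0,L_*)$ has the simple eigenvalue $-\lambda_1$ with eigenfunction $\chi$ (the $n=0$ mode) and the simple eigenvalue $0$ with eigenfunction $\xi$. By \eqref{eq6-bi} and Proposition \ref{prpSpectr}, the remaining spectrum lies in $[c_0,\infty)$ with $c_0>0$. The map $a\mapsto\mathcal{L}_+(a,L(a))$ is continuous in the norm-resolvent sense, since the potential $p\varphi(a)^{p-1}$ is continuous in $L^\infty$ and the $L^{-2}\partial_{yy}$ coefficient varies continuously. Kato's perturbation theory then gives that $\lambda_2(a)$ is simple and isolated for small $|a|$, with a normalized eigenfunction $\phi_2(a)\to\xi$ in $H^2$, and that the first eigenvalue stays near $-\lambda_1$.

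Next I would take the inner product of the identity above with $\phi_2(a)$ and use self-adjointness:
\[
\lambda_2(a)\,\langle \partial_a\varphi(a),\phi_2(a)\rangle = -\lambda_*'\,L''(0)\,a^2\,\langle\xi,\phi_2(a)\rangle + o(a^2).
\]
Both inner products equal $1+o(1)$, which gives \eqref{main-lambda}.

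The main obstacle is the size of the remainder. Pairing against $\phi_2(a)$ rather than against $\xi$ avoids any need for the rate of convergence of $\phi_2(a)$, but the error in $\partial_{yy}\varphi(a)$ multiplied by $L'(a)$ must be $o(a^2)$. This requires $\|h(a)\|_{H^2}=O(a^2)$, which Proposition \ref{ex-bi} provides, and $L'(a)=O(a)$, which follows from $L\in C^2$ with $L'(0)=0$. Both are available for all $p\ge2$, so no third derivative of the nonlinearity is needed and the case $p=2$ is covered as well. One should also check that $\partial_a\varphi(a)$ lies in $\mathcal{D}(H_{L(a)})$, so that the differentiated equation holds in $L^2$; this follows from the $C^2$ regularity in $\mathcal{D}(H_{L_*})$ stated in Theorem \ref{thm1-bi}, applied in the rescaled variables.
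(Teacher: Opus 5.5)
Your argument is correct, and it takes a different route from the paper's.

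The paper differentiates the eigenvalue equation $\mathcal{L}_+(a,L(a))\phi_2(a)=\lambda_2(a)\phi_2(a)$ twice in $a$. It gets $\frac{d\lambda_2}{da}(0)=0$ from $\int_0^1\cos^3(\pi y)\,dy=0$ and solves for $\frac{\partial\phi_2}{\partial a}(0)$. It then computes $\langle\partial_a^2\mathcal{L}_+\,\xi,\xi\rangle$, which brings in $p(p-1)(p-2)\int\phi_{\omega,\gamma}^{p-3}\xi^4$ and $\frac{\partial^2 h}{\partial a^2}(0,L_*)$. Only at the end does it substitute the explicit formula \eqref{eq19-bi} to recognize $\frac{d^2\lambda_2}{da^2}(0)=-2\frac{d^2L}{da^2}(0)\lambda_*'$.

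You instead differentiate the profile equation. This makes $\partial_a\varphi(a)$ a quasimode whose residual $-2L(a)^{-3}L'(a)\partial_{yy}\varphi(a)$ is explicitly of size $a^2$, and self-adjointness transfers this directly to $\lambda_2(a)$. This is the Crandall--Rabinowitz exchange-of-stability mechanism.

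Your version needs only:
\begin{itemize}
\item $C^1$ dependence of $\mathcal{F}$;
\item $\|h(a)\|_{H^2}=O(a^2)$;
\item $L\in C^2$ with $L'(0)=0$;
\item norm-resolvent continuity, giving $\phi_2(a)\to\xi$ in $L^2$ once the sign is fixed by $\langle\phi_2(a),\xi\rangle>0$.
\end{itemize}
It never uses the third derivative of the nonlinearity, the second derivatives of $h$, or formula \eqref{eq19-bi}. So $p=2$ and $2<p<3$ need no extra work, and you avoid the lengthy second-order bookkeeping.

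What the paper's computation gives beyond yours is twice-differentiability of $\lambda_2$ at $a=0$, and an expression for $\frac{d^2\lambda_2}{da^2}(0)$ written directly in terms of the profiles. Proposition \ref{prop2-bi} already identifies $\frac{d^2L}{da^2}(0)$, so nothing needed for Theorem \ref{thmStab2} is lost with your approach.
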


\begin{proof}
Differentiating \eqref{eq45-bi} with respect to $a$, we obtain 
    \begin{equation} \label{eq1:critical}
    \mathcal{L} (a, L(a)) 
    \frac{\partial \phi_{2}}{\partial a}(a) 
    + \partial_{a} \mathcal{L} (a, L(a)) \phi_{2}(a)
    = \frac{d \lambda_{2}}{d a}(a) \phi_{2}(a) 
    + \lambda_{2}(a) \frac{\partial \phi_{2}}
    {\partial a}(a). 
    \end{equation}
Observe that 
    \begin{equation} \label{eq3:critlcal}
    \begin{split}
     \partial_{a} \mathcal{L} (a, L(a))
    & = 2 L^{-3}(a) \frac{d L}{d a}(a) 
    \partial _{yy} 
    - p (p-1) \varphi^{p-2}(a) 
    \frac{\partial \varphi}{\partial a}(a) \\
    & = 2 L^{-3}(a) \frac{d L}{d a}(a) 
    \partial _{yy} 
    - p (p-1) \varphi^{p-2}(a) 
    \left(\xi + \frac{\partial h}{\partial a}
    (a) \right). 
    \end{split}
    \end{equation}
Then, we have by \eqref{eq59-bi} 
and \eqref{eq38-bi} that 
    \[
    \partial_{a} \mathcal{L} (a, L(a))|_{a = 0} 
    = - p(p-1) 
    (\phi_{\omega, \gamma}^{(1)})^{p-2} \xi
    \]
Taking a scalar product with $\phi_{2}(a)$ 
on \eqref{eq3:critlcal}, 
we obtain 
    \begin{equation}\label{eq2:critical}
    \begin{split}
    & \quad \langle 
    \mathcal{L} (a, L(a)) 
    \frac{\partial \phi_{2}}{\partial a}(a), 
    \phi_{2}(a) \rangle 
    + \langle \partial_{a} \mathcal{L} (a, L(a)) \phi_{2}(a), 
    \phi_{2}(a) \rangle \\
    & = 
    \frac{d \lambda_{2}}{d a}(a)
    \|\phi_{2}(a)\|_{L^{2}}^{2} 
    + \lambda_{2}(a) 
    \langle \frac{\partial \phi_{2}}
    {\partial a}(a), \phi_{2}(a) \rangle \\
    & = \frac{d \lambda_{2}}{d a}(a)
    + \lambda_{2}(a) 
    \langle \frac{\partial \phi_{2}}
    {\partial a}(a), \phi_{2}(a) \rangle.  
    \end{split}
    \end{equation}
 Substituting the above with $a = 0$, 
 we have 
    \[
    \begin{split}
    & \quad \langle 
    D_{u} \mathcal{F}(L_*, \varphi(0)) 
    \frac{\partial \phi_{2}}{\partial a}(0), 
    \phi_{2}(0) \rangle 
    + \langle \partial_{a} D_{u} 
    \mathcal{F}(L_*, \varphi(0)) \phi_{2}(0), 
    \phi_{2}(0) \rangle \\
    & = \frac{d \lambda_{2}}{d a}(0)
    + \lambda_{2}(0) 
    \langle \frac{\partial \phi_{2}}
    {\partial a}(0), \phi_{2}(0) \rangle. 
    \end{split}
    \]
Using $\lambda_{2}(0) = 0$, $\phi_{2}(0) 
= \xi$, $D_{u} \mathcal{F}(L_{*}, \varphi(0))
\phi_{2}(0) = 0$ and 
    \[
    \begin{split}
    \langle \partial_{a} D_{u} \mathcal{F}(L(0), \varphi(0)) \phi_{2}(0), 
    \phi_{2}(0) \rangle
    & = - p(p-1) 
    \langle (\phi_{\omega, \gamma}^{(1)})^{p-2} 
    \xi^2, \xi \rangle \\
    & = - p(p-1) 
    \int_{\R} 
    (\phi_{\omega, \gamma}^{(1)}(x))^{p-2} 
    \chi^{3}(x) dx 
    \int_{0}^{1} (\cos (\pi y))^3 dy \\
    & = 0,  
    \end{split}
    \]
we have $\frac{d \lambda_{2}}{d a}(0) = 0$. 

Putting $a = 0$ into \eqref{eq1:critical}, 
we have by $\lambda_{2}(0) = \frac{d \lambda_{2}}{d a}(0) = 0$ that  
    \[
    D_{u}\mathcal{F}(L_*, 
    \phi_{\omega, \gamma}^{(1)}) 
    \frac{\partial \phi_{2}}{\partial a}(0) 
    + \partial_{a} D_{u}\mathcal{F}(L_*, 
    \phi_{\omega, \gamma}^{(1)}) \phi_{2}(0) 
    = 0.
    \]
This together with $\phi_{2}(0) = \xi$ 
yields that 
    \begin{equation} \label{der2-h-a-0}
    \begin{split}
    \frac{\partial \phi_{2}}{\partial a}(0)  
    & = - (P_{\perp} D_{u}\mathcal{F}(L_*, 
    \phi_{\omega, \gamma}^{(1)}))^{-1} 
    \left(P_{\perp} 
    \partial_{a} D_{u}\mathcal{F}(L_*, 
    \phi_{\omega, \gamma}^{(1)}) \xi 
    \right) \\
    & = p (p-1) (P_{\perp} D_{u}\mathcal{F}(L_*, 
    \phi_{\omega, \gamma}^{(1)}))^{-1} 
     (\phi_{\omega, \gamma}^{(1)})^{p-2} 
     \xi^2. 
    \end{split}
    \end{equation}
Differentiating \eqref{eq1:critical} 
with respect to the parameter $a$ and 
taking the scalar product with 
$\phi_{2}(a)$, 
we obtain
\begin{equation} \label{2der-a-lambda}
\begin{split}
\frac{d^{2} \lambda_{2}}{d a^{2}}(a)
& = 2 \langle 
\left(
\partial_{a} 
\mathcal{L} (a, L(a)) \right)
\frac{\partial \phi_{2}}{\partial a}(a), \phi_{2}(a) 
\rangle + \langle (\partial_{a}^{2} \mathcal{L} (a, L(a)))
\phi_{2}(a), \phi_{2}(a) \rangle \\
& \quad + \langle 
\mathcal{L} (a, L(a))
\frac{\partial^2 \phi_{2}}{\partial a^2}(a), \phi_{2}(a) 
\rangle 
- \lambda_{2}(a) 
\langle \frac{\partial^2 \phi_{2}}{\partial a^2}(a), 
\phi_{2}(a) \rangle 
- 2 \frac{\partial \lambda_{2}}{\partial a}(a) 
\langle \frac{\partial \phi_{2}}{\partial a}
(a), \phi_{2}(a) \rangle
\\
& = 2 \langle \left(\partial_{a} 
\mathcal{L} (a, L(a))\right)
\frac{\partial \phi_{2}}{\partial a}(a), \phi_{2}(a) 
\rangle + \langle (\partial_{a}^{2} \mathcal{L} (a, L(a)))
\phi_{2}(a), \phi_{2}(a) \rangle. 
\end{split}
\end{equation}
Here, we have used the fact that
$\langle \frac{\partial 
\phi_{2}}{\partial a}(a), \phi_{2}(a) 
\rangle = 0$ because 
$\|\phi_{2}(a)\|_{L^{2}}^{2} = 1$ 
for each $a > 0$.

Differentiating \eqref{eq3:critlcal} with respect to the parameter $a$ 
yields that 
\begin{equation} \label{2der-L-a}
\begin{split}
\partial_{a}^{2} 
D_{u}\mathcal{F}(L(a), \varphi(a)) 
& = 
-6 L^{-4}(a) \frac{d L}{d a}(a) 
\partial_{yy} 
+ 2 L^{-3}(a) \frac{d^{2} L}{d a^2}(a) 
\partial_{yy} \\
& \quad 
- p(p-1)(p-2) 
\varphi^{p-3}(a)
\left(\xi + \frac{\partial h}{\partial a}
(a) \right)^{2} 
- p(p-1) \varphi^{p-2}(a) 
\frac{d^{2} h}{d a^{2}}(a)
\end{split}
\end{equation}
Moreover, 
differentiating \eqref{eq1-varphi} with respect to the parameter $a$, 
we have 
\begin{equation} \label{eq2-varphi}
\frac{d \varphi}{d a}(a) = 
\xi 
+ \frac{\partial h}{\partial a}(L(a), a) 
+ \frac{d L}{d a}(a) 
\frac{\partial h}{\partial L}
(L(a), a). 
\end{equation}
Substituting the above with $a = 0$, 
we have by \eqref{eq14-bi} that 
\begin{equation*}
\frac{d \varphi}{d a}(0) 
= \xi. 
\end{equation*}
Differentiating \eqref{eq2-varphi} with respect to the parameter $a$ gives 
\begin{equation*} 
\begin{split}
\frac{d^{2} \varphi}{d a^{2}}(a) 
& 
= \frac{\partial^{2} h}{\partial a^{2}}
 (L(a), a) 
+ 2 \frac{d L }{d a}(a) 
\frac{\partial^{2} h}{\partial a \partial L}
(L(a), a) \\[6pt]
& 
+ \frac{d^{2} L}{d a^{2}}
(a)
\frac{\partial h}{\partial L}(L(a), a) 
+ \left(\frac{d L }{d a}(a) \right)^{2} \frac{\partial^{2} h}
{d L ^{2}}(L(a), a). 
\end{split}
\end{equation*}
Substituting the above with $a = 0$, we have by \eqref{eq38-bi} that 
\begin{equation} \label{2der-a-varphi}
\frac{d^{2} \varphi}{d a^{2}} (0)
= \frac{d^{2} h}{d a^{2}} (L_*, 0) 
+ \frac{d^{2} L}{d a^{2}}(0)
\frac{\partial h}{\partial L}(L_*, 0).  
\end{equation}
By \eqref{2der-L-a} and \eqref{2der-a-varphi}, we obtain  
\begin{equation*} 
\begin{split}
& \quad \langle 
(\partial_{a}^{2} D_{u} \mathcal{F}(L_*, \phi_{\omega, \gamma}^{(1)}))\xi, \xi \rangle 
\\[6pt]
& = - 2 L_{*}^{-3} 
\frac{d^{2} L}{d a^{2}}(0)  
-p(p-1)(p-2) \int_{\mathbb{R} \times \mathbb{T}}
(\phi_{\omega, \gamma}^{(1)})^{p-3} (\xi)^{4} \, dxdy  \\[6pt]
& \quad - p(p-1) \int_{\mathbb{R} \times \mathbb{T}} 
(\phi_{\omega, \gamma}^{(1)})^{p-2} \xi^2 \frac{d^{2} \varphi}{d a^{2}}(0) 
\, dxdy  \\[6pt]
& = - 2 L_{*}^{-3} 
\frac{d^{2} L}{d a^{2}}(0) 
-p(p-1)(p-2) \int_{\mathbb{R} \times \mathbb{T}}
(\phi_{\omega, \gamma}^{(1)})^{p-3} (\xi)^{4} \, dxdy  \\[6pt]
& \quad - p(p-1) \int_{\mathbb{R} \times \mathbb{T}} 
(\phi_{\omega, \gamma}^{(1)})^{p-2} \xi^2 
\left( \frac{d^{2} h}{d a^{2}}(L_*, 0) + 
\frac{d^{2} L}{d a^{2}}(0)
 \frac{\partial h}{\partial L}(L_*, 0) \right)
\, dxdy  \\[6pt]
& = 
- 2 \L_{*}^{-3} 
\frac{d^{2} L}{d a^{2}}(0) 
-p(p-1)(p-2) \int_{\mathbb{R} \times \mathbb{T}}
(\phi_{\omega, \gamma}^{(1)})^{p-3} (\xi)^{4} \, dxdy  \\[6pt]
& \quad - p(p-1) \int_{\mathbb{R} \times \mathbb{T}} 
(\phi_{\omega, \gamma}^{(1)})^{p-2} \xi^2 
\frac{d^{2} h}{d a^{2}}(L_*, 0) \, dxdy  \\[6pt]
& \quad - p(p-1) \frac{d^{2} L}{d a^{2}}(0)
\int_{\mathbb{R} \times \mathbb{T}} 
(\phi_{\omega, \gamma}^{(1)})^{p-2} \xi^2 
 \frac{\partial h}{\partial L}(L_*, 0) 
\, dxdy . 
\end{split}
\end{equation*}
From \eqref{eq33-bi} and 
$\|\partial_{y} \phi_{2}(0, L_{*})\|_{L^{2}} 
= \|\partial_{y} \xi\|_{L^{2}} = 1$, we obtain
\begin{equation*}
\begin{split}
& - 2 L_{*}^{-3} \frac{d^{2} L}{d a^{2}}(0) 
- p(p-1) \frac{d^{2} L}{d a^{2}}(0)
\int_{\mathbb{R} \times \mathbb{T}} 
(\phi_{\omega, \gamma}^{(1)})^{p-2} \xi^2 
 \frac{\partial h}{\partial L}(L_*, 0) 
\, dxdy  \\[6pt]
& = \frac{d^{2} L}{d a^{2}}(0) 
\left(- 2 L_{*}^{-3} - p(p-1) \int_{\mathbb{R} \times \mathbb{T}} 
(\phi_{\omega, \gamma}^{(1)})^{p-2} \xi^2  
\frac{\partial h}{\partial L}(L_{*}, 0) 
\, dxdy \right) \\[6pt]
& = \frac{d^{2} L}{d a^{2}}(0) 
\lambda_{2}'.   
\end{split}
\end{equation*}
Thus, one has
\begin{equation} \label{2der-L-a-int}
\begin{split}
\langle 
(\partial_{a}^{2} D_{u} \mathcal{F}(L_*, \phi_{\omega, \gamma}^{(1)}))\xi, \xi \rangle 
& = \frac{d^{2} L}{d a^{2}}(0) 
\lambda_{2}'
-p(p-1)(p-2) \int_{\mathbb{R} \times \mathbb{T}}
(\phi_{\omega, \gamma}^{(1)})^{p-3} (\xi)^{4} \, dxdy \\
& \quad - p(p-1) \int_{\mathbb{R} \times \mathbb{T}} 
(\phi_{\omega, \gamma}^{(1)})^{p-2} \xi^2 
\frac{d^{2} h}{d a^{2}}(L_*, 0) \, dxdy 
\end{split}
\end{equation}
It follows from \eqref{2der-a-lambda} that 
\begin{equation}\label{2der-a-lambda-0}
\begin{split}
\frac{d^{2} \lambda_{2}}{d a^{2}}(0) 
= 
-2 p (p-1) 
\int_{\mathbb{R} \times \mathbb{T}}
(\phi_{\omega, \gamma}^{(1)})^{p-2} 
\xi^{2} 
\frac{\partial \phi_{2}}{\partial a}(0) \, dxdy  
+ \langle  
\partial_{a}^{2} D_{u} \mathcal{F}(L_{*}, 
\phi_{\omega, \gamma}^{(1)})
\xi, \xi \rangle  
\end{split}
\end{equation}
 It follows from 
\eqref{2der-a-lambda-0}, 
\eqref{2der-L-a-int}, \eqref{der2-h-a-0}, \eqref{eq26-bi} 
and \eqref{eq19-bi}, in this order,  
that 
\begin{equation*}
\begin{split}
\frac{d^{2} \lambda_{2}}{d a^{2}}(0) 
& = 
-2 p (p-1) 
\int_{\mathbb{R} \times \mathbb{T}}
(\phi_{\omega, \gamma}^{(1)})^{p-2} \xi^2 
\frac{\partial \phi_{2}}{\partial a}(0) \, dxdy  \\[6pt]
& \quad + \frac{d^{2} L}{d a^{2}}(0) 
\lambda_{2}'
-p(p-1)(p-2) \int_{\mathbb{R} \times \mathbb{T}}
(\phi_{\omega, \gamma}^{(1)})^{p-3} (\xi)^{4} \, dxdy \\
& \quad - p(p-1) \int_{\mathbb{R} \times \mathbb{T}} 
(\phi_{\omega, \gamma}^{(1)})^{p-2} \xi^2 
\frac{d^{2} h}{d a^{2}}(L_*, 0) \, dxdy 
\\[6pt]
& = - 3 p^{2} (p-1)^{2} 
\int_{\mathbb{R} \times \mathbb{T}}
(\phi_{\omega, \gamma}^{(1)})^{p-2} \xi^2 
(P_{\perp} D_{u} \mathcal{F}(L_{*}, \phi_{\omega, \gamma}^{(1)}))^{-1} 
\left((\phi_{\omega, \gamma}^{(1)})^{p-2} \xi^2 \right)\, dxdy  \\[6pt]
& \quad 
+ \frac{d^{2} L}{d a^{2}}(0) 
\lambda_{2}'
- p(p-1)(p-2) \int_{\mathbb{R} \times \mathbb{T}}
(\phi_{\omega, \gamma}^{(1)})^{p-3} (\xi)^{4} \, dxdy  
\\[6pt]
& = - 3 \frac{d^{2} L}{d a^{2}}(0) 
\lambda_*'
+ \frac{d^{2} L}{d a^{2}}(0) 
\lambda_*'
= -2 \frac{d^{2} L}{d a^{2}}(0) 
\lambda_*'. 
\end{split}
\end{equation*}
From this, we find 
that \eqref{main-lambda} holds. 
\end{proof}

\begin{proof}[Proof of Theorem \ref{thmStab2}.]
By Proposition \ref{prop1:critical} and the
Grillakis--Shatah--Strauss criterion~\cite{MR1081647}, the
stability of the bifurcating standing wave is determined by the
number of negative eigenvalues of
$S_{L(a),\gamma}''(\varphi(a))$. Since the bifurcation occurs from
the line soliton at which the second eigenvalue of
$\mathcal{L}_{+}$ vanishes, the only eigenvalue whose sign may
change for sufficiently small $|a|$ is $\lambda_{2}(a)$. By
Proposition \ref{prop:critical},
\begin{equation*}
    \lambda_{2}(a)
    =
    -\frac{d^{2}L}{da^{2}}(0)\lambda_{*}'a^{2}
    +o(a^{2}).
\end{equation*}
By \eqref{eq34-bi}, we have $\lambda_{*}'<0$. Hence, the sign of
$\lambda_{2}(a)$ is the same as the sign of
$\frac{d^{2}L}{da^{2}}(0)$. Therefore, for sufficiently small
$|a|>0$,
\[
\frac{d^{2}L}{da^{2}}(0)>0
\quad\Longrightarrow\quad n(a)=1,
\]
whereas
\[
\frac{d^{2}L}{da^{2}}(0)<0
\quad\Longrightarrow\quad n(a)=2.
\]
The stability and instability statements therefore follow from
Theorem \ref{thm2:critical}.

It remains to determine the sign of
$\frac{d^{2}L}{da^{2}}(0)$. By \eqref{eq19-bi},
\begin{equation*}
\begin{aligned}
\frac{d^{2}L}{da^{2}}(0)
={}&
\frac{p(p-1)(p-2)}{3\lambda_{*}'}
\int_{\mathbb{S}}
\phi_{\omega,\gamma}^{p-3}\xi_{\gamma}^{4}\,dx\,dy
\\
&+
\frac{p^{2}(p-1)^{2}}{\lambda_{*}'}
\left\langle
\phi_{\omega,\gamma}^{p-2}\xi_{\gamma}^{2},
\left(
P_{\perp}
\left(
\mathcal{L}_{+,0}
-L_{*}^{-2}\partial_{yy}
\right)
\big|_{H^{2}_{\mathrm{ort}}}
\right)^{-1}
\left(
\phi_{\omega,\gamma}^{p-2}\xi_{\gamma}^{2}
\right)
\right\rangle .
\end{aligned}
\end{equation*}
Here
\[
\xi_\gamma(x,y)= \sqrt{2}\chi_\gamma(x)\cos(\pi y),
\]
where $\chi_\gamma$ is the normalized positive eigenfunction associated
with the lowest eigenvalue of the one-dimensional linearized
operator $\mathcal{L}_{+,0}(\gamma)$ around
$\phi_{\omega,\gamma}$.

Let $f_\gamma(x) := \phi_{\omega,\gamma}^{p-2}(x)\chi_{\gamma}^{2}(x)$. Using the trigonometric identity, we expand the source term as
\begin{equation*}
    \phi_{\omega,\gamma}^{p-2}\xi_{\gamma}^{2}(x,y)
    = 2\phi_{\omega,\gamma}^{p-2}(x)\chi_{\gamma}^{2}(x)\cos^{2}(\pi y)
    = f_\gamma(x) + f_\gamma(x)\cos(2\pi y).
\end{equation*}

Observe that the operator $P_{\perp} \left( \mathcal{L}_{+,0} - L_{*}^{-2}\partial_{yy} \right)$ acts diagonally on the longitudinal Fourier modes $\cos(n\pi y)$. Introducing the decoupled one-dimensional operators
\begin{equation*}
    \mathbf{A}_{n}(\gamma) := \mathcal{L}_{+,0} + n^{2}\pi^{2}L_{*}^{-2} \quad \text{for } n \in \mathbb{N}_{0},
\end{equation*}
the action of the inverse operator on the active modes ($n=0$ and $n=2$) is given by
\begin{equation*}
    \left( P_{\perp} \left( \mathcal{L}_{+,0} - L_{*}^{-2}\partial_{yy} \right) \big|_{H^{2}_{\mathrm{ort}}} \right)^{-1} \left( \phi_{\omega,\gamma}^{p-2}\xi_{\gamma}^{2} \right)
    = \mathbf{A}_{0}^{-1}(\gamma)f_\gamma(x) + \mathbf{A}_{2}(\gamma)^{-1}f_\gamma(x)\cos(2\pi y).
\end{equation*}

Taking the inner product in $L^{2}(\mathbb{S})$, we integrate over $x \in \mathbb{R}$ and $y \in [0,1]$. By the $L^{2}$-orthogonality of the Fourier modes on $[0,1]$, the cross-terms vanish. In this way, we obtain the modal decomposition:
\begin{equation*}
\begin{aligned}
    &\left\langle
    \phi_{\omega,\gamma}^{p-2}\xi_{\gamma}^{2},
    \left(
    P_{\perp}
    \left(
    \mathcal{L}_{+,0}
    -L_{*}^{-2}\partial_{yy}
    \right)
    \big|_{H^{2}_{\mathrm{ort}}}
    \right)^{-1}
    \left(
    \phi_{\omega,\gamma}^{p-2}\xi_{\gamma}^{2}
    \right)
    \right\rangle_{L^{2}(\mathbb{S})} \\
    &\quad = \int_{\mathbb{R}} \int_{0}^{1} \Big( f_\gamma(x) + f_\gamma(x)\cos(2\pi y) \Big) \Big( \mathbf{A}_{0}^{-1}(\gamma)f_\gamma(x) + \mathbf{A}_{2}^{-1}(\gamma)f_\gamma(x)\cos(2\pi y) \Big) \, dy \, dx \\
    &\quad = \left\langle f_\gamma, \mathbf{A}_{0}^{-1}(\gamma)f_\gamma \right\rangle_{L^{2}(\mathbb{R})} + \frac{1}{2} \left\langle f_\gamma, \mathbf{A}_{2}^{-1}(\gamma)f_\gamma \right\rangle_{L^{2}(\mathbb{R})}.
\end{aligned}
\end{equation*}
For $\gamma=0$, both $\phi_{\omega,0}$ and $\chi_0$ are explicit, and
the sign of $\frac{d^{2}L}{da^{2}}(0)$ can be determined explicitly. For $\gamma<0$, although the corresponding
profiles can also be represented explicitly, the resulting
expression is considerably more involved. We therefore determine
the sign by perturbing from the case $\gamma=0$.

\medskip

\textbf{Case $\gamma = 0$:} For $\gamma = 0$, the linearized operator simplifies, and the principal eigenfunction is explicitly given by $\chi_0(x) = \phi_{\omega,0}^{(p+1)/2}(x)$. Consequently, the source term $f_0(x) = \phi_{\omega,0}^{p-2}(x)\chi_0^2(x)$ reduces to
\begin{equation*}
    f_0(x) = \phi_{\omega,0}^{p-2}(x) \phi_{\omega,0}^{p+1}(x) = \phi_{\omega,0}^{2p-1}(x).
\end{equation*}

Substituting these identities into the expression for $\frac{d^2L}{da^2}(0)$ at $\gamma = 0$, we obtain
\begin{equation*}
\begin{aligned}
\frac{d^{2}L}{da^{2}}(0) ={}& \frac{p^{2}(p-1)^{2}}{\lambda_{*}'} \Bigg[ \left\langle \phi_{\omega,0}^{2p-1}, \mathbf{A}_{0}^{-1}(0) \phi_{\omega,0}^{2p-1} \right\rangle_{L^{2}(\mathbb{R})} + \frac{1}{2} \left\langle \phi_{\omega,0}^{2p-1}, \mathbf{A}_{2}^{-1}(0) \phi_{\omega,0}^{2p-1} \right\rangle_{L^{2}(\mathbb{R})} \\
&\qquad\qquad\quad + \frac{p-2}{2p(p-1)} \int_{\mathbb{R}} \phi_{\omega,0}^{3p-1}(x) \, dx \Bigg].
\end{aligned}
\end{equation*}
We evaluate the terms inside the brackets. For the first term, 
we remark that applying $\mathbf{A}_0^{-1}(0)$ to $\phi_{\omega,0}^{2p-1}$ yields from \eqref{eqStationary2D}
    $$\mathbf{A}_0^{-1}(0) \phi_{\omega,0}^{2p-1} = \frac{p+1}{2p(p-1)} \left( \phi_{\omega,0}^p - (p+1)\omega \phi_{\omega,0} \right)$$
    which gives
    \begin{equation*}
        \left\langle \phi_{\omega,0}^{2p-1}, \mathbf{A}_0^{-1}(0) \phi_{\omega,0}^{2p-1} \right\rangle_{L^2(\mathbb{R})} = -\frac{(3p-1)(p+1)}{4p^2(p-1)} \int_{\mathbb{R}} \phi_{\omega,0}^{3p-1}(x) \, dx.
    \end{equation*}

For the second term, using the spectral operator bound $$\|\mathbf{A}_2^{-1}(0)\|_{L^2 \to L^2} \le \frac{1}{4\pi^2 L_*^{-2}},$$ 
with $\omega = \frac{4\pi^2 L_*^{-2}}{(p+3)(p-1)}$, 
we estimate
    \begin{equation*}
        \left\langle \phi_{\omega,0}^{2p-1}, \mathbf{A}_2^{-1}(0) \phi_{\omega,0}^{2p-1} \right\rangle_{L^2(\mathbb{R})} \le \frac{4(p+1)(3p-1)}{3(7p-3)(p+3)(p-1)} \int_{\mathbb{R}} \phi_{\omega,0}^{3p-1}(x) \, dx.
    \end{equation*}

Combining these results yields the upper bound
\begin{equation}\label{eqUpBoundP}
    \lambda_*' \frac{d^{2}L}{da^{2}}(0) \le \frac{(p-1)\left(3p^4 - 164p^3 - 284p^2 + 216p - 27\right)}{48  (7p-3)(p+3)} \int_{\mathbb{R}} \phi_{\omega,0}^{3p-1}(x) \, dx.
\end{equation}
One can easily see that the upper bound in \eqref{eqUpBoundP} is negative for $p\in [2,5)$, which implies from \eqref{main-lambda} that  $\lambda_2(a)$ is strictly positive for $\gamma = 0$ and $a$ sufficiently small. 

\medskip

\textbf{Case $\gamma <0$:} The coefficient
$\frac{d^{2}L}{da^{2}}(0)$ depends continuously on $\gamma$. Hence,
provided that its value at $\gamma=0$ is nonzero, its sign remains
unchanged for all sufficiently small $|\gamma|$. 

To rigorously justify this claim, it suffices to establish the continuity of the map $\gamma \mapsto \frac{d^{2}L}{da^{2}}(0)$ at $\gamma = 0$. We proceed via operator perturbation theory.

First, consider the one-dimensional Hamiltonian $H_{\gamma} = -\partial_{xx} + \gamma \delta_{0}$. As $\gamma \to 0$, $H_{\gamma} \to H_{0} = -\partial_{xx}$ in the norm resolvent sense on $L^{2}(\mathbb{R})$. The ground state profile $\phi_{\omega,\gamma}$ satisfies the stationary equation
\begin{equation*}
    H_{\gamma}\phi_{\omega,\gamma} + \omega \phi_{\omega,\gamma} - \phi_{\omega,\gamma}^{p} = 0.
\end{equation*}
By the implicit function theorem applied to the associated energy functional, the map $\gamma \mapsto \phi_{\omega,\gamma}$ is continuous into $H^{1}(\mathbb{R})$, with $\phi_{\omega,\gamma} \to \phi_{\omega,0}$ strongly in $H^{1}(\mathbb{R})$ as $\gamma \to 0$. Consequently, the linearized operator $\mathcal{L}_{+,0}(\gamma) = H_{\gamma} + \omega - p\phi_{\omega,\gamma}^{p-1}$ converges to $\mathcal{L}_{+,0}(0)$ in the norm resolvent sense.

Since the principal eigenvalue $\lambda_{\gamma}$ of 
$\mathcal{L}_{+,0}(\gamma)$ is simple and isolated, 
Kato's perturbation theory \cite[Page 67--68]{Kato1980}
ensures that the eigenvalue and its associated normalized positive eigenfunction $\chi_{\gamma}$ depend continuously on $\gamma$. Specifically, $\chi_{\gamma} \to \chi_{0}$ strongly in $H^{1}(\mathbb{R})$ as $\gamma \to 0$.

This strong convergence implies that the source term $f_{\gamma} = \phi_{\omega,\gamma}^{p-2}\chi_{\gamma}^{2}$ satisfies
\begin{equation*}
    \lim_{\gamma \to 0} \|f_{\gamma} - f_{0}\|_{L^{2}(\mathbb{R})} = 0.
\end{equation*}
Furthermore, for $n \in \{0, 2\}$, the operators $\mathbf{A}_{n}(\gamma) = \mathcal{L}_{+,0}(\gamma) + n^{2}\pi^{2} L_{*}^{-2}$ (with $\mathbf{A}_{0}$ restricted to $H^{2}_{\mathrm{ort}}$) are uniformly coercive for $|\gamma|$ small enough. The norm resolvent convergence of $\mathcal{L}_{+,0}(\gamma)$ thus yields the operator norm convergence of their inverses:
\begin{equation*}
    \lim_{\gamma \to 0} \left\| \mathbf{A}_{n}^{-1}(\gamma) - \mathbf{A}_{n}^{-1}(0) \right\|_{\mathcal{B}(L^{2}(\mathbb{R}))} = 0.
\end{equation*}

We can now pass to the limit in each component of $\frac{d^{2}L}{da^{2}}(0)$. For the inner product terms, we have
\begin{equation*}
\begin{aligned}
    &\left| \langle f_{\gamma}, \mathbf{A}_{n}^{-1}(\gamma)f_{\gamma} \rangle_{L^{2}} - \langle f_{0}, \mathbf{A}_{n}^{-1}(0)f_{0} \rangle_{L^{2}} \right| \\
    &\quad \le \|\mathbf{A}_{n}^{-1}(\gamma)\| \|f_{\gamma} - f_{0}\|_{L^{2}} (\|f_{\gamma}\|_{L^{2}} + \|f_{0}\|_{L^{2}}) + \|\mathbf{A}_{n}^{-1}(\gamma) - \mathbf{A}_{n}^{-1}(0)\| \|f_{0}\|_{L^{2}}^{2},
\end{aligned}
\end{equation*}
which vanishes as $\gamma \to 0$. Similarly, the local integral term converges continuously:
\begin{equation*}
    \lim_{\gamma \to 0} \int_{\mathbb{R}} \phi_{\omega,\gamma}^{p-3}(x) \chi_{\gamma}^{4}(x) \, dx = \int_{\mathbb{R}} \phi_{\omega,0}^{p-3}(x) \chi_{0}^{4}(x) \, dx.
\end{equation*}

Combining these limits, we conclude that
\begin{equation*}
    \lim_{\gamma \to 0^{-}} \frac{d^{2}L}{da^{2}}(0) = \left.\frac{d^{2}L}{da^{2}}(0)\right|_{\gamma=0} < 0.
\end{equation*}
By strict inequality and continuity, there exists a threshold $\gamma_{0} > 0$ such that $\frac{d^{2}L}{da^{2}}(0) < 0$ for all $\gamma \in (-\gamma_{0}, 0]$.
\end{proof}

\appendix

\section{Proof of \eqref{eq3-bi}}\label{AppFredholm}

In this appendix, we show \eqref{eq3-bi}. We recall that 
\begin{equation*}
    D_h F(0, L, 0) = H_L + \omega - p (\phi_{\omega, \gamma}^{(1)})^{p-1}
\end{equation*} 
where formally $H_L = -\partial_{xx} - \frac{1}{L^2} \partial_{yy} + \gamma \tau^* \tau$ with the domain $D(H_L) \subset H^2(\mathbb{S})$, while $\phi_{\omega, \gamma}^{(1)}$ is the trivial extension of the one-dimensional soliton $\phi_{\omega, \gamma}$ to $\mathbb{S}$. Here $\gamma \geq 0$ and $\omega > \gamma^2/4$ are fixed. In this setting, it has been shown in \cite[Lemma 3.3]{CoSh24} that for any $L>0$, the form associated with the self-adjoint operator $H_L + \omega$ is coercive, that is there exists $C > 0$ such that for any $u \in D(H_L)$,
\begin{equation*}
    ((H_L + \omega) u, u ) \geq C \|u\|_{L^2(\mathbb S)}^2.
\end{equation*}
Thus, by the Lax-Milgram theorem, we have that $H_L + \omega$ is bijective from $D(H_L)$ to $L^2(\mathbb S)$. 

Next, it is easy to see that the essential spectrum of $H_L + \omega$ is contained in $[\omega, \infty)$ by the classical Weyl's theorem. On the other hand, by the same theorem, as $\phi_{\omega, \gamma}$ decays exponentially for $x \to \pm \infty$, the essential spectrum of $D_h F(0, L, 0)$ is also contained in $[\omega, \infty)$, while the discrete spectrum is made of isolated eigenvalues with finite multiplicity. All this is well known in the one-dimensional case, and the extension to the strip is straightforward.

In particular, the kernel is finite dimensional and isolated. Thus if $T =D_h F(0, L^*, 0)_{|H^2_{\text{ort}}}: H^2_{\text{ort}} \to L^2_{\text{ort}}$ is the restriction of $D_h F(0, L^*, 0)$ to the orthogonal complement of the kernel, then $T$ has closed range and is injective. It follows directly that $ran(T) = \overline{ran(T)} = (\ker(T))^\perp= L^2_{\text{ort}}$ which concludes the proof of \eqref{eq3-bi}.

\section{Proof the spectral mapping property  \eqref{spectral-mapping}}
\label{sec:spectral-m}

In this section, we show \eqref{spectral-mapping}.
Let $\gamma < 0$ and $\omega > \gamma^2/4$ be fixed.
We define the operator $A: \mathcal{D}(\H) \times \mathcal{D}(\H) \to L^2(\mathbb S_L) \times L^2(\mathbb S_L)$ by
\begin{equation*}
    A = J S(\phi_{\omega, \gamma}^{(L)}) = \begin{pmatrix}0 & - \mathcal{L}_{+,L} \\ \mathcal{L}_{-,L} & 0
\end{pmatrix} 
\end{equation*}
Being a bounded perturbation of the skew-adjoint operator
\begin{equation*}
    \begin{pmatrix}0 & \Delta \\ -\Delta & 0
\end{pmatrix} 
\end{equation*}
the operator $A$ generates a $C_0$ semigroup on $L^2(\R) \times L^2(\R)$, which we denote by $\{e^{tA}\}_{t \geq 0}$. Moreover, it holds: 
\begin{lem}\label{lemSpABor}
   The essential spectrum of $A$ is such that $\sigma_{\text{ess}}(A) \subset \{iz : |z| \geq \omega, z \in \mathbb{R}\}$. The discrete spectrum of $A$ consists of a finite number of isolated eigenvalues with finite multiplicity.
\end{lem}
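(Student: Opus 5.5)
The plan is to view $A$ as a relatively compact perturbation of a skew-adjoint ``free'' operator, localize the essential spectrum via Weyl's theorem, and then show that the discrete spectrum is finite by a transverse Fourier decomposition together with Grillakis-type eigenvalue counting.

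\textbf{Step 1 (reference operator).} Write $A = A_0 + V$, where
\[
A_0=\begin{pmatrix}0 & -(\H+\omega)\\ \H+\omega & 0\end{pmatrix},\qquad
V=\begin{pmatrix}0 & p(\phi^{(L)}_{\omega,\gamma})^{p-1}\\ -(\phi^{(L)}_{\omega,\gamma})^{p-1} & 0\end{pmatrix}.
\]
The operator $A_0$ is $J$ times the self-adjoint operator $\mathrm{diag}(\H+\omega,\H+\omega)$, and the two factors commute. Hence $A_0$ is unitarily equivalent, via $u^R+iu^I$, to multiplication by $\pm i$ times $\H+\omega$. Therefore $\sigma(A_0)=\{\pm i z: z\in\sigma(\H+\omega)\}$.

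\textbf{Step 2 (spectrum of $\H+\omega$).} Using the Neumann mode decomposition from the proof of Proposition \ref{prpStrich1}, we have $\H=\bigoplus_n \H_n$ with $\H_n=-\partial_{xx}+\gamma\delta_0+\lambda_n$. For $\gamma<0$, the spectrum of $-\partial_{xx}+\gamma\delta_0$ is $\{-\gamma^2/4\}\cup[0,\infty)$, so $\sigma_{ess}(\H+\omega)=[\omega,\infty)$. The point spectrum of $\H+\omega$ consists of the values $\omega-\gamma^2/4+\lambda_n$, all positive.

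\textbf{Step 3 (Weyl's theorem).} Since $\phi_{\omega,\gamma}$ decays exponentially in $x$ and is bounded, multiplication by $\phi^{p-1}$ is relatively compact with respect to $\H$ in each fixed mode. This gives $\sigma_{ess}(A)=\sigma_{ess}(A_0)$.

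On the full strip there is a difficulty: multiplication by a function decaying only in $x$ is not compact on $L^2(\Strip_L)$. I would therefore argue mode by mode. $V$ commutes with the $y$-decomposition because $\phi$ is independent of $y$, so $A=\bigoplus_n A_n$, where $A_n$ is the one-dimensional operator with shift $\lambda_n$. Each $A_n$ is a compact perturbation, in resolvent sense, of $A_{0,n}$, so $\sigma_{ess}(A_n)\subset\{iz:|z|\ge\omega+\lambda_n\}$.

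To assemble the pieces, I would show that for $n$ large, namely $\lambda_n>p\|\phi\|_\infty^{p-1}$, both $\mathcal L_{\pm,n}$ are positive definite. In that case $A_n$ is similar to a skew-adjoint operator, via $\mathcal L_{+,n}^{1/2}$-type symmetrization, and its spectrum lies on $i\R$ with $|z|\ge\omega$, with resolvent bounds uniform in $n$. Combined with finitely many low modes, this gives the essential spectrum claim and shows that $A$ has no spectrum off these sets, apart from eigenvalues of finitely many $A_n$.

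\textbf{Step 4 (discrete spectrum).} For each of the finitely many low modes, the discrete spectrum of $A_n$ outside the essential spectrum is finite. The argument is that eigenvalues off $i\R$ are bounded by the Grillakis count (the number of negative eigenvalues of $\mathcal L_{+,n}$ is finite). Eigenvalues on $i\R$ with $|z|<\omega$ are discrete and can only accumulate at $\pm i\omega$. I expect this step to be the main obstacle, since ruling out accumulation at the threshold needs exponential decay of $\phi$. I would use a Birman–Schwinger argument with the exponentially decaying potential, which gives finitely many eigenvalues below the threshold. Summing over the finitely many relevant modes concludes the proof.
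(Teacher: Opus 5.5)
Your argument is correct in outline, but it is organized differently from the paper. The paper's proof is one sentence. It says that $A$ differs from the free matrix operator built from $-\Delta+\omega$ by a relatively compact perturbation, so Weyl's theorem places $\sigma_{\mathrm{ess}}(A)$ in $\{iz:|z|\ge\omega\}$. The finiteness of the discrete spectrum is then taken from \cite[Theorem~3.1]{Gr88Inst}.

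The obstacle you raise in Step~3 is not real. Weyl's theorem only needs relative compactness. Here $V(\H+c)^{-1}=\bigoplus_n V(\H_n+c)^{-1}$ is a direct sum of compact operators with norms $\lesssim (c-\gamma^2/4+\lambda_n)^{-1}\to 0$, hence compact on $L^2(\Strip_L)$. Alternatively, apply Rellich on $[-R,R]\times[0,L]$, using that the cross-section is bounded. So the global Weyl step is available directly, as in the paper.

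Your mode decomposition genuinely earns its keep in the finiteness claim. Weyl alone only yields isolated eigenvalues of finite multiplicity, which may accumulate on $\{iz:|z|\ge\omega\}$. On the strip one must also know that only finitely many transverse modes contribute spectrum off that set, and the paper's one-line argument leaves this to the citation. Three ingredients of yours make this self-contained: uniform control of the high modes, the Grillakis count $n(\mathcal{L}_{+,n})+n(\mathcal{L}_{-,n})$ for eigenvalues off $i\R$, and a threshold analysis.

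There are two details to tighten.

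First, conjugation by $\mathrm{diag}(\mathcal{L}_{+,n}^{1/2},\mathcal{L}_{-,n}^{1/2})$ is unbounded on $L^2$. By itself it therefore does not give $L^2$ resolvent bounds uniform in $n$. One clean fix:
\begin{itemize}
\item use that $A_n$ is skew-adjoint for $\langle S_n\cdot,\cdot\rangle$ on $H^1\times H^1$;
\item use that $A_n$ is also skew-adjoint for $\langle JS_n^{-1}J^{*}\cdot,\cdot\rangle$ on $H^{-1}\times H^{-1}$;
\item note that both norms are equivalent to those built from $\H_n+\omega$, with constants uniform for large $n$, and interpolate.
\end{itemize}
Also, placing $\sigma(A_n)$ in $\{iz:|z|\ge\omega\}$ requires $\inf\sigma(\mathcal{L}_{\pm,n})\ge\omega$, roughly $\lambda_n\ge\gamma^2/4+p\|\phi_{\omega,\gamma}\|_\infty^{p-1}$. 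Mere positivity of $\mathcal{L}_{\pm,n}$ is not enough.

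Second, the threshold issue in Step~4 concerns only $n=0$. For $n\ge1$, eigenvalues with $|z|\le\omega$ stay at positive distance from $\sigma_{\mathrm{ess}}(A_n)=\{iz:|z|\ge\omega+\lambda_n\}$, so they are automatically finite. For $n=0$ the Birman--Schwinger step is mild. Since $-\partial_{xx}+\gamma\delta_0$ with $\gamma\neq0$ has no zero-energy resonance, its exponentially weighted resolvent is analytic at the threshold.
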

The proof of the above lemma follows from the Weyl's theorem and the fact that the essential spectrum of $A$ is the same as the one of the operator
\begin{equation*}
    \begin{pmatrix}0 & -\Delta + \omega \\ \Delta - \omega & 0
\end{pmatrix}
\end{equation*}
for which the same properties have been shown in \cite[Theorem $3.1$]{Gr88Inst}. 

Next, we as in Section \ref{secStabBor}, for any $(u,v) \in  \mathcal{D}(\H) \times \mathcal{D}(\H)$, we can pass in Fourier series in the $y$ variable and write
\begin{equation*}
    A \begin{pmatrix}u \\ v\end{pmatrix} = \sum_{n =0}^{\infty} A_n  \begin{pmatrix}u_n \\ v_n\end{pmatrix} \cos(n\pi L^{-1} y)
\end{equation*}
where $u_n$ and $v_n$ are the Fourier coefficients of $u$ and $v$ respectively, $D_n = - \partial_{xx} + (n\pi L^{-1})^2 + \gamma \delta_0 + \omega$ and
\begin{equation*}
    A_n = \begin{pmatrix}0 & - D_n + p (\phi_{\omega, \gamma})^{p-1} \\ D_n  - (\phi_{\omega, \gamma})^{p-1} & 0 \end{pmatrix}
\end{equation*}
Notice that Lemma \ref{lemSpABor} is true for any $A_n$ with $n \in \mathbb{N}$, thus the essential spectrum of $A_n$ is included in $\{iz : |z| \geq \omega + (n\pi L^{-1})^2, z \in \mathbb{R}\}$ and the discrete spectrum consists of a finite number of isolated eigenvalues with finite multiplicity. We show the spectral mapping property for each $A_n$:

\begin{lem}\label{lemSMTBor}
   For any $n \in \R, n \geq 0$, it holds that \(\sigma(e^{t A_n}) \setminus \{0\} = e^{t \sigma(A_n)}\).
\end{lem}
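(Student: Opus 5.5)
We prove the spectral mapping property for each $A_n$ by reducing it to a compact perturbation of an operator whose semigroup is explicitly computable. We write
\[
A_n = A_n^0 + K_n, \qquad A_n^0 = \begin{pmatrix}0 & -D_n \\ D_n & 0\end{pmatrix}, \qquad K_n = \begin{pmatrix}0 & p\,\phi_{\omega,\gamma}^{p-1} \\ -\phi_{\omega,\gamma}^{p-1} & 0\end{pmatrix}.
\]

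\textbf{Step 1: the unperturbed part.} Since $D_n$ is self-adjoint, $A_n^0$ is skew-adjoint, and $e^{tA_n^0}$ is a unitary group. By the spectral theorem, $A_n^0$ is unitarily equivalent (via $u\pm iv$) to multiplication by $\pm i\mu$ on the spectral representation of $D_n$. Hence $\sigma(e^{tA_n^0}) = \overline{e^{t\sigma(A_n^0)}}$. For $\gamma<0$, $D_n$ also has the eigenvalue $\omega+(n\pi/L)^2-\gamma^2/4>0$ besides $[\omega+(n\pi/L)^2,\infty)$, so $\sigma(A_n^0)$ is a closed subset of $i\mathbb R$.

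\textbf{Step 2: compactness of the perturbed difference.} $K_n$ is bounded, and $\phi_{\omega,\gamma}^{p-1}$ decays exponentially. Following Gesztesy et al.\ \cite{GeJoLaSt00}, we show that $e^{tA_n}-e^{tA_n^0}$ is compact for every $t>0$. The Duhamel formula gives
\[
e^{tA_n}-e^{tA_n^0}=\int_0^t e^{(t-s)A_n}K_n e^{sA_n^0}\,ds.
\]
It suffices that $K_n e^{sA_n^0}(A_n^0-\lambda)^{-1}$ is compact, by the standard decaying-potential $\times$ resolvent argument in one dimension. We then need to remove the resolvent factor. The usual trick is to use the local smoothing (Kato smoothing) estimate for the 1D Schrödinger group with a $\delta$ interaction, $\int_0^t\|\langle x\rangle^{-1}e^{isD_n}f\|_{H^{1/2}}^2ds\lesssim\|f\|^2$. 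Together with the exponential weight in $K_n$ and Rellich compactness, this shows the integral is a norm limit of compact operators. The required smoothing estimates follow from the 1D dispersive estimates for $-\partial_{xx}+\gamma\delta_0$ already used in \eqref{eqStrich1D1} \cite{AdSa05}.

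\textbf{Step 3: conclusion via Weyl's theorem.} By Step 2, $\sigma_{\rm ess}(e^{tA_n})=\sigma_{\rm ess}(e^{tA_n^0})\subset\{|z|=1\}$, which equals $e^{t\sigma_{\rm ess}(A_n)}$ using Lemma \ref{lemSpABor} and Step 1. Outside the unit circle, $\sigma(e^{tA_n})$ consists only of isolated eigenvalues of finite multiplicity. For these we use the general fact that the point spectrum satisfies the spectral mapping property for $C_0$ semigroups, $\sigma_p(e^{tA})\setminus\{0\}=e^{t\sigma_p(A)}$ (Engel–Nagel). This matches the discrete eigenvalues of $A_n$, which come in quadruples $\pm\lambda,\pm\bar\lambda$ by Hamiltonian symmetry. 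Finally, the inclusion $e^{t\sigma(A_n)}\subset\sigma(e^{tA_n})$ always holds.

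\textbf{Main obstacle.} The main obstacle is Step 2: obtaining compactness of the Duhamel difference uniformly enough with a $\delta$ potential present. If local smoothing with the delta is awkward, we would first try the alternative of showing that the generator difference $K_n$ is $A_n^0$-compact and that $(A_n^0-\lambda)^{-1}$ decays along vertical lines ($\|(A_n^0-i\tau-\lambda)^{-1}\|\to0$ fails, but $K_n(A_n^0-\lambda-i\tau)^{-1}\to0$ in norm as $|\tau|\to\infty$ by the decay of $\phi_{\omega,\gamma}$). We would then apply Gearhart–Prüss: boundedness of the resolvent of $A_n$ on vertical lines $\Re\lambda=\mu>s(A_n)$ outside compact sets gives the growth bound equal to the spectral bound. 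That is in fact all that is needed in Lemma \ref{lem-3-2}.
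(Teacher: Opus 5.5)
Your main argument takes a genuinely different route from the paper. The paper works at the resolvent level. It factors $z-A_n=(z-A_n^0)(I+T_n(z))$ with $T_n(z)=(z-A_n^0)^{-1}K_n$, inverts $I+T_n(z)$ by a Neumann series for large $|\tau|$ on each line $\Re z=\alpha\neq0$, and concludes with the Gearhart--Pr\"uss-type criterion of Theorem \ref{thmArendt}. You work at the semigroup level: compactness of $e^{tA_n}-e^{tA_n^0}$, then Weyl's theorem for the bounded operator $e^{tA_n}$, then the point spectral mapping theorem $\sigma_p(e^{tA})\setminus\{0\}=e^{t\sigma_p(A)}$.

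Steps 1 and 3 are fine. In Step 3 you should add that $0\in\rho(e^{tA_n})$, which holds because $A_n$ generates a group. This is what lets the analytic Fredholm argument conclude that the spectrum inside the unit disc is also discrete.

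Your route gives more structure: $\sigma_{\mathrm{ess}}(e^{tA_n})$ is exactly the unit circle and the spectrum off it is discrete. It also does not use the Hilbert-space setting. The price is a time-dependent smoothing estimate. The paper's route needs only a stationary estimate, and, as you observe, Gearhart--Pr\"uss alone already gives what Lemma \ref{lem-3-2} uses.

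Step 2 is not justified as written. Compactness of $K_ne^{sA_n^0}(A_n^0-\lambda)^{-1}$ only handles bounded spectral parts of $D_n$. The real content is that $f\mapsto\phi_{\omega,\gamma}^{p-1}e^{\pm isD_n}P_{>N}f$, with $P_{>N}$ a spectral cutoff of $D_n$, becomes small in operator norm from $L^2$ to $L^2([0,t];L^2)$ as $N\to\infty$.

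This does \emph{not} follow from the dispersive/Strichartz bounds \eqref{eqStrich1D1}. Those bounds are scale invariant, so after H\"older with the weight they give an $N$-independent bound. You need a genuine smoothing (limiting absorption) input for $-\partial_{xx}+\gamma\delta_0$. In one dimension this is elementary from the explicit resolvent kernel, namely the free kernel $\frac{i}{2k}e^{ik|x-y|}$ plus a rank-one Krein term. For fixed $\alpha\neq0$, $\|(D_n-\tau+i\alpha)^{-1}\phi_{\omega,\gamma}^{p-1}\|_{\mathrm{HS}}^2$ is of order $\|\phi_{\omega,\gamma}^{p-1}\|_{L^2}^2/(|\alpha|\sqrt{\tau})$ as $\tau\to\infty$. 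Kato's smoothing argument (Plancherel in $s$ after inserting $e^{-\epsilon s}$) turns such weighted resolvent bounds into the required time-averaged smallness.

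With that in hand, the rest of Step 2 goes through:
\begin{enumerate}
\item the high spectral part is small by the smoothing estimate;
\item the low spectral part is compact by Arzel\`a--Ascoli and Rellich, using the decay of $\phi_{\omega,\gamma}$;
\item composing with the bounded map $g\mapsto\int_0^te^{(t-s)A_n}g(s)\,ds$ gives compactness of the Duhamel term.
\end{enumerate}

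The paper's route hinges on the same fact. The spectral-theorem bounds \eqref{eqEst1Bor}--\eqref{eqEst2Bor} only give $\|T_n(z)\|=O(1/|\alpha|)$: at $\lambda^2=\tau^2-\alpha^2$ both quantities there are $\approx1/(2|\alpha|)$. They do not give $\|T_n(z)\|<1$ for large $|\tau|$, so the smallness must come from the decay of $\phi_{\omega,\gamma}^{p-1}$. This is exactly the point your fallback paragraph identifies.
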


The proof of the above lemma is based on the following general result, see for instance \cite[p.$95$]{Ar86SMT}.

\begin{theorem} \label{thmArendt}
Let $B$ be the generator of a $C_0$ semigroup $\{e^{tB}\}_{t \geq 0}$ on a Hilbert space $X$. If $\sigma(B) \cap \{z = \alpha + i\tau : \tau = 0, \alpha \neq 0\} = \emptyset$ then $\sigma(e^{tB}) \cap \{ z \in \C : z = e^{t\alpha}\} = \emptyset$ if and only if 
\begin{equation}\label{eqPruss}
    \sup_{\tau \in \R} \|(\alpha + i \tau  - B)^{-1}\|_{\mathcal{L}(X)} < \infty.
\end{equation}
\end{theorem}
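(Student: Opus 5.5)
The statement is the Gearhart–Prüss theorem; we would prove it by the classical Fourier-series argument rather than by appealing to \cite{Ar86SMT} as a black box. We read the set $\{z\in\C : z=e^{t\alpha}\}$ as the circle $\{|z|=e^{t\alpha}\}$, since this is the version needed for the spectral mapping property. For the same reason, finiteness of the supremum in \eqref{eqPruss} is understood to include the statement that the whole line $\alpha+i\R$ lies in $\rho(B)$.

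\emph{Reduction.} First replace $B$ by $B-\alpha$, which multiplies the semigroup by $e^{-\alpha s}$. This reduces the claim to $\alpha=0$. Next, for $|\mu|=1$ write $\mu=e^{i\tau_0 t}$ and replace $B$ by $B-i\tau_0$. This reduces everything to one fixed point of the circle. The claim then becomes:
\[
1\in\rho(T),\quad T:=e^{tB} \iff \frac{2\pi i k}{t}\in\rho(B)\ \text{for all } k\in\mathbb Z \ \text{ and } \ \sup_{k}\Bigl\|R\bigl(\tfrac{2\pi i k}{t},B\bigr)\Bigr\|<\infty .
\]
Uniformity over all of $\tau\in\R$ then follows by letting $\tau_0$ range over the compact interval $[0,2\pi/t]$.

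\emph{Direct implication.} We would use the identity, valid for every $\mu\in\C$,
\[
(\mu-B)\int_0^t e^{\mu(t-s)}e^{sB}x\,ds=(e^{\mu t}-T)x ,
\]
which follows from the semigroup property and integration by parts, together with the fact that $e^{sB}$ commutes with $B$. If the circle lies in $\rho(T)$, then for $\mu=i\tau$ this gives
\[
R(i\tau,B)=(e^{i\tau t}-T)^{-1}\int_0^t e^{i\tau(t-s)}e^{sB}\,ds .
\]
Since $\rho(T)$ is open and the circle is compact, $\|(e^{i\tau t}-T)^{-1}\|$ is bounded uniformly in $\tau$. Also $\|e^{sB}\|\le Me^{\omega_0 s}$ on $[0,t]$. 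Together these give \eqref{eqPruss}.

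\emph{Converse (main step).} Given the uniform resolvent bound $K$, we would show that $1-T$ is bijective with bounded inverse.

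\textbf{Injectivity.} Suppose $Tx=x$ and set $v(s)=e^{sB}x$, extended $t$-periodically. Its Fourier coefficients satisfy
\[
\hat v_k=R\bigl(\tfrac{2\pi i k}{t},B\bigr)(1-T)x=0 \quad\text{for all }k,
\]
so $v\equiv0$ and hence $x=0$.

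\textbf{Surjectivity.} Given $y\in X$, we want to construct $x$ with $(1-T)x=y$. Take the mild solution $w(s)=e^{sB}y$ and a smooth cutoff to produce a forcing $f\in L^2(0,t;X)$. We look for a $t$-periodic mild solution $u$ of $u'=Bu+f$; its coefficients must satisfy
\[
\hat u_k=R\bigl(\tfrac{2\pi i k}{t},B\bigr)\hat f_k .
\]
Because $X$ is a Hilbert space, Plancherel for $X$-valued Fourier series together with the uniform bound gives $\|u\|_{L^2(0,t;X)}\le K\|f\|_{L^2(0,t;X)}$. The variation-of-constants formula then recovers the value $u(0)$ as a bounded linear function of $y$, and periodicity of $u$ translates exactly into $(1-T)u(0)=y$ up to an explicit correction. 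This gives a bounded right inverse of $1-T$.

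The main obstacle is this converse step, and specifically two points in it. First, the series $\sum_k R\bigl(\tfrac{2\pi i k}{t},B\bigr)\hat f_k e^{2\pi i k s/t}$ must be shown to define a genuine mild solution and not merely an $L^2$ function. We would handle this by approximating $f$ with trigonometric polynomials and passing to the limit using the uniform bound. Second, one must pass from the $L^2$ estimate to a pointwise bound on $u(0)$, for which we would average the variation-of-constants identity over $s\in[0,t]$. The Hilbert-space structure is essential here, since it supplies Plancherel. Combining the direct and converse implications with the reduction proves the theorem, and hence yields Lemma \ref{lemSMTBor} once \eqref{eqPruss} is verified for each $A_n$.
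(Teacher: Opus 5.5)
The paper does not prove this statement. It quotes the Gearhart--Pr\"uss theorem from \cite{Ar86SMT} and uses it as a black box in Lemma \ref{lemSMTBor}. You instead give the classical self-contained Fourier-series (Pr\"uss) argument, and your outline is sound.

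Your reading of the statement is forced, not merely convenient. Read literally (a single point $e^{t\alpha}$, supremum over all $\tau\in\R$), the ``only if'' direction is false. Take $X=\C$ and $B=\alpha+i\pi/t$. Then $\sigma(B)$ contains no real number, so the hypothesis holds. Also $e^{tB}=-e^{t\alpha}$, so $e^{t\alpha}\in\rho(e^{tB})$. Yet $\alpha+i\pi/t\in\sigma(B)$, so the supremum in \eqref{eqPruss} is infinite. The hypothesis on $\sigma(B)\cap\R$ plays no role in the theorem. The correct one-point statement is exactly your reduced claim, with $\tau$ restricted to $2\pi\mathbb{Z}/t$, and the circle statement follows from it as you describe.

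The only step you leave vague is surjectivity, and it closes as follows. Take $\psi$ smooth with $\psi(0)=0$, $\psi(t)=1$, and set $f(s)=\psi'(s)e^{sB}y$. The periodic mild solution $u$ then satisfies $(1-T)u(0)=\int_0^t e^{(t-r)B}f(r)\,dr=(\psi(t)-\psi(0))Ty=Ty$. Hence $x=u(0)+y$ solves $(1-T)x=y$. Your averaging estimate for $u(0)$, combined with Plancherel, gives $\|x\|\lesssim\|y\|$. This is precisely the ``explicit correction'' you allude to.

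As for what each route buys, the citation is short. Your proof shows that the Hilbert structure enters only through the vector-valued Plancherel identity; the result fails in general Banach spaces. It also isolates the pointwise criterion: $1\in\rho(T)$ if and only if $2\pi i\mathbb{Z}/t\subset\rho(B)$ and $\sup_k\|(2\pi i k/t-B)^{-1}\|<\infty$.

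That pointwise form is what an identity like $\sigma(e^{tA_n})\setminus\{0\}=e^{t\sigma(A_n)}$ actually rests on. It needs resolvent bounds only for large $|\tau|$, which is what the paper checks. The circle form, by contrast, controls only whole circles. That suffices for the spectral-radius bound used in Lemma \ref{lem-3-2}, but it requires the entire line $\alpha+i\R$ to lie in $\rho(A_n)$.
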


\begin{proof}[Proof of Lemma \ref{lemSMTBor}]
   Let $z = \alpha + i \tau$ with $\alpha, \tau \in \R$ and $\alpha, \tau \neq 0$. We show that \eqref{eqPruss} holds for $B = A_n$ for any $n \in \R, n \geq 0$. In particular, we have to show the boundedness for $|\tau| \to \infty$. To this end, we write $z - A_n$ as 
\begin{equation*}
    z - A_n = \begin{pmatrix}z & - D_n  \\ D_n & z \end{pmatrix} \left( I + T_n(z) \right) 
\end{equation*}
where
\begin{equation*}
    T_n(z) =  \begin{pmatrix}z & - D_n  \\ D_n & z \end{pmatrix}^{-1}  \begin{pmatrix}0 & p (\phi_{\omega, \gamma})^{p-1} \\ - (\phi_{\omega, \gamma})^{p-1} & 0 \end{pmatrix}.
\end{equation*}
Notice that $D_n^2$ is a positive self-adjoint operator, thus $z^2 + D_n^2$ is invertible when $\tau \neq 0$, and with the inverse $(z^2 + D_n^2)^{-1}: L^2(\R) \to \mathcal{D}(\H_n^2)$ being a bounded operator.
By direct computation, one can check that
\begin{equation*}
    \begin{pmatrix}z & - D_n  \\ D_n & z \end{pmatrix}^{-1} = \begin{pmatrix}z (z^2 + D_n^2)^{-1} & D_n (z^2 + D_n^2)^{-1} \\ - D_n (z^2 + D_n^2)^{-1} & z (z^2 + D_n^2)^{-1} \end{pmatrix}
\end{equation*}
which implies that
\begin{equation*}
     T_n(z)  = \begin{pmatrix} - D_n (z^2 + D_n^2)^{-1} (\phi_{\omega, \gamma})^{p-1} & p z (z^2 + D_n^2)^{-1}  (\phi_{\omega, \gamma})^{p-1} \\ - z (z^2 + D_n^2)^{-1} (\phi_{\omega, \gamma})^{p-1} & - p D_n  (z^2 + D_n^2)^{-1}  (\phi_{\omega, \gamma})^{p-1} \end{pmatrix}
\end{equation*}
We proceed by showing that $(I +T_n(z)): L^2(\R) \times L^2(\R) \to L^2(\R) \times L^2(\R)$ is a bounded operator, and remains bounded for $|\tau| \to \infty$ uniformly in $n \geq 0$. Indeed, on the one hand, by the spectral theorem for self-adjoint operators, we have that 
\begin{equation}\label{eqEst1Bor}
   \|D_n (z^2 + D_n^2)^{-1}\|_{\mathcal{L}(L^2(\R))} \leq \sup_{\lambda \in \sigma(D_n)} \lambda | z^2 + \lambda^2|^{-1} = \sup_{\lambda \in \sigma(D_n)} \frac{\lambda}{\sqrt{(\alpha^2 - \tau^2 + \lambda^2)^2 + 4\alpha \tau}} \leq C_1
\end{equation}
 where $C_1$ is uniform in $n\geq 0$ and $\tau$. On the other hand, by the same argument, we have that 
 \begin{equation}\label{eqEst2Bor}
     \|z (z^2 + D_n^2)^{-1}\|_{\mathcal{L}(L^2(\R))} \leq \sup_{\lambda \in \sigma(D_n)} \frac{|z|}{|z^2 + \lambda|} \leq \sup_{\lambda \in \sigma(D_n)} \frac{\sqrt{\alpha^2 + \tau^2}}{|Im(z^2)|} = \frac{\sqrt{\alpha^2 + \tau^2}}{2 \alpha \tau} \leq C_2
 \end{equation}
 where $C_2$ is uniform in $n\geq 0$ and $|\tau|$ sufficiently big. Thus, as $(\phi_{\omega, \gamma})^{p-1}$ is a multiplication operator by a function in $L^\infty(\R)$, we conclude that $T_n(z)$ is a bounded operator on $L^2(\R) \times L^2(\R)$ with the bound uniform in $n \geq 0$ and $\tau$. In particular, for $|\tau|$ large enough, we have that $\|T_n(z)\|_{\mathcal{L}(L^2(\R) \times L^2(\R))} < 1$, thus $(I + T_n(z))^{-1}$ exists and is bounded. 

Since for sufficiently large $|\tau|$, we have that $\|T_n(z)\| < 1$, we can write
\begin{equation*}
    \left\| (z - A_n)^{-1} \right\| = \left\| (I + T_n(z))^{-1} \right\| \left\| \begin{pmatrix}z & - D_n  \\ D_n & z \end{pmatrix}^{-1} \right\| \leq \frac{1}{1 - \|T_n(z)\|} \left\|\begin{pmatrix}z & - D_n  \\ D_n & z \end{pmatrix}^{-1} \right\| .    
\end{equation*}
From \eqref{eqEst1Bor} and \eqref{eqEst2Bor}, it follows that 
$$\begin{pmatrix}z & - D_n  \\ D_n & z \end{pmatrix}^{-1}$$ is a bounded operator on $L^2(\R) \times L^2(\R)$ with the bound is uniform in $n \geq 0$ and $\tau$. This concludes the proof.
\end{proof}

\section{Linear instability}\label{AppenLinStab}
In this section, we keep the notations of Section \ref{sec:stab}. We show the linear instability of the line soliton.  We shall show that for $L > L_*$, 
$J\mathcal{S}(\phi_{\omega, \gamma}^{(L)})$ has at least one positive eigenvalue. 

We see from \eqref{eq-linearizedop} 
that $\mathcal{S}(\phi_{\omega, \gamma}^{(L)})$ 
has an eigenvalue if and only if there exists $n \in \mathbb{Z}$ such that 
$J S_{L, \gamma}^{\prime \prime}(n)$ has the same eigenvalue. 
Therefore, 
in order to obtain the linear instability of the line soliton $\phi_{\omega, \gamma}^{(L)}$, 
it is sufficient to show that 
$J S_{L, \gamma}^{\prime \prime}(1)$ has a positive eigenvalue. 
We obtain the following:
\begin{lem}
\label{lem:posi-eigen}
For $L > L_*$, 
$J S_{L, \gamma}^{\prime \prime}(1)$ has at least one positive eigenvalue. 
\end{lem}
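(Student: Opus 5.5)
We need to show that for $L>L_*$ the operator $J S_{L,\gamma}''(1)$, which acts on pairs $(u,v)\in H^2$-type domains on $\R$, has a positive real eigenvalue. Throughout, $\mathcal{L}_{+}:=\mathcal{L}_{L,+,1}=\mathcal{L}_{+,0}+(\pi/L)^2$ and $\mathcal{L}_{-}:=\mathcal{L}_{L,-,1}=\mathcal{L}_{-,0}+(\pi/L)^2$.

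\textbf{Spectral facts.} Since $\mathcal{L}_{-,0}\ge 0$ with kernel spanned by $\phi_{\omega,\gamma}$, the operator $\mathcal{L}_-$ is strictly positive, with $\mathcal{L}_-\ge (\pi/L)^2>0$. By Proposition~\ref{prpSpectr}(b), the operator $\mathcal{L}_+$ has exactly one negative eigenvalue, $(\pi/L)^2-\lambda_1<0$ because $L>L_*$, with eigenfunction $\chi$; the rest of its spectrum lies in $[C_\omega+(\pi/L)^2,\infty)$.

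\textbf{Reduction to a generalized eigenvalue problem.} The eigenvalue problem $JS''(1)(u,v)=\mu(u,v)$ reads $-\mathcal{L}_-v=\mu u$ and $\mathcal{L}_+u=\mu v$. This reduces to
\[
\mathcal{L}_-^{1/2}\mathcal{L}_+\mathcal{L}_-^{1/2}w=-\mu^2 w,\qquad u=\mathcal{L}_-^{1/2}w .
\]
Indeed, $\mathcal{L}_+u=\mu v$ and $v=-\mu\mathcal{L}_-^{-1}u$ give $\mathcal{L}_+u=-\mu^2\mathcal{L}_-^{-1}u$. It therefore suffices to show that the self-adjoint operator $K:=\mathcal{L}_-^{1/2}\mathcal{L}_+\mathcal{L}_-^{1/2}$, defined through its quadratic form, has a negative eigenvalue $-\nu<0$. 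Then $\mu=\sqrt{\nu}>0$, and the pair $u=\mathcal{L}_-^{1/2}w$, $v=-\mu\mathcal{L}_-^{-1}u$ is a genuine eigenfunction.

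\textbf{Existence of a negative eigenvalue of $K$.} We take $w=\mathcal{L}_-^{-1/2}\chi$. This is well defined because $\mathcal{L}_-$ is invertible and $\chi$ lies in its domain. Then
\[
\langle Kw,w\rangle=\langle\mathcal{L}_+\chi,\chi\rangle=(\pi/L)^2-\lambda_1<0 .
\]
Hence $\inf\sigma(K)<0$. To conclude that this infimum is an eigenvalue, we argue by Weyl's theorem. The operator $K$ differs from $\mathcal{L}_-^{1/2}(-\partial_{xx}+\gamma\delta_0+\omega+(\pi/L)^2)\mathcal{L}_-^{1/2}$ by a relatively compact perturbation, which comes from the exponentially decaying potential $\phi_{\omega,\gamma}^{p-1}$. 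Its essential spectrum therefore lies in $[(\omega+(\pi/L)^2)^2,\infty)\subset(0,\infty)$. The negative part of $\sigma(K)$ is thus discrete, so the infimum is an eigenvalue $-\nu$ with eigenfunction $w_0$.

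\textbf{Main obstacle.} The delicate step is justifying the essential spectrum and domain statements for the composite operator $K$, since the $\delta$-interaction makes domain bookkeeping nontrivial. To avoid this, we will use an alternative variational formulation: we minimize $\langle\mathcal{L}_+u,u\rangle/\langle\mathcal{L}_-^{-1}u,u\rangle$ over $u\in H^1(\R)\setminus\{0\}$. The infimum is negative by the choice $u=\chi$. Attainment follows from a concentration-compactness argument, in the spirit of Rousset--Tzvetkov \cite[Theorem 1.1]{RoTz10Inst}. At infinity the potential vanishes, and there the quotient is positive, so a minimizing sequence normalized by $\langle\mathcal{L}_-^{-1}u,u\rangle=1$ cannot lose mass to infinity. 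A minimizer is obtained from the weak lower semicontinuity of the form of $\mathcal{L}_+$ together with the compactness of the potential term. The Euler--Lagrange equation of the minimizer gives exactly $\mathcal{L}_+u=-\nu\mathcal{L}_-^{-1}u$, which yields $\mu=\sqrt\nu>0$.
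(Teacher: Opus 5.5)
Your proof is correct, and it takes a genuinely different route from the paper.

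\textbf{The paper's route.} The paper never uses the strict positivity of $\mathcal{L}_{L,-,1}$ directly. It works instead at the critical transverse wavenumber $\nu_L=L/L_*>1$, where $\mathcal{L}_{L,+,\nu_L}=\mathcal{L}_{+,0}+\lambda_1$ has kernel spanned by $\chi$. There it solves $S''_{L,\gamma}(a)(\vec{\chi}+v)+\lambda J^{-1}(\vec{\chi}+v)=0$ by the implicit function theorem in $(v,a)$, with $\lambda$ as a parameter. It shows $a''(0)<0$ from the nonnegativity of $S''_{L,\gamma}(\nu_L)$ on $\vec{\chi}^{\perp}$, which gives positive eigenvalues for $a$ slightly below $\nu_L$. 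A continuation argument in $a$ then shows the positive eigenvalue can neither collapse to $0$ (since $S''_{L,\gamma}(a_0)$ is invertible) nor escape to $\infty$ (by an a priori bound). This carries the instability down to every $0<a<\nu_L$, in particular $a=1$.

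\textbf{Your route.} You eliminate $v$ using $\mathcal{L}_{L,-,1}\ge(\pi/L)^2>0$. This turns the non-self-adjoint problem into the self-adjoint pencil $\mathcal{L}_{L,+,1}u=-\mu^2\mathcal{L}_{L,-,1}^{-1}u$. You then obtain the eigenvalue as the minimum of a generalized Rayleigh quotient, with $\chi$ as test function.

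\textbf{What each buys.} Your argument is shorter and works verbatim for every $0<a<\nu_L$. It also bypasses the perturbative continuation step, which is the least transparent part of the paper's proof. As a bonus, it identifies $\sqrt{\nu}$ as the largest real unstable eigenvalue in this mode: any real eigenvalue $\mu>0$ gives $R(u)=-\mu^2\ge-\nu$. The paper's route, in exchange, describes the unstable branch near the band edge $a=\nu_L$.

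\textbf{Remarks on the write-up.} First, you can drop the Weyl-theorem discussion of $K$ entirely, since the variational formulation suffices on its own. Second, attainment needs no concentration-compactness, but you should make the $H^1$ bound explicit. The normalization $\langle\mathcal{L}_{L,-,1}^{-1}u_n,u_n\rangle=1$ only controls $\|u_n\|_{H^{-1}}$. Combine $\langle\mathcal{L}_{L,+,1}u,u\rangle\ge\frac12\|u'\|_{L^2}^2-C\|u\|_{L^2}^2$ (absorbing the $\delta$ term via $|u(0)|^2\le\|u\|_{L^2}\|u'\|_{L^2}$) with $\|u\|_{L^2}^2\le\|u\|_{H^1}\|u\|_{H^{-1}}$ to bound $\|u_n\|_{H^1}$. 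Then the $\delta$ and potential terms are weakly continuous on bounded sets of $H^1$. The rest of the numerator and the convex denominator are weakly lower semicontinuous. Since the infimum is negative, the weak limit is a nonzero minimizer.
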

\begin{proof}

We recall that $\chi$ is the normalized eigenfunction of 
$\mathcal{L}_{+, 0}$ associated with the eigenvalue 
$-\lambda_1$ with $\|\chi\|_{L^{2}(\R)} = \sqrt{2}$. 
We put 
\begin{equation*}
\vec{\chi} = 
\begin{pmatrix}
\chi \\
0
\end{pmatrix}
\end{equation*} 
and
\begin{equation*}
U(v, a, \lambda):= 
S_{L, \gamma}^{\prime \prime}(a) 
(\vec{\chi}+v) + \lambda J^{-1} (\vec{\chi}+v),
\end{equation*}
with $v \in {\rm Span}(\vec{\chi})^{\perp}$ 
and $\omega, \ \lambda >0$, 
where
\begin{equation*}
{\rm Span}(\vec{\chi})^{\perp} = 
\left\{v 
= {}^{t}(v_{1}, v_{2})
\in L^2(\mathbb{R}) \times L^2(\mathbb{R}) 
\colon \langle v, \vec{\chi} \rangle = 0 \right\}. 
\end{equation*}
We know that the kernel of 
$S_{L, \gamma}^{\prime \prime}(\nu_{L})$ 
is spanned by $\vec{\chi}$, where 
$\nu_{L} = L/L_{*}$. 
Note that $\nu_{L} > 1$ for $L >  L_{*}$. 
We see that $U$ is a $C^\infty$ function from 
${\rm Span} (\vec{\chi})^{\perp} \times \mathbb{R} \times \mathbb{R}$ 
to $L^2(\mathbb{R})\times L^2(\mathbb{R})$ 
and 
\begin{equation*}
U(0, \nu_{L}, 0) 
= S_{L, \gamma}^{\prime \prime}(\nu_{L})\vec{\chi} = 0. 
\end{equation*}
Differentiate $U(v, a, \lambda)$ with respect to 
$a, v$ and substituting $v = 0, a = \nu_{L}$ and $\lambda = 0$, 
we have 
\begin{equation*}
(D_{a}U(0, \nu_{L}, 0), D_{v}U(0, \nu_{L}, 0)) 
(\mu, w)
= ( 2(\frac{\pi}{L})^2 \nu_{L} \mu \vec{\chi}, 
S_{L, \gamma}^{\prime \prime}
(\nu_{L}) w).
\end{equation*}
for any $\mu \in \mathbb{R}$ and 
$w \in {\rm Span}(\vec{\chi})^{\perp}$. 
Thus, $D_{a} U(0, \nu_{L}, 0) \in \mathcal{L}(\mathbb{R}, 
L^2(\mathbb{R})\times L^2(\mathbb{R}))$ 
is invertible 
since it is a linear one-to-one mapping from $\mathbb{R}$ to ${\rm Span}(\vec{\chi})$. 
In addition, from the fact that the kernel of 
$S_{L, \gamma}^{\prime \prime}(\nu_{L})$ is spanned by $\vec{\chi}$,  
we infer that $D_{v}U(0, \nu_{L}, 0) \in 
\mathcal{L}({\rm Span} (\vec{\chi})^{\perp}, 
L^2(\mathbb{R}) \times L^2(\mathbb{R}))$ is invertible. 
Then, by the implicit function theorem, there exist two $C^\infty$ functions 
$a(\lambda) \in \mathbb{R}$ and 
$\eta(\lambda) \in {\rm Span} (\vec{\chi})^{\perp}$ 
satisfying $a(0)= \nu_{L}$, $\eta(0)=0$ and 
$U(\eta(\lambda), a(\lambda), \lambda)=0$, that is, 
\begin{equation*}
0 = U(\eta(\lambda), a(\lambda), \lambda) = 
S_{L, \gamma} (a(\lambda)) (\vec{\chi} 
+\eta(\lambda)) + \lambda J^{-1} (\vec{\chi}+\eta(\lambda)),
\end{equation*}
for sufficiently small $\lambda$. 
Differentiating the previous identity with respect to $\lambda$, 
we have
\begin{equation} \label{identity:U'}
\begin{split}
0 = \frac{d  }{d \lambda} U(\eta(\lambda), a(\lambda), \lambda)
& = 
2(\frac{\pi}{L})^2
a(\lambda) a^{\prime}(\lambda)(\vec{\chi} + \eta(\lambda)) 
+S_{L, \gamma}^{\prime \prime}
(a(\lambda)) \eta^{\prime}(\lambda) \\
& \quad + J^{-1}(\vec{\chi}+\eta(\lambda))
+\lambda J^{-1} \eta^{\prime}(\lambda).
\end{split}
\end{equation}
Since $\eta(0) = 0$ and $a(0) = \nu_{L}$, 
for $\lambda=0$, we obtain
\begin{equation}
\label{identity:lambda=0}
0 = 
(\frac{\pi}{L})^2
\nu_{L}
a^{\prime}(0) \vec{\chi} + 
S_{L, \gamma}^{\prime \prime}(\nu_{L})\eta^{\prime}(0) + J^{-1}\vec{\chi}.
\end{equation}
Taking the $L^2$-scalar product of identity \eqref{identity:lambda=0} 
with $\vec{\chi}$ and using the fact that $\vec{\chi} \in 
\mathop{\mathrm{Ker}} S_{L, \gamma}^{\prime \prime}(\nu_{L})$ and 
$J$ is a skew-symmetric operator, we get
\begin{equation*}
0 = 
(\frac{\pi}{L})^2 
\nu_{L}
a^{\prime}(0) \|\vec{\chi}\|_{L_{x}^2(\R) \times L_{x}^{2}(\R)}^{2}.
\end{equation*}
Thus, $a^{\prime}(0)=0$. 
Substituting $a^{\prime}(0) = 0$ into \eqref{identity:lambda=0}, we see that
\begin{equation}
\label{identity;v(0)}
S_{L, \gamma}^{\prime \prime}(\nu_{L}) \eta^{\prime}(0) =- J^{-1}\vec{\chi}.
\end{equation}
Differentiate \eqref{identity:U'} with respect to $\lambda$ again, 
we have 
\begin{equation*}
\begin{split}
0 = \frac{\partial^2 U}{\partial \lambda^2} (\eta(\lambda),\omega(\lambda),\lambda) 
& =  
2 (\frac{\pi}{L})^2 a(\lambda)
a^{\prime \prime}(\lambda)(\vec{\chi} + \eta(\lambda)) 
+ 2 (\frac{\pi}{L})^2
(a^{\prime}(\lambda))^2(\vec{\chi} + \eta(\lambda)) \\
& \quad 
+ 2 (\frac{\pi}{L})^2 
a(\lambda) a^{\prime}(\lambda)
\eta^{\prime}(\lambda)
+ 2(\frac{\pi}{L})^2
a(\lambda) a^{\prime}(\lambda) \eta^{\prime}(\lambda)
+ S_{L, \gamma}^{\prime \prime}
(a(\lambda))\eta^{\prime \prime}(\lambda) \\
& \quad 
+ 2 J^{-1}\eta^{\prime}(\lambda) + \lambda J^{-1}\eta^{\prime \prime}(\lambda). 
\end{split}
\end{equation*}
Since $\eta(0) = 0$, $a(0) = \nu_{L}$ and 
$a^{\prime}(0) = 0$, putting $\lambda=0$, we have
\begin{equation*}
0 = 
2 (\frac{\pi}{L})^2 \nu_L
a^{\prime \prime}(0) \vec{\chi} + 
S_{L, \gamma}^{\prime \prime}(\nu_{L})\eta^{\prime \prime}(0) 
+ 2 J^{-1}\eta^{\prime}(0).  
\end{equation*}
Multiplying the above identity by $\vec{\chi}$ and 
integrating the resulting equation, we have 
\begin{equation*}
0 = 
2 (\frac{\pi}{L})^2 \nu_L
a^{\prime \prime}(0) 
\|\vec{\chi}\|_{L_{x}^{2}(\R) \times L_{x}^{2}(\R)}^{2} 
+ 2 \langle J^{-1} \eta^{\prime}(0), \vec{\chi} \rangle. 
\end{equation*}
Here, we have used the fact that 
$S_{L, \gamma}^{\prime \prime}(\nu_{L})\vec{\chi} = 0$. 
This together with \eqref{identity;v(0)} yields that 
\begin{equation*}
\begin{split}
a^{\prime \prime}(0) 
= - \frac{\langle J^{-1}\eta^{\prime}(0), \vec{\chi} \rangle}
{(\frac{\pi}{L})^2 \nu_L \|\vec{\chi}\|_{L_{x}^{2}(\R) \times L_{x}^{2}(\R)}^{2}} 
& = \frac{\langle J^{-1}\eta^{\prime}(0), JS_{L, \gamma}^{\prime \prime}(\nu_{L})\eta^{\prime}(0) \rangle}
{(\frac{\pi}{L})^2 \nu_L 
\|\vec{\chi}\|_{L_{x}^{2}(\R) \times L_{x}^{2}(\R)}^{2}} \\
& = - 2 \frac{\langle \eta^{\prime}(0), S_{L, \gamma}^{\prime \prime}(\nu_{L})
\eta^{\prime}(0) \rangle}
{(\frac{\pi}{L})^2 \nu_L 
\|\vec{\chi}\|_{L_{x}^{2}(\R) \times L_{x}^{2}(\R)}^{2}} <0.
\end{split}
\end{equation*}
In the last inequality, we have used the fact that 
$\eta^{\prime}(0) \in 
\mathop{\mathrm{Span}} (\vec{\chi})^{\perp}$, 
$\mathop{\mathrm{Ker}} S_{L, \gamma}^{\prime \prime}(\nu_{L})
= \mathop{\mathrm{Span}} (\vec{\chi})$ and $S_{L, \gamma}^{\prime \prime}(\nu_{L})$ 
is non-negative operator. 
Therefore, there exists a sufficiently small $\varepsilon >0$ such that the function $a(\lambda)$ 
on $(0, \varepsilon)$ has the inverse function $\lambda(a)$
defined on a neighborhood of $a(0)=\nu_L$.
In other words, 
$JS_{L, \gamma}^{\prime \prime}(a)$ 
has a positive eigenvalue 
$\lambda(a)$ for $a \in (\lambda(\varepsilon), \nu_{L})$.

Let 
\begin{equation*}
 a_0 := \inf\{ a > 0 \colon \exists \ b \in (a, \nu_{L})
\mbox{ s.t. $JS_{L, \gamma}^{\prime \prime}
(b)$ has a positive eigenvalue}  \}.
\end{equation*}
From the above argument, we see that $a_{0} < \nu_{L}$. We shall show that $a_0 = 0$. Suppose the contrary that $0 < a_0 < \nu_{L}$. We take $\{a_n\}_{n=1}^{\infty} \subset  (a_0, \nu_{L})$ such that 
\[
\lim_{n \to \infty} a_n = a_0.
\]
By the perturbation theory, the sequence $\{a_{n}\}$ satisfies 
either 
\[
\lim_{n \to \infty} \lambda(a_n) = 0 \quad
{\rm or} \quad \lim_{n \to \infty}\lambda(a_n) = \infty.
\]
We claim that 
the second limit cannot hold. 
Indeed, taking $u_n\neq 0$ the associated eigenfunction corresponding to the eigenvalue $\lambda(a_n)$ with 
$\|u_{n}\|_{L_{x}^{2}(\R) \times L_{x}^{2}(\R)} = 1$, 
we see that
\begin{equation} \label{iden-JS}
\begin{split}
\langle J S_{L, \gamma}^{\prime \prime}
(a_n)u_n,u_n \rangle 
&= \langle (L_{L, -, a_{n}})\text{Re}\; u_n, \text{Im}\; u_n \rangle - 
\langle (L_{L, +, a_{n}}) \text{Im}\; u_n, \text{Re}\; u_n \rangle \\
& = -(p-1) \langle (\phi_{\omega, \gamma})^{p-1}_{\omega}\text{Im}\; u_n, \text{Re}\; u_n \rangle.
\end{split}
\end{equation}
Since $J S_{L, \gamma}^{\prime \prime}
(a_n)u_n = \lambda(a_{n}) u_{n}$ and $\|\phi_{\omega, \gamma}\|_{L_{x}^\infty(\R)}\lesssim 1$, it follows from \eqref{iden-JS} that 
\begin{equation*}
\lambda(a_n) \|u_n\|_{L_{x}^2(\R) \times L_{x}^{2}(\R)}^{2} 
= |\langle J S_{L, \gamma}^{\prime \prime}
(a_n)u_n, u_n \rangle| \lesssim 
\|u_n\|_{L_{x}^2(\R) \times L_{x}^{2}(\R)}^{2},
\end{equation*}
which means that $\lambda(a_n)\lesssim 1$.
As a consequence, we obtain that 
\begin{equation*}
\lim_{n \to \infty} \lambda(a_n) = 0. 
\end{equation*}
Next,  
since $-JS_{L, \gamma}^{\prime \prime}
(a_n)u_n=\lambda(a_n)u_n$, we have 
\begin{equation*}
S_{L, \gamma}^{\prime \prime}(a_0)u_n = 
S_{L, \gamma}^{\prime \prime} 
(a_{n}) u_{n} + (a_{0} - a_{n})u_{n} =
- J^{-1} \lambda (a_{n}) u_{n} + (a_{0} - a_{n}) u_{n}.
\end{equation*}
Since $S_{L, \gamma}^{\prime \prime}(a_0)$ 
is invertible 
and the inverse 
$(S_{L, \gamma}^{\prime \prime}(a_0))^{-1}$ is bounded, 
we obtain
\begin{equation*}
\begin{split}
1 = \|u_n\|_{L_{x}^{2}(\R) \times L_{x}^{2}(\R)}&= 
\|(S_{L, \gamma}^{\prime \prime}
(a_0))^{-1} (- J^{-1} 
\lambda (a_{n}) u_{n} + (a_{0} - a_{n}) u_{n})\|_{L_{x}^{2}(\R)} \\
&\lesssim 
\lambda(a_{n}) + |a_{0} - a_{n}|
\to 0 
\end{split}
\end{equation*}
as $n \to \infty$, 
which is a contradiction. 
Thus, we conclude that $a_0 = 0$, which implies that 
for $0 < a < \nu_{L}$, 
$JS_{L, \gamma}^{\prime \prime}
(a)$ 
has a positive eigenvalue.  
Since $\nu_{L} > 1$ for $L >  L_{*}$, 
we see that $JS_{L, \gamma}^{\prime 
\prime}(1)$ has a positive 
eigenvalue. 
This completes the proof. 
\end{proof}

\bibliographystyle{abbrv} %
\bibliography{biblio}

\end{document}